\documentclass[notitlepage,twoside,a4paper]{amsart}
\usepackage{mathrsfs}
\usepackage{amsmath,amssymb,enumerate}
\usepackage{epsfig,fancyhdr,color}
\usepackage{epstopdf}
\usepackage{amssymb}
\usepackage{amsmath,amsthm}
\usepackage{latexsym}
\usepackage{amscd}
\usepackage{psfrag}
\usepackage{graphicx}
\usepackage{epsf}
\usepackage[latin1]{inputenc}
\usepackage[all]{xy}
\usepackage{tikz}
\usepackage{prettyref}
\usepackage{subcaption}
\usepackage{float}
\usepackage{color}
\usepackage{array}
\usepackage{cite}
\usetikzlibrary{snakes}
\usepackage[totalwidth=17cm,totalheight=24cm]{geometry}

\newcommand{\II}{I\hspace{-0.1cm}I}
\newcommand{\III}{I\hspace{-0.1cm}I\hspace{-0.1cm}I}


\newtheorem{theorem}{\rm\bf Theorem}[section]
\newtheorem{proposition}[theorem]{\rm\bf Proposition}
\newtheorem{lemma}[theorem]{\rm\bf Lemma}
\newtheorem{corollary}[theorem]{\rm\bf Corollary}
\newtheorem{definition}[theorem]{\rm\bf Definition}
\newtheorem{remark}[theorem]{\rm\bf Remark}
\newtheorem{example}[theorem]{\rm\bf Example}

\newtheorem {claim}[theorem]{\rm\bf Claim}

\newtheorem{Step-nabla}{\rm\bf Step}

\newcommand{\C}{{\mathbb C}}
\newcommand{\CP}{{\mathbb CP}}
\newcommand{\N}{{\mathbb N}}
\newcommand{\DD}{{\mathbb D}}
\newcommand{\HH}{{\mathbb H}}
\newcommand{\R}{{\mathbb R}}
\newcommand{\RP}{{\mathbb {RP}}}
\newcommand{\Z}{{\mathbb Z}}

\newcommand{\bD}{\mathbb{D}}

\newcommand{\cI}{\mathcal{I}}
\newcommand{\cJ}{\mathcal{J}}
\newcommand{\cK}{\mathcal{K}}

\newcommand{\cX}{\mathcal{X}}

\newcommand{\cB}{{\mathcal B}}
\newcommand{\cC}{{\mathcal C}}
\newcommand{\cE}{{\mathcal E}}

\newcommand{\cU}{{\mathcal U}}
\newcommand{\cS}{{\mathcal S}}
\newcommand{\cV}{{\mathcal V}}
\newcommand{\cW}{{\mathcal W}}
\newcommand{\cCC}{{\mathcal{CC}}}
\newcommand{\cCP}{{\mathcal{CP}}}

\newcommand{\cQC}{{\mathcal{QC}}}

\newcommand{\cT}{{\mathcal T}}

\newcommand{\PSL}{\rm{PSL}}

\newcommand{\codim}{{\rm codim}}
\newcommand{\bdS}{{{\rm d}\mathbb{S}}}
\newcommand{\bHS}{\mathbb{HS}}
\newcommand{\nx}{n_{\bar x}}
\newcommand{\ny}{n_{\bar y}}

\newcounter{notes}%

\def\interieur#1{\mathord{\mathop{\kern 0pt #1}\limits^\circ}}


\title[]{The geometric data on the boundary of convex subsets of hyperbolic manifolds}

\author{Qiyu Chen}
\address{Qiyu Chen:
School of Mathematics, South China University of Technology,
510641, Guangzhou, P. R. China.}
\email{qiyuchen@scut.edu.cn}
\thanks{Q. Chen was partially supported by NSFC No: 12101244, Guangdong Basic and Applied Basic Research Foundation No: 2022A1515012225, and Guangzhou Science and Technology Program No: 202201010464.}

\author{Jean-Marc Schlenker}
\address{Jean-Marc Schlenker:
University of Luxembourg, FSTM, Department of Mathematics,
Maison du nombre, 6 avenue de la Fonte,
L-4364 Esch-sur-Alzette, Luxembourg}
\email{jean-marc.schlenker@uni.lu}
\thanks{J.-M. S. was partially supported by FNR project O20/14766753.}

\date{\today}

\begin{document}

\begin{abstract}
  Let $N$ be a geodesically convex subset in a convex co-compact hyperbolic manifold $M$ with incompressible boundary. We assume that each boundary component of $N$ is either a boundary component of $\partial_\infty M$, or a smooth, locally convex surface in $M$. We show that $N$ is uniquely determined by the boundary data defined by the conformal structure on the boundary components at infinity, and by either the induced metric or the third fundamental form on the boundary components which are locally convex surfaces. We also describe the possible boundary data. This provides an extension of both the hyperbolic Weyl problem and the Ahlfors-Bers Theorem.

  Using this statement for quasifuchsian manifolds, we obtain existence results for similar questions for convex domains $\Omega\subset \HH^3$ which meets the boundary at infinity $\partial_{\infty}\HH^3$ either along a quasicircle or along a quasidisk. The boundary data then includes either the induced metric or the third fundamental form in $\HH^3$, but also an additional ``gluing'' data between different components of the boundary, either in $\HH^3$ or in $\partial_\infty\HH^3$.

  \bigskip

  \noindent Keywords: hyperbolic geometry, Weyl problem, convex surfaces, isometric embeddings.
\end{abstract}

\maketitle

\tableofcontents

\section{Introduction and main results}

\subsection{The classical Weyl problem in Euclidean and hyperbolic spaces}

The classical Weyl problem, going back to Hermann Weyl \cite{weyl1916uber}, asks whether any smooth metric of positive curvature on the sphere can be obtained uniquely as the induced metric on the boundary of a bounded convex subset in $\R^3$. A proof was obtained by Lewy \cite{lewy1935priori} for analytic metrics, by Nirenberg \cite{nirenberg} and Pogorelov \cite{pogorelov:deformation} for smooth metrics, and by Alexandrov \cite{alexandrov:intrinsic,AZ} for polyhedral metrics. The result was then further extended by Pogorelov \cite{Po} to general non-smooth metrics: the induced metric on the boundary of any bounded convex subset of $\R^3$ (even if non-smooth) is a geodesic distance of curvature $K\geq 0$ in the sense of Alexandrov, and each such metric on the sphere is realized on the boundary of a unique bounded convex body.

The same question was considered by Alexandrov \cite{alexandrov:intrinsic} and Pogorelov \cite{Po} for surfaces in the 3-dimensional hyperbolic space $\HH^3$. Given a bounded convex subset $\Omega\subset \HH^3$, the induced metric on its boundary has curvature $K\geq -1$. Conversely, Alexandrov and Pogorelov proved that any smooth metric of curvature $K>-1$ is induced on the boundary of a unique bounded convex subset of $\HH^3$ with smooth boundary. A similar result holds for bounded polyhedra, for which the induced metric on the boundary is a hyperbolic metric with cone singularities of angles less than $2\pi$ \cite{alex}.

In the hyperbolic space, there is also a {\em dual} Weyl problem, describing the possible third fundamental forms of the boundaries of bounded convex subsets. For a smooth surfaces $S\subset \HH^3$, the third fundamental form is classically defined for two tangent vectors $u,v$ to $S$ at a point $x$ by:
$$ \III(u,v) = I(\nabla_uN,\nabla_vN), $$
where $I$ is the induced metric on $S$, $\nabla$ is the Levi-Civita connection of $\HH^3$, and $N$ is a unit normal vector field to $S$. This third fundamental form can also be described as the induced metric on a dual surface in the de Sitter space (see Section \ref{ssc:dual}), and under this definition extends to the boundary of a convex domain with non-smooth boundary, for instance polyhedra in $\HH^3$. For polyhedra in $\HH^3$, the third fundamental form is also called the {\em dual metric}, e.g. in \cite{HR}.

If $\Omega\subset \HH^3$ is a convex bounded domain, the third fundamental form on its boundary is $CAT(1)$: it has curvature $K\leq 1$, and its closed geodesics have length $L>2\pi$ \cite{CD}. Conversely, any smooth metric on $S^2$ with curvature $K<1$ and closed geodesics of length $L>2\pi$ is realized as the third fundamental form on the boundary of a unique bounded convex subset in $\HH^3$ with smooth boundary, see \cite{these}. A similar result was proved by Rivin and Hodgson \cite{Ri,HR}: any spherical metric on $S^2$ with cone singularities of angles greater than  $2\pi$, with closed geodesics of length $L>2\pi$, is the dual metric of a unique bounded polyhedron in $\HH^3$. This description of the third fundamental forms on the boundary of bounded or ideal polyhedra is also related to the description of the dihedral angles of those polyhedra, see e.g. \cite{andreev,andreev-ideal,shu,rivin-annals,rousset1}.

\subsection{The Weyl problem for unbounded convex subsets in hyperbolic space}\label{subsec:unbounded convex domain}

We are interested here in the Weyl problem for convex subsets that are not bounded. More specifically, we will consider here what are perhaps the simplest cases of such a convex subset, of two possible types. The first case is a convex unbounded subset $\Omega\subset \HH^3$ such that
\begin{itemize}
\item the boundary of $\Omega$ in $\HH^3$ is an open disk, which we denote by $\partial \Omega$,
\item the ideal boundary of $\Omega$ in the ideal boundary $\partial_{\infty}\HH^3$ (identified with $\CP^1$) of $\HH^3$ is a closed disk, which we denote by $\partial_{\infty}\Omega$.
\end{itemize}

Clearly, in this setting, the induced metric (or third fundamental form) on $\partial\Omega$ does not uniquely determine $\Omega$ (up to isometries). For instance, if $\Omega$ is the hyperbolic convex hull of $\partial_\infty\Omega$ (assuming that $\partial_{\infty}\Omega$ is not a round disk), then $\partial\Omega$ is developable, and $(\partial\Omega,I)$ is always isometric to the hyperbolic plane. In the same setting, for any $K\in (-1,0)$, the boundary (in $\partial_{\infty}\HH^3$) of $\partial_\infty\Omega$ bounds a unique $K$-surface $\Sigma_K$ bounding a convex domain with asymptotic boundary $\partial_\infty\Omega$, see \cite{rosenberg-spruck}, and $\Sigma_K$ equipped with its induced metric is always isometric to the hyperbolic plane with the metric scaled by $1/|K|$. The same construction can also be used for the third fundamental form, since $\Sigma_K$ also has a third fundamental form of constant curvature $K^*=K/(K+1)$, so it is always isometric to the hyperbolic plane with the metric scaled by $1/|K^*|$.

The second type of unbounded convex subset of $\HH^3$ that we consider here is a convex domain $\Omega\subset \HH^3$ such that $\partial\Omega$ is the disjoint union of two complete disks, denoted here by $\partial_-\Omega$ and $\partial_+\Omega$, which ``meet'' at infinity along a Jordan curve. In the spirit of the Weyl problem and its dual, we would like to prescribe on $\partial_-\Omega$ and $\partial_+\Omega$ either the induced metric or the third fundamental form. However this data is again not sufficient to uniquely determine $\Omega$, as shown in \cite{convexhull} for metrics of constant curvature: one needs also to prescribe a ``gluing map'' at infinity between $\partial_-\Omega$ and $\partial_+\Omega$.

Our results are obtained only when the limit curve at infinity -- the boundary of $\partial_\infty\Omega$ in the first case, the common boundary of $\partial_-\Omega$ and $\partial_+\Omega$ in the second case -- is a quasi-circle, which corresponds to the gluing maps being quasi-symmetric. We can state those results as follows, with points (i) and (ii) corresponding to the first case above, and points (iii)-(v) corresponding to the second case. Before stating the result, however, we need a simple definition.

\begin{definition}\label{def:bounded conf}
  Let $h$ be a conformal metric on 
  an open disk $D\subset \CP^1$, 
  and let $h_0$ be the conformal hyperbolic metric on 
  $D$. We say that $h$ is {\em bounded} if there exists a constant $C>1$ such that at all points of 
  $D$, $h\leq Ch_0$.
\end{definition}

Note that if $h$ is complete and has curvature $K\geq -c$ for some $c>0$, then $h\geq (1/c)h_0$
(see \cite[Theorem 2.8]{mateljevic}, where the argument is based on \cite{cheng-yau}).

\begin{theorem} \label{tm:main-iv}
Let $D_-, D_+\subset\CP^1$ be disjoint quasi-disks with $\partial D_-=\partial D_+$. Let $\epsilon\in(0,1)$ and let $h_-$ (resp. $h_+$) be a smooth, complete, bounded conformal metric of curvature varying in $[-1+\epsilon,1/\epsilon]$ on $D_-$ (resp. $D_+$), and $h^*_-$ (resp. $h^*_+$) be a smooth, complete, bounded conformal metric of curvature varying in $[-1/\epsilon,0]$ on $D_-$ (resp. $D_+$). Then
\begin{enumerate}[(i)]
  \item There exists a convex subset $\Omega\subset \HH^3$ with $\partial_{\infty}\Omega$ a closed disk and a continuous map $\phi:\CP^1\to \partial\Omega\cup\partial_{\infty}\Omega$ sending $D_+$ to $\partial_\infty\Omega$, which is conformal on $D_+$ and such that $\phi^*I=h_-$ on $D_-$.
  \item There exists a convex subset $\Omega\subset\HH^3$ with $\partial_{\infty}\Omega$ a closed disk and a continuous map $\phi:\CP^1\to \partial\Omega\cup\partial_{\infty}\Omega$ sending $D_+$ to $\partial_\infty\Omega$, which is conformal on $D_+$ and such that $\phi^*\III=h^*_-$ on $D_-$.
  \item There exists a convex subset $\Omega\subset \HH^3$ with $\partial_{\infty}\Omega$ a Jordan curve, and a continuous map $\phi:\CP^1\to \partial\Omega\cup\partial_{\infty}\Omega$, such that $\phi^*I=h_-$ on $D_-$ and $\phi^*I=h_+$ on $D_+$.
  \item There exists a convex subset $\Omega\subset \HH^3$ with $\partial_{\infty}\Omega$ a Jordan curve, and a continuous map $\phi:\CP^1\to \partial\Omega\cup\partial_{\infty}\Omega$, such that $\phi^*\III=h^*_-$ on $D_-$ and $\phi^*\III=h^*_+$ on $D_+$.
\item There exists a convex subset $\Omega\subset \HH^3$ with $\partial_{\infty}\Omega$ a Jordan curve, and a continuous map $\phi:\CP^1\to \partial\Omega\cup\partial_{\infty}\Omega$, such that $\phi^*I=h_-$ on $D_-$ and $\phi^*\III=h_+^*$ on $D_+$.
\end{enumerate}
\end{theorem}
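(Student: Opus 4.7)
The plan is to deduce Theorem \ref{tm:main-iv} from the main existence result for convex subsets of quasifuchsian manifolds (announced in the abstract as an extension of Ahlfors--Bers) by an equivariant approximation argument. The idea is that any configuration of boundary data on the quasidisks $D_-,D_+\subset \CP^1$ can be approximated by $\Gamma_n$-invariant data on the domain of discontinuity of a sequence of quasifuchsian groups $\Gamma_n$ whose limit sets Hausdorff-converge to the quasicircle $\gamma := \partial D_- = \partial D_+$, after which the theorem in the quasifuchsian setting produces equivariant convex subsets in $\HH^3$ whose limit is the desired $\Omega$.

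First I would construct an approximating sequence $\Gamma_n$ of quasifuchsian groups whose limit sets $\Lambda_n$ Hausdorff-converge to $\gamma$, so that the two components $D_n^\pm$ of $\Omega(\Gamma_n)$ Carath\'eodory-converge to $D_\pm$. Second, I would build $\Gamma_n$-invariant conformal metrics $h_n^\pm$ (respectively $h_n^{\pm,*}$) on $D_n^\pm$ approximating $h_\pm$ (resp. $h_\pm^*$) on compact subsets of $D_\pm$, with curvatures in the same prescribed intervals. The natural recipe is to pull back $h_\pm$ via a Riemann map $D_n^\pm \to D_\pm$ (which converges to the identity on compact subsets by the Carath\'eodory kernel theorem), then modify the pullback near $\Lambda_n$ by a $\Gamma_n$-equivariant interpolation with a multiple of the Poincar\'e metric of $D_n^\pm$. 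The bounded-metric hypothesis $(1/c)h_0 \le h \le Ch_0$ is precisely what makes this interpolation preserve both completeness and the curvature bounds; in parts (i) and (ii) the conformal-structure data at infinity needs no modification, and in the mixed case (v) the two sides are interpolated independently.

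Third, I would apply the quasifuchsian existence theorem to each approximating datum, obtaining a geodesically convex subset $N_n$ in the convex co-compact manifold $\HH^3/\Gamma_n$ whose boundary realizes that datum. Lifting to the universal cover produces a $\Gamma_n$-equivariant convex subset $\tilde N_n \subset \HH^3$. Finally I would pass to a subsequential limit $\tilde N_n \to \Omega$ in the Hausdorff topology on $\HH^3 \cup \CP^1$, with smooth convergence of the boundary on compact subsets of $D_\pm$; the limit $\Omega$ is the desired convex subset, and the prescribed boundary data is recovered by passing to the limit in the second step, using that the modifications there are invisible on compact subsets in the limit.

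The hard part will be this last step: preventing the $\tilde N_n$ from degenerating (collapsing onto a lower-dimensional set, or, in parts (iii)--(v), having their two boundary sheets crash together) as $\Gamma_n$ varies. The bounded-metric hypothesis and the two-sided curvature bounds translate, via the maximum principle and standard comparison arguments for convex surfaces in $\HH^3$, into uniform second-fundamental-form estimates on each $\partial \tilde N_n$ and, in the two-sheet cases, a uniform lower bound on the distance between the two sheets on compact subsets of $\HH^3$. These estimates are enough to run Arzel\`a--Ascoli on a diagonal exhaustion of $D_- \sqcup D_+$ and extract a convergent subsequence, after which identification of the boundary data of $\Omega$ with the prescribed $h_\pm$ (or $h_\pm^*$) is routine.
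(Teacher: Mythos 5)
Your proposal follows the same overall strategy as the paper: approximate by equivariant data under surface groups, invoke the quasifuchsian existence theorem, extract a limit via Labourie-type compactness, and use the bounded-metric hypothesis to get the uniform principal-curvature estimates and the bound on the distance to the convex core that prevent degeneration. The interpolation recipe you sketch for the invariant metrics is essentially what the paper carries out in Propositions~\ref{prop:approx III} and~\ref{prop:approx I}.

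There is, however, one step in your outline that does not quite work as stated, and it hides the paper's main structural idea. You write that the quasifuchsian existence theorem produces a convex subset ``in the convex co-compact manifold $\HH^3/\Gamma_n$'' for a scaffold group $\Gamma_n$ chosen in advance. That cannot be: the quasifuchsian group is an \emph{output} of the existence theorem, determined (via Ahlfors--Bers) by the boundary data you feed in, and it will not equal $\Gamma_n$ in general. What is preserved from your scaffold is only the pair of Fuchsian holonomies $(\rho_n^+,\rho_n^-)$ of the two ideal boundary components and the associated $(\rho_n^+,\rho_n^-)$-equivariant quasisymmetric ``gluing map'' $f_n$. The paper therefore reformulates each clause of the theorem in terms of prescribed gluing maps (the $I$-, $\III$-, and mixed gluing maps of Definitions~\ref{def:I-III gluing maps of first type} and~\ref{def:I-III gluing maps of second type}, with the equivalence established in Proposition~\ref{prop:equiv}), approximates the conformal welding $\Phi_C$ of $\gamma$ directly by quasifuchsian quasisymmetric maps $f_n$ (Proposition~\ref{prop:qcirc approx}), and then shows that the output domains have gluing map exactly $f_n$ (Example~\ref{ex:gluing map II}, Proposition~\ref{prop:quasifuchsian realization}). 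Your proposal would become correct once you make this translation explicit: what must converge to recover the map $\phi$ of the theorem is not the scaffold's limit set $\Lambda_n$, but the gluing maps $f_n$, and the equivalence between the two statements is itself a nontrivial lemma.
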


The hypothesis on the metrics $h^*_\pm$ are probably not optimal -- one could expect similar results only assuming that those metrics have curvature in $[-1/\epsilon, 1-\epsilon]$, and that their closed geodesics have length $L>2\pi$. However the arguments used in Section \ref{sc:convexH} only function (at this point) with the stronger assumption considered here. The hypothesis that those metrics are ``bounded'' in the sense considered here is also necessary for ``technical'' reasons in the proofs.

Theorem \ref{tm:main-iv} extends the main result of \cite{convexhull},
which dealt only with constant curvature metrics, and has a non-empty intersection with the main result of \cite{weylgen}, which only deals with the induced metric. However it should be pointed out that the argument used here is quite different, since the approximation was done in \cite{weylgen} by metrics on the sphere, while here we use a quasifuchsian approximation argument. One could also find additional motivations in \cite{weylsurvey}.

\subsection{Convex domains in quasifuchsian manifolds}

The proof of Theorem \ref{tm:main-iv} is based on an approximation argument, where complete metrics on a disk together with quasi-symmetric homeomorphisms from $\RP^1$ to itself are approximated, in a suitable sense, by metrics and quasi-symmetric homeomorphisms which are invariant (resp. equivariant) with respect to the action of surface groups.
Those invariant metrics (and equivariant gluing maps) correspond to convex surfaces embedded in quasifuchsian hyperbolic manifolds.

In this quasifuchsian setting, and more generally in the convex co-compact setting discussed below, we will obtain an existence but also a uniqueness statement. We first discuss the quasifuchsian setting here for simplicity, and for reference below.

Let $S$ be a closed, oriented connected surface of genus at least $2$ and let $\cT_S$ denote the Teichm\"uller space of $S$, which is the space of hyperbolic metrics (or conformal structures) on $S$ (up to isotopies). The following statement is a consequence of Theorem \ref{tm:main-cc} below.

\begin{corollary} \label{cr:qf-I}
Let $h$ be a Riemannian metric of curvature greater than $-1$ on $S$, and let $c\in \cT_S$ be a conformal structure on $S$. Then there is a unique (up to isotopy) hyperbolic manifold homeomorphic to $S\times [0,+\infty)$ which has a convex boundary at $S\times\{0\}$ with induced metric isotopic to $h$, and has a conformal structure at infinity on $S\times\{+\infty\}$ isotopic to $c$.
\end{corollary}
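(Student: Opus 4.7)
The plan is to derive the corollary as the quasifuchsian special case of Theorem \ref{tm:main-cc}. In that case the ambient manifold $M$ is quasifuchsian, i.e., homeomorphic to $S \times \R$ with two ends at infinity, and the convex subset $N \subset M$ is taken to be the closed region bounded on one side by a smooth locally convex surface $\Sigma$ (isotopic to $S \times \{0\}$) and extending on the other side to a single end at infinity of $M$. Such an $N$ is diffeomorphic to $S \times [0, +\infty)$, and its intrinsic boundary data consists of the induced metric on $\Sigma$ and the conformal structure at its single end at infinity. The quasifuchsian case of the theorem is precisely the assertion that $N$ is uniquely determined by this boundary data, and that all admissible data occur.

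For existence, fix arbitrarily an auxiliary conformal structure $c_- \in \cT_S$ to play the role of the conformal structure at the second end at infinity of $M$, and apply Theorem \ref{tm:main-cc} to the boundary data $(c_-, h, c)$. The curvature hypothesis $K > -1$ on $h$ is precisely the necessary condition for $h$ to be the induced metric on a smooth locally convex surface in $\HH^3$ (by the Gauss equation, $K_I = -1 + K_e$ with $K_e \geq 0$), and any $c_- \in \cT_S$ is admissible, so the full triple satisfies the hypotheses of Theorem \ref{tm:main-cc}. The theorem then produces $(M, N)$ realizing the prescribed data, and the intrinsic hyperbolic structure of $N$ gives the object claimed by the corollary. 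The freedom in choosing $c_-$ does not affect the isometry type of $N$, since the boundary data on $\partial N$ (which lies entirely on the side opposite to the $c_-$-end) is $(h, c)$ regardless of $c_-$.

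For uniqueness, suppose $N_1$ and $N_2$ are two hyperbolic manifolds on $S \times [0, +\infty)$ realizing the data $(h, c)$. Each may be embedded as a convex subset of a quasifuchsian manifold by gluing an appropriate convex collar along its smooth convex boundary; arranging both extensions to use the same auxiliary conformal structure $c_- \in \cT_S$ at the new end, one obtains two pairs $(M_i, N_i)$ with identical boundary data $(c_-, h, c)$. The uniqueness part of Theorem \ref{tm:main-cc} yields an ambient isometry $M_1 \to M_2$ isotopic to the identity and carrying $N_1$ to $N_2$, which restricts to the desired intrinsic isometry $N_1 \cong N_2$ isotopic to the identity on $S \times [0, +\infty)$.

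The main obstacle is of course Theorem \ref{tm:main-cc} itself, which contains all the substantive content and would be the heart of the proof. Within this reduction, the only delicate point is the construction of the quasifuchsian collar used in the uniqueness step: given any hyperbolic half-manifold with smooth convex boundary and one end at infinity, one must show that its holonomy representation of $\pi_1(S)$ extends to a quasifuchsian representation with any prescribed conformal structure at the new end. This should follow from a standard convex collar / Bers-type gluing construction in convex co-compact Kleinian groups, possibly by another application of the existence part of Theorem \ref{tm:main-cc} to construct the complementary half.
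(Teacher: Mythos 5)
Your overall strategy (read the corollary off as the quasifuchsian case of Theorem \ref{tm:main-cc}) is the right one and matches the paper, but the instantiation of the theorem is incorrect and the auxiliary $c_-$ causes genuine trouble.

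Theorem \ref{tm:main-cc} prescribes exactly one piece of boundary data per boundary component of $M$; the partition $\cI\sqcup\cJ\sqcup\cK=\{1,\ldots,n\}$ makes this explicit. For a quasifuchsian $M\cong S\times\R$ one has $n=2$, so the valid input is a \emph{pair}, not a triple. The triple $(c_-,h,c)$ you feed into the theorem is overdetermined: there is no partition of $\{1,2\}$ into which a conformal structure at one end, an induced metric on a convex surface, and a conformal structure at the other end can all be placed simultaneously. The correct instantiation is $\cI=\{1\}$, $\cJ=\{2\}$, $\cK=\emptyset$, $c_1=c$, $h_2=h$; then the theorem directly yields a unique quasifuchsian $M$ with conformal structure $c$ at the $\partial_1 M$ end and a locally convex surface $S_2$ (isotopic to $\partial_2 M$) with induced metric $h$, and the region $N$ between $S_2$ and the $\partial_1 M$ end is the desired manifold. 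There is no free $c_-$ anywhere: the conformal structure at the other end of $M$ is an output of the theorem, determined by $(c,h)$, not an input. In fact your closing remark — that ``the freedom in choosing $c_-$ does not affect the isometry type of $N$'' — would, if the construction worked as you describe, manufacture a $\cT_S$-family of distinct $M$'s all containing an $N$ with boundary data $(h,c)$, which directly contradicts the uniqueness you are trying to prove; the argument is internally circular at that point.

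The uniqueness half also gets unnecessarily tangled for the same reason. Given $N_1,N_2$ with the same data $(h,c)$, one completes each to a quasifuchsian $M_i$ simply by taking $\HH^3/\rho_i(\pi_1 S)$ for $\rho_i$ the holonomy of $N_i$ (the paper notes after Corollary \ref{cr:qf-III} that this completion is unique up to isometry); there is no ``gluing of a convex collar'' and no freedom to ``arrange both extensions to use the same $c_-$'' — the second end's conformal structure is forced. Both $M_i$ then realize the boundary data $c_1=c$, $h_2=h$ in Theorem \ref{tm:main-cc}, and its uniqueness clause gives the isometry $M_1\cong M_2$ carrying $N_1$ to $N_2$.
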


We now consider a ``dual'' statement to Corollary \ref{cr:qf-I}, where we prescribe the third fundamental form rather than the induced metric.

\begin{corollary} \label{cr:qf-III}
  Let $h^*$ be a Riemannian metric of curvature less than $1$ on $S$ with closed, contractible geodesics of length greater than $2\pi$, and let $c\in \cT_S$ be a conformal structure on $S$. Then there is a unique (up to isotopy) hyperbolic manifold homeomorphic to $S\times [0,+\infty)$ which has a convex boundary at $S\times\{0\}$ with the third fundamental form isotopic to $h^*$, and has a conformal structure at infinity on $S\times\{+\infty\}$ isotopic to $c$.
\end{corollary}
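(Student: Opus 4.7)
The plan is to derive Corollary \ref{cr:qf-III} directly from Theorem \ref{tm:main-cc} by specializing the ambient convex co-compact manifold to the quasifuchsian case, exactly as (I presume) Corollary \ref{cr:qf-I} is derived. Given the data $(h^*, c)$, I would look for a quasifuchsian manifold $M$ homeomorphic to $S\times\R$ together with a geodesically convex subset $N\subset M$ whose boundary has two components: one is the ideal boundary component of $M$ at the $+\infty$ end, on which the conformal structure is prescribed to be $c$; the other is a smooth, locally convex surface $\Sigma\subset M$ isotopic to $S\times\{0\}$, on which the third fundamental form is prescribed to be $h^*$. The hyperbolic manifold $N\setminus\partial_\infty N$, equipped with its inherited metric, is then homeomorphic to $S\times[0,+\infty)$ and realizes the required boundary data.

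The first step is to verify that the intrinsic hypotheses on $h^*$ in the statement --- curvature less than $1$ pointwise, and contractible closed geodesics of length greater than $2\pi$ --- are precisely the admissibility conditions imposed on the third fundamental form by Theorem \ref{tm:main-cc}. These are the natural dual of the condition $K > -1$ governing Corollary \ref{cr:qf-I}, and they mirror the classical Rivin--Hodgson / Schlenker characterization of the third fundamental forms of smooth boundaries of bounded convex bodies in $\HH^3$ recalled earlier in the introduction.

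Existence and uniqueness then follow from the corresponding parts of Theorem \ref{tm:main-cc}. Existence produces $(M,N)$, and one reads off the desired hyperbolic structure on $S\times[0,+\infty)$ as $N$ minus its ideal part. For uniqueness, two hyperbolic structures on $S\times[0,+\infty)$ meeting the conclusion embed, by Ahlfors--Bers uniformization applied to the ``free'' $-\infty$ end, in two quasifuchsian completions $(M_1,N_1)$ and $(M_2,N_2)$, each realizing the same prescribed data $(c,h^*)$ on $\partial N$. The uniqueness part of Theorem \ref{tm:main-cc} then yields an isometry isotopic to the identity between these pairs, which restricts to the required isotopy.

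The main subtlety I would expect lies in the reduction itself: Theorem \ref{tm:main-cc} is formulated with $M$ a fixed convex co-compact manifold, whereas in Corollary \ref{cr:qf-III} the ambient quasifuchsian $M$ is not given but rather obtained by completing the one-ended target hyperbolic manifold. One must check that the freedom in choosing the completion --- essentially the conformal structure at the $-\infty$ end of $M$ --- is irrelevant to the intrinsic geometry of $N$ and therefore cannot create distinct isotopy classes on $S\times[0,+\infty)$. Once this bookkeeping is handled, the corollary is essentially a translation of Theorem \ref{tm:main-cc} into the quasifuchsian setting.
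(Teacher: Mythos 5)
Your proposal matches the paper's own reduction: Corollary \ref{cr:qf-III} is obtained from Theorem \ref{tm:main-cc} by taking $M\cong S\times\R$, $\cI=\{1\}$, $\cJ=\emptyset$, $\cK=\{2\}$, and passing to the geodesically convex piece cut off by the realizing convex surface. One small clarification on the uniqueness step: there is no ``freedom in choosing the completion.'' A hyperbolic manifold $N\cong S\times[0,+\infty)$ with locally convex boundary and a compact convex core has convex co-compact (hence quasifuchsian) holonomy $\rho$, and $\HH^3/\rho(\pi_1 S)$ is its unique complete extension, as the paper records in the remark immediately after the corollary; the conformal structure at the $-\infty$ end is therefore determined by $N$ rather than a parameter to be checked, and the uniqueness clause of Theorem \ref{tm:main-cc} applies directly to the two completions.
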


Note that the above hyperbolic manifold can be isometrically embedded in a unique (up to isometries) quasifuchsian manifold.

A consequence of Corollary \ref{cr:qf-I} concerns quasifuchsian manifolds with induced metric prescribed on one side of the convex core, while the conformal structure at infinity is prescribed on the other side. We only claim existence in that statement, since proving uniqueness would request obtaining a local rigidity statement that appears difficult. The statement is presented here as a theorem since it is not a special case of Theorem \ref{tm:main-cc}.

\begin{theorem} \label{tm:cc-I}
Let $h\in\cT_S$ be a hyperbolic metric on $S$, and let $c\in \cT_S$ be a conformal structure on $S$.
Then there is a quasifuchsian hyperbolic manifold with the conformal structure on its upper boundary component at infinity isotopic to $c$, and with the induced metric on the lower boundary component of its convex core isotopic to $h$.
\end{theorem}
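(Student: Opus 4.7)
Proof plan. The strategy is to approximate the hyperbolic metric $h$ by smooth metrics of curvature strictly greater than $-1$, apply Corollary \ref{cr:qf-I} along the approximating sequence, and pass to a limit whose lower convex core boundary will be identified as the limit of the approximating convex surfaces.

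Set $h_n = (1 + 1/n)\, h$. Each $h_n$ is a smooth Riemannian metric of constant curvature $-n/(n+1) > -1$ on $S$, with $h_n \to h$ uniformly as $n \to \infty$. By Corollary \ref{cr:qf-I} applied to $(h_n, c)$, there is a hyperbolic manifold $N_n \cong S \times [0, +\infty)$ with convex boundary $\Sigma_n$ inducing $h_n$ and with conformal structure at infinity isotopic to $c$. The manifold $N_n$ embeds isometrically in a unique quasifuchsian manifold $M_n$ (with upper conformal infinity $c$ and lower conformal infinity $c_n^- \in \cT_S$), in such a way that $\Sigma_n$ is a convex surface situated on the opposite side of the convex core from the $c$-end, so $N_n$ contains $C(M_n)$. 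By the Gauss equation, the extrinsic curvatures of $\Sigma_n$ satisfy $k_1 k_2 = K_{h_n} + 1 = 1/(n+1) \to 0$, so the surfaces $\Sigma_n$ pleat up in the limit.

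The heart of the argument is to show that $(M_n)$ has a subsequence converging in the quasifuchsian deformation space $\cQF(S) \cong \cT_S \times \cT_S$. Since the upper conformal infinity of $M_n$ is $c$ for every $n$, the Bers embedding reduces this to controlling the lower conformal infinity $c_n^-$ in a compact subset of $\cT_S$. I would combine Sullivan's theorem---giving a universal quasi-conformal bound between $c_n^-$ and the hyperbolic metric $h_n^\ast$ induced on $\partial_- C(M_n)$---with geometric control of $h_n^\ast$ via the nearest point projection $\pi_n : \Sigma_n \to \partial_- C(M_n)$. This projection is $1$-Lipschitz and homotopic to the identity on $S$, yielding $\ell_{h_n^\ast}(\gamma) \le \ell_{h_n}(\gamma) \to \ell_h(\gamma)$ for every simple closed curve $\gamma$, and hence upper bounds on all length functions. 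The delicate, opposite direction---ruling out that some length function of $h_n^\ast$ degenerates to zero---is the main obstacle; it should be addressed by analysing the second fundamental forms of $\Sigma_n$, whose total mass is controlled via Gauss--Bonnet and the Gauss equation, and showing that they accumulate on a transverse measure supported on a genuine bending lamination rather than concentrating along a single collapsing simple closed curve.

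Once a subsequential limit $M_\infty = \lim_k M_{n_k}$ has been extracted, the identification of $\partial_- C(M_\infty)$ is straightforward: since $h_{n_k} \to h$ and the extrinsic curvature of $\Sigma_{n_k}$ tends to zero, the surfaces $\Sigma_{n_k}$ converge to a pleated surface $\Sigma_\infty$ in $M_\infty$ with induced metric $h$. The convexity and positioning of $\Sigma_{n_k}$ below the convex core force $\Sigma_\infty = \partial_- C(M_\infty)$, and the upper conformal infinity $c$ is preserved under the limit. Thus $M_\infty$ has upper conformal infinity isotopic to $c$ and lower convex core boundary with induced metric isotopic to $h$, establishing the theorem.
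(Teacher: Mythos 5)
Your plan follows the paper's (apply Corollary \ref{cr:qf-I} to the constant-curvature approximants $h_n$, then extract a limit) up through the derivation of $\ell_{h_n^\ast}(\gamma) \le \ell_{h_n}(\gamma)$ from the $1$-Lipschitz nearest-point projection onto the convex core. But you then create an obstacle that is not there. You worry that upper bounds on the marked length spectrum of $h_n^\ast$ still leave open the possibility that some length degenerates to zero, and propose to rule this out by an analysis of the second fundamental forms of $\Sigma_n$ accumulating on a bending lamination; this is both unnecessary and not actually carried through. The upper bound $\ell_{h_n^\ast}(\gamma)\leq |K_n|^{-1/2}\,\ell_h(\gamma)$ (with $|K_n|^{-1/2}\to 1$) alone forces $h_n^\ast\to h$ in $\cT_S$: a sequence of hyperbolic metrics on $S$ whose marked length spectrum is bounded above by a fixed multiple of a given one is automatically precompact in $\cT_S$ (a shrinking systole would force transverse curves to blow up by the collar lemma, contradicting the upper bounds), and any subsequential limit $m^-$ then satisfies $\ell_{m^-}\leq\ell_h$ on all curves, which is impossible for $m^-\neq h$ by the positive definiteness of Thurston's asymmetric metric. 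This is exactly the one-line argument in the paper's Proposition \ref{prop:appl-convergence}, with the reference to Thurston supplying the positivity.

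For identifying the limit, the paper also takes a shorter route than yours: rather than arguing that the surfaces $\Sigma_{n_k}$ converge to a pleated surface and then that this surface equals $\partial_-C(M_\infty)$, it invokes Bonahon's theorem that the induced metric on the boundary of the convex core varies continuously over the quasifuchsian deformation space; since $M_{n_k}\to M_\infty$ and $h_{n_k}^\ast\to h$, this gives $\partial_-C(M_\infty)$ the induced metric $h$ at once. Your geometric route would require an additional argument for the convergence of the $K_n$-surfaces to a pleated surface, which Bonahon's result lets you bypass.
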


\subsection{Convex domains in convex co-compact manifolds}

We finally state a general result on the existence and uniqueness of convex co-compact hyperbolic manifold containing a geodesically convex subset such that the induced metric, third fundamental form, or conformal metric at infinity is prescribed for each end. The boundary components of the convex co-compact hyperbolic manifolds throughout the paper are necessarily closed oriented surfaces of genus at least two.

\begin{theorem} \label{tm:main-cc}
Let $M$ be a compact 3-manifold with non-empty, incompressible boundary which admits a convex co-compact hyperbolic structure on its interior ${\rm int}(M)$ and let $\partial_lM$ ( $l=1,\ldots,n$, $n\geq 2$ ) denote its boundary components. Let $\cI\sqcup \cJ\sqcup \cK=\{1,\ldots,n\}$ be a partition, where $\cI$, $\cJ$, or $\cK$ might be an empty set. Let $c_i$ ($i\in \cI$) be a conformal structure on $\partial_iM$, $h_j$ ($j\in \cJ$) be a Riemannian metric of curvature greater than $-1$ on $\partial_jM$ and $h_k^*$ ($k\in \cK$) be a Riemannian metric of curvature less than $1$ on $\partial_kM$, with closed, contractible geodesics of length greater than $2\pi$. Then there exists a unique (up to isotopy) convex co-compact hyperbolic structure on ${\rm int}(M)$ which is complete on $\partial_i M$ (if $\cI\not=\emptyset$) with conformal structure isotopic to $c_i$, and with induced metric isotopic to $h_j$ on a surface isotopic to $\partial_j M$ (if $\cJ\not=\emptyset$), and with third fundamental form isotopic to $h_k^*$ on a surface isotopic to $\partial_kM$ (if $\cK\not=\emptyset$).
\end{theorem}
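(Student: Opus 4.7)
The plan is to view Theorem \ref{tm:main-cc} as asserting that the data map
\[
\Phi:\mathcal{D}(M)\to \prod_{i\in\cI}\cT(\partial_iM)\times \prod_{j\in\cJ}\mathcal{M}_{K>-1}(\partial_jM)\times \prod_{k\in\cK}\mathcal{M}^{\ast}(\partial_kM)
\]
is a homeomorphism. Here $\mathcal{D}(M)$ is the deformation space of pairs $(g,N)$, where $g$ is a convex co-compact hyperbolic structure on $\mathrm{int}(M)$ and $N\subset(\mathrm{int}(M),g)$ is a geodesically convex subset whose $l$-th boundary component is either an end at infinity for $l\in\cI$ or a smooth locally convex surface isotopic to $\partial_l M$ for $l\in\cJ\cup\cK$, quotiented by isotopies. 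The map $\Phi$ records the conformal structures at infinity, the induced metrics, and the third fundamental forms on these boundary components. The proof then proceeds by showing that $\Phi$ is a local homeomorphism (openness and local injectivity), proper, and has image meeting each connected component of its target.

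For local injectivity and openness I would apply the implicit function theorem to $\Phi$. Since the three boundary conditions are supported on disjoint ends, the linearization $d\Phi$ decomposes as a direct sum over boundary components. Each summand is invertible: on $\cI$ ends by Ahlfors-Bers infinitesimal theory; on $\cJ$ components by inverting the elliptic Jacobi operator for first-order isometric deformations of a locally convex surface under the curvature hypothesis $K>-1$; and on $\cK$ components by its de Sitter dual (see Section \ref{ssc:dual}) under the hypothesis $K<1$ together with the length bound on closed geodesics. For surjectivity, I would use continuity. Starting from an anchor case where all $n$ boundary components carry the same type of data---all $\cI$ (Ahlfors-Bers), all $\cJ$ (the generalized hyperbolic Weyl theorem, e.g.\ \cite{weylgen}), or all $\cK$ (its dual)---I would connect the target data to the anchor by a path in the product data space that switches boundary types one end at a time, propagating realizability along the path using openness and properness. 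Uniqueness then follows because a locally injective proper map to a Hausdorff space has discrete fibers, and any two preimages of the same point can be joined back to a common anchor along the same path, forcing them to coincide up to isotopy.

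The main obstacle is properness of $\Phi$. Given $(g_n,N_n)\in\mathcal{D}(M)$ with $\Phi(g_n,N_n)$ convergent, one must extract a convergent subsequence modulo isotopy. Three kinds of degeneration have to be ruled out. First, the hyperbolic structures $g_n$ themselves must remain in a compact part of the moduli space of convex co-compact structures on $\mathrm{int}(M)$; the incompressibility hypothesis prevents the pinching of essential curves, and on $\cI$ ends Ahlfors-Bers compactness applies directly once the conformal data converges. Second, the smooth locally convex surfaces $\partial_j N_n$ and $\partial_k N_n$ must not escape toward conformal infinity of their ends; this is where the ``bounded'' hypothesis of Definition \ref{def:bounded conf} is crucial, as it combines with the curvature bounds via the Cheng-Yau type estimate cited after that definition to force the surfaces to stay at bounded distance from any fixed reference surface in the end. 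Third, these surfaces must satisfy uniform $C^{1,\alpha}$ regularity, which follows from Pogorelov-Labourie type estimates for locally convex surfaces in $\HH^3$ with curvature bounded away from the relevant critical values. Coordinating these three estimates in the mixed setting, where different ends carry different data types, is the most delicate point of the argument, and is precisely what the bounded-metric hypothesis is designed to make quantitatively uniform.
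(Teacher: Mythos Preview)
Your high-level strategy (continuity method: local rigidity + properness + a known anchor case) matches the paper's, but the execution has genuine gaps at each step.

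\textbf{Local rigidity.} Your claim that $d\Phi$ ``decomposes as a direct sum over boundary components'' with each summand separately invertible is wrong. A deformation of the hyperbolic structure $g$ changes the data on \emph{all} ends simultaneously, so there is no block-diagonal structure. What is actually needed is the global infinitesimal rigidity statement: no nontrivial $\dot g\in T_g\cCC(M)$ preserves all of $c_i,h_j,h_k^*$ at first order. This is Proposition~\ref{lm:local rigidity-mfld}, and it is the technical heart of the paper. Its proof occupies all of Section~\ref{sec:local rigidity}: one passes via the infinitesimal Pogorelov map to a deformation vector field on the boundary of a convex domain in $\R^3$, shows it satisfies a generalized Beltrami equation (equation~\eqref{eq:E}), establishes delicate H\"older and $L^p$ estimates across the limit set, and invokes Vekua's lemma~\ref{lm:Vekua}. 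None of this reduces to ``inverting a Jacobi operator on each end''; the coupling between ends of different types ($\cI$, $\cJ$, $\cK$) is precisely what makes the argument nontrivial and is handled only through this global Euclidean rigidity.

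\textbf{Properness.} You invoke the ``bounded'' hypothesis of Definition~\ref{def:bounded conf}, but that definition concerns conformal metrics on open disks and is used only for Theorem~\ref{tm:main-iv}. In Theorem~\ref{tm:main-cc} the metrics live on closed surfaces, there is no boundedness hypothesis, and the argument is different: the paper (Lemma~\ref{lm:compact}) uses Lemma~\ref{lm:principal-curv} and the bi-Lipschitz comparison between the boundary metric and the conformal hyperbolic metric at infinity (via the nearest-point retraction) to bound the conformal structures at infinity, then appeals to Ahlfors--Bers. The surfaces cannot escape to infinity simply because their intrinsic metrics are bounded and the principal curvatures are controlled; no Cheng--Yau input is needed here.

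\textbf{Surjectivity and uniqueness.} Your ``switching boundary types one end at a time'' does not make sense: the partition $\cI\sqcup\cJ\sqcup\cK$ is fixed in the theorem, and there is no continuous path from a conformal structure to a Riemannian metric. The paper instead fixes the partition, proves the special case where all $h_j,h_k^*$ have \emph{constant curvature} (Lemma~\ref{lm:special}, which follows from the existing $K$-surface results plus Proposition~\ref{lm:local rigidity-mfld}), and then connects an arbitrary boundary datum to a constant-curvature one by a path \emph{within the same data space}. Transversality of the submanifolds $\cU_i,\cV_j,\cW_k$ (Lemma~\ref{lm:transversal}, again from Proposition~\ref{lm:local rigidity-mfld}) gives openness, Lemma~\ref{lm:compact} gives closedness, and the constant-curvature anchor pins the intersection number at~$1$.
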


Corollaries \ref{cr:qf-I} and \ref{cr:qf-III} clearly follow from this theorem.

\subsection{Recent related results}

The statements presented here are related to a number of recent results.

Roman Prosanov extended \cite{prosanov:fuchsian} the existence and uniqueness result of Pogorelov for convex domains in $\HH^3$, without any regularity hypothesis beyond convexity, to Fuchsian hyperbolic manifolds. He also proved \cite{prosanov:dual} that hyperbolic manifolds with convex, polyhedral boundary are uniquely determined by the {\em dual metric} on the boundary -- the dual metric is the polyhedral analog of the third fundamental form, already appearing in \cite{HR}. In \cite{prosanov:polyhedral}, he defined a class of hyperbolic metrics with convex, ``bent'' boundary, and proved that any hyperbolic metric with cone singularities of positive curvature can be realized on the boundary. Moreover he proved a uniqueness result for a subclass of those metrics.

In \cite{mesbah:induced}, Abderrahim Mesbah extended Theorem \ref{tm:main-iv}, for quasifuchsian hyperbolic manifolds, to include the prescription of the measured bending lamination on one component (which then has to be a convex pleated surface) and the induced metric or third fundamental form on the other component of the boundary of a convex subset in a quasifuchsian hyperbolic manifold.

Another relevant recent work is \cite{choudhury:measured}, where Diptaishik Choudhury establishes that in the neighborhood of the Fuchsian locus, one can prescribe the {\em measured foliation at infinity}, which is an analog at infinity of the measured bending lamination on the boundary of the convex core, perhaps as the conformal structure at infinity is analog to the hyperbolic metric induced on the boundary of the convex core.

\subsection*{Acknowledgements} We would like to thank the anonymous referee who checked the submitted version of this paper with considerable care, and made a large number of very significant remarks, which lead to important improvements in the exposition and to several corrections.

\section{Background material}

\subsection{The hyperbolic and de Sitter spaces}\label{subsec:projective models}
Recall that the hyperboloid models of the 3-dimensional hyperbolic space and de Sitter (dS) space are respectively
$$ H^3:=\{x\in\R^{3,1}| \langle x, x\rangle_{3,1}=-1, x_0>0\},$$
and
 $$ dS^3:=\{x\in\R^{3,1}| \langle x, x\rangle_{3,1}=1\},$$
 where $\langle x, y\rangle_{3,1}=-x_0y_0+x_1y_1+x_2y_2+x_3y_3$. We denote by $\HH^3$ (resp. $\bdS^3$) the projection of $H^3$ (resp. $dS^3$) to $\RP^3$ (with the metric induced from the projection), which is the \emph{projective model} of the hyperbolic (resp. dS) 3-space (we use this model throughout the paper, unless otherwise stated).

Let $\bHS^3$ denote the space $\RP^3$ equipped with the geometric structure induced by the bilinear form $\langle\cdot,\cdot\rangle_{3,1}$, called the 3-dimensional \emph{HS space of signature $(3,0)$}. By construction its geometry is invariant under $PO_0(3, 1)$. It has a component - corresponding to the time-like vectors with $x_0>0$ in $\R^{3,1}$ isometric to $\HH^3$, while the complement of the closure of this component is isometric to $\bdS^3$. The signature of the metric induced on $\bHS^3$ is in fact $(3,0)$ on $\HH^3$ and $(2,1)$ on $\bdS^3$.

Let $\phi:\HH^3\cup \bdS^3 \rightarrow \R^{3,1}$ be a local section of the canonical projection $\pi:\R^{3,1}\rightarrow \RP^3$ with $\phi(\HH^3)=H^3$ and $\phi(\bdS^3)\subset dS^3$. For each point $x\in \HH^3\cup \bdS^3$, we define the \emph{HS scalar product} on $T_x\bHS^3$ as
$$\langle v,w\rangle_{HS}:=\langle\phi_*(v),\phi_*(w)\rangle_{3,1}$$
for all $v,w\in T_x\bHS^3$. The restriction of $\langle\cdot,\cdot\rangle_{HS}$ to $T\HH^3$ (resp. $T\bdS^3$) exactly induces the hyperbolic (resp. dS) metric.

The boundary in $\RP^3$ of $\HH^3$ or $\bdS^3$ coincides with the projectivization in $\RP^3$ of the set of light-like vectors in $\R^{3,1}$, which is conformally equivalent to $\CP^1$ in a canonical way. Let $\overline{\HH^3}=\HH^3\cup\CP^1$ denote the natural (conformal) compactification of $\HH^3$ by $\CP^1$. Note that the action of isometries of $\HH^3$ can extend to $\CP^1$ as projective automorphisms (or M\"obius transformations) and conversely, M\"obius transformations on $\CP^1$ can extend to isometries inside $\HH^3$ (using the Poincar\'{e} upper half-space model) \cite{marden:hyperbolic}. For simplicity, we denote by ${\rm PSL}_2(\C)$ the group of orientation-preserving isometries of $\HH^3$ as well as the group of M\"obius transformations on $\CP^1$. Let $sl_2(\C)$ denote the Lie algebra of ${\rm PSL_2(\C)}$, which can be interpreted as the space of infinitesimal automorphisms of $\overline{\HH^3}$. These are vector fields on $\overline{\HH^3}$ whose flows are isometries (resp. M\"obius transformations) on $\HH^3$ (resp. $\partial\overline{\HH^3}\cong\CP^1$), called \emph{Killing} (resp. $\emph{projective}$) vector fields (see e.g. \cite{HK,bridgeman-bromberg:bound}).

\subsection{The $sl_2(\C)$-bundle over $\overline{\HH^3}$}
\label{subsec:hyperbolic bundles}

Let $E$ denote the trivial $sl_2(\C)$-bundle over $\overline{\HH^3}$. Let $\kappa\in\Gamma(E)$ be a section of $E$. If $x\in\HH^3$, the value of $\kappa$ at $x$, denoted by $\kappa_x$, is a Killing vector field on $\HH^3$, which has the following decomposition (see e.g. \cite{HK,hmcb}):
            $$\kappa_x:=u_x+v_x,$$
         where $u_x$ is the \emph{infinitesimal pure translation at $x$} (namely, $u_x(x)=\kappa_x(x)$ and $\nabla_Xu_x=0$ for all $X\in T_x \HH^3$), while $v_x$ is the \emph{infinitesimal pure rotation at $x$} (namely, $v_x(x)=0$ and there exists a vector say $\sigma(x)\in T_x\HH^3$ such that $\nabla_X v_x=\sigma(x)\times X$ for all $X\in T_x\HH^3$), where $\nabla$ is the Levi-Civita connection of $\HH^3$. As a consequence, we can identify $\kappa_x$ with a pair of tangent vectors at $x$, say $(\tau(x),\sigma(x))\in T_x \HH^3\times T_x \HH^3$, which satisfy that
         $$\tau(x)=\kappa_x(x), \quad\quad \nabla_X\kappa_x=\sigma(x)\times X,$$
         for all $X\in T_x\HH^3$. The \emph{curl} of a vector field $X$ on $\HH^3$ is defined as
         \begin{equation}\label{eq:curl}
         {\rm curl}X:=\sum_{i\in \Z/3\Z}(\langle\nabla_{e_i}X,e_{i+1}\rangle-\langle\nabla_{e_{i+1}}X,e_i\rangle)e_{i+2},
         \end{equation}
         where $(e_1, e_2, e_3)$ is an orthonormal frame of $T\HH^3$ (see e.g. \cite[Section 11.1]{thurston-notes}). A direct computation shows that the above $\sigma(x)$ is one half the curl of $\kappa_x$ at $x$, similarly for the euclidean case in Section \ref{subsec:euclidean bundles}.

         In this way, the restriction to $\HH^3$ of a section $\kappa\in\Gamma(E)$ is identified with a pair of vector fields $(\tau,\sigma)\in\Gamma(T\HH^3)\times\Gamma(T\HH^3)$. For convenience, we call $\tau$ (resp. $\sigma$) the \emph{translation} (resp. \emph{rotation}) \emph{component} of $\kappa$. Under this identification and by a direct computation of the flat connection $D$ for $sl_2(\C)$-bundle over $\HH^3$, we conclude that: for any $x\in\HH^3$ and any $X\in T_x\HH^3$, the one-form $\omega=d^D\kappa$ can be written as (see \cite{HK} or \cite[Section 3]{hmcb}):
         \begin{equation}
           \label{eq:omega}
        \omega(X)=d^{D}_X(\tau,\sigma)=\big(\nabla_X \tau-\sigma(x)\times X, \nabla_X \sigma+\tau(x)\times X \big)=:\big(\omega_{\tau}(X),\omega_{\sigma}(X)\big).
        \end{equation}
        We call $\omega_{\tau}$ (resp. $\omega_{\sigma}$) the \emph{translation} (resp. \emph{rotation}) \emph{part} of $\omega$.
        Note that $\kappa$ is a constant section if and only if $d^D\kappa=0$, also, if and only if $\tau$ and $\sigma$ are hyperbolic Killing fields.

        If $x\in\CP^1$, $\kappa_x$ is a projective vector field on $\CP^1$. Let $\big(\phi_t(z)\big)_{t\in[0, \varepsilon]}$ be a one-parameter family of M\"obius transformations which generate $\kappa_x$. By computing the derivative at $t=0$ of $\phi_t(z)$, $\kappa_x$ has the following form in an affine chart $z$ near $x$ (see e.g. \cite[Section 3.2]{bromberg:rigidity} and \cite[Remark 4.4]{qfmp}):
        \begin{equation*}
        \kappa_x(z)=P(z)\partial_z,
        \end{equation*}
        where $P(z)$ is a complex polynomial of degree at most 2 (called \emph{quadratic polynomials} for simplicity). Moreover, there is a unique Killing field on $\HH^3$, determined by the above $(\phi_t(z))_{t\in[0,\varepsilon]}$, that extends continuously to $\kappa_x$ (see the proof of Lemma  \ref{lm:norm-Killing} for instance).

        \subsection{The convex co-compact structures and their deformations}\label{subsec:deformation}
        
Let $M$ be a compact (irreducible) 3-manifold with non-empty, incompressible boundary. Assume that each boundary component of $M$ is a closed oriented surface of genus at least two. By a result of Waldhausen \cite{waldhausen}, homotopic homeomorphisms of $M$ are also isotopic.
A \emph{$({\rm PSL}_2(\C),\overline{\HH^3})$-structure} on $M$ is a maximal atlas of charts $\{(U_{\alpha},\varphi_{\alpha})\}$ to $\overline{\HH^3}$ with transition maps the restrictions of elements of ${\rm PSL}_2(\C)$, which is a \emph{hyperbolic structure} when restricted to the interior of $M$, and a \emph{complex projective structure} when restricted to $\partial M$ (see e.g. \cite{bridgeman-bromberg:bound}). Under this condition, the interior ${\rm int}(M)$ of $M$ is equipped with a complete, convex co-compact hyperbolic metric, also called a \emph{convex co-compact structure}. (To see this, let $(D_i)_{i=1,\dots, n}$ be a cover of $\partial M$ by embedded open disks, for the complex projective structure at infinity. Each of the $D_i$ bounds a half-space $H_i$ embedded in $M$. The complement $\Omega$ in $M$ of the union of the $H_i$ is then by construction a non-empty, geodesically convex subset of $M$, in the sense that any geodesic segment in $M$ with endpoints in $\Omega$ is contained in $\Omega$. This proves that the hyperbolic structure on 
${\rm int}(M)$ is convex co-compact.)

A convex co-compact hyperbolic structure $g$ on ${\rm int}(M)$ (up to isotopy) determines a representation (unique up to conjugacy) say $\rho_g:\pi_1(M)\rightarrow {\rm PSL}_2(\C)$ which is discrete, faithful and such that $\HH^3/\rho_g(\pi_1(M))$ contains a non-empty compact geodesically convex subset. Such a representation $\rho_g$ 
is said to be \emph{convex co-compact}. If we relax the above compactness condition for the geodesically convex subset to be of finite volume, the corresponding hyperbolic structure (resp. representation 
) is said to be \emph{geometrically finite}. Conversely, a conjugacy class of a convex co-compact representation from $\pi_1(M)$ to $\PSL_2(\C)$ determines an isotopy class of a convex co-compact hyperbolic structure on 
${\rm int}(M)$.

Let $\cCC(M)$ denote the space of convex co-compact structures on ${\rm int}(M)$ (up to isotopy). By the Quasiconformal Stability Theorem of Marden \cite{marden:geometry}, $\cCC(M)$ 
can be identified with an open subset of the space of representations of $\pi_1(M)$ into ${\rm PSL}_2(\C)$ (up to conjugacy) with a topology determined by the images of finitely many generators (note that $\pi_1(M)$ is finitely generated and this topology is independent of the choice of generators).
It deserves to mention that any $g\in\cCC(M)$ (with its 
representation $\rho_g$) is obtained as a \emph{quasiconformal deformation} of an arbitrary fixed element say $g_0\in\cCC(M)$ (with its 
representation $\rho_{g_0}$), in the sense that there is a quasiconformal homeomorphism $f:\CP^1\rightarrow \CP^1$ such that $\rho_g(\gamma)=f\circ\rho_{g_0}(\gamma)\circ f^{-1}$, for all $\gamma\in\pi_1(M)$.

\subsubsection{The Ahlfors-Bers Theorem}

We will use repeatedly below the fact that, when $M$ has incompressible boundary, the space $\cCC(M)$ of convex co-compact hyperbolic metrics on 
${\rm int}(M)$ is parameterized by the Teichm\"uller space of its boundary, while a suitably adapted statement also applies to manifolds with compressible boundary. To state it, notice that $\cCC(M)$ inherits a complex structure from that of ${\rm PSL}_2(\C)$, through the description of elements of $\cCC(M)$ through the image of a finite set of generators of $\pi_1(M)$ in ${\rm PSL}_2(\C)$, while $\cT_{\partial M}$ is equipped with its Weil-Petersson complex structure. The following statement is stated, in the more general case of geometrically finite manifolds, as \cite[Theorem 5.1.3.]{marden:hyperbolic}.

\begin{theorem} \label{tm:ab}
  If the interior of $M$ carries a convex co-compact hyperbolic metric, then the map sending $g\in \cCC(M)$ to the conformal structure on its boundary at infinity defines a biholomorphism between $\cCC(M)$ and $\cT_{\partial M}/{\rm Mod}_0(\partial M)$, where ${\rm Mod}_0(\partial M)$ is the subgroup of the mapping-class group of $\partial M$ generated by Dehn twists along compressible simple closed curves.
\end{theorem}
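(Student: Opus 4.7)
The proof follows the strategy of the Ahlfors-Bers simultaneous uniformization theorem, adapted to convex co-compact 3-manifolds with (possibly compressible) boundary. Three things must be established: that the map to the Teichm\"uller quotient is well-defined and holomorphic, that it is injective, and that it is surjective. Both sides are already known to carry complex structures; the content of the theorem is that the specific map defined by ``take the conformal structure at infinity'' is a biholomorphism onto the stated quotient.

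For well-definedness, each $g\in\cCC(M)$ produces a domain of discontinuity $\Omega_g\subset\CP^1$ on which $\rho_g(\pi_1(M))$ acts properly discontinuously with quotient canonically identified with $\partial M$; the underlying conformal structure defines a point of $\cT_{\partial M}$, a priori modulo the group of isotopy classes of self-homeomorphisms of $M$ acting trivially on $\pi_1(M)$. By the Waldhausen theorem recalled above, such self-homeomorphisms restrict on $\partial M$ to products of Dehn twists along simple closed curves that are compressible in $M$, which is precisely $\Mod_0(\partial M)$. Holomorphicity is then a consequence of the fact that infinitesimal deformations of $\rho_g$ in the character variety are realized by solving the Beltrami equation with a parameter, whose solutions depend holomorphically on the Beltrami coefficient.

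Injectivity rests on Sullivan's rigidity theorem: the limit set $\Lambda_g$ of a convex co-compact Kleinian group carries no $\rho_g$-invariant measurable line field. Consequently, if $g_1,g_2\in\cCC(M)$ induce the same conformal structure at infinity, a conformal identification of the two domains of discontinuity lifts (after composing with a diffeomorphism of $M$ whose boundary restriction lies in $\Mod_0(\partial M)$) to a quasiconformal conjugation $f:\CP^1\to\CP^1$ between $\rho_{g_1}$ and $\rho_{g_2}$; the Beltrami coefficient of $f$ is supported on $\Lambda_{g_1}$ and invariant, hence vanishes, so $f$ is a M\"obius transformation and $g_1=g_2$ in $\cCC(M)$. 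For surjectivity, fix a base point $g_0\in\cCC(M)$ with conformal boundary $c_0$, and let $c\in\cT_{\partial M}$ be any target. Represent the difference $c_0\to c$ by a Beltrami differential on $\partial M$, lift it to a $\rho_{g_0}(\pi_1(M))$-invariant Beltrami differential $\mu$ on $\Omega_{g_0}$, and extend it by zero across $\Lambda_{g_0}$ (which has zero Lebesgue measure in the convex co-compact case). Solving the Beltrami equation yields a quasiconformal $f:\CP^1\to\CP^1$, and $\rho:=f\rho_{g_0}f^{-1}$ is discrete and faithful with conformal structure at infinity realizing $c$. The Quasiconformal Stability Theorem of Marden, already invoked in Section~\ref{subsec:deformation}, ensures $\rho$ remains convex co-compact and determines the required $g\in\cCC(M)$.

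The main obstacle is the rigidity input: one must know a priori that no invariant measurable line field exists on the limit set of a convex co-compact Kleinian group. Without this ergodic-theoretic fact, deforming the complex structure on the limit set would produce extra representations with the same conformal structure at infinity and destroy injectivity. The topological subtlety behind the $\Mod_0(\partial M)$ quotient is a second, milder difficulty that disappears when $\partial M$ is incompressible, which is the setting of interest in the remainder of the paper.
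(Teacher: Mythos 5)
The paper does not prove this theorem. It states it and attributes it to Marden, citing \cite[Theorem 5.1.3]{marden:hyperbolic}, and uses it as a known input. So there is no internal argument to compare your proof against; what can be assessed is only whether your sketch is a correct reconstruction of the standard argument.

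Your sketch follows the standard Bers--Sullivan route and is structurally sound: lift a Beltrami differential from $\partial M$ to $\Omega_{g_0}$, extend it by zero across the (Lebesgue-null) limit set, solve the Beltrami equation to produce a quasiconformal conjugacy and hence a new convex co-compact structure (surjectivity); use Sullivan's rigidity -- no invariant measurable line field on the limit set of a finitely generated Kleinian group -- to upgrade a quasiconformal conjugacy which is conformal on $\Omega$ to a M\"obius conjugacy (injectivity); and appeal to holomorphic dependence of the solution of the Beltrami equation on the coefficient for the complex structure. These are the right ingredients and the assembly is correct.

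One point is misattributed. You claim that ``by the Waldhausen theorem recalled above, such self-homeomorphisms restrict on $\partial M$ to products of Dehn twists along simple closed curves that are compressible in $M$.'' Waldhausen's theorem (homotopic homeomorphisms of a Haken manifold are isotopic) tells you that a self-homeomorphism of $M$ inducing an inner automorphism of $\pi_1(M)$ is isotopic to the identity \emph{as a map of $M$}; it says nothing about what the boundary restriction looks like in $\Mod(\partial M)$. The identification of the kernel of $\Mod(\partial M)\to \Out(\pi_1 M)$ with the group generated by Dehn twists about boundaries of compressing disks is a separate, nontrivial theorem (due to Maskit, and in greater generality to McCullough). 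Your argument genuinely needs this theorem, not Waldhausen's, to establish that the ambiguity group in the marking is exactly $\Mod_0(\partial M)$. This is a citation error rather than a mathematical gap -- the result you need is true and standard -- but as written the well-definedness step rests on the wrong theorem. For the purposes of the present paper, which restricts to incompressible boundary where $\Mod_0(\partial M)$ is trivial, this subtlety disappears anyway.
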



In particular, if 
${\rm int}(M)$ is homeomorphic to the product of a surface $S$ (of genus at least $2$) by an interval,
the manifold 
${\rm int}(M)$ equipped with a convex co-compact structure is also called a  \emph{quasifuchsian} hyperbolic manifold. In this case, the two boundary components of $M$ are homeomorphic to $S$, the boundary of $M$ is incompressible, and $\cCC(M)$ is biholomorphic to $\cT_S\times \cT_S$.

 \subsubsection{The deformation vector field}\label{subsubsec: deformation vector field}

Let $(U_{\alpha},\varphi_{\alpha}^t)_{t\in [0,\epsilon]}$ be a smooth one-parameter family of convex co-compact structures on 
${\rm int}(M)$. Let 
$dev_t$ and $hol_t, t\in[0,\epsilon]$, be the corresponding developing maps and holonomy representations, respectively, 
where $dev_t:\tilde{M}\rightarrow \overline{\HH^3}$ and $hol_t:\pi_1(M)\rightarrow {\rm PSL}_2(\C)$ with $\tilde{M}$ the universal cover of $M$. This defines a \emph{deformation vector field} say $u$, on $\tilde{M}$, which associates to each $x\in\tilde{M}$ a tangent vector at $t=0$, say $u(x)$, of the curve $t\mapsto dev_0^{-1}(dev_t(x))$.  The equivariance property $dev_t(\gamma\cdot x)=hol_t(\gamma)\circ dev_t(x)$ for all $\gamma\in\pi_1(M)$ implies that the vector field $\gamma_*u-u$ is \textit{Killing} (resp. \emph{projective}) on $\tilde{M}\setminus\partial\tilde{M}$ (resp. $\partial\tilde{M}$):
 \begin{equation}\label{eq:automorphic}
 (\gamma_*)^{-1}u(\gamma\cdot x)=u(x)+dev_0^*\big(\dot{hol}_0(\gamma)(dev_0(x))\big),
 \end{equation}
 where $\dot{hol}_0(\gamma)\in sl_2(\C)$ is the derivative of $hol_t(\gamma)$ at $t=0$, which is a Killing (resp. projective) vector field on $\HH^3$ (resp. $\CP^1$). We call such $u$ \textit{automorphic}. Moreover, we say $u$ is \emph{equivariant} (with respect to the action of $\pi_1(M)$) if in particular $\dot{hol}_0(\gamma)=0$ (in this case, $dev_t$ is obtained by pre-composing $dev_0$ with the lift of diffeomorphisms $f_t$ of $M$ isotopic to the identity), see e.g. \cite{HK,bromberg:rigidity,qfmp}

 Note that if the above first-order deformation preserves the conformal structure at infinity on $\partial_lM$ ($1\leq l\leq n$), then $u$ is a conformal vector field when restricted to $\partial_l\tilde{M}$.

\subsubsection{The deformation section}

Let $\Lambda\subset\CP^1$ be the limit set of the Kleinian group $hol_0(\pi_1(M))$. For simplicity, we identify the universal cover $\tilde{M}$ with its image $\overline{\HH^3}$ under the developing map. Now we introduce a \emph{deformation section} of $E$ over $\overline{\HH^3}\setminus\Lambda$ (associated to a deformation vector field $u$) as follows (see \cite{bromberg:rigidity,qfmp}).

For $x\in\HH^3$, we define a Killing vector field
$s_x\in sl_2(\C)$ that ``best approximates" the vector field $u$ at $x$:
\begin{equation}\label{eq:canonical lift}
s_x(x)=u(x), \quad \quad {\rm{curl}}(s_x)(x)={\rm curl}(u)(x).
\end{equation}
For $z_0\in \CP^1\setminus\Lambda$, let $u(z)=f(z)\partial_z$ be the local expression of $u$ in an affine chart near $z_0$, we define a projective vector field $s_{z_0}$ whose two-jets agree with $f$ at $z_0$. More precisely,
\begin{equation}\label{eq:approx Mobius}
s_{z_0}(z)=\big(f(z_0)+f_z(z_0)(z-z_0)+f_{zz}(z_0)\frac{(z-z_0)^2}{2}\big)\partial_{z}.
\end{equation}
Note that the above $s_x$ (resp. $s_{z_0}$) is uniquely determined by the equations \eqref{eq:canonical lift}  (resp. the equation \eqref{eq:approx Mobius}). We call such a section the \textit{canonical lift} of $u$ and denote it by $s_u$. By definition, the canonical lift of a vector field in $sl_2(\C)$ is a constant section. As a consequence, the canonical lift $s_u$ is \textit{automorphic}, that is, for each $\gamma\in\pi_1(M)$,  $(s_u)_{\gamma\cdot x}-\gamma_*(s_u)_{x}$ is the same element in $sl_2(\C)$ for all $x\in\overline{\HH^3}\setminus\Lambda$. In particular, if $u$ is equivariant, then $s_u$ is \textit{equivariant}, in the sense that $(s_u)_{\gamma\cdot x}=\gamma_*(s_u)_{x}$ for all $\gamma\in\pi_1(M)$ and $x\in\overline{\HH^3}\setminus\Lambda$.

\subsubsection{The deformation one-form}\label{subsubsec:deformation one-forms}

Recall that $E$ has a natural flat connection $D$. Let $d^D$ be the de Rham differential twisted by $D$. We now introduce a \emph{deformation one-form}: $\tilde{\omega}=d^Ds_u$ on $\overline{\HH^3}\setminus\Lambda$, see e.g. \cite{HK,bromberg:rigidity,qfmp}. One can check that $\tilde{\omega}$ is an \emph{equivariant} $E$-valued one-form, which measures how far the deformation vector field $u$ differs from a Killing (resp. projective) field on $\HH^3$ (resp. $\CP^1$). Indeed, $$\tilde{\omega}-\gamma_*\tilde{\omega}=d^D s_u-\gamma_*d^Ds_u=d^D(s_u-\gamma_*s_u)=0.$$
So $\tilde{\omega}$ descends to an $E_M$-valued closed one-form $\omega\in\Omega^1(M,E_M)$ on $M$ (since it is locally exact), where $E_M$ is the quotient of $(\overline{\HH^3}\setminus\Lambda)\times sl_2(\C)$ by $hol_0(\pi_1(M))$, which acts on $\overline{\HH^3}\setminus\Lambda$ by deck transformations and on $sl_2(\C)$ via adjoint representation.

A first-order deformation of a convex co-compact structure $g\in\cCC(M)$ is \emph{trivial} if and only if the corresponding deformation vector field $u$ is the sum of a Killing vector field and an equivariant vector field, or equivalently, the deformation section $s_u$ is the sum of a constant section and an equivariant section, or equivalently, the deformation one-form $\tilde{\omega}=d^Ds_u$ is exact.  As a consequence, there is a one-to-one correspondence between the space of infinitesimal deformations of $M$ modulo trivial deformations and the first-cohomology group $H^1(M,E_M)$ (see \cite{HK,bromberg:rigidity,qfmp}).

\begin{definition}\label{def:norm}
Let $\kappa$ be a Killing vector field on $\HH^3$ (which can be viewed as a constant section of $E$ over $\HH^3$). We define the norm of $\kappa$ at $x$ as
$$\|\kappa\|_{x}:=\|\tau(x)\|_{\HH^3}+\|\sigma(x)\|_{\HH^3}~, $$
where $\tau$ and $\sigma$ are respectively the translation and rotation components of $\kappa$. Let 
$\omega\in \Omega^1(M,E_M)$. The norm of $\omega$ at $p\in M$ is defined as
$$\|\omega\|_p: =\sup_{X\in T_pM:\, \|X\|_{T_pM}=1}\|\omega(X)\|_p~,$$
where $\omega(X)\in E_M$ is a local Killing field on a neighbourhood in $M$ of $p$. We say $\omega$ is bounded on a subset $N\subset M$ if $\|\omega\|_p$ is uniformly bounded for all $p\in N$.
\end{definition}

\subsection{The bundle of Euclidean Killing fields over $\R^3$}

        \label{subsec:euclidean bundles}
        Let $\bar{E}$ denote the trivial bundle of Euclidean Killing fields over $\R^3$. The identification between $\bar{E}$ and $T\R^3\times T\R^3$ can be done along similar lines as hyperbolic case, see \cite[Section 3]{hmcb}. Let $\bar{\kappa}\in\Gamma(\bar{E})$ and $\bar{\kappa}_{\bar{x}}$ be its value at $\bar{x}\in\R^3$.
        We can write $$\bar{\kappa}_{\bar{x}}:=\bar{u}_{\bar{x}}+\bar{v}_{\bar{x}},$$ where $\bar{u}_{\bar {x}}$ is the  \emph{infinitesimal pure translation at $\bar{x}$} (that is, $\bar{u}_{\bar{x}}(\bar{x})=\bar{\kappa}_{\bar{x}}(\bar{x})$ and $\bar{\nabla}_{\bar{X}}\bar{u}_{\bar{x}}=0$ for all $\bar{X}\in T_{\bar{x}}\R^3$), while $\bar{v}_{\bar{x}}$ is the \emph{infinitesimal pure rotation at $\bar{x}$} (that is, $\bar{u}_{\bar{x}}(\bar{x})=0$ and there exists a vector say $\bar{\sigma}(\bar{x})\in T_{\bar{x}}\R^3$ such that $\bar{\nabla}_{\bar{X}} \bar{v}_{\bar{x}}=\bar{\sigma}(\bar{x})\times \bar{X}$ for all $\bar{X}\in T_{\bar{x}}\R^3$), here $\bar{\nabla}$ is the Levi-Civita connection of $\R^3$. Therefore, we can identify a Euclidean Killing field $\bar{\kappa}_{\bar{x}}$ with a pair of tangent vectors at $\bar{x}$, say $(\bar{\tau}(\bar{x}),\bar{\sigma}(\bar{x}))\in T_{\bar{x}}\R^3\times T_{\bar{x}}\R^3$, satisfying that $$\bar{\tau}(\bar{x}):=\bar{\kappa}_{\bar{x}}(\bar{x}), \quad\quad \bar{\nabla}_{\bar{X}} \bar{\kappa}_{\bar{x}}=\bar{\sigma}(\bar{x})\times \bar{X},$$
      for all $\bar{X}\in T_{\bar{x}}\R^3$.

      In this way, a section $\bar{\kappa}\in\Gamma(\bar{E})$ is identified with a pair of vector fields $(\bar{\tau},\bar{\sigma})\in\Gamma(T\R^3)\times\Gamma(T\R^3)$.
       we call $\bar{\tau}$ (resp. $\bar{\sigma}$) the \emph{translation} (resp. \emph{rotation}) \emph{component} of $\bar{\kappa}$. Under this identification and by a direct computation of the flat connection $\bar{D}$ for the bundle $\bar{E}$, it follows that for any $\bar{x}\in\R^3$ and any $\bar{X}\in T_{\bar{x}}\R^3$, the one-form $\bar{\omega}=d^{\bar{D}}\bar{\kappa}$ can be written as (see \cite[Section 3]{hmcb}).
        \begin{equation}\label{eq:euclidean-one-form}
        \bar{\omega}(\bar{X})
        =d^{\bar{D}}_{\bar{X}}(\bar{\tau},\bar{\sigma})
        =\big(
        \bar{\nabla}_{\bar{X}} \bar{\tau}-\bar{\sigma}(\bar{x})\times \bar{X}, \bar{\nabla}_{\bar{X}} \bar{\sigma}
        \big)
        =:\big(
        \bar{\omega}_{\bar{\tau}}(\bar{X}),
        \bar{\omega}_{\bar{\sigma}}(\bar{X})
        \big).
        \end{equation}
        We call $\bar{\omega}_{\bar{\tau}}$ (resp. $\bar{\omega}_{\bar{\sigma}}$) the \emph{translation} (resp. \emph{rotation}) \emph{part} of $\bar{\omega}$.
        Note that $\bar{\kappa}$ is a constant section if and only if $d^{\bar{D}}\bar{\kappa}=0$, also, if and only if $\bar{\tau}$ and $\bar{\sigma}$ are Euclidean Killing fields.

\subsection{The infinitesimal Pogorelov map} \label{subsec:Pogorelov}
We first recall the definition of the infinitesimal Pogorelov map and its key properties (see e.g. \cite{Po,shu} and in particular \cite[Definitions 1.9 and 1.12]{hmcb}). 

Choose an affine chart $x_0 = 1$ and denote by $O^*$ the projective plane in $\bdS^3$ dual to the center $O:=[1,0,0,0]\in\HH^3$ in $\bHS^3$. Let $U=\bHS^3\setminus(O^*\cup\partial\overline{\HH^3})$. Then $U$ is the intersection of $\bHS^3\setminus\partial\overline{\HH^3}$ with the aforementioned affine chart of $\RP^3$.  Let $\R^3$ denote the 3-dimensional Euclidean space and let $\iota:U\rightarrow \R^3$ be an inclusion that sends $O$ to the origin $\bar{O}$ of $\R^3$, with $\iota(\HH^3)$ the unit open ball $\bD^3$. It can be checked directly from the hyperbolic metric on $\HH^3$ that $\iota$ is an isometry on the tangent space at $O$.

For any $x\in U$ and any vector $v\in T_xU$, we can write $v=v^r+v^{\perp}$, where $v^r$ is tangent to the geodesic in $\bHS^3$ going through $O$ and $x$ (called the \emph{radial} component of $v$) and $v^{\perp}$ is orthogonal to $v^r$ (called the \emph{lateral} component of $v$). Let $\Upsilon: TU\rightarrow T\R^3$ be the bundle map over the inclusion $\iota$, which is defined in the following way:
$$\Upsilon(v)=d\iota(v)$$
for all $v\in T_OU$, and
$$\Upsilon(v)=\sqrt{\frac{\langle \hat{x},\hat{x}\rangle_{HS}}{\langle d\iota(\hat{x}),d\iota(\hat{x})\rangle_{\R^3}}}d\iota(v^r)+d\iota(v^{\perp}),$$
for all $v\in T_xU$ with $x\in U$, where 
$\hat{x}$ is an arbitrary non-zero radial vector at $x$ (note that the term under the square root is homogenous in $\hat{x}$), and $\langle\cdot,\cdot\rangle_{HS}$ is the HS scalar product. We call this bundle map an \emph{infinitesimal Pogorelov map}.

Note that $\Upsilon$ sends radial (resp. lateral) directions to radial (resp. lateral) directions. In particular, the radial component of $\Upsilon(v)$ has the same norm as the radial component of $v$ whenever $x\in \HH^3$ or $\bdS^3$ (with respect to corresponding metrics), while $\Upsilon$ acts on the lateral component as the differential of the projective model map: the norm of the lateral component of $\Upsilon(v)$ is $1/{\cosh(\rho)}$ (resp. $1/\sinh(\rho)$) times that of the lateral component of $v$ for $x\in \HH^3$ (resp. $x\in\bdS^3$), where $\rho$ is the hyperbolic distance from $x$ to $O$ (resp. the dS distance from $x$ to $O^*$), see e.g. \cite[Propositions 1.2 and 1.3]{hmcb}.

 We say that a vector field on a surface $S\subset\HH^3$ (resp. a spacelike surface $S'\subset\bdS^3$) is \emph{isometric} if it preserves the induced metric on $S\subset\HH^3$ (resp. $S'\subset\bdS^3$) at first order. Similarly for that on a surface $\bar{S}\subset\R^3$. We stress the following property of the bundle map $\Upsilon$, see \cite[Lemma 1.10]{hmcb}, which is crucial for the local rigidity problem here.

\begin{lemma}\label{lm:Pogorelov isometric}
The following statements are true:
\begin{enumerate}
\item Let $Z$ be a vector field on $U$. Then $Z$ is a Killing vector field (for the HS metric) if and only if $\Upsilon(Z)$ is a Killing vector field (for the Euclidean metric).
\item  Let $v$ be a vector field on an embedded surface $S\subset U$. Then $v$ is an isometric vector field on $S$ if and only if $\Upsilon(v)$ is an isometric vector field on $\iota(S)$.
\end{enumerate}
\end{lemma}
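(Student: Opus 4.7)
The plan is to prove (1) first by a dimension argument combined with an explicit check on a basis, and then deduce (2) from (1) by a pointwise $1$-jet matching argument.

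For (1), the key observation is that $\Upsilon$ is pointwise a linear isomorphism (it rescales radial and lateral components by nonzero factors), so it is injective on smooth sections of $TU$. Both the HS Killing algebra and the Euclidean Killing algebra have dimension $6$, so it suffices to verify that $\Upsilon$ sends each HS Killing field to a Euclidean Killing field: injectivity and dimension then force $\Upsilon$ to restrict to a linear bijection between these two $6$-dimensional subspaces of sections, from which both directions of (1) follow (if $\Upsilon(Z)$ is Euclidean Killing, then $\Upsilon(Z)=\Upsilon(Z_0)$ for an HS Killing $Z_0$, and pointwise injectivity forces $Z=Z_0$). To verify the inclusion, I would choose a basis of the HS Killing algebra consisting of three infinitesimal rotations fixing $O$ and three infinitesimal boosts along the three coordinate axes. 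The rotations agree with Euclidean rotations about $\bar O$ via $d\iota$ at $O$, and the $SO(3)$-symmetry acting compatibly on $\bHS^3$ and $\R^3$ via $\iota$ reduces the verification for the boosts to the single case of the boost along the $x_1$-axis. A direct computation in the affine chart $x_0=1$, using the explicit radial scaling factor in the definition of $\Upsilon$, shows that this boost is sent to a constant Euclidean translation in the $x_1$-direction.

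For (2), I would reduce to (1) via a $1$-jet matching. Fix $x\in S$. The space of $1$-jets of vector fields along $S$ at $x$ is $T_xU\oplus\mathrm{Hom}(T_xS,T_xU)$, of dimension $9$; the isometric condition---that the symmetric part of the tangential projection of $\nabla v|_{T_xS}$ vanishes---cuts this to dimension $6$. The map sending an HS Killing field $K$ to its $1$-jet $(K(x),\nabla K|_{T_xS})$ lands in this subspace, since skew-symmetry of $\nabla K(x)$ on $T_xU$ implies that the tangential projection of its restriction remains skew on $T_xS$. This map is injective: if $K(x)=0$ and $\nabla_X K=0$ for all $X\in T_xS$, then skew-symmetry of $\nabla K(x)$ forces $\nabla K(x)=0$ on the full $T_xU$, hence $K\equiv 0$. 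A dimension count thus shows the map is a bijection, so for any isometric $v$ there exists an HS Killing $K$ with $K(x)=v(x)$ and $\nabla K|_{T_xS}=\nabla v|_{T_xS}$.

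Setting $w=v-K|_S$, one has $w(x)=0$ and $\nabla_X w=0$ for every $X\in T_xS$. Since $\Upsilon$ is a smooth bundle map and $w(x)=0$, the product rule for the Euclidean covariant derivative yields $\bar\nabla_{d\iota(X)}\Upsilon(w)|_{\iota(x)}=\Upsilon(\nabla_X w)|_{\iota(x)}=0$ for every $X\in T_xS$, so $\Upsilon(w)$ has vanishing $1$-jet along $\iota(S)$ at $\iota(x)$ and contributes nothing to the isometric bilinear form there. Combined with the fact that $\Upsilon(K)$ is Euclidean Killing by (1), it follows that $\Upsilon(v)=\Upsilon(K)+\Upsilon(w)$ is isometric at $\iota(x)$; since $x$ was arbitrary, it is isometric on $\iota(S)$. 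The converse direction follows symmetrically by applying the same argument to $\Upsilon^{-1}$. The main obstacle is the explicit computation in (1): although the Pogorelov map is designed precisely so that the square-root radial rescaling cancels the projective distortion between $\HH^3$ and $\bD^3\subset\R^3$, carrying out the bookkeeping of $\cosh(\rho)$ and $\sinh(\rho)$ factors is delicate, and the use of the $SO(3)$-symmetry to reduce to a single radial geodesic is what keeps the verification tractable.
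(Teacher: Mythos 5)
Your proof plan is sound and takes a structurally cleaner route than the direct tensor verification one would expect in the source the paper cites for this statement (\cite[Lemma 1.10]{hmcb}). For (1), the basis-plus-dimension-count argument works: $\Upsilon$ is fiberwise a linear isomorphism, the rotations fixing $O$ are purely lateral (so $\Upsilon$ acts on them as $d\iota$ and they go to Euclidean rotations), and $SO(3)$-equivariance of the radial/lateral decomposition and of the scaling factor reduces the boost check to a single axis. For (2), the $1$-jet matching argument is correct: the space of isometric $1$-jets along $S$ at $x$ has dimension $6$, the jet map on Killing fields is injective by the skew-symmetry argument you give, and the fact that $\Upsilon$ is a smooth pointwise-linear bundle map is exactly what makes the vanishing of a $1$-jet transfer across $\Upsilon$. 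The deduction of the converse by applying the argument to $\Upsilon^{-1}$, using the bidirectional form of (1), is also fine.

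Two points should be flagged. First, $U$ (and hence $\iota(U)=\DD^3\sqcup(\R^3\setminus\overline{\DD^3})$) is disconnected, so the Killing algebras involved are $12$-dimensional; the count works but should be run component by component with the $6$-dimensional algebras on each piece. Second, and more substantively, your boost verification is only unproblematic on the $\HH^3$ side. On the $\bdS^3$ side the radial direction is timelike, so $\langle\hat x,\hat x\rangle_{HS}<0$ and the displayed radial factor $\sqrt{\langle\hat x,\hat x\rangle_{HS}/\langle d\iota(\hat x),d\iota(\hat x)\rangle_{\R^3}}$ is not real as written. Taking the naive absolute value gives the factor $1/(r^2-1)$, and then the $x_1$-boost $Z=(1-x_1^2)\partial_{x_1}-x_1x_2\partial_{x_2}-x_1x_3\partial_{x_3}$ is sent, for $r>1$, to
\[
\Upsilon(Z)=\Big(1-\frac{2x_1^2}{r^2}\Big)\partial_{x_1}-\frac{2x_1x_2}{r^2}\partial_{x_2}-\frac{2x_1x_3}{r^2}\partial_{x_3},
\]
which is not a Euclidean Killing field. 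What is actually needed to send the boost to $\partial_{x_1}$ on the de Sitter side is the signed factor $1/(1-r^2)$, which is negative for $r>1$; that is, the radial rescaling in $\Upsilon$ must reverse sign across $\partial\DD^3$. This sign is implicit in the construction of \cite{hmcb} but is not visible in the formula reproduced in this paper, and your sketch does not address it. As written your verification therefore covers only the $\HH^3$ component of $U$, while the lemma (and the paper's later use of it on the dual surfaces $(\partial_k\Omega)^*\subset\bdS^3$) requires the de Sitter component as well.
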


As a consequence of Lemma \ref{lm:Pogorelov isometric}, the infinitesimal Pogorelov map $\Upsilon$ induces the following map between sections of $E$ (restricted to $\HH^3$) and $\bar{E}$ (see e.g. \cite[Definition 3.5]{hmcb}):
\begin{equation*}
\begin{split}
\Psi: E &\rightarrow \bar{E}\\
\kappa&\mapsto \bar{\kappa},
\end{split}
\end{equation*}
which takes a hyperbolic Killing field $\kappa_x$ (associated to $x$) to a Euclidean Killing field $\bar{\kappa}_{\bar{x}}$ (associated to $\bar{x}=\iota(x)$), with 
$\bar{\kappa}_{\bar{x}}(\bar{y}):=\Upsilon(\kappa_x(y))$ for all $y\in \HH^3$ and $\bar{y}=\iota(y)$. For convenience, we also denote by $\Psi:\Gamma(E)\rightarrow\Gamma(\bar{E})$ the restriction of $\Psi$ to $\Gamma(E)$. In particular, $\Psi$ sends flat sections of $E$ for $D$ to flat sections of $\bar{E}$ for $\bar{D}$ and therefore, it respects the connections $D$ and $\bar{D}$ \cite[Proposition 3.6]{hmcb}. We have
\begin{equation}\label{eq:psi(w)}
\Psi(d^D_{X}\kappa):=d^{\bar{D}}_{d\iota(X)}(\Psi(\kappa)),
\end{equation}
for any section $\kappa\in\Gamma(E)$ and $X\in T\HH^3$. For simplicity, we denote $\Psi_*(d^D\kappa):=d^{\bar{D}}(\Psi(\kappa))$.

\subsection{The duality between $\HH^3$ and $\bdS^3$}\label{ssc:dual}
The (polar) duality between a point (resp. a totally geodesic plane) in $\HH^3$ and a totally geodesic plane (resp. a point) in $\bdS^3$ is defined in the following way.

Consider the hyperboloid models $H^3$ and $dS^3$ in $\R^{3,1}$. Given a point $x\in H^3$ (which is the intersection with $H^3$ of the line $l_x\subset\R^{3,1}$ in the direction of $x$), we define the \emph{dual} of $x$ as the intersection $(l_x)^{\perp}\cap dS^3$, where $(l_x)^{\perp}$ is the orthogonal complement in $\R^{3,1}$ of $l_x$ with respect to the Minkowski metric $\langle \cdot,\cdot\rangle_{3,1}$. Given a totally geodesic plane $P\subset H^3$ (which is obtained from the intersection with $H^3$ of a hyperspace $H_P\subset\R^{3,1}$), we define the \emph{dual} of $P$ as the intersection $(H_P)^{\perp}\cap dS^3$. Similarly, we can define the dual of a point $x'\in dS^3$ and the dual of a totally geodesic spacelike plane $P'\subset dS^3$ (note that the spacelike condition ensures that the intersection $(H_{P'})^{\perp}\cap H^3$ is non-empty and the dual of $P'$ is thus well-defined). Through the projective maps, the duality is naturally defined in the projective models $\HH^3$ and $\bdS^3$.

Using this duality, one can define the \emph{dual surface}, denoted by $S^*$, of a convex embedded surface  $S\subset\HH^3$ as the set of points which are dual to the tangent planes $P_x$ to $S$ at all $x\in S$. Similarly, one can define the \emph{dual surface}, denoted by $(S')^*$, of a convex embedded spacelike surface  $S'\subset\bdS^3$. In particular, we denote by $x^*$ the point in $S^*$ which is dual to the tangent plane to $S$ at $x\in S$.

We list the following relations and properties between dual surfaces, see e.g. \cite[Section 3]{shu}.

\begin{proposition}\label{prop:dual}
Let $S\subset\HH^3$ be a strictly convex embedded surface. Then
\begin{enumerate}
  \item The dual surface $S^*$ of $S$ is a strictly convex space-like surface in $\bdS^3$.
  \item The dual surface $(S^*)^*$ of $S^*$ is exactly $S$.
  \item The pull-back of the induced metric on $S^*$ through the duality map is the third fundamental form of $S$ and vice versa.
  \item $S$ has curvature $K\in(-1,+\infty)$ at a point $x\in S$ if and only if $S^*$ has curvature $K^*=K/(K+1)\in(-\infty,1)$ at the dual point $x^*\in S^*$.
\end{enumerate}
\end{proposition}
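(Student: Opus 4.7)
The cleanest approach will be to work in the hyperboloid models $H^3, dS^3 \subset \R^{3,1}$ and identify the duality map $x \mapsto x^*$ with the (outer-pointing) Gauss map $N : S \to dS^3$. For $x \in S$ viewed as a unit timelike vector, the outward unit normal $N(x) \in T_xH^3 \subset \R^{3,1}$ satisfies $\langle N(x), x\rangle_{3,1}=0$, $\langle N(x),N(x)\rangle_{3,1}=1$, and $N(x) \perp T_xS$ (in the sense of $\R^{3,1}$). Hence $N(x) \in dS^3$ and the line $\R\cdot N(x)$ is exactly the orthogonal complement of the 3-plane $H_P$ spanned by $x$ and $T_xS$, so $N(x)$ coincides with the dual point $x^*$.

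The plan for (1) and (2) is to differentiate this Gauss map. Since $\langle N,N\rangle_{3,1}=1$, one has $\langle dN_x(Y), N(x)\rangle_{3,1}=0$ for every $Y \in T_xS$, so $dN_x(T_xS) \subset T_{x^*}dS^3$. Computing with the ambient derivative and the formula for the Levi-Civita connection of $H^3$, we get $dN_x(Y) = \nabla_Y N = -B(Y)$, where $B$ is the shape operator of $S$. Strict convexity of $S$ means $B$ is (positive or negative) definite, hence invertible, so $dN_x$ is an isomorphism onto $B(T_xS)=T_xS$, viewed inside $T_{x^*}dS^3$. Since $T_xS$ is a spacelike 2-plane in $\R^{3,1}$, $S^*$ is a smooth spacelike surface; its shape operator (computed below) is again definite, proving strict convexity. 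For (2), the unit timelike normal to $S^*$ at $x^* = N(x)$ is precisely $x$ (it lies in $(x^*)^{\perp}$, is orthogonal to $T_{x^*}S^*=T_xS$, and satisfies $\langle x,x\rangle_{3,1}=-1$), so the duality construction applied to $S^*$ returns the original point $x$, whence $(S^*)^* = S$.

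For (3), let $Y,Z \in T_xS$; their images under the differential of the duality map are $-B(Y),-B(Z)$ in $T_{x^*}S^*$. The induced metric on $S^*$ is the restriction of $\langle\cdot,\cdot\rangle_{3,1}$, so the pullback is $\langle B(Y),B(Z)\rangle_{3,1} = I(B(Y),B(Z)) = I(\nabla_Y N,\nabla_Z N) = \III(Y,Z)$. Applying the same computation to $S^*$ and using $(S^*)^* = S$ gives the converse statement.

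Finally, for (4), diagonalize $B$ in an $I$-orthonormal principal frame $e_1,e_2$ with eigenvalues $k_1,k_2$. The computation of the shape operator $B^*$ of $S^*$ proceeds by writing its Gauss map as $x^* \mapsto x$ and projecting the ambient derivative onto $T_{x^*}dS^3$: in the basis $\{-B(e_i)\}=\{-k_i e_i\}$ one finds $B^*(-k_i e_i) = -e_i$, so the principal curvatures of $S^*$ are $1/k_1, 1/k_2$. The Gauss equation for a spacelike surface in $dS^3$ reads $K^* = 1 - k_1^* k_2^*$, while in $\HH^3$ one has $K = -1 + k_1 k_2$. Combining, $K^* = 1 - 1/(k_1k_2) = 1 - 1/(K+1) = K/(K+1)$. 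Strict convexity of $S$ forces $k_1 k_2 > 0$, i.e. $K > -1$, which matches $K^* < 1$; the converse uses the involution $(S^*)^* = S$.

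The main point requiring care is the sign/normalization bookkeeping, in particular verifying the Gauss equation with the correct sign for a spacelike surface in the Lorentzian ambient $dS^3$ (where the unit normal is timelike) and confirming that the Weingarten map of $S^*$ is indeed $(-B)^{-1}$ under the natural identification $T_{x^*}S^* \cong T_xS$; once these are fixed, all four statements reduce to direct computations with the hyperboloid models and no further analytic input is required.
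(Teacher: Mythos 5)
Your proof is correct; note that the paper itself gives no proof of this proposition, merely a citation to \cite[Section 3]{shu}, and your argument via the hyperboloid model---identifying the duality map with the Gauss map $N:S\to dS^3$, computing that $dN_x=-B$, deducing $B^*=B^{-1}$ with principal curvatures $1/k_i$, and invoking the Gauss equation $K^*=1-k_1^*k_2^*$ for a spacelike surface with timelike normal in $dS^3$---is precisely the standard computation that the cited reference carries out. The sign and orientation bookkeeping you flag is indeed the only delicate point, and you have resolved it correctly.
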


\section{Local rigidity}
\label{sec:local rigidity}

\subsection{General overview}

Let $g\in\cCC(M)$ be a convex co-compact hyperbolic structure on $M$ with conformal structure at infinity $c_i$ on $\partial_i M$ (for each $i\in\cI$), with induced metric $h_j$ on a smooth, strictly convex surface $S_j\subset M$ isotopic to $\partial_jM$ (for each $j\in\cJ$), and with third fundamental form $h^*_k$ on a smooth, strictly convex surface $S_k\subset M$ isotopic to $\partial_kM$ (for each $k\in\cK$). In this section, we aim to show the infinitesimal rigidity of $g$ with respect to $c_i$, $h_j$ and $h^*_k$ over $i\in\cI, j\in\cJ, k\in\cK$ (as stated below).

\begin{proposition}\label{lm:local rigidity-mfld}
For any first-order deformation $\dot{g}\in T_g\cCC(M)$, if $\dot{g}$ preserves the conformal structure at infinity  $c_i$ on $\partial_iM$ (for all $i\in\cI$), the induced metric $h_j$ on $S_j$ (for all $j\in\cJ$) and the third fundamental form $h^*_k$ on $S_k$ (for all $k\in\cK$) at first order, then $\dot{g}$ is trivial.
\end{proposition}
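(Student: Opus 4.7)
The plan is to run the standard Weitzen\"ock / integration-by-parts rigidity argument on the geodesically convex subset $N\subset M$ bounded by the surfaces $S_j$ ($j\in\cJ$) and $S_k$ ($k\in\cK$), with asymptotic boundary at the ends corresponding to $\partial_iM$ ($i\in\cI$). Lift $\dot g$ to a deformation vector field $u$ on the universal cover, with canonical lift $s_u$ and closed $E_M$-valued deformation one-form $\omega=d^Ds_u$. By the characterization recalled in Section~\ref{subsubsec:deformation one-forms}, $\dot g$ is trivial precisely when $\omega$ is exact, so the task is to show $[\omega]=0$ in $H^1(N,E_M)$ under the three boundary hypotheses: $\omega|_{S_j}$ represents an isometric deformation of $(S_j,h_j)$; $\omega|_{S_k^*}$ represents (after the duality of Proposition~\ref{prop:dual}) an isometric deformation of the dual surface $S_k^*\subset\bdS^3$; and $\omega$ is asymptotically conformal along each $\partial_iM$.

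The first step is to apply the infinitesimal Pogorelov map $\Psi$ in an affine chart to transfer the whole problem to $\R^3$, where $\HH^3$ is realized as the open unit ball $\bD^3\subset\R^3$ and the dual dS regions become the complement. By Lemma~\ref{lm:Pogorelov isometric} and equation~\eqref{eq:psi(w)}, the image $\bar\omega=\Psi_*\omega=d^{\bar D}\Psi(s_u)$ is a closed $\bar E$-valued one-form on $\iota(N)$, isometric deformations transfer to isometric deformations on $\iota(S_j)$ and on the image of $S_k^*$ (via the duality), and flatness is preserved. This is the key reduction that makes Euclidean Killing-field techniques available.

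The second step is a Stokes identity for the closed $3$-form obtained by wedging the translation and rotation parts of $\bar\omega$ given in~\eqref{eq:euclidean-one-form}. Integrating $d(\bar\omega_{\bar\tau}\wedge\bar\omega_{\bar\sigma})$ (or the analogous cubic primitive used in the Euclidean infinitesimal Cauchy--Dehn rigidity argument) over $\iota(N)$ yields a sum of boundary contributions along $\iota(S_j)$, along $\iota(S_k)$, and, when $\cI\neq\emptyset$, along an asymptotic piece near $\partial\bD^3$. On each $\iota(S_j)$ the isometric-vector-field condition makes the translation part of $\bar\omega$ tangentially a Euclidean Killing field, which kills the boundary integrand pointwise; the same happens on $\iota(S_k)$ after passing to the dual surface, using Proposition~\ref{prop:dual}(3). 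Combined with the flatness $d^{\bar D}\bar\omega=0$, this will force $\bar\omega=0$ up to a trivial (exact) piece, hence $\omega$ is exact and $\dot g$ is trivial.

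The main obstacle is the asymptotic term when $\cI\neq\emptyset$, since then $N$ is noncompact and one cannot simply apply Stokes on all of $N$. The plan is to exhaust $N$ by compact geodesically convex subregions $N_\varepsilon\nearrow N$ whose extra boundary in each $\cI$-end is chosen with controlled geometry (e.g.\ pushed along equidistant or $K$-surface foliations of the end, cf.\ Section~\ref{subsec:unbounded convex domain}), and to show that the contribution of $\partial N_\varepsilon\setminus(\bigcup_j S_j\cup\bigcup_k S_k)$ tends to zero as $\varepsilon\to 0$. This requires controlling the pointwise norm $\|\omega\|$ of Definition~\ref{def:norm} on such foliations; the preservation of the conformal structure at infinity forces the relevant components of $s_u$ and $\omega$ to decay sufficiently fast, and the ``bounded'' hypothesis from Definition~\ref{def:bounded conf} (together with the curvature/completeness assumptions on the prescribed data) ensures that the comparison with the conformal hyperbolic metric is uniform near infinity, so the asymptotic boundary term genuinely vanishes in the limit. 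Getting this last estimate honest is where the real technical work lies, and presumably where the detailed analysis of the canonical lift near $\CP^1\setminus\Lambda$ developed in Section~\ref{subsec:deformation} is invoked.
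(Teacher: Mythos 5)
Your first two steps match the paper exactly: lift $\dot g$ to a deformation vector field $u$, take the canonical lift $s_u$ and one-form $\omega=d^Ds_u$, reduce triviality of $\dot g$ to exactness of $\omega$, and transfer to $\R^3$ via the infinitesimal Pogorelov map. But your third step — a three-dimensional Stokes/Weitzen\"ock integral over $\iota(N)$ — is not what the paper does, and as sketched it has a genuine gap.

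The paper's rigidity mechanism is two-dimensional, not three-dimensional. After constructing a carefully decaying representative $\omega$ (Lemma \ref{lm:omega}) and passing through $\Psi$, the argument never integrates anything over $\bar\Omega$. Instead it takes the \emph{tangential component} $\bar v$ of $\bar u$ along the single closed convex surface $\partial\bar\Omega^*$ (with $\partial_k\bar\Omega$ replaced by the dual surfaces $\partial_k\bar\Omega^*$), shows $\bar v$ solves the generalized Beltrami / Vekua equation \eqref{eq:E} (Proposition \ref{prop:solution}), establishes that $\bar v\in C^\alpha$ for all $\alpha<1$ and $\bar\nabla\bar v\in L^p$ for all $p>2$ (Propositions \ref{prop:Holder estimate}, \ref{prop:Lp bound partially}), and then invokes Vekua's uniqueness theorem (Lemma \ref{lm:Vekua}) for $D_{1,p}$ solutions of \eqref{eq:E} on a $D_{2,\infty}$ convex surface. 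There is no wedge form $d(\bar\omega_{\bar\tau}\wedge\bar\omega_{\bar\sigma})$ to integrate, and I do not see how such an identity could replace Vekua: after Pogorelov the ambient space is flat, so a Weitzen\"ock argument has no curvature term to give positivity, and the ``pointwise vanishing'' of the boundary integrand on $\iota(S_j)$ that you assert does not obviously hold — isometricity of the deformation on $S_j$ is a condition on the surface metric, not a pointwise annihilation of a cubic density. Before Pogorelov, in $\HH^3$, a Calabi--Weil/Matsushima--Murakami Bochner argument would have curvature on its side, but then one has to deal with the noncompactness of $\Omega$ and the blow-up near $\Lambda$ directly in $\HH^3$, which is a different set of estimates than you sketch.

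Your framing of the difficulty is also off target. You locate the hard part in the asymptotic boundary term ``when $\cI\neq\emptyset$''; but after the Pogorelov transfer $\bar\Omega$ is a bounded convex body and there is no noncompact boundary — the $\cI$-components become pieces of $\partial\DD^3$. What is actually hard, regardless of whether $\cI$ is empty, is the regularity of $\bar v$ near the limit set $\bar\Lambda$, where $\partial\bar\Omega^*$ is only $C^{1,1}$ and the components of the transferred section blow up like $(1-r)^{-1/2}$ (Lemma \ref{lm:pogorelov_radial and lateral}). The bulk of Sections \ref{subsec:Holder estimate}--3.5 is devoted to showing that, despite this blow-up, the \emph{tangential} component $\bar v$ is uniformly H\"older and $W^{1,p}$ across $\bar\Lambda$, using the angle estimates of Lemma \ref{lm: angle estimate}, the distance comparisons of Claims \ref{clm:estimate}--\ref{clm:Lipschitz}, and the dual-point geometry of Lemma \ref{lm:estimate-dual points}. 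Your proposal neither sets up the right integral identity nor identifies the regularity estimates that would be needed to push \emph{any} boundary-integral argument through to $\bar\Lambda$.

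Finally, a smaller but real slip: you speak of boundary contributions ``along $\iota(S_k)$, using Proposition \ref{prop:dual}(3)'', but the $\cK$-condition only becomes an isometric-deformation condition after replacing $\partial_k\bar\Omega$ by the dual surface $\partial_k\bar\Omega^*$. The domain one works with is therefore $\bar\Omega^*$, not $\iota(N)=\bar\Omega$, and $\bar\Omega^*$ is not the image of $N$ under $\iota$. This matters because the point of the construction is that $\partial\bar\Omega^*$ is a single closed $D_{2,\infty}$ convex surface to which Vekua's lemma applies.
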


\subsubsection{From $M$ to $\HH^3$}\label{subsec:from M to H3}

For simplicity, we denote by $M$ the hyperbolic manifold with convex co-compact structure $g\in\cCC(M)$. Let $N\subset M$ be the geodesically convex subset with the conformal boundary at infinity $\partial_iM$ and the smooth, strictly convex boundary surfaces $S_j$ and $S_k$ over $i\in\cI$, $j\in\cJ$ and $k\in\cK$. We denote $\partial_iN:=\partial_iM$, $\partial_jN:=S_j$ and $\partial_kN:=S_k$.

Let $\Omega$ denote the the lift of $N$ in the universal cover of $M$ (which is identified with $\overline{\HH^3}$). Up to isometries, we always assume that the center $O$ of $\HH^3$ is contained in the interior of the convex hull of the limit set $\Lambda$ of the Kleinian group of $M$ (which descends to the convex core $C_M$ of $M$), and is thus contained in the interior of $\Omega$. Moreover, we assume that the family $(dev_t)_{t\in [0,\epsilon]}$ of developing maps associated to the first order deformation $\dot{g}$ (see Section \ref{subsubsec: deformation vector field}) is normalized such that the deformation section vanishes at $O$ (namely, $\tau(O)=\sigma(O)=0$). This will help to simplify some estimates we need later (see Lemma \ref{lm:pogorelov_radial and lateral} for instance).

We denote by $\partial_i\Omega\subset\partial\overline{\HH^3}$ ($i\in\cI$) the union of all the lifts in $\partial\overline{\HH^3}$ of the conformal boundary component $\partial_iM$, by $\partial_j\Omega\subset\HH^3$ ($j\in\cJ$) the union of all the lifts in $\HH^3$ of $S_j$, and by $\partial_k\Omega\subset\HH^3$ ($k\in\cK$) the union of all the lifts in $\HH^3$ of $S_k$. Let $(\partial_k\Omega)^*$ denote the dual surface in $\bdS^3$ of the strictly convex surface $\partial_k\Omega\subset\HH^3$ and let $\Omega^*$ denote the convex domain in $\RP^3$, sharing the same boundary with $\Omega$, except with $\partial_k\Omega$ replaced by $(\partial_k\Omega)^*$.

\subsubsection{From $\HH^3$ to $\R^3$}

Recall that $\iota$ is an inclusion in $\R^3$ that sends the projective model $\HH^3$ to the unit open ball $\bD^3$ (see Section \ref{subsec:Pogorelov}).
We denote $\bar{\Omega}:=\iota(\Omega)$,  $\partial_l\bar{\Omega}=\iota(\partial_l\Omega)$ ($1\leq l\leq n$)
and $\bar{\Lambda}=\iota(\Lambda)$. Note that the center $\bar{O}$ of $\bD^3$ is contained in the interior of $\bar{\Omega}$ (by the above assumption) and
$$\partial\bar{\Omega}
=(\cup_{i\in\cI}\partial_i\bar{\Omega})\cup(\cup_{j\in\cJ}\partial_j\bar{\Omega})\cup(\cup_{k\in\cK}\partial_k\bar{\Omega})\cup\bar{\Lambda}~.$$

 Let $\bar{\Omega}^*=\iota(\Omega^*)$ and denote $\partial_k\bar{\Omega}^*:=\iota((\partial_k\Omega)^*)$, $k\in\cK$. Then
$$\partial\bar{\Omega}^*
=(\cup_{i\in\cI}\partial_i\bar{\Omega})\cup(\cup_{j\in\cJ}\partial_j\bar{\Omega})\cup(\cup_{k\in\cK}\partial_k\bar{\Omega}^*)\cup\bar{\Lambda}~.$$

In the case that $M$ is quasifuchsian, the limit set $\bar{\Lambda}$ is a Jordan curve and $\partial\bar{\Omega}\setminus\bar{\Lambda}$ has two connected components. In the case that 
$M$ has $n>2$ boundary components, $\partial\bar{\Omega}\setminus\bar{\Lambda}$ is a countably infinite union of disjoint open disks, and for each $i\in\cI$ (resp. $j\in\cJ$, $k\in\cK$), $\partial_i\bar{\Omega}$ (resp. $\partial_j\bar{\Omega}$, $\partial_k\bar{\Omega}$ and $\partial_k\bar{\Omega}^*$) has countably infinitely many connected components.

\subsubsection{The boundary behavior of the convex domains $\bar{\Omega}$ and $\bar{\Omega}^*$.}

The following describes the behavior of the principal curvatures of the image in $\DD^3\subset\R^3$ (resp. $\R^3\setminus\overline{\DD^3}$) under $\iota$ of a strictly convex surface in $\HH^3$ (resp. a strictly convex spacelike surface in $\bdS^3$), based on the fact that the second fundamental form of the surface changes conformally (see \cite[Lemmas 1.4, 2.5 and 2.8]{hmcb}).


\begin{lemma}\label{lm:euc pcurv bound}
Let $S$ be either $\partial_j\Omega$, for some $j\in \cJ$, or $\partial_k\Omega$ (resp. $(\partial_k\Omega)^*$), for some $k\in \cK$. If $S$ has principal curvatures in $[k_{min},k_{max}]$ for some $k_{max}>k_{min}>0$, then $\iota(S)$ is a convex surface in $\R^3$, with principal curvatures in $[k'_{min},k'_{max}]$ for some $k'_{max}>k'_{min}>0$.
\end{lemma}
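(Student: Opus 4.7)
The proof applies the explicit pointwise comparisons from \cite[Lemmas 1.4, 2.5, 2.8]{hmcb}, which relate the induced metric and second fundamental form of a surface in the projective model of $\HH^3$ (resp.\ $\bdS^3$) to those of its image under $\iota$ in $\R^3$. My overall plan is to combine these local formulas with the hypothesis of uniform bounds on the hyperbolic (resp.\ de Sitter) principal curvatures to deduce the corresponding uniform bounds in the Euclidean setting.

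First I would treat the cases $S = \partial_j\Omega$ and $S = \partial_k\Omega$ sitting inside $\HH^3$. Convexity of $\iota(S)$ is immediate from the projective model: geodesics of $\HH^3$ are straight line segments in $\DD^3 \subset \R^3$, so a locally strictly convex surface in $\HH^3$ is also locally strictly convex in the Euclidean sense. For the quantitative bounds, at a point $x \in S$ with $\rho := d_{\HH}(O,x)$, the cited lemmas express the Euclidean first and second fundamental forms $\bar I, \bar{II}$ of $\iota(S)$ at $\iota(x)$ as specific modifications of their hyperbolic counterparts $I, II$ at $x$. The modifications involve $\cosh \rho$ acting differently on the radial and lateral components (as in the infinitesimal Pogorelov maps of Section \ref{subsec:Pogorelov}), together with the factor relating the hyperbolic and Euclidean unit normals to $S$, both of which are made explicit in the cited lemmas.

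The Euclidean principal curvatures of $\iota(S)$ are then the eigenvalues of the Weingarten operator $\bar I^{-1}\bar{II}$. The key observation is that in forming this ratio, the position-dependent factors involving $\cosh\rho$ that enter $\bar I$ and $\bar{II}$ combine in a controlled way, so that the resulting eigenvalues depend on the hyperbolic principal curvatures $k_1, k_2 \in [k_{min}, k_{max}]$ and on $\rho$, but their dependence on $\rho$ remains bounded both above and away from zero. This precise cancellation is the main technical step. It matters because $S$ accumulates on the limit set $\bar\Lambda \subset \partial\DD^3$, where $\rho \to \infty$; the assumption $k_{min} > 0$ is exactly what prevents the Euclidean principal curvatures from degenerating in this limit, yielding the uniform lower bound $k'_{min} > 0$.

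For the de Sitter case $S = (\partial_k\Omega)^* \subset \bdS^3$, whose image under $\iota$ lies in $\R^3 \setminus \overline{\DD^3}$, the same strategy applies with \cite[Lemma 2.8]{hmcb} in place of the hyperbolic formula; the dS distance from $x$ to the polar plane $O^*$ now plays the role of $\rho$, and the cancellation in the Weingarten operator again produces uniform bounds. The main obstacle throughout is verifying that the position-dependent conformal factors do not destroy uniformity near $\bar\Lambda$; once the explicit formulas from \cite{hmcb} are in place, the argument reduces to a bookkeeping check of bounded multiplicative factors and the resulting eigenvalue estimates.
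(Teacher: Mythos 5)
Your proposal follows the same route as the paper, which gives no standalone proof of this lemma and instead defers entirely to the formulas in \cite[Lemmas 1.4, 2.5, 2.8]{hmcb}, noting only that ``the second fundamental form of the surface changes conformally'' under $\iota$. Your sketch correctly isolates the two ingredients implicit in that citation --- convexity of $\iota(S)$ is automatic in the projective model, and the $\cosh\rho$-dependent factors in $\bar I$ and the conformal factor in $\bar\II$ combine so the eigenvalues of $\bar I^{-1}\bar\II$ stay pinched, with $k_{\min}>0$ providing the lower bound --- so it matches the paper's approach.
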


\begin{remark}\label{rk:Hausdorff dim}
 It is shown by Sullivan \cite{sullivan:entropy} that $\bar{\Lambda}$ has Hausdorff dimension in $[1,2)$. Moreover, Bishop has proved that the Hausdorff dimension and Minkowski dimension are the same for the limit set of analytically finite Kleinian groups \cite{bishop-minkowski}.
\end{remark}

\begin{remark}\label{rk:regularity-boundary}
 The surface $\partial\bar{\Omega}\setminus\bar{\Lambda}$ (resp.
 $\partial\bar{\Omega}^*\setminus\bar{\Lambda}$) is smooth, while $\partial\bar{\Omega}$ (resp. $\partial\bar{\Omega}^*$) is $C^{1,1}$ (see e.g. \cite[Corollaries 2.6 and 2.9]{hmcb}). In particular, each connected component of $\partial_i\bar{\Omega}$ (resp. $\partial_j\bar{\Omega}$, $\partial_k\bar{\Omega}$, $\partial_k\bar{\Omega}^*$) is contained in $\partial\DD^3$ (resp. tangent to $\partial\DD^3$ along its boundary).
 \end{remark}

Note that we do not claim that $\partial \bar{\Omega}$ (resp. $\partial \bar{\Omega}^*$) is $C^2$, however the principal curvatures are well-defined almost everywhere and we do claim that they are bounded:

  \begin{lemma}\label{prop:boundary regularity}
  $\partial\bar{\Omega}$ (resp. $\partial\bar{\Omega}^*$) is in the class $D_{2,\infty}$, namely, it satisfies the following:
      \begin{enumerate}
      \item The principal curvatures on $\partial\bar{\Omega}$ (resp. $\partial\bar{\Omega}^*$) has positive lower and upper bounds almost everywhere.
      \item $\partial\bar{\Omega}$ (resp. $\partial\bar{\Omega}^*$) can be locally written as the graph of a function with bounded measurable second derivatives.
      \end{enumerate}
   \end{lemma}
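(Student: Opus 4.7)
The plan is to deduce the $D_{2,\infty}$ regularity from the piecewise description of $\partial\bar{\Omega}$ together with the regularity statements already in Remarks \ref{rk:Hausdorff dim} and \ref{rk:regularity-boundary}. Recall that
\[
\partial\bar{\Omega} \;=\; \bigl(\cup_{i\in\cI}\partial_i\bar{\Omega}\bigr) \cup \bigl(\cup_{j\in\cJ}\partial_j\bar{\Omega}\bigr) \cup \bigl(\cup_{k\in\cK}\partial_k\bar{\Omega}\bigr) \cup \bar{\Lambda},
\]
and the analogous decomposition holds for $\partial\bar{\Omega}^*$ with $\partial_k\bar{\Omega}$ replaced by $\partial_k\bar{\Omega}^*$. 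The strategy is to establish the two assertions on each of these smooth pieces and to treat the limit set as a negligible set.

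For assertion (1), I would first invoke Remark \ref{rk:Hausdorff dim}: the limit set $\bar{\Lambda}$ has Hausdorff dimension strictly less than $2$, hence zero $2$-dimensional Hausdorff measure. Since $\partial\bar{\Omega}\setminus\bar{\Lambda}$ is smooth by Remark \ref{rk:regularity-boundary}, it suffices to establish uniform positive upper and lower bounds on the principal curvatures of each smooth piece. On $\partial_i\bar{\Omega}\subset \partial\DD^3$ both principal curvatures equal $1$. For $\partial_j\bar{\Omega}$ (resp.\ $\partial_k\bar{\Omega}$, $\partial_k\bar{\Omega}^*$), the original surface $S_j\subset M$ (resp.\ $S_k$, $S_k^*$) is compact, hence its principal curvatures in $\HH^3$ (resp.\ $\bdS^3$) are uniformly bounded between positive constants $k_{\min}$ and $k_{\max}$. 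Each connected component of $\partial_j\Omega$ is isometric (as a hyperbolic surface) to $S_j$, and therefore shares the same bounds. Lemma \ref{lm:euc pcurv bound} then transfers these bounds to positive Euclidean bounds $k'_{\min}, k'_{\max}$, and taking the minimum (resp.\ maximum) across all types yields the claim.

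For assertion (2), I would rely on the global $C^{1,1}$ regularity of $\partial\bar{\Omega}$ from Remark \ref{rk:regularity-boundary}. Around any point $p\in\partial\bar{\Omega}$, one can choose Euclidean coordinates in which the tangent plane is horizontal. Local convexity together with the $C^{1,1}$ regularity implies that a neighborhood of $p$ in $\partial\bar{\Omega}$ is the graph of a convex $C^{1,1}$ function $\phi$. By Rademacher's theorem, $\nabla\phi$ is differentiable almost everywhere, so $\mathrm{Hess}(\phi)$ exists a.e.\ and its $L^\infty$ norm is bounded by the Lipschitz constant of $\nabla\phi$. This yields the desired graph representation with bounded measurable second derivatives, and the same argument applies verbatim to $\partial\bar{\Omega}^*$.

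The main obstacle I anticipate is checking that Lemma \ref{lm:euc pcurv bound} actually provides \emph{uniform} constants $k'_{\min}, k'_{\max}$ across the countably many isometric lifts making up each $\partial_j\bar{\Omega}$, which accumulate on $\bar{\Lambda}$ near $\partial\DD^3$. Because the Pogorelov map rescales lateral directions by the factor $1/\cosh(\rho)$, where $\rho$ is the hyperbolic distance to the origin $O$, the lifts become arbitrarily small in Euclidean coordinates as they approach the sphere. One therefore needs to verify, from the explicit transformation formulas for second fundamental forms under $\iota$ used in Lemmas 1.4, 2.5 and 2.8 of \cite{hmcb}, that the Euclidean principal curvature bounds depend only on the hyperbolic (or dS) bounds $[k_{\min},k_{\max}]$ and not on the position of the surface in $\bD^3$.
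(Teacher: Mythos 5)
Your argument is correct and follows the same route as the paper: the paper also deduces (1) from Lemma~\ref{lm:euc pcurv bound}, Remarks~\ref{rk:Hausdorff dim} and~\ref{rk:regularity-boundary}, and the descent of each boundary piece to a closed strictly convex surface in $M$ with compactly bounded principal curvatures, and deduces (2) from (1) together with the $C^{1,1}$ regularity and strict convexity. The uniformity concern you flag at the end is already built into the statement of Lemma~\ref{lm:euc pcurv bound} (whose constants depend only on the hyperbolic/dS bounds, as verified in the cited lemmas of \cite{hmcb}), so there is no gap.
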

   \begin{proof}
      Statement (1) follows from Lemma \ref{lm:euc pcurv bound}, Remarks \ref{rk:Hausdorff dim} and \ref{rk:regularity-boundary}, and the fact that $\partial_i\Omega$ (resp. $\partial_j\Omega$, $\partial_k\Omega$ and $(\partial_k\Omega)^*$) descends to a closed, strictly convex surface in $M$, with principal curvatures bounded between two positive constants. Statement (2) follows from  Statement (1), the $C^{1,1}$-smoothness (see Remark \ref{rk:regularity-boundary}) and strict convexity of $\partial\bar{\Omega}$ (resp. $\partial\bar{\Omega}^*$).
   \end{proof}

For each 
$\bar{x}\in\partial\bar{\Omega}$ (resp. $\bar{x}\in\partial\bar{\Omega}^*$),
let $\theta_{\bar{x}}$ denote the angle between the radial direction along $\bar{O}\bar{x}$ at $\bar{x}$ and the outward-pointing normal direction to 
$\partial\bar{\Omega}$ (resp. $\partial\bar{\Omega}^*$) at $\bar{x}$ and let $\delta_{\partial\bar{\Omega}}(\bar{x},\bar{\Lambda})$ (resp. $\delta_{\partial\bar{\Omega}^*}(\bar{x},\bar{\Lambda})$ ) denote the distance from the point $\bar{x}$ to the limit set $\bar{\Lambda}$ along $\partial\bar{\Omega}$ (resp. $\partial\bar{\Omega}^*$).

Since $\partial_i\bar{\Omega}\subset\partial\DD^3$, it is clear that $\theta_{\bar{x}}=0$  for all $\bar{x}\in \partial_i\bar{\Omega}$ ($i\in\cI$). For the other
$\bar{x}\in\partial\bar{\Omega}\cap\DD^3$ or $\bar{x}\in\partial\bar{\Omega}^*\setminus\overline{\DD^3}$, the estimate of the angle $\theta_{\bar{x}}$ and the distances $\delta_{\partial\bar{\Omega}}(\bar{x},\bar{\Lambda})$, $\delta_{\partial\bar{\Omega}^*}(\bar{x},\bar{\Lambda})$ in \cite[Lemmas 2.4 and 2.7]{hmcb} still work and we conclude the following:

\begin{lemma}\label{lm: angle estimate}
There exists a constant $C>1$, such that
\begin{enumerate}
 \item
 For any $\bar{x}\in\partial\bar{\Omega}\cap\DD^3$
 with $\bar{x}$ close enough to $\bar{\Lambda}$, we have $\theta_{\bar{x}}\leq C\sqrt{1-\|\bar{x}\|}$ and moreover,
     $$\frac{\sqrt{1-\|\bar{x}\|}}{C}\leq
     \delta_{\partial\bar{\Omega}}(\bar{x},\bar{\Lambda})\leq C\sqrt{1-\|\bar{x}\|}~.$$
  \item
 For any $\bar{x}\in\partial\bar{\Omega}^*\setminus\overline{\DD^3}$ with $\bar{x}$ close enough to $\bar{\Lambda}$, we have
 $\theta_{\bar{x}}\leq C\sqrt{\|\bar{x}\|-1}$ and moreover,
 $$\frac{\sqrt{\|\bar{x}\|-1}}{C}\leq \delta_{\partial\bar{\Omega}^*}(\bar{x},\bar{\Lambda})\leq C\sqrt{\|\bar{x}\|-1}~.$$
\end{enumerate}
\end{lemma}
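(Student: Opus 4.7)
The plan is to adapt the local analysis of \cite[Lemmas 2.4 and 2.7]{hmcb}, which handled the quasifuchsian case, to the present convex co-compact setting. The estimates are local near points of $\bar\Lambda$, and the essential ingredients remain the same: the two-sided bounds on the Euclidean principal curvatures of the smooth components of $\partial\bar\Omega$ and $\partial\bar\Omega^*$ (Lemma \ref{lm:euc pcurv bound}), together with $C^{1,1}$ regularity and the tangency to $\partial\DD^3$ along $\bar\Lambda$ (Remark \ref{rk:regularity-boundary} and Lemma \ref{prop:boundary regularity}). Uniformity of $C$, despite the fact that $\partial\bar\Omega\setminus\bar\Lambda$ can have infinitely many connected components when $n>2$, comes from the observation that these components are lifts of the finitely many compact surfaces $\partial_jN$ and $\partial_kN$, so the associated local models form finitely many isometry classes.

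For part (1), fix $\bar x\in\partial\bar\Omega\cap\DD^3$ close to $\bar\Lambda$, and choose $\bar y\in\bar\Lambda$ realizing $\delta_{\partial\bar\Omega}(\bar x,\bar\Lambda)$. In an affine chart with $\bar y=(0,0,-1)$ and common tangent plane $\{z=-1\}$ to $\partial\DD^3$ and $\partial\bar\Omega$ at $\bar y$, write $\partial\bar\Omega$ locally as $z=-1+f(u,v)$ with $f(0,0)=0$, $\nabla f(0,0)=0$, and Hessian measurably bounded between $c_1 I$ and $c_2 I$ for some $0<c_1\leq c_2$ depending only on the uniform curvature bounds. Since $\partial\bar\Omega\subset\overline{\DD^3}$ lies above the sphere $z=-\sqrt{1-r^2}$ with $r^2=u^2+v^2$, strict convexity forces $c_1>1/2$. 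Expanding
\begin{equation*}
1-\|\bar x\|^2 \;=\; 2f(u,v)-r^2-f(u,v)^2,
\end{equation*}
the Hessian bounds yield $(2c_1-1)r^2-c_2^2r^4\leq 1-\|\bar x\|^2\leq 2c_2 r^2$, hence $1-\|\bar x\|\asymp r^2$. On the other hand, since $|\nabla f|\leq c_2 r$ is uniformly small near $\bar y$, the length element on $\partial\bar\Omega$ is comparable to the Euclidean length in the $(u,v)$-plane, so $\delta_{\partial\bar\Omega}(\bar x,\bar\Lambda)\asymp r\asymp\sqrt{1-\|\bar x\|}$, giving both inequalities.

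The angle estimate then follows from a direct comparison: the outward unit normal to $\partial\bar\Omega$ at $\bar x$ is $(-\nabla f(u,v),1)/\sqrt{1+|\nabla f|^2}$, which by $|\nabla f|\leq c_2 r$ makes an angle at most of order $r$ with the vertical direction (the outward normal at $\bar y$). The radial direction at $\bar x$ also makes an angle at most of order $r$ with the vertical, since $\bar x$ lies at Euclidean lateral distance $r$ from the axis through $\bar O$ and $\bar y$ while $\|\bar x\|\geq 1-Cr^2$. The triangle inequality gives $\theta_{\bar x}\leq C'r\leq C''\sqrt{1-\|\bar x\|}$.

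Part (2) proceeds identically, with $\partial\bar\Omega^*$ written locally as a graph tangent to $\partial\DD^3$ from the exterior side and $\|\bar x\|^2-1$ replacing $1-\|\bar x\|^2$; Lemma \ref{lm:euc pcurv bound} again provides the positive two-sided bounds on the Euclidean principal curvatures. The main obstacle I anticipate is ensuring that the constant $C$ is genuinely uniform over all of $\bar\Lambda$: the infinitely many tangency components fall into finitely many $\rho_{g_0}(\pi_1(M))$-orbits, each descending to a compact piece of $\partial N\subset M$ on which the principal curvatures and the $C^{1,1}$ modulus of the boundary are controlled by a single constant, which transfers to a uniform $C$ in the estimates above.
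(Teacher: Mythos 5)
Your plan — reconstruct the local graph analysis of \cite[Lemmas 2.4 and 2.7]{hmcb}, which the paper itself merely cites after disposing of the trivial case $\bar x\in\partial_i\bar\Omega$ — is reasonable, and the overall framework (nearest point $\bar y\in\bar\Lambda$, graph coordinates over the common tangent plane, expansion of $1-\|\bar x\|^2$) matches the cited argument. However, the step justifying the essential lower bound is incorrect. You assert that the Hessian of $f$ is bounded below by $c_1 I$ with $c_1>1/2$ uniformly (so $f\geq c_1 r^2$ with $c_1>1/2$) because $\partial\bar\Omega\subset\overline{\DD^3}$. This isotropic bound is false: $\partial_j\bar\Omega$ (resp.\ $\partial_k\bar\Omega$) is tangent to $\partial\DD^3$ along a bounding curve contained in $\bar\Lambda$, and in the direction tangent to that curve the normal curvature of $\partial\bar\Omega$ coincides with that of the unit sphere, which is exactly $1$. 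The sphere comparison only forces the normal curvature to be $\geq 1$ in every direction, so the smaller Euclidean principal curvature converges to exactly $1$ as $\bar x\to\bar\Lambda$, and the infimum of $f/r^2$ equals $1/2$ rather than exceeding it. Your bound $(2c_1-1)r^2 - c_2^2 r^4 \leq 1-\|\bar x\|^2$ therefore degenerates completely (it even fails to yield positivity) in the limit, and the angle estimate, which uses $r\lesssim\sqrt{1-\|\bar x\|}$, inherits the same defect.

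What the argument actually requires — and what \cite{hmcb} derives from the explicit formulas relating hyperbolic and Euclidean principal curvatures under $\iota$ — is that the Euclidean principal curvature in the direction \emph{transverse} to $\bar\Lambda$ is bounded below by a constant strictly larger than $1$, uniformly; combined with the observation that the distance-realizing path from $\bar x$ to $\bar y$ runs essentially in the transverse direction (so only the transverse second derivative of $f$ enters the expansion of $1-\|\bar x\|^2$), this yields $1-\|\bar x\|\asymp\delta_{\partial\bar\Omega}(\bar x,\bar\Lambda)^2$. ``Strict convexity inside the sphere'' cannot by itself supply the uniform transverse gap: a family of strictly convex surfaces in $\DD^3$ tangent to $\partial\DD^3$ along a circle may have transverse curvature tending to $1$ (for instance, equidistant surfaces from a totally geodesic disk at distance $t\to\infty$), and for these the asserted equivalence fails. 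The uniform gap must be extracted from the hyperbolic principal curvature bounds on the compact surfaces $S_j,S_k$ via the curvature transformation formulas — precisely the point your outline elides.
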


\begin{remark}\label{rk:equidistant-angle}
Let $\tilde{S}^i_t$ be the lift in $\Omega$ of the equidistant surface $S^i_t$ (which is a closed strictly convex surface) at distance $t$ from $\partial_iC_M$ (see Subsection \ref{subsec:foliation}) and let $\bar{S}^i_t:=\iota(\tilde{S}^i_t)$. Note that for each $t\geq 1$, the principal curvatures of $\bar{S}^i_t$ have a uniform positive lower and upper bound for all $i\in\cI$ (by Lemma \ref{lm:euc pcurv bound} and Remark \ref{rk:comparsion}). Statement (1) in Lemma \ref{lm: angle estimate} still holds for any $\bar{x}\in \cup_{i\in\cI}\bar{S}^i_t$ (with $\partial\bar{\Omega}$ replaced by $\cup_{i\in\cI}\bar{S}^i_t$) close enough to $\bar{\Lambda}$.
\end{remark}

\subsubsection{The Vekua lemma}

Let $\bar{I}$ (resp. $\bar{\II}$, $\bar{\III}$) and $\bar{B}$ denote the first (resp. second, third) fundamental form and the shape operator of an embedded strictly convex surface $\bar{S}$ in $\R^3$. We define
$$\bar{B}(\bar{X}):=-\bar{\nabla}_{\bar{X}}\bar{n}~,\quad\quad
 \bar{\II}(\bar{X},\bar{Y})=\bar{I}(\bar{B}(\bar{X}),\bar{Y})~,\quad\quad \bar{\III}(\bar{X},\bar{Y})=\bar{I}(\bar{B}(\bar{X}),\bar{B}(\bar{Y}))~,$$
 for $\bar{X},\bar{Y}\in T\bar{S}$, where $\bar{n}$ is the unit normal vector field on $\bar{S}$ oriented such that the eigenvalues of $\bar{B}$ are positive, and $\bar{\nabla}$ is the Levi-Civita connection of $\R^3$.

Let $\bar{J}$ be the complex structure associated to $\bar{\II}$ and let $\nabla^{\bar{\III}}$ denote the Levi-Civita connection for $\bar{\III}$. By computation (see \cite[Proposition 3.1]{hmcb-v4}), $$\nabla^{\bar{\III}}_{\bar{X}}\bar{Y}=\bar{B}^{-1}\nabla^{\bar{I}}_{\bar{X}}(\bar{B}\bar{Y})~,$$
where $\bar{X},\bar{Y}\in T\bar{S}$ and $\nabla^{\bar{I}}$ is the Levi-Civita connection for $\bar{I}$.

Let $\bar{\partial}_{\bar{\III}}$ be the operator that takes a vector field $\bar{W}$ over $\bar{S}$ to a section $\bar{\partial}_{\bar{\III}}\bar{W}$ of the bundle of one-forms with values in $T\bar{S}$, defined by
\begin{equation*}
(\bar{\partial}_{\bar{\III}}\bar{W})(\bar{X}):={\nabla}^{\bar{\III}}_{\bar{X}}\bar{W}
+\bar{J}{\nabla}^{\bar{\III}}_{\bar{J}\bar{X}}\bar{W}~,
\end{equation*}
for all $\bar{X}\in T\bar{S}$. It follows from a direct computation that
$$ (\bar{\partial}_{\bar{\III}}\bar{W})(\bar{J}\bar{X})=-\bar{J}(\bar{\partial}_{\bar{\III}}\bar{W})(\bar{X})~. $$
Let $\bar{v}$ be a vector field on $\bar{S}$. The following equation will play a central role, so we give it a specific name.
\begin{equation}
  \tag{E}
  \label{eq:E}
  \bar\partial_{\bar{\III}}(\bar{B}^{-1}\bar{v})=0~.
\end{equation}
It ensures that the Lie derivative of the induced metric $\bar{I}$ on $\bar{S}\subset\R^3$ with respect to $\bar v$ is parallel to $\bar{\II}$ (see the proof of Lemma \ref{lm:equation} below for more details).

The following states a crucial relation between an isometric deformation vector field on an embedded surface $\bar{S}\subset\R^3$ and equation \eqref{eq:E}.

\begin{lemma}\label{lm:equation}
Let $\bar{U}$ be an isometric deformation vector field on an embedded convex surface $\bar{S}\subset\R^3$ (with the shape operator $\bar{B}$) and let $\bar{V}$ be its component tangent to $\bar{S}$. Then $\bar{V}$ satisfies equation $(E)$. Conversely, let $\bar{V}$ be a solution of equation \eqref{eq:E}, there exists a unique isometric deformation vector field of $\bar{S}$ whose component tangent to $\bar{S}$ is $\bar{V}$.
\end{lemma}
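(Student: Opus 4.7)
The plan is to decompose any deformation vector field $\bar{U}$ on $\bar{S}$ into its tangential and normal components $\bar{U}=\bar{V}+f\bar{n}$, and to translate the isometry condition $\mathcal{L}_{\bar{U}}\bar{I}=0$ on $\bar{S}$ into a first-order PDE for $\bar{V}$ alone.

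The first step is a direct Lie-derivative computation. Using the Gauss formula $\bar{\nabla}_{\bar{X}}\bar{V}=\nabla^{\bar{I}}_{\bar{X}}\bar{V}+\bar{\II}(\bar{X},\bar{V})\bar{n}$ and the Weingarten relation $\bar{\nabla}_{\bar{X}}\bar{n}=-\bar{B}(\bar{X})$, one obtains
\[
(\mathcal{L}_{\bar{U}}\bar{I})(\bar{X},\bar{Y})=(\mathcal{L}_{\bar{V}}\bar{I})(\bar{X},\bar{Y})-2f\,\bar{\II}(\bar{X},\bar{Y}),
\]
for all $\bar{X},\bar{Y}\in T\bar{S}$. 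Hence $\bar{U}$ is an isometric deformation if and only if $\mathcal{L}_{\bar{V}}\bar{I}$ is pointwise proportional to $\bar{\II}$, and when it is, the proportionality function is precisely $2f$. This determines $f$ uniquely from $\bar{V}$ and establishes in advance the uniqueness clause of the converse.

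The second, and main, step is the algebraic identification of this proportionality condition with equation \eqref{eq:E}. Setting $\bar{W}:=\bar{B}^{-1}\bar{V}$, the formula $\nabla^{\bar{\III}}_{\bar{X}}\bar{Y}=\bar{B}^{-1}\nabla^{\bar{I}}_{\bar{X}}(\bar{B}\bar{Y})$ yields $\nabla^{\bar{\III}}_{\bar{X}}\bar{W}=\bar{B}^{-1}\nabla^{\bar{I}}_{\bar{X}}\bar{V}$, so
\[
(\bar{\partial}_{\bar{\III}}\bar{W})(\bar{X})=\bar{B}^{-1}\nabla^{\bar{I}}_{\bar{X}}\bar{V}+\bar{J}\,\bar{B}^{-1}\nabla^{\bar{I}}_{\bar{J}\bar{X}}\bar{V}.
\]
On a two-dimensional surface, a symmetric $(0,2)$-tensor $\alpha$ is proportional to $\bar{\II}$ at a point if and only if $\alpha(\bar{J}\bar{X},\bar{Y})+\alpha(\bar{X},\bar{J}\bar{Y})=0$ for all $\bar{X},\bar{Y}$: indeed, the symmetric tensors form a three-dimensional vector space, and this two-dimensional set of conditions cuts it down to the line of multiples of $\bar{\II}$. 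Applying this with $\alpha=\mathcal{L}_{\bar{V}}\bar{I}$, expanding via $(\mathcal{L}_{\bar{V}}\bar{I})(\bar{X},\bar{Y})=\bar{I}(\nabla^{\bar{I}}_{\bar{X}}\bar{V},\bar{Y})+\bar{I}(\bar{X},\nabla^{\bar{I}}_{\bar{Y}}\bar{V})$, and using the $\bar{I}$-self-adjointness of $\bar{B}$ together with the compatibility $\bar{\II}(\bar{J}\cdot,\bar{J}\cdot)=\bar{\II}(\cdot,\cdot)$, the proportionality condition reduces exactly to the vanishing of $\bar{\partial}_{\bar{\III}}\bar{W}$.

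Combining the two steps closes the argument in both directions. If $\bar{U}$ is isometric with tangential part $\bar{V}$, then $\mathcal{L}_{\bar{V}}\bar{I}$ is proportional to $\bar{\II}$ by the first step, hence $\bar{V}$ satisfies \eqref{eq:E} by the second. Conversely, given $\bar{V}$ satisfying \eqref{eq:E}, the second step provides a unique scalar function $f$ on $\bar{S}$ with $\mathcal{L}_{\bar{V}}\bar{I}=2f\,\bar{\II}$, and then $\bar{U}:=\bar{V}+f\bar{n}$ is an isometric deformation with tangential component $\bar{V}$ by the first step, unique by construction. The main technical obstacle is the algebraic identification in the second step, which requires a careful pointwise manipulation of the interaction between $\bar{J}$, $\bar{B}$, and the two Levi-Civita connections $\nabla^{\bar{I}}$ and $\nabla^{\bar{\III}}$; this is the classical Vekua-type content underlying the lemma.
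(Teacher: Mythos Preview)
Your proposal is correct and follows essentially the same approach as the paper: both reduce the isometry condition to $\mathcal{L}_{\bar V}\bar I$ being a pointwise multiple of $\bar\II$, and identify this with equation \eqref{eq:E} via the relation $\nabla^{\bar\III}_{\bar X}(\bar B^{-1}\bar V)=\bar B^{-1}\nabla^{\bar I}_{\bar X}\bar V$ and the $\bar\II$-compatibility of $\bar J$. The only organizational difference is that the paper cites the forward direction to \cite[Proposition 3.8]{hmcb-v4} and proves the converse by directly computing $\bar\II\big((\bar\partial_{\bar\III}\bar B^{-1}\bar V)(\bar X),\bar X\big)$ and $\bar\II\big((\bar\partial_{\bar\III}\bar B^{-1}\bar V)(\bar X),\bar J\bar X\big)$, whereas you package both directions through the single criterion ``$\alpha$ is proportional to $\bar\II$ iff $\alpha(\bar J\cdot,\cdot)+\alpha(\cdot,\bar J\cdot)=0$''; the computations underlying the two are identical.
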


\begin{proof}
The first statement was shown in \cite[Proposition 3.8]{hmcb-v4} -- note that we refer here to the arxiv version (v4) rather than to the published version.

It suffices to show the second statement. Assume that $\bar{V}$ is a solution of equation \eqref{eq:E}, then by a direct computation (see also \cite[Proposition 3.8]{hmcb-v4}), we have
\begin{equation}\label{eq:II}
\begin{split}
\bar{\II}(\partial_{\bar{\III}}(\bar{B}^{-1}\bar{V})\bar{X},\bar{X})
=\bar{I}(\nabla^{\bar{I}}_{\bar{X}}\bar{V},\bar{X})-\bar{I}(\nabla^{\bar{I}}_{\bar{J}\bar{X}}\bar{V},\bar{J}\bar{X})=0~,\\
\bar{\II}(\partial_{\bar{\III}}(\bar{B}^{-1}\bar{V})\bar{X},\bar{J}\bar{X})
=\bar{I}(\nabla^{\bar{I}}_{\bar{X}}\bar{V},\bar{J}\bar{X})+\bar{I}(\nabla^{\bar{I}}_{\bar{J}\bar{X}}\bar{V},\bar{X})=0~.\\
\end{split}
\end{equation}
Since $\bar{J}$ is the complex structure associated to $\bar{\II}$, then $\bar{\II}(\bar{X},\bar{J}\bar{X})=0$. Note that $$(L_{\bar{V}}\bar{I})(\bar{X},\bar{Y})=\bar{I}(\nabla^{\bar{I}}_{\bar{X}}\bar{V},\bar{Y})
+\bar{I}(\bar{X},\nabla^{\bar{I}}_{\bar{Y}}\bar{V})~.$$
 Assume that $\bar{X}\not=0$. When $\bar{Y}=\bar{J}\bar{X}$, combined with \eqref{eq:II}, then
$$(L_{\bar{V}}\bar{I})(\bar{X},\bar{J}\bar{X})=0
=\bar{\II}(\bar{X},\bar{J}\bar{X})
=2\frac{\bar{I}(\nabla^{\bar{I}}_{\bar{X}}\bar{V},\bar{X})}{\bar{\II}(\bar{X},\bar{X})}\bar{\II}(\bar{X},\bar{J}\bar{X})~.$$
When $\bar{Y}=\bar{X}$, using \eqref{eq:II} again, then
$$(L_{\bar{V}}\bar{I})(\bar{X},\bar{X})
=2\bar{I}(\nabla^{\bar{I}}_{\bar{X}}\bar{V},\bar{X})
=2\frac{\bar{I}(\nabla^{\bar{I}}_{\bar{X}}\bar{V},\bar{X})}{\bar{\II}(\bar{X},\bar{X})}\bar{\II}(\bar{X},\bar{X})~.$$
Define $f(\bar{x}):=\bar{I}(\nabla^{\bar{I}}_{\bar{Z}_{\bar{x}}}\bar{V},\bar{Z}_{\bar{x}})/\bar\II(\bar{Z}_{\bar{x}},\bar{Z}_{\bar{x}})$, where $\bar{Z}_{\bar{x}}\in T_{\bar{x}}\bar{S}$ is an arbitrary non-zero vector. By \eqref{eq:II} and the fact that $\bar\II(\bar{X},\bar{X})=\bar\II(\bar{J}\bar{X},\bar{J}\bar{X})$, it can be directly checked that $f$ depends only on the base point of $\bar{Z}_{\bar{x}}$ and is thus a well-defined function on $\bar{S}$. Therefore,
 \begin{equation}\label{eq:parallel}
(L_{\bar{V}}\bar{I})(\bar{X},\bar{Y})=2f\bar{\II}(\bar{X},\bar{Y})~.
\end{equation}
Let $\bar{U}:=\bar{V}+f\bar{n}$, where $\bar{n}$ is the unit normal vector field on $\bar{S}$ defined above. Then by \eqref{eq:parallel},
$$(L_{\bar{U}}\bar{I})(\bar{X},\bar{Y})=(L_{\bar{V}+f\bar{n}}\bar{I})(\bar{X},\bar{Y})
=(L_{\bar{V}}\bar{I})(\bar{X},\bar{Y})-2f\bar{\II}(\bar{X},\bar{Y})=0~.$$
As a consequence, $\bar{V}$ is the tangential component of the unique isometric vector field $\bar{U}$ on $\bar{S}$.
\end{proof}

\begin{remark}\label{rk:equation}
Note that in Lemma \ref{lm:equation}, the surface $\bar{S}$ can be locally written as the graph of a function say $u=f(x,y)$. If $\bar{U}$ is an isometric deformation vector field on $\bar{S}$, then the tangential component $\bar{V}$ of $\bar{U}$ satisfies the generalized Beltrami equation:
\begin{equation}\label{eq:Beltrami}
w_{\bar{z}}-\mu(z)(w_{z}+\bar{w}_{\bar{z}})=0~,
\end{equation}
where $w(z)$ is the local expression of the tangential component $\bar{V}$ in a local chart $z=x+iy$, and $\mu(z)=(1/2)(f_{\bar{z}\bar{z}}/f_{z\bar{z}})$ with $\|\mu\|_{\infty}<1$ (see e.g. \cite[V.2.2, Eq (2.9) p.397]{vekua}). By Lemma \ref{lm:equation}, the tangential component $\bar{V}$ satisfies equation \eqref{eq:E} if and only if it satisfies equation \eqref{eq:Beltrami}.
\end{remark}


A solution of equation \eqref{eq:E} is said to be \emph{trivial} if it is the tangential component (restricted to $\bar{S}$) of a global Killing field of $\R^3$, which corresponds to a trivial infinitesimal deformation of $\bar{S}$ (see \cite[Section 5]{hmcb-v4} and \cite[V.1, p. 394]{vekua}).

\begin{definition}
  Let $\bar v$ be a vector field defined along a surface $\bar S$. We say that $\bar v$ is in $D_{1,\infty}(T\bar S)$ (resp. $D_{1,p}(T\bar S)$) 
  if the function $\bar{x}\mapsto \bar{v}(\bar x)$ with $\bar x\in\bar{S}$ and $\bar{v}(\bar x)\in T_{\bar x}\bar{S}$, whose derivatives up to order one (which exist in the sense of distribution) are in $L^{\infty}$ (resp. $L^p$).
\end{definition}

We now introduce a result of Vekua \cite[V.2.2, p. 393-402]{vekua}, which is crucial for the proof of Proposition \ref{lm:local rigidity-mfld}.

\begin{lemma}[Vekua]\label{lm:Vekua}
  Let $\bar{\Omega}\subset \R^3$ be a bounded convex subset such that $\partial\bar{\Omega}$ is in $D_{2,\infty}$. Suppose that $\bar v$ is a solution of \eqref{eq:E} which is continuous on $\partial\bar{\Omega}$ and belongs to $D_{1,p}(T\partial\bar{\Omega})$, $p>2$. Then $\bar v$ is trivial. 
\end{lemma}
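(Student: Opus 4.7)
The strategy I would follow is Vekua's classical approach to the infinitesimal rigidity of closed convex surfaces in $\R^3$, adapted to the weak regularity $D_{2,\infty}$/$D_{1,p}$ assumed here.

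\textbf{Localization to the Beltrami equation.} First, I would use the $C^{1,1}$-smoothness provided by Remark \ref{rk:regularity-boundary} to cover $\partial\bar{\Omega}$ by finitely many graph charts. In each chart, Remark \ref{rk:equation} identifies $\bar{v}$ with a complex function $w$ solving the generalized Beltrami equation \eqref{eq:Beltrami}, with coefficient $\mu$ measurable and $\|\mu\|_{\infty}<1$: the latter bound follows from strict convexity, which in turn follows from the $D_{2,\infty}$ hypothesis together with Lemma \ref{prop:boundary regularity}(1). Since $\bar{v}$ is assumed continuous and in $D_{1,p}$ with $p>2$, the function $w$ lies in $W^{1,p}_{\mathrm{loc}}\cap C^0$ in each chart.

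\textbf{The similarity principle.} I would then apply Vekua's similarity principle for generalized analytic functions: in each chart, either $w\equiv 0$, or $w(z)=\Phi(z)\,e^{\omega(z)}$ where $\Phi$ is holomorphic on the chart and $\omega$ is a bounded H\"older continuous function. The hypothesis $p>2$ is crucial here: Sobolev embedding gives $w\in C^{0,\alpha}$ and ensures that $\omega$ is well-defined and continuous, so that the zeros of $w$ inherit the local multiplicative structure of zeros of $\Phi$. Consequently, if $\bar{v}\not\equiv 0$ on $\partial\bar{\Omega}$, its zero set is discrete, and each zero carries a well-defined positive (winding) index.

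\textbf{Poincar\'e--Hopf versus the Killing algebra.} The boundary $\partial\bar{\Omega}$ of a bounded convex body in $\R^3$ is topologically a 2-sphere, so the Poincar\'e--Hopf theorem yields, for any non-trivial continuous tangent field $\bar{v}$,
\[
   \sum_{p:\,\bar v(p)=0}\mathrm{ind}_p(\bar v)=\chi(S^2)=2.
\]
On the other hand, by Lemma \ref{lm:equation} the tangential projection $Z^T$ of any Euclidean Killing field $Z$ of $\R^3$ is itself a solution of \eqref{eq:E}; the 6-dimensional Lie algebra of Killing fields of $\R^3$ projects injectively into the solution space, since a non-trivial Killing field cannot be everywhere tangent to a strictly convex closed surface. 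Given an arbitrary solution $\bar{v}$, I would then fix a point $p_0\in\partial\bar{\Omega}$ and choose $Z\in\mathfrak{iso}(\R^3)$ so that $\bar{v}-Z^T$ vanishes to sufficiently high order at $p_0$; the similarity principle then attributes index $\geq 3$ to the isolated zero of $\bar{v}-Z^T$ at $p_0$, contradicting the identity above, unless $\bar{v}-Z^T\equiv 0$. Hence $\bar{v}=Z^T$ is the tangential component of a global Killing field of $\R^3$, i.e.\ trivial.

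\textbf{Main obstacle.} The hard part will be making the last step---the local index computation combined with the jet-prescription against $\mathfrak{iso}(\R^3)$---rigorous at the low regularity allowed by the hypotheses. In classical treatments of Vekua this is carried out via a Riemann--Roch-type counting for generalized analytic functions, and one must verify that the index at each isolated zero is additive under perturbation by smooth Killing fields, and that jets of the solution can be prescribed freely enough at $p_0$ for the contradiction to apply. The joint role of the conditions $\partial\bar{\Omega}\in D_{2,\infty}$ (providing $\mu\in L^\infty$ with $\|\mu\|_\infty<1$) and $\bar{v}\in D_{1,p}$ with $p>2$ (providing Sobolev embedding into $C^{0,\alpha}$) is precisely what makes Vekua's representation and the index argument go through.
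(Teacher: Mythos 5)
The paper does not actually prove this lemma --- it cites Vekua's book directly and only supplies a remark explaining why the $D_{1,p}$ hypothesis matches Vekua's, so your proposal is a reconstruction of Vekua's original argument rather than a competitor to a proof in the paper. Your outline is in the correct spirit (localization to a generalized Beltrami equation, similarity principle giving isolated zeros of positive multiplicity, Poincar\'e--Hopf on $S^2$, and a dimension count against the $6$-dimensional Killing algebra), which is indeed the shape of Vekua's argument.

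Two corrections are needed. First, a sign error in your injectivity claim: $Z\mapsto Z^T$ being injective requires that no nontrivial Killing field of $\R^3$ be everywhere \emph{normal} to $\partial\bar\Omega$, not everywhere \emph{tangent}. Indeed rotations about the center are everywhere tangent to the round sphere and yet project to nonzero $Z^T$, so ``cannot be everywhere tangent'' is false as stated and in any case not what injectivity requires. The correct argument: if $Z^T=0$ then $Z=f\bar n$ along $\partial\bar\Omega$, and applying the Killing condition $\langle\bar\nabla_{\bar X}Z,\bar X\rangle=0$ to a tangent vector $\bar X$ gives $f\,\bar\II(\bar X,\bar X)=0$, which with strict convexity forces $f\equiv 0$ and hence $Z\equiv 0$.

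Second, and more seriously, the jet-prescription step at a single point $p_0$ has a gap that your own closing paragraph partly anticipates. The similarity representation $w=\Phi e^{\omega}$ is \emph{not linear in $w$}: the similarity function $\omega$ depends nonlinearly on $w$, so the ``$2$-jet of the holomorphic part of $\bar v - Z^T$'' is not an affine function of $Z$, and one cannot simply solve a $6\times 6$ linear system to kill it. Making this precise requires Bers' formal powers for pseudoanalytic functions, a machinery that is delicate at the $L^\infty$-coefficient, $W^{1,p}$-solution regularity considered here. The standard way around this is to replace the single high-order zero by three simple zeros: the evaluation map $\bar v\mapsto(\bar v(p_1),\bar v(p_2),\bar v(p_3))\in\R^6$ is manifestly linear, its restriction to the $6$-dimensional space of trivial solutions is injective (a nonzero $Z^T$ vanishing at $p_1,p_2,p_3$ would carry total index $\geq 3>2$, contradicting Poincar\'e--Hopf with positive local indices), hence an isomorphism; then for any solution $\bar v$ choose $Z$ matching $\bar v$ at the three points, and $\bar v-Z^T$ has total index $\geq 3$ unless it is identically zero. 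This route uses the similarity principle only through the statement that zeros of a nontrivial solution are isolated with positive index, avoiding any appeal to higher-order formal expansions. Finally, a small point on the role of $p>2$: continuity of $\bar v$ is already a hypothesis, so Sobolev embedding into $C^{0,\alpha}$ is not the key reason it is needed; rather, $p>2$ (together with the $L^\infty$ bound on $\mu$ coming from $D_{2,\infty}$) is what makes Vekua's integral operators bounded and the similarity function $\omega$ bounded and continuous.
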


\begin{remark}
  Vekua assumes in his argument that the deformation vector field $(\xi_*, \eta_*, \zeta_*)$ on the closed surface $S_*$ that he considers is in $D_{1,p}(TS_*)$, $p>2$, see \cite[p. 400]{vekua}. However one of the first steps in Vekua's proof is to apply  projective transformations to the surface $S_*$ and the vector field $(\xi_*, \eta_*, \zeta_*)$, the obtained surface (denoted by $S$) is then a graph $z=z(x,y)$ over the $(xy)$ plane, with the new deformation vector field $(\xi,\eta,\zeta)$. He then considers a complex function $w$ on $S$ 
 (see \cite[Eq (2.27), p. 401]{vekua}), which can be indeed written as $w=u+iv$, where $u$ and $v$ are the scalar products of the deformation vector field $(\xi,\eta,\zeta)$ with the tangent vector fields $(1,0,z_x)$ and $(0,1,z_y)$ to the surface, respectively (see \cite[(2.6) and (2.8), p. 396]{vekua}).

  In Vekua's argument, 
  the surface is assumed to be in the class $D_{2,\infty}$ (see Lemma \ref{prop:boundary regularity} for definition). The tangent vector fields $(1,0,z_x)$ and $(0,1,z_y)$ in the previous paragraph are thus in $D_{1,\infty}(TS)$. Note that the above scalar product expressions of $u$ and $v$ are still valid by replacing the deformation vector field $(\xi,\eta,\zeta)$ with its tangential component. 
  It follows that if the tangential component of the deformation vector field $(\xi,\eta,\zeta)$ is in $D_{1,p}(TS)$, then $u$ and $v$ are both in $D_{1,p}$, and therefore, Vekua's complex function $w$ (appearing in the previous paragraph) is also in $D_{1,p}$.

  The rest of Vekua's argument only uses this vector field $w$. It follows that, if our vector field $\bar v$ (as the tangential component of the deformation vector field $\bar u$, see Section \ref{subsec:deformation field}) is in $D_{1,p}(T\partial\bar\Omega)$, Vekua's argument can be used as it is.
  \end{remark}

We will use this lemma through \cite[Lemma 5.5]{hmcb}, which we recall here in slightly different notations.

\begin{lemma}\label{lm:D(1,p)-condition}
  Let $\bar{\Omega}\subset \R^3$ be a bounded convex subset. Let $\bar{\Lambda}\subset\partial\bar{\Omega}$ be a closed subset of Hausdorff dimension 
  $D_{\bar{\Lambda}}\in[1,2)$.
  Let $p>2$ and let $f:\partial\bar{\Omega}\rightarrow\R$ be a function such that
  \begin{itemize}
  \item $f$ is everywhere $C^{\alpha}$ for some $\alpha\in(D_{\bar{\Lambda}}-1,1)$.
  \item $f$ is smooth outside $\bar{\Lambda}$.
  \item $\int_{\partial\bar{\Omega}\setminus\bar{\Lambda}}\|df\|^pda<\infty$, where $da$ is the area form of the induced metric on $\partial\bar{\Omega}\subset\R^3$.
  \end{itemize}
  Then $df$, considered as an $L^p$ section (defined on $\partial\bar{\Omega}\setminus \bar\Lambda$ as the usual derivative of $f$, and as $0$ on $\bar\Lambda$), is the generalized derivative of $f$ on $\partial \bar{\Omega}$, and $f$ is in 
  $D_{1,p}(\partial\bar{\Omega})$.
\end{lemma}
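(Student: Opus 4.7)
The plan is to reduce the statement to the integration-by-parts identity
$$\int_{\partial\bar{\Omega}} f\,\mathrm{div}(X)\, da + \int_{\partial\bar{\Omega}\setminus\bar{\Lambda}} \langle df, X\rangle\, da = 0$$
for every smooth tangent vector field $X$ on the closed surface $\partial\bar{\Omega}$. Once this identity is established, $df$ extended by zero across $\bar{\Lambda}$ is by definition the generalized (weak) derivative of $f$, and $f\in D_{1,p}(\partial\bar{\Omega})$ will follow immediately: $f$ is continuous on the compact surface $\partial\bar{\Omega}$, hence in $L^p$, and $\|df\|\in L^p$ by the third hypothesis.

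My strategy is a cut-off approximation avoiding $\bar{\Lambda}$. For each $\epsilon>0$, the inequality $\alpha+1>D_{\bar{\Lambda}}$, together with the definition of Hausdorff dimension (and a standard Vitali-type refinement), will produce a finite, pairwise disjoint collection of balls $B(y_i,r_i)$ with $y_i\in\bar{\Lambda}$, $r_i<\epsilon$, $\sum_i r_i^{\alpha+1}<\epsilon$, whose doublings still cover $\bar{\Lambda}$. I then build a smooth cut-off $\eta_\epsilon:\partial\bar{\Omega}\to[0,1]$ equal to $0$ on $\bigcup B(y_i,r_i/2)$, equal to $1$ off $\bigcup B(y_i,r_i)$, and satisfying $|d\eta_\epsilon|\leq C/r_i$ on each transition annulus. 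Since $\eta_\epsilon f X$ is smooth on the closed surface $\partial\bar{\Omega}$ (its support avoids $\bar{\Lambda}$, where $f$ fails to be smooth), the divergence theorem gives
$$\int_{\partial\bar{\Omega}}\eta_\epsilon\langle df,X\rangle\, da+\int_{\partial\bar{\Omega}} f\eta_\epsilon\,\mathrm{div}(X)\, da+\int_{\partial\bar{\Omega}} f\langle d\eta_\epsilon,X\rangle\, da=0.$$
As $\epsilon\to0$, the first two terms will converge to the desired integrals by dominated convergence, using $df\in L^p\subset L^1$ on the bounded surface and $f$ continuous.

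The main obstacle will be controlling the ``error'' term $\int f\langle d\eta_\epsilon,X\rangle\, da$. On each piece $E_i:=B(y_i,r_i)\cap\partial\bar{\Omega}$ I plan to split $f=f(y_i)+(f-f(y_i))$. The H\"older bound $|f-f(y_i)|\leq Cr_i^\alpha$, combined with $|d\eta_\epsilon|\leq C/r_i$ and $\mathrm{area}(E_i)\leq Cr_i^2$, yields a fluctuation contribution of size $\lesssim r_i^{\alpha+1}$. The delicate point is the constant piece $f(y_i)$: the naive estimate $\int_{E_i}|d\eta_\epsilon|\, da\lesssim r_i$ only produces $\sum r_i$, which need not be small. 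I will instead exploit cancellation by applying the divergence theorem to $\eta_\epsilon X$ on $E_i$, using $\eta_\epsilon\equiv1$ on $\partial E_i$ (which holds precisely because the balls are pairwise disjoint), to obtain
$$\int_{E_i}\langle d\eta_\epsilon,X\rangle\, da=\int_{E_i}(1-\eta_\epsilon)\,\mathrm{div}(X)\, da,$$
of size $\lesssim r_i^2$. Multiplying by $|f(y_i)|\leq\|f\|_\infty$ and summing gives a total bound $C\sum_i(r_i^{\alpha+1}+r_i^2)\leq C'\epsilon$, since $r_i<1$ forces $r_i^2\leq r_i^{\alpha+1}$. The role of the hypothesis $\alpha>D_{\bar{\Lambda}}-1$ is precisely to make $\sum_i r_i^{\alpha+1}$ arbitrarily small over a sufficiently fine cover. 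Passing to the limit $\epsilon\to 0$ then yields the integration-by-parts identity and completes the proof.
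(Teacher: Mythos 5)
Your outline is reasonable — test against a smooth field, cut off near $\bar\Lambda$, use the H\"older modulus together with the dimension gap $\alpha>D_{\bar\Lambda}-1$ to kill the fluctuation part of the error, and handle the constant part by a divergence-theorem cancellation. The fluctuation estimate $\sum_i r_i^{\alpha+1}\to 0$ is correct, and you are right that the naive bound $\sum_i r_i$ need not vanish. But the cut-off construction is internally inconsistent, and the inconsistency sits exactly where the argument has content.

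You need the $B(y_i,r_i)$ to be pairwise disjoint to get $\eta_\epsilon\equiv1$ on each $\partial E_i$ and hence the identity $\int_{E_i}\langle d\eta_\epsilon,X\rangle\,da=\int_{E_i}(1-\eta_\epsilon)\,\mathrm{div}X\,da$. But you also state that only the \emph{doublings} of the disjoint balls cover $\bar\Lambda$, and that $\eta_\epsilon\equiv1$ off $\bigcup_i B(y_i,r_i)$. Unless the disjoint balls themselves already cover $\bar\Lambda$ — impossible once $\epsilon$ is small if $\bar\Lambda$ is connected, which it is for the limit sets in the paper's applications, and in any case not something the Vitali selection produces — there are points $z\in\bar\Lambda$ lying outside $\bigcup_i B(y_i,r_i)$, and in a neighborhood of any such $z$ one has $\eta_\epsilon\equiv1$. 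There $\eta_\epsilon fX$ has exactly the regularity of $f$, which is only $C^\alpha$, so your assertion that ``its support avoids $\bar\Lambda$, where $f$ fails to be smooth'' is false, and the divergence theorem applied to $\eta_\epsilon fX$ over the closed surface is not justified; granting it is essentially granting the lemma. Conversely, if you repair this by demanding that $\eta_\epsilon$ vanish on a neighborhood of $\bar\Lambda$, the balls carrying $d\eta_\epsilon$ must overlap, $\eta_\epsilon$ is no longer $1$ on $\partial E_i$, the per-disk cancellation is lost, and the $f(y_i)$ contributions of total size $\sum_i r_i$ are uncontrolled. So the two requirements on $\eta_\epsilon$ pull in opposite directions and cannot both hold.

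A route that avoids this tension is to argue line by line (the ACL characterization of $D_{1,p}$) rather than with the global divergence theorem. Work in a coordinate chart and pick $\alpha'$ with $D_{\bar\Lambda}-1<\alpha'<\alpha$. Since $\mathcal{H}^{\alpha'+1}(\bar\Lambda)=0$, the coarea (Eilenberg) inequality for Hausdorff measures gives $\mathcal{H}^{\alpha'}(\bar\Lambda\cap\ell)=0$ for almost every coordinate line $\ell$, and Fubini together with the $L^p$ hypothesis gives $f'\in L^1(\ell)$ for almost every $\ell$. On such a line, cover $\bar\Lambda\cap\ell$ by finitely many small intervals $I_i$ with $\sum_i|I_i|^{\alpha'}<\epsilon$; the H\"older bound gives $\sum_i\mathrm{osc}_{I_i}(f)\lesssim(\max_i|I_i|)^{\alpha-\alpha'}\sum_i|I_i|^{\alpha'}\to0$, while on the complementary finitely many compact intervals the fundamental theorem of calculus applies. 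Letting $\epsilon\to0$ shows that $f$ is absolutely continuous on $\ell$ with classical derivative $f'$, which yields $f\in D_{1,p}$. Finally, note that the paper itself does not prove this lemma internally — it is imported from \cite[Lemma 5.5]{hmcb} — so a self-contained argument along these lines is what you would need to supply.
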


\subsection{Estimates on $\omega$}

In this subsection, we aim to show the following statement:

\begin{lemma} \label{lm:omega}
  Let $\dot g$ be a first-order deformation of the hyperbolic structure $g\in\cCC(M)$ which leaves invariant the conformal metric at infinity $c_i$ on $\partial_iM=\partial_iN$, $i\in \cI$, the induced metric $h_j$ on $S_j=\partial_jN$, $j\in \cJ$, and the third fundamental form $h_k$ on $S_k=\partial_kN$, $k\in \cK$, at first order. There exists a representative $\omega$ of $\dot g$, which is a closed one-form in $\Omega^1(M,E_M)$ such that:
  \begin{enumerate}[(i)]
  \item $\omega$ vanishes along the geodesic rays starting orthogonally from a support plane to $\partial C_M$, at all points at distance at least $1$ from $C_M$ and contained in the hyperbolic ends corresponding to $\partial_iM$ ($i\in\cI$).
  \item for each $x\in N$, $\|\omega\|_x\leq C\exp(-2d(x,C_M))$ for a constant $C>0$ depending on $\omega$.
  \end{enumerate}
\end{lemma}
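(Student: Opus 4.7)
I build $\omega$ by starting from the canonical deformation form $\omega_0:=d^Ds_u$ and modifying it by an equivariant exact form. Here $u$ is a smooth deformation vector field representing $\dot g$, and $s_u$ its canonical lift (see~\eqref{eq:canonical lift} and~\eqref{eq:approx Mobius}). Using gauge freedom, I choose $u$ so that its restrictions inherit the symmetries from the preservation hypotheses: conformal on each $\partial_i\tilde\Omega\subset\CP^1$ for $i\in\cI$ (from preservation of $c_i$), isometric on each $\partial_j\tilde\Omega$ for $j\in\cJ$ (from preservation of $h_j$), and with dual vector field isometric on $\partial_k\tilde\Omega^*\subset\bdS^3$ for $k\in\cK$ (from preservation of $h_k^*$, via Proposition~\ref{prop:dual}). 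Since modifying $\omega_0$ by $d^Ds$ for any equivariant section $s$ keeps the class in $H^1(M,E_M)$, I retain the freedom to adjust the representative.

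\medskip
\noindent\textbf{Achieving~(i).} In each end corresponding to $i\in\cI$, I foliate by the equidistant surfaces $S^i_t$ from $\partial_iC_M$ ($t\ge 0$), with $\partial_t$ the outward unit tangent to the orthogonal radial rays (well-defined for $t>0$ via nearest-point projection to the convex core). I then define an equivariant section $s$ of $E$ on these ends by
\begin{equation*}
  s(t,\theta)\,:=\,\chi(t)\int_1^t \omega_0(\partial_{t'})\big|_{(t',\theta)}\,dt'\,,
\end{equation*}
where $\chi$ is a smooth cutoff with $\chi\equiv 0$ on $t\le 1/2$ and $\chi\equiv 1$ on $t\ge 1$; extend $s$ by zero elsewhere in $M$. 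For $t\ge 1$ we have $\nabla^D_{\partial_t}s=\omega_0(\partial_t)$, so $\omega:=\omega_0-d^Ds$ satisfies $\omega(\partial_t)=0$ along these rays for $t\ge 1$, giving~(i). Equivariance of $\omega_0$ and of the radial foliation ensures equivariance of $s$, so $\omega$ descends to $\Omega^1(M,E_M)$.

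\medskip
\noindent\textbf{Achieving~(ii).} I pass to the Euclidean picture via the infinitesimal Pogorelov map $\Psi$ (Subsection~\ref{subsec:Pogorelov}): $\bar\omega:=\Psi_*\omega$ is a closed $\bar E$-valued one-form on $\bar\Omega\setminus\bar\Lambda$. The central claim is that $\bar\omega$ is bounded in Euclidean norm, uniformly on $\overline{\bar\Omega}$. Near $\partial_i\bar\Omega\subset\partial\DD^3$ this uses the continuity at infinity of the canonical lift~\eqref{eq:approx Mobius} (thanks to the conformal choice of $u$) combined with Lemma~\ref{lm: angle estimate}(1). Near $\partial_j\bar\Omega$ and $\partial_k\bar\Omega$ inside $\DD^3$ it follows from Lemma~\ref{lm:Pogorelov isometric} (translating the isometric boundary condition into Euclidean isometric data) together with Lemma~\ref{lm:euc pcurv bound}. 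Near the limit set $\bar\Lambda$, Lemma~\ref{lm: angle estimate}(2) and Remark~\ref{rk:equidistant-angle} provide the required control. Converting back, the Pogorelov map contracts lateral components by $1/\cosh(\rho(x))$ with $\rho(x)=d_{\HH^3}(x,O)$, and both the one-form action on a unit hyperbolic tangent vector and the hyperbolic norm of the resulting $sl_2(\C)$-value each pick up one such factor, yielding
\begin{equation*}
  \|\omega\|_x \,\le\, C\cosh^{-2}(\rho(x)) \,\le\, C'\,e^{-2\rho(x)}\,.
\end{equation*}
Since $O\in C_M$ and $C_M$ has bounded diameter, $\rho(x)$ and $d(x,C_M)$ differ by a bounded constant, giving~(ii).

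\medskip
\noindent\textbf{Main obstacle.} The crux is the uniform boundedness of $\bar\omega$ on $\overline{\bar\Omega}$, which must weave together the $C^{1,1}$-regularity from Lemma~\ref{prop:boundary regularity}, the angle and distance estimates of Lemma~\ref{lm: angle estimate}, and separate treatments at each of the three boundary types $i\in\cI$, $j\in\cJ$, $k\in\cK$. A secondary subtlety is to verify that the corrector $s$ from~(i) preserves the decay rate in~(ii); this holds because the cutoff $\chi$ is compactly supported in the radial direction and $\omega_0$ itself already decays, but it needs to be checked carefully in the transition region $t\in[1/2,1]$. A final point is that the independent choices of boundary behaviour of $u$ at different components must fit together into a single equivariant vector field, which is standard via partitions of unity since the boundary components are disjoint.
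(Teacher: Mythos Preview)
Your construction for~(i) is reasonable and not far from the paper's, but your argument for~(ii) has a genuine gap, and in fact the logical flow is inverted relative to how the Pogorelov map is actually used.

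\medskip
\textbf{The Pogorelov route does not yield~(ii).} Your central claim, that $\bar\omega=\Psi_*\omega$ is uniformly bounded on $\overline{\bar\Omega}$, is false. The paper's own Lemma~\ref{lm:estimate-one-form} computes the components of $\bar\omega$ and finds, for instance, $\|\bar\omega^r_{\bar\tau}(\bar R)\|\le C/(1-r)$ and $\|\bar\omega^{\perp}_{\bar\sigma}(\bar R)\|\le 2C/(1-r)^{3/2}$, both of which blow up as $r\to 1$. Crucially, those estimates already \emph{use} the decay $\|\omega\|_x\le Ce^{-2d(x,C_M)}$ from Lemma~\ref{lm:omega} as input; without it the Euclidean bounds are worse, not better. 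So the direction of implication in the paper is Lemma~\ref{lm:omega}~$\Rightarrow$ Pogorelov estimates, and you cannot reverse it. Your conversion step is also incorrect: from Lemma~\ref{lm:pogorelov_translation and rotation}, $\Psi^{-1}$ applied to a bounded Euclidean Killing field can produce a hyperbolic Killing field of norm $\sim\cosh\rho$ (e.g.\ invert the relation $\Psi(0,v)=(0,\bar v/\sqrt{1-r^2})$), so even a bounded $\bar\omega$ would only give $\|\omega\|_x\le C$, not $C\cosh^{-2}\rho$. Finally, $\rho(x)=d_{\HH^3}(x,O)$ and $d(x,\tilde C_M)$ are \emph{not} comparable up to a constant: the lift $\tilde C_M$ is unbounded in $\HH^3$, so points far out along $\tilde C_M$ have $d(\cdot,\tilde C_M)=0$ but arbitrarily large $\rho$.

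\medskip
\textbf{What is actually needed.} The paper obtains~(ii) from a structural fact you do not invoke: because $\dot g$ preserves $c_i$, the deformation vector field $u^i$ on $\partial_i\Omega$ is holomorphic, and by~\eqref{eq:one-form at infty} the one-form $d^D s_{u^i}$ takes values in projective fields of the form $\lambda(z-z_0)^2\partial_z$. Lemma~\ref{lm:norm-Killing} then shows that the Killing extension of such a field has norm $4|\lambda|e^{-t}$ at the point $x_t$ on the ray at distance $t$. The paper's modification replaces $s_0$ in the $\cI$-ends (for $t\ge 1$) by the section $\kappa$ that is \emph{constant along each ray} and equal to the boundary value $(s^i_0)_z$; this simultaneously gives~(i) (constancy along rays) and~(ii) (the tangential derivative of $\kappa$ is governed by $\omega^i$, hence by Lemma~\ref{lm:norm-Killing} has norm $\sim e^{-t}$, and the metric comparison in Remark~\ref{rk:comparsion} supplies the second $e^{-t}$). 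Your radial-integral corrector achieves~(i) but gives no control on the tangential components of $d^Ds$, and there is no reason those should decay like $e^{-2t}$ without the degree-two polynomial mechanism.
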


\subsubsection{The deformation one-form on $\partial_iM$}\label{subsec:one-form at infty}

Let $\sigma_i$ ($i\in\cI$) denote the complex projective structure on $\partial_iM$. By assumption, the first-order deformation $\dot g$ of $M$ preserves the conformal structure at infinity on $\partial_iM$, so the deformation vector field $u$ restricted to $\partial_i\Omega$, say $u^i$, is holomorphic and has the form $$u^i(z)=f_i(z)\partial_z~,$$ where $f_i(z)$ is a holomorphic function in an affine chart of $\sigma_i$. The (infinitesimal) Schwarzian derivative of $u^i$ has the following expression (see e.g. \cite[Section 2]{bromberg1}):
$$\mathcal{S}(u^i)(z)=f'''_i(z)dz^2~. $$
One can check that $\mathcal{S}(u^i)(\gamma\cdot z)=\mathcal{S}(u^i)(z)$ for all $\gamma\in {\rm PSL}_2(\C)$, so it determines a quadratic differential on $\partial_iM$.
Moreover, $\mathcal{S}(u^i)\equiv 0$ if and only if $f_i(z)$ is a quadratic polynomial.

Note that the canonical lift $s_{u^i}$ of $u^i$ has local expression in the form of \eqref{eq:approx Mobius}. Define $\omega^i:=d^Ds_{u^i}$. Then for each $z_0\in\partial_i\Omega$ and $U=u(z)\partial_z\in T_{z_0}\C$,
\begin{equation}\label{eq:one-form at infty}
\begin{split}
\omega^i(U)(z_0)=(d^D_Us_{u^i})(z_0)&=\big(u(z)\partial_z\big)|_{z_0}\big((s_{u^i})(z)\big)\\
         &=u(z_0)\partial_{z_0}\big(f_i(z_0)+f'_i(z_0)(z-z_0)+\frac{f''_i(z_0)}{2}(z-z_0)^2\big)\partial_{z}\\
         &=\left(\frac{1}{2}u(z_0)f'''_i(z_0)(z-z_0)^2\right)\partial_z~.
\end{split}
\end{equation}
As a result, $(d^D{s_{u^i}})$ takes values in the vector space of homogeneous polynomials of degree 2 (see also \cite[Lemma 4.8]{qfmp}).

\subsubsection{The equidistant foliations of hyperbolic ends}
\label{subsec:foliation}

It is well-known that $M\setminus C_M$ has $n$ connected components, each called a \emph{hyperbolic end} of $M$, denoted by $\cE^l$ ($1\leq l\leq n$). The connected component of the boundary of $C_M$ as a pleated boundary surface of $\cE^l$ is denoted by $\partial_lC_M$. Moreover, each $\cE^l$ is foliated by a family of equidistant surfaces $(S^l_t)_{t>0}$ lying at a distance $t$ from $S^l_0:=\partial_lC_M$. Since $\partial C_M$ is convex, for each point $p\in \cE^l$, there is a unique geodesic ray (say $r_q$) in $\cE^l$ starting from $q\in\partial_lC_M$ and orthogonal to a support plane to $\partial_lC_M$ at $q$ and going through $p$. We define $G: M\setminus C_M\rightarrow \partial_{\infty}M$ to be the hyperbolic Gauss map that takes each point $p\in \cE^l$ to the endpoint at infinity of the geodesic ray $r_q$. In particular, the union $\cup_{q\in\partial_lC_M}r_q$ of these ``vertical" geodesic rays forms another foliation of $\cE^l$.

  Let $\tilde{C}_M$ (resp. $\tilde{\cE}^l$, $\tilde{S}^l_t$) denote the lift in $\HH^3$ of $C_M$ (resp. $\cE^l$, $S^l_t$). Let $\tilde{G}:\HH^3\setminus\tilde{C}_M\rightarrow\CP^1\setminus\Lambda$ be the lifting map of the Gauss map $G$.

\subsubsection{The metric at infinity associated to an equidistant foliation of hyperbolic ends}\label{subsec:metric at infty}
There are several ways to define a conformal metric at infinity on the boundary $\partial_lM$ of a hyperbolic end $\cE^l$, by using the equidistant foliation of $\cE^l$ (see e.g. \cite[Section 3, 4]{qfmp}). Here we consider the metric at infinity on $\partial_iM$ ($i\in\cI$), defined as
\begin{equation}\label{eq:metric at infty}
g_{i,\infty}:=4e^{-2t}G_*I^*_t~,
\end{equation}
where $I^*_t=I_t+2\II_t+\III_t$ is the ``horospherical metric" on an equidistant surface $S^i_{t}$ ($t\geq 1$), with first, second and third fundamental forms on $S^i_{t}$ denoted by $I_t, \II_t, \III_t$.
We stress that the metric $g_{i,\infty}$ is independent of the choice of $t$, and moreover, its conformal structure is the same as that underlies the $\CP^1$ structure on $\partial_iM$ (see \cite{horo} and \cite[Section 3.3]{convexhull} for more details).  (This metric $g_{i,\infty}$ is then the Thurston metric at infinity.)

Let $x_t\in \tilde{S}^i_{t}$ ($t\geq 1$). Let $z_0=\tilde{G}(x_t)$ and $z_1$ be the other endpoint at infinity of the complete geodesic passing through $x_t$ and going towards $z_0$ on one side. We choose an affine chart $z$ near $z_0$ such that $z_0$ is identified with $0$, while $z_1$ is identified with $\infty$ in the Poincar\'{e} upper half-space model ${\mathbb{U}}^3:=\{(z,s):z\in\C, s>0\}$, and moreover,
the metric $\tilde{g}_{i,\infty}$, as the lift of $g_{i,\infty}$ to $\partial_i\Omega$, has the following expression on $T_{z_0}(\partial_i\Omega)\subset T_{z_0}\CP^1$:
\begin{equation}\label{eq:metric at infty-universal}
\tilde{g}_{i,\infty}:=4e^{-2t}\tilde{G}_*\tilde{I}^*_t=|dz|^2~,
\end{equation}
where $\tilde{I}^*_t$ is the lifting metric on $\tilde{S}^i_{t}$ of $I^*_t$.

Note that the deformation one form on $\partial_i\Omega$ takes values in the projective fields given by homogeneous polynomials of degree 2, see \eqref{eq:one-form at infty}. We now consider the extension of such projective fields to Killing fields in $\HH^3$, which will be used later in the proof of Lemma \ref{lm:omega}.

\begin{lemma}\label{lm:norm-Killing}
Let $x_t\in \tilde{S}^i_{t}$ ($t\geq 1$) and let $z$ be an affine chart near $z_0$ as chosen above. Let $P(z)=\lambda (z-z_0)^2\partial_z$ ($\lambda\in\C$ and $\lambda\not=0$) be a projective vector field on $\CP^1\setminus\{z_1\}$. Then the norm of the Killing field extending $P(z)$, at $x_t$, is $4|\lambda|e^{-t}$.
In particular, the norm decreases to zero as $x_t$ tends to $z_0$ along the geodesic ray $r_{x_0}$ orthogonal to $\tilde{S}^i_{0}$ at $x_0$.
\end{lemma}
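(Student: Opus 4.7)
The plan is to work in the Poincar\'e upper half-space model $\mathbb U^3 = \{(x,y,s) : s > 0\}$, compatibly with the chosen affine chart: $z_0 \leftrightarrow 0$ and $z_1 \leftrightarrow \infty$, so that the complete geodesic through $x_t$ joining $z_0$ and $z_1$ is the vertical line $\{(0,0,s):s>0\}$. Writing $x_t = (0,0,s_t)$, the fact that hyperbolic distance along vertical geodesics equals $|\log(s_1/s_2)|$ gives $s_t = s_0 e^{-t}$, where $s_0$ is the Euclidean height of $x_0$. The problem thus reduces to (a) computing the hyperbolic norm at $(0,0,s)$ of the Killing field extending $\lambda z^2\partial_z$, and (b) determining $s_0$ from the normalization.

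For (a), I identify $\HH^3$ with positive definite Hermitian $2\times 2$ matrices of determinant one, on which $\PSL_2(\C)$ acts by $g\cdot H = gHg^*$; under this identification $(0,0,s)\leftrightarrow \mathrm{diag}(s, 1/s)$. The projective field $\lambda z^2\partial_z$ lifts to $X = \bigl(\begin{smallmatrix}0 & 0 \\ -\lambda & 0\end{smallmatrix}\bigr)\in sl_2(\C)$; applying $\exp(\epsilon X)\cdot\mathrm{diag}(s,1/s)\cdot\exp(\epsilon X)^*$ and differentiating at $\epsilon = 0$ yields the Killing field $-s^2(\lambda_1\partial_x - \lambda_2\partial_y)$ at $(0,0,s)$, where $\lambda = \lambda_1 + i\lambda_2$. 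Its hyperbolic norm is $|\lambda|s$.

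For (b), recall that $r_{x_0}$ is orthogonal to a support plane of $\partial_i\tilde C_M$ at $x_0$; in our coordinates this support plane is the hemisphere of Euclidean radius $s_0$ centered at the origin. Near $x_t$, the equidistant surface $\tilde S^i_t$ has principal curvatures $\tanh t$, so $\tilde I^*_t = (1+\tanh t)^2\tilde I_t$ with $\tilde I_t(\partial_x,\partial_x) = 1/s_t^2$. Tracing the outward-normal geodesic from a nearby point $x_t + \epsilon\partial_x$ on $\tilde S^i_t$ to its endpoint in $\partial\mathbb U^3$, and using the identity $(1+\tanh t)\cosh t = e^t$, one finds that $d\tilde G_{x_t}(\partial_x)$ is a vector of Euclidean length $e^t/(2\cosh t)$ in $T_{z_0}\C$. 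Plugging into the $t$-independent identity $\tilde g_{i,\infty} = 4e^{-2t}\tilde G_*\tilde I^*_t$ yields
\begin{equation*}
\tilde g_{i,\infty}(\partial_x,\partial_x)\big|_{z_0} = 4e^{-2t}\cdot \frac{(2\cosh t)^2(1+\tanh t)^2}{e^{2t}\,s_t^2} = \frac{16\,e^{-2t}}{s_t^2},
\end{equation*}
and the normalization $\tilde g_{i,\infty}|_{z_0} = |dz|^2$ forces $s_t = 4e^{-t}$, hence $s_0 = 4$.

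Combining (a) and (b), the Killing field extending $P$ has hyperbolic norm $|\lambda|\,s_t = 4|\lambda|e^{-t}$ at $x_t$, which decreases monotonically to zero as $t\to\infty$. The main technical step is the Gauss-map differential in (b); the $t$-independence of the identity $\tilde g_{i,\infty} = 4e^{-2t}\tilde G_*\tilde I^*_t$ is essential for pinning down the universal constant $4$, as the various factors of $\cosh t$, $1+\tanh t$ and $e^t$ must conspire into a $t$-free answer.
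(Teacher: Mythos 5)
There is a genuine gap in step (a). The ``norm of the Killing field at $x_t$'' that the lemma refers to is the one introduced in Definition~\ref{def:norm}: for a Killing field $\kappa$, $\|\kappa\|_x=\|\tau(x)\|_{\HH^3}+\|\sigma(x)\|_{\HH^3}$, where $\tau(x)=\kappa_x(x)$ is the \emph{translation} component and $\sigma(x)=\tfrac12\,{\rm curl}(\kappa_x)(x)$ is the \emph{rotation} component. In step (a) you only compute the value of the Killing field at $(0,0,s)$ --- that is, the translation part --- and its hyperbolic norm $\|\tau(x_t)\|=|\lambda|s$. You never compute the curl. For this field the rotation component does not vanish: one checks (using the orthonormal frame $(s\partial_x,s\partial_y,s\partial_s)$ and the connection coefficients for $\mathbb U^3$) that $\tfrac12\,{\rm curl}(\kappa)$ at $(0,0,s)$ is $(-i\bar\lambda s^2,\,0)$, so $\|\sigma(x_t)\|=|\lambda|s$ as well, and hence $\|\kappa\|_{x_t}=2|\lambda|s$, twice the value you use.

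That you nonetheless arrive at $4|\lambda|e^{-t}$ is the result of a second, compensating factor-of-two discrepancy in step (b): your Gauss-map computation (using~\eqref{eq:metric at infty-universal} as written, with the factor $4e^{-2t}$) yields $s_t=4e^{-t}$, while the paper obtains $s(x_t)=2e^{-t}$. Feeding the corrected norm $2|\lambda|s_t$ into your $s_t=4e^{-t}$ would give $8|\lambda|e^{-t}$, contradicting the lemma; the statement $4|\lambda|e^{-t}$ requires $s(x_t)=2e^{-t}$ together with $\|\kappa\|_{x_t}=2|\lambda|s(x_t)$. Your two errors cancel in the final product, but the argument itself is not sound: you must compute $\sigma$, and you must reconcile your value of $s_t$ with the paper's.

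A few secondary remarks. First, your Gauss-map differential $d\tilde G_{x_t}(\partial_x)=\tfrac{e^t}{2\cosh t}\partial_x=\tfrac{1+\tanh t}{2}\partial_x$ is correct, and is actually a more explicit treatment than the paper's (which identifies $\tilde I_t^*$ on $T_{x_t}\tilde S^i_t$ with the induced metric of the horosphere $\{s=s(x_t)\}$ without tracking $d\tilde G$). Second, although you assume the principal curvature of $\tilde S^i_t$ equals $\tanh t$, this is immaterial: for an arbitrary principal curvature $k$ one has $d\tilde G(\partial_x)=\tfrac{1+k}{2}\partial_x$ and $\tilde I^*_t(\partial_x,\partial_x)=\tfrac{(1+k)^2}{s_t^2}$, so the $(1+k)^{\pm 2}$ factors cancel in $\tilde G_*\tilde I^*_t$ and the value of $s_t$ does not depend on $k$; you may wish to state this, since $\tilde S^i_t$ does not have constant principal curvature $\tanh t$ in general (e.g.\ over the bending locus of $\partial_i C_M$). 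The essential gap remains the missing computation of $\sigma$.
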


\begin{proof}
We consider the model ${\mathbb{U}}^3:=\{(z,s):z\in\C,~ s>0\}$, with the metric $(|dz|^2+ds^2)/s^2$. Let $\big(\phi_t(z)\big)_{t\in[0,\epsilon]}$ be a family of M\"obius transformations generating $P(z)$, with $\phi_0(z)=\rm{Id}$ and $$\phi_t(z):=\frac{a(t)z+b(t)}{c(t)z+d(t)}~,$$
where $a(t), b(t), c(t), d(t)\in\C$ and $a(t)d(t)-b(t)c(t)=1$.
A computation shows that
\begin{equation*}
\frac{\partial}{\partial t}|_{t=0}\phi_t(z)=-c'(0)z^2+(a'(0)-d'(0))z+b'(0)~.
\end{equation*}
Therefore,
\begin{equation}\label{eq:proj field}
-c'(0)=\lambda~,\quad a'(0)-d'(0)=b'(0)=0~.
\end{equation}

Moreover, $a(t)d(t)-b(t)c(t)\equiv 1$. Differentiating on both sides of this identity, we obtain $a'(0)+d'(0)=0$. Combined with \eqref{eq:proj field}, we have that $a'(0)=d'(0)=0$.

Now we construct a Killing field on ${\mathbb{U}}^3$ whose extension to $\CP^1\setminus\{z_1\}$ is $P(z)$. Note that if $c(t)\not=0$, there is a canonical way to extend $\phi_t(z)$ to an isometry on $\mathbb{U}^3$ (see \cite[Chapter 1.1]{marden:hyperbolic}) by
\begin{equation}\label{eq:extension 1}
\tilde{\phi}_t(z,s):=\Big(-\frac{\overline{z+\big(d(t)/c(t)\big)}}{c^2(t)\big(|z+(d(t)/c(t))|^2+s^2\big)}+\frac{a(t)}{c(t)}, \,\, \frac{s}{|c(t)|^2\big(|z+\big(d(t)/c(t)\big)|^2+s^2\big)}\Big)~.
\end{equation}
Taking the derivative of \eqref{eq:extension 1} at $t=0$, the desired Killing field, say $\kappa$, on $\mathbb{U}^3$ is
$$\frac{\partial}{\partial t}|_{t=0}\tilde{\phi}_t(z,s)=
\big(\lambda z^2-\overline{\lambda}s^2,\,\, 2s\cdot{\rm Re}(\lambda z)\big)~.$$
This is a parabolic Killing field that vanishes at $0$ and preserves (globally) the horospheres centered at $0$. Using the orthonormal frame $(e_1,e_2,e_3):=(s\partial_x, s\partial_y, s\partial_s)$ of $T\mathbb{U}^3$ and by a direct computation, we have
\begin{equation*}
\begin{split}
\nabla_{e_1}e_1&=e_3~, \quad \nabla_{e_2}e_1=0~, \quad \nabla_{e_3}e_1=0~,\\
\nabla_{e_1}e_2&=0~,\quad   \nabla_{e_2}e_2=e_3~, \quad \nabla_{e_3}e_2=0~,\\
\nabla_{e_1}e_3&=-e_1~,\quad \nabla_{e_2}e_3=-e_2~,\quad \nabla_{e_3}e_3=0~.\\
\end{split}
\end{equation*}
Combined with the formula \eqref{eq:curl}, we obtain
$$\frac{1}{2}{\rm curl}(\kappa)=\big(-i\lambda z^2-i\overline{\lambda}s^2,\,\, 2s\cdot{\rm Im}(\lambda z)\big)~.$$
Thus the norm of $\kappa$ at the point $x_t=(0,s(x_t))$ is $$\|\kappa\|_{x_t}
=\|\tau(x_t)\|_{\mathbb{U}^3}+\|\sigma(x_t)\|_{\mathbb{U}^3}
=\|(-\overline{\lambda}s^2(x_t),0)\|_{\mathbb{U}^3}+\|((-i\overline{\lambda}s^2(x_t),0)\|_{\mathbb{U}^3}
=2|\lambda|s(x_t)~,$$
which decreases to zero as $x_t$ tends to $z_0$ along the geodesic ray orthogonal to $\tilde{S}^i_{0}$ at $x_0$ (note that $s(x_t)$ tends to zero as $t\rightarrow \infty$).

By \eqref{eq:metric at infty-universal}, under the chosen affine chart $z$ near $z_0$,
$$\tilde{G}_*(\tilde{I}^*_t|_{T_{x_t}\tilde{S}^i_{t}})=\frac{1}{4}(e^{2t}|dz|^2)|_{T_{z_0}(\partial_i\Omega)}~.$$
Note that the coordinate $s(x_t)$ is uniquely determined by the above choice of affine chart $z$. Since the horospherical metric $\tilde{I}_t^*$ on $T_{x_t}\tilde{S}^i_{t}$ is equal to the metric on the tangent space of the horosphere $\{s=s(x_t)\}$ at $x_t$, we obtain
$$\frac{1}{4}e^{2t}|dz|^2=\frac 1{s(x_t)^2}|dz|^2~.$$
It follows that $s(x_t)=2e^{-t}$.

 As a consequence, the norm of the Killing field extending $P(z)$, at $x_t$, is equal to $4|\lambda|e^{-t}$. The lemma follows.
\end{proof}

\begin{remark}\label{rk:comparsion}
 Let $G_t: S^i_{t}\rightarrow \partial_iM$ ($t\geq 1$, $i\in\cI$) denote the hyperbolic Gauss map. Since $S^i_1$ is strictly convex, by a direct computation, the principal curvatures $\lambda_t(p)$, $\mu_t(p)$ of the equidistant surface $S^i_{t}$ ($t\geq 1$) are respectively
 $$\lambda_t(p)=\frac{\lambda_1(p)+\tanh t}{1+\lambda_1(p)\tanh t}~, \quad\quad \mu_t(p)=\frac{\mu_1(p)+\tanh t}{1+\mu_1(p)\tanh t}~. $$
 Therefore, the principal curvatures of the closed surface $S^i_{t}$ have a uniform positive upper and lower bound (and thus the second and third fundamental forms $\II_t$ and $\III_t$ are uniformly bounded) for all $t\geq 1$, say $k^i_{\rm max}$ and $k^i_{\rm min}$. Recall that the horospherical metric on $S^i_{t}$ is  $I^*_t=I_t+2\II_t+\III_t$. As a result, on $T_{p}S^i_{t}$,
 $$I_t\leq I_t^*\leq (1+k)^2I_t~,$$
 where $k=\max_{i\in\cI}\{k^i_{\rm max}\}$.
\end{remark}

\subsubsection{Proof of Lemma \ref{lm:omega}.}

We are now ready to give the proof.

\begin{proof}[\textbf{Proof of Lemma \ref{lm:omega}}]
  Let $(g_t)_{t\in[0,\varepsilon]}$ be 
  a one-parameter deformation
  of $g\in\cCC(M)$ with derivative at $t=0$ corresponding to $\dot g$   (up to a trivial deformation). We now construct the desired representative one-form $\omega$ by the following steps:
\begin{enumerate}[\textbf{Step} 1]
\item  Let $u_0$ be a deformation (automorphic) vector field on $\Omega$ associated to 
  $\dot g$,
  whose restriction to $\partial_i\Omega$ is a holomorphic (automorphic) vector field, denoted by $u^i_0$. Let $s_0$ denote the (automorphic) deformation section associated to $u_0$ and in particular, let $s^i_0$ denote the deformation section associated to $u_0^i$ (see Section \ref{subsec:deformation}). For each point $y\in \tilde{\cE}^i$ ($i\in\cI$), there is a unique point $x\in \tilde{S}^i_0$ such that $y$ lies in the geodesic ray $r_x$ orthogonal to a support plane to $\tilde{S}^i_0$ at $x$. Let $z\in\partial_i\Omega$ be the endpoint at infinity of $r_x$ and let $(s^i_0)_z$ be the projective vector field on $\partial_i\Omega$, which is the value in $sl_2(\C)$ at $z$ of $s^i_0$. There is a unique Killing field on $\Omega$, denoted by $\kappa^i_x$, whose extension to $\partial_i\Omega$ is $(s^i_0)_z$. For each $y\in r_x$, we associate to $y$ the same Killing field $\kappa^i_x$. Then we obtain a section $\kappa$ over $\cup_{i\in\cI}\tilde{\cE}^i$, which is constant along $r_x$ for each $x\in \tilde{S}^i_0$. Besides, $\kappa$ is automorphic, since each section $s^i_0$ is automorphic.
\item  Let $\eta(t)$ be a smooth cut-off function which vanishes for $t\leq \delta_0$ (where $\delta_0<1$ is taken small enough) and is equal to 1 for $t\geq 1$.
  Let $\chi_E$ be the characteristic function of $E\subset\Omega$, which is equal to $1$ on $E$ and to $0$ on the complement of $E$. Now we construct a new section, say $s$, over $\Omega$:
  $$s:=\big(1-\eta\circ d(\cdot,C_M)\cdot\chi_{\cup_{i\in\cI}\tilde{\cE}^i}\big)s_0+\big(\eta\circ d(\cdot,C_M)\cdot \chi_{\cup_{i\in\cI}\tilde{\cE}^i}\big)\kappa~.$$
  Then $s$ coincides with $\kappa$ near $\partial_i\Omega$ for all $i\in\cI$ and coincides with $s_0$ near the convex subset $\tilde{C}_M$. It is clear that $s$ is automorphic.
\item Let $\tilde{\omega}=d^{D}s$. Then $\tilde{\omega}$ is a one-form on $\Omega$, which descends to a closed one-form say $\omega$, on $N$, 
  which is cohomologous to the one-form, say $\omega_0$, that descends from $\tilde{\omega}_0:=d^Ds_0$. Indeed, $$\tilde{\omega}-\tilde{\omega}_0=d^D(s-s_0)=\big(-\eta\circ d(\cdot,C_M)\cdot \chi_{\cup_{i\in\cI}\tilde{\cE}^i} \big) d^D(s_0-\kappa)~,$$
 where $d^D(s_0-\kappa)$ is exact on $\cup_{i\in\cI}\tilde{\cE}^i$.
   \end{enumerate}

   By construction, $\omega$ vanishes along each geodesic ray starting from $S^i_1$ orthogonally for all $i\in\cI$, which implies Condition (i). It remains to show $\omega$ also satisfies Condition (ii).

    We first estimate the norm of $\omega$ on the subset $\cup_{t\geq1}S^i_{t}$ of $\cE^i$ ($i\in\cI$). Let $p_t\in S^i_{t}$ and $X\in T_{p_t}M$. By Condition (i), it suffices to consider $\omega(X)$ for $X\in T_{p_t}S^i_{t}$. Denote $z_0:=G(p_t)\in\partial_iM$ and $U:=G_*X\in T_{z_0}(\partial_iM)$. 
    By \eqref{eq:metric at infty},  the horospherical metric $I^*_t$ satisfies
    $$ G_*(I^*_t|_{T_{p_t}S^i_{t}}) :=\frac{1}{4}e^{2t}g_{i,\infty}|_{T_{z_0}(\partial_iM)}~.$$
   We now choose an affine chart $z$ near $z_0$ such that $g_{i,\infty}|_{T_{z_0}(\partial_iM)}=|dz|^2$, as in \eqref{eq:metric at infty-universal}.
   In particular,
 \begin{equation}\label{eq:comparison}
 \|U\|_{g_{i,\infty}}
 = \|G_*X\|_{g_{i,\infty}}=\|X\|_{G^*g_{i,\infty}}
 =2e^{-t}\|X\|_{I^*_t}~.
 \end{equation}
 Assume that $U=u(z)\partial_z$ in the above chosen affine chart $z$ and the holomorphic deformation vector field $u_0^i$ on $\partial_iM$ is expressed as $u_0^i(z)=f_i(z)\partial_z$, where $f_i(z)$ is a holomorphic function. It is clear that $\omega=\omega_0$ on 
 $\cup_{i\in\cI}\partial_iM$. Then by \eqref{eq:one-form at infty},
$$\omega(U)=\omega_0(U)=d^D_Us_0=\frac{1}{2}u(z_0)f'''_i(z_0)(z-z_0)^2\partial_z~.$$
Note that $\omega(X)$ is the Killing field extending the projective field $\omega(U)$. By Lemma \ref{lm:norm-Killing}, we have
\begin{equation*}
\begin{split}
\|\omega(X)\|_{p_t}&= 4e^{-t}\cdot|u(z_0)|\cdot\frac{1}{2}|f'''_i(z_0)|\\
&=2e^{-t}\cdot\|U\|_{g_{i,\infty}}\cdot|f'''_i(z_0)|\\
&=4e^{-2t}\cdot\|X\|_{I^*_t}\cdot|f'''_i(z_0)|\\
&\leq 4C_0 e^{-2t}\cdot\|X\|_{I_t}\cdot|f'''_i(z_0)|\\
&\leq C_1 e^{-2t}\cdot\|X\|_{I_t}~.
\end{split}
\end{equation*}
The second equality follows from the fact that $g_{i,\infty}|_{T_{z_0}(\partial_iM)}=|dz|^2$, the third one follows from \eqref{eq:comparison}, while the first inequality follows from Remark \ref{rk:comparsion}, where $C_0=1+k$. Since $f'''_i(z)dz^2$ is a quadratic differential on $\partial_i\Omega$ and under the above affine chart (see Section \ref{subsec:one-form at infty}), $|f'''_i(z)|$ descends to a well-defined smooth function on the closed surface $\partial_iM$ and is thus bounded. The last inequality follows by taking a uniform bound for all $|f'''_i(z)|$ over all $i\in\cI$ (note that $\cI$ is a finite set).  By Definition \ref{def:norm}, we have $\|\omega\|_{p_t}\leq C_1e^{-2t}$, where $t=d(\cdot,C_M)$.

On the other hand, $\omega$ is uniformly bounded in the convex compact subset $N_0:=N\setminus(\cup_{i\in\cI}\cup_{t>1}S^i_{t})$ of $N$. Note that the function $d(\cdot,C_M)$ has a uniform upper bound on $N_0$. As a consequence, there is a constant $C>0$ depending 
on $\omega$, such that $$\|\omega\|_x\leq C\exp(-2d(x,C_M))~,$$
for all $x\in N$ (compare with the first paragraph in the proof of Theorem 3.5 in \cite{bromberg:rigidity} or the proof of Lemma 3.2, p. 106 of \cite{qfmp}). So $\omega$ also satisfies Condition (ii).
This concludes the proof of the lemma.
\end{proof}

For convenience, henceforth, we denote by $\omega$ the representative one-forms (constructed in Lemma \ref{lm:omega}) on the submanifold $N\subset M$ and its universal cover $\Omega$.

\subsection{Estimates on $\bar\omega$}

The following describes the images of the infinitesimal pure translation and rotation under the map $\Psi$ (see \cite[Lemma 3.7]{hmcb}).

\begin{lemma}\label{lm:pogorelov_translation and rotation}
  Let $x\in(\Omega\cap\HH^3)\setminus\{O\}$ and $\bar{x}:=\iota(x)\in(\bar{\Omega}\cap\DD^3)\setminus\{\bar{O}\}$. Let $R$ (resp. $\bar{R}:=\Upsilon(R)$) denote the unit radial vector in $T_x\HH^3$ (resp. $T_{\bar{x}}\R^3$) pointing away from the center. Let $v$ (resp. $\bar{v}:=(\cosh\rho)\Upsilon(v)$) denote a unit lateral vector in $T_x\HH^3$ (resp. $T_{\bar{x}}\R^3$), where $\rho:=d_{\HH^3}(x,O)$.

Set $r:=d_{\R^3}(\bar{x},\bar{O})$. Then
\begin{equation*}
\begin{split}
\Psi(R,0)&=(\bar{R},0)~,\\
\Psi(0,R)&=(0,\bar{R})~,\\
\Psi(v,0)&=(\sqrt{1-r^2}\,\bar{v},-\frac{r}{\sqrt{1-r^2}}\,\bar{R}\times\bar{v})~,\\
\Psi(0,v)&=(0,\frac{1}{\sqrt{1-r^2}}\,\bar{v})~,
\end{split}
\end{equation*}
where $(R,0)$ represents the infinitesimal pure translation whose value at $x$ is $R$, while $(0,R)$ represents the infinitesimal pure rotation whose curl at $x$ is $2R$. Similarly for the other pairs of tangent vectors.
\end{lemma}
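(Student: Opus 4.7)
The plan is to prove the four identities by direct computation, exploiting the definition $\Psi(\kappa_x)(\bar y):=\Upsilon(\kappa_x(y))$ together with the scaling behavior of $\Upsilon$: on radial vectors it preserves the norm, while on lateral vectors it rescales by a factor $1/\cosh(\rho)$ at points of $\HH^3$. A preliminary observation in the Klein model is that $r = \tanh\rho$, so $\cosh\rho = 1/\sqrt{1-r^2}$ and $\sinh\rho = r/\sqrt{1-r^2}$; these factors will produce the coefficients $\sqrt{1-r^2}$ and $r/\sqrt{1-r^2}$ appearing in the statement.

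First I would handle the two radial identities. The Killing field $\kappa=(R,0)$ generates the one-parameter family of hyperbolic translations along the radial geodesic through $O$ and $x$; in the Klein model these are projective transformations preserving the line $\overline{O x}$. Applying $\Upsilon$ and using that radial directions and norms along this line are preserved at $\bar x$, evaluation at $\bar x$ gives $\Psi(\kappa)(\bar x)=\Upsilon(R)=\bar R$. The image is a Euclidean Killing field by Lemma \ref{lm:Pogorelov isometric}, and since the generating one-parameter family preserves the entire radial line pointwise-direction-wise, the rotation component of $\Psi(\kappa)$ at $\bar x$ is zero, yielding $\Psi(R,0)=(\bar R,0)$. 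The analogous argument for $(0,R)$, which generates hyperbolic rotations about the radial axis, shows these correspond to genuine Euclidean rotations about that axis, giving $\Psi(0,R)=(0,\bar R)$.

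For the lateral translation $\kappa=(v,0)$, the value at $\bar x$ is computed directly: $\Psi(\kappa)(\bar x)=\Upsilon(v)$, and by the lateral scaling of $\Upsilon$ together with the normalization $\bar v=(\cosh\rho)\Upsilon(v)$, one obtains $\Upsilon(v)=\bar v/\cosh\rho=\sqrt{1-r^2}\,\bar v$. Thus the translation component of $\Psi(v,0)$ at $\bar x$ is $\sqrt{1-r^2}\,\bar v$. The rotation component is found by computing $\bar\nabla_{\bar X}\Psi(\kappa)$ for $\bar X\in T_{\bar x}\R^3$ and extracting the skew-symmetric part. Evaluating $\Psi(\kappa)$ at points $\bar y=\iota(y)$ near $\bar x$ by applying $\Upsilon$ to $\kappa(y)$ (which one expands to first order using $\nabla_Y\kappa_x=0$ together with the Koszul formula in $\HH^3$), and then differentiating, the scaling mismatch between radial and lateral components of $\Upsilon$ forces a nonzero skew part. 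A careful bookkeeping identifies this with the cross product $\bar\sigma(\bar x)\times\cdot$ for $\bar\sigma(\bar x)=-\frac{r}{\sqrt{1-r^2}}\,\bar R\times\bar v$.

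Finally, for $\kappa=(0,v)$ the field vanishes at $x$, so $\Psi(\kappa)(\bar x)=\Upsilon(0)=0$, giving translation component $0$ at $\bar x$. The rotation component is found by computing $\bar\nabla_{\bar X}\Psi(\kappa)$ using $\nabla_X\kappa_x=v\times X$; expanding $\kappa(y)$ to first order in $y$ and applying $\Upsilon$ term by term, the radial and lateral pieces of $X$ are treated separately with their respective scaling factors. The resulting skew endomorphism is $\bar\sigma(\bar x)\times\cdot$ with $\bar\sigma(\bar x)=\bar v/\sqrt{1-r^2}$. The main obstacle is precisely this last type of computation: the Pogorelov map mixes radial and lateral parts with different scaling, so tracking how infinitesimal pure translations and rotations decompose under $\Upsilon$ requires careful expansion; the radial cases are essentially symmetry statements, but the lateral identities genuinely depend on the projective model geometry of the Klein ball and on the factor $\cosh\rho=1/\sqrt{1-r^2}$ appearing in the lateral rescaling.
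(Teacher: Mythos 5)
The paper does not actually prove this lemma: it imports it verbatim from \cite[Lemma 3.7]{hmcb}, so there is no internal argument to compare your proposal against. Judged on its own terms, your plan is the right one in spirit --- work in the Klein model, use $r=\tanh\rho$, read off the translation component from $\Upsilon(\kappa_x(x))$, and then extract $\bar\sigma(\bar x)$ from the first-order expansion of $\bar y\mapsto \Upsilon(\kappa_x(y))$ --- and the four target identities are indeed correct. But as written the argument has two genuine gaps.

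First, for the radial cases your justification that the rotation component vanishes (``the generating one-parameter family preserves the entire radial line pointwise-direction-wise'') is not a proof. Rotation invariance about the axis $\bar O\bar x$ only tells you that $\bar\sigma(\bar x)$ is \emph{parallel} to $\bar R$; it does not force $\bar\sigma(\bar x)=0$. To close this by symmetry you would need to also invoke reflection through a plane containing the axis (under which a nontrivial axial rotation component would change sign), or simply compute. Concretely, in Klein coordinates the hyperbolic translation $(R,0)$ at $\bar x=(r,0,0)$ is the field $\bar V(\bar y)=e_1-\langle e_1,\bar y\rangle\,\bar y$; splitting $\bar V(\bar y)$ into radial and lateral parts at $\bar y$ and applying $\Upsilon$ (which multiplies the Euclidean radial coordinate by $\cosh^2\rho_y=1/(1-\|\bar y\|^2)$ and leaves the Euclidean lateral coordinates fixed), the first-order terms cancel and $\Upsilon(\bar V(\bar y))=\bar R+O(t^2)$, giving $\bar\sigma(\bar x)=0$ directly. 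This cancellation is the content of the claim; it does not follow from invariance alone.

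Second, the lateral cases $(v,0)$ and $(0,v)$ --- which carry all the nontrivial coefficients $\sqrt{1-r^2}$ and $r/\sqrt{1-r^2}$ --- are only asserted (``a careful bookkeeping identifies\dots'', ``the resulting skew endomorphism is\dots''). These are exactly the steps where the proof lives, and they need to be carried out. Following your own scheme one can do it: for $\kappa=(v,0)$, with $\bar x=(r,0,0)$ and $v=\hat e_2$, the Klein-model field is
\begin{equation*}
  \bar V(\bar y)=(\sinh\rho\, y_2-\cosh\rho\, y_1 y_2,\ \cosh\rho-\sinh\rho\, y_1-\cosh\rho\, y_2^2,\ -\cosh\rho\, y_2 y_3)~.
\end{equation*}
Expanding at $\bar y=\bar x+t(\delta_1,\delta_2,\delta_3)$, separating radial/lateral parts, applying $\Upsilon$, and using $\cosh\rho-r\sinh\rho=1/\cosh\rho$ and $\cosh\rho-1/\cosh\rho=r\sinh\rho$, one finds
$\Upsilon(\bar V(\bar y))=(t\delta_2\sinh\rho,\ 1/\cosh\rho - t\delta_1\sinh\rho,\ 0)+O(t^2)$, which identifies $\bar\tau(\bar x)=\sqrt{1-r^2}\,\bar v$ and $\bar\sigma(\bar x)=(0,0,-\sinh\rho)=-\frac{r}{\sqrt{1-r^2}}\bar R\times\bar v$. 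A parallel computation for $(0,v)$ with $\bar V(\bar y)=b+c\times\bar y-\langle b,\bar y\rangle\bar y$, $b=(0,0,\sinh\rho)$, $c=(0,\cosh\rho,0)$, yields $\bar\sigma(\bar x)=\cosh\rho\,\bar v=\frac{1}{\sqrt{1-r^2}}\bar v$. Until those expansions are actually written out, the proposal is a plan rather than a proof; the plan is sound, and the formulas do check out, but the ``main obstacle'' you identify at the end is precisely the part that remains undone.
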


Let $\omega=d^Ds$ be the one-form on $\Omega$ constructed in the proof of Lemma \ref{lm:omega} and let $\bar{\omega}:=\Psi_*(\omega)$. We denote by $\omega^r_\tau$ and $\omega^{\perp}_\tau$ (resp. $\omega^r_\sigma$ and $\omega^{\perp}_\sigma$) the radial and lateral components of the translation (resp. rotation) part $\omega_\tau$ (resp. $\omega_\sigma$) of $\omega$, and by  $\bar{\omega}^r_{\bar{\tau}}$ and $\bar{\omega}^{\perp}_{\bar{\tau}}$ (resp. $\bar{\omega}^r_{\bar{\sigma}}$ and $\bar{\omega}^{\perp}_{\bar{\sigma}}$) the radial and lateral components of the translation (resp. rotation) part $\bar{\omega}_{\bar{\tau}}$ (resp. $\bar{\omega}_{\bar{\sigma}}$) of $\bar{\omega}$. The following is the estimates of their norms in the unit radial and lateral directions. The proof is similar to Lemma 3.8 in \cite{hmcb}. We include it here for completeness. For simplicity, we denote by $\|\cdot\|$ the Euclidean norm henceforth.

\begin{lemma}\label{lm:estimate-one-form}
Let $x\in(\Omega\cap\HH^3)\setminus\{O\}$ and $\bar{x}:=\iota(x)\in(\bar{\Omega}\cap\DD^3)\setminus\{\bar{O}\}$. Let $R$, $\bar{R}$, $v$, $\bar{v}$, $\rho$ and $r$ be the same notations as in Lemma \ref{lm:pogorelov_translation and rotation}. Then there is a constant $C>0$ depending only on $\omega$, such that
\begin{equation*}
\begin{split}
&\|\bar{\omega}^r_{\bar{\tau}}(\bar{R})\|\leq \frac{
C}{1-r}~,\quad\quad
\|\bar{\omega}^r_{\bar{\tau}}(\bar{v})\|\leq \frac{
C}{\sqrt{1-r}}~,\\
&\|\bar{\omega}^r_{\bar{\sigma}}(\bar{R})\|\leq \frac{
C}{1-r}~,\quad\quad
\|\bar{\omega}^r_{\bar{\sigma}}(\bar{v})\|\leq \frac{
C}{\sqrt{1-r}}~,\\
&\|\bar{\omega}^{\perp}_{\bar{\tau}}(\bar{R})\|\leq \frac{
C}{\sqrt{1-r}}~,\quad\quad
\|\bar{\omega}^{\perp}_{\bar{\tau}}(\bar{v})\|\leq
C~,\\
&\|\bar{\omega}^{\perp}_{\bar{\sigma}}(\bar{R})\|\leq \frac{
2C}{(\sqrt{1-r})^3}~,\quad\quad
\|\bar{\omega}^{\perp}_{\bar{\sigma}}(\bar{v})\|\leq \frac{2
C}{1-r}~.
\end{split}
\end{equation*}
\end{lemma}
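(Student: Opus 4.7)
The argument is a direct term-by-term bookkeeping exercise, entirely parallel to \cite[Lemma 3.8]{hmcb}, which combines the uniform bound on $\omega$ furnished by Lemma \ref{lm:omega} with the explicit transformation formulas of Lemma \ref{lm:pogorelov_translation and rotation}. The only input needed from Lemma \ref{lm:omega} is the uniform estimate $\|\omega\|_x\leq C$ (absorbing $e^{-2d(x,C_M)}\leq 1$ into the constant); the sharper exponential decay is crucial elsewhere but plays no role here.

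The key reduction is to rewrite $\bar\omega(\bar X)$ in terms of $\Psi(\omega(X))$ for a hyperbolic unit vector $X$ in the same direction. The differential $d\iota$ contracts a unit hyperbolic radial (resp.\ lateral) vector to a Euclidean vector of norm $1/\cosh^2\rho$ (resp.\ $1/\cosh\rho$), so that $\bar R=\cosh^2\rho\cdot d\iota(R)$ and $\bar v=\cosh\rho\cdot d\iota(v)$. Inserting this into the compatibility relation \eqref{eq:psi(w)} yields the master identities
$$\bar\omega(\bar R)=\cosh^2\rho\cdot\Psi(\omega(R)), \qquad \bar\omega(\bar v)=\cosh\rho\cdot\Psi(\omega(v)).$$

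For either choice of $X$, I identify the value $[\omega(X)]_x\in sl_2(\C)$ with its pair $(\tau(x),\sigma(x))$ and further split each component into its radial and lateral parts, so that $\|\tau^r\|_{\HH^3},\|\tau^\perp\|_{\HH^3},\|\sigma^r\|_{\HH^3},\|\sigma^\perp\|_{\HH^3}\leq\|\omega\|_x\leq C$. I then apply Lemma \ref{lm:pogorelov_translation and rotation} to each of $(\tau^r,0)$, $(\tau^\perp,0)$, $(0,\sigma^r)$ and $(0,\sigma^\perp)$ individually, sum the four images, and collect the translation/rotation and radial/lateral components of $\Psi(\omega(X))=(\bar\tau_\Psi,\bar\sigma_\Psi)$ at $\bar x$. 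The outcome is
$$\|\bar\tau_\Psi^r\|\leq C,\quad \|\bar\tau_\Psi^\perp\|\leq C\sqrt{1-r^2},\quad \|\bar\sigma_\Psi^r\|\leq C,\quad \|\bar\sigma_\Psi^\perp\|\leq\frac{2C}{\sqrt{1-r^2}},$$
where the factor $2$ in the last bound accounts for the two sources of lateral rotation: the contribution from $\sigma^\perp$ via $\Psi(0,v)$, and the cross term $-\tfrac{r}{\sqrt{1-r^2}}\bar R\times\bar v$ produced by $\Psi(v,0)$ out of $\tau^\perp$ (bounded using $r\leq 1$).

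Finally, I multiply these four estimates by the overall scaling factor $\cosh^2\rho=1/(1-r^2)$ (when $\bar X=\bar R$) or $\cosh\rho=1/\sqrt{1-r^2}$ (when $\bar X=\bar v$), and use $1-r^2\geq 1-r$ to replace denominators in $1-r^2$ by the cleaner denominators in $1-r$ that appear in the statement. All eight inequalities then fall out in parallel. No genuine obstacle is expected; the one point requiring attention is correctly tracking the cross term in $\Psi(v,0)$ and attributing it to $\bar\sigma_\Psi^\perp$ rather than to the translation component.
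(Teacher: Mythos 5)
Your proof is correct and follows essentially the same strategy as the paper's: pull back $\bar R$ and $\bar v$ to $\cosh^2\rho\, d\iota(R)$ and $\cosh\rho\, d\iota(v)$, use $\bar\omega(d\iota(X))=\Psi(\omega(X))$, decompose $\omega(X)$ into radial/lateral translation/rotation pieces bounded by $\|\omega\|_x$ (uniformly bounded by Lemma~\ref{lm:omega}), push each piece through the explicit formulas of Lemma~\ref{lm:pogorelov_translation and rotation}, and then apply the overall $\cosh^2\rho$ or $\cosh\rho$ scaling together with $1-r^2\geq 1-r$. The only difference is organizational: the paper proceeds case-by-case over the four components while you compute the four intermediate bounds on $\Psi(\omega(X))$ first and then scale, and you correctly identify the one subtle point (the cross term $-\tfrac{r}{\sqrt{1-r^2}}\bar R\times\bar v$ from $\Psi(v,0)$ feeding into $\bar\sigma^\perp_\Psi$, which is the source of the factor~$2$).
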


\begin{proof}
  By the assumption and \eqref{eq:psi(w)}, $\bar{\omega}(d\iota(X))=(\Psi_*\omega)(d\iota(X))=\Psi(\omega(X))$ for all $X\in T\Omega$. In particular,
  $\bar{\omega}^r_{\bar{\tau}}(d\iota(X))$ is equal to the radial component of the translation part, denoted by $\big(\Psi(\omega(X))\big)^r_{\bar{\tau}}$, of $\Psi(\omega(X))$.
  Similarly for $\bar{\omega}^{\perp}_{\bar{\tau}}$, $\bar{\omega}^r_{\bar{\sigma}}$ and $\bar{\omega}^{\perp}_{\bar{\sigma}}$ acting on $d\iota(X)$. Note that
  $$\bar{R}=\Upsilon(R)=\cosh^2\rho \cdot d\iota(R)~, \quad\quad \bar{v}=\cosh\rho\cdot\Upsilon(v)=\cosh\rho\cdot d\iota(v)~, \quad\quad r=\tanh{\rho}~.$$ We divide the discussion into the following four cases:

  \textbf{Case 1}: We consider the case of $\bar{\omega}^r_{\bar{\tau}}$. By the above facts, we obtain
\begin{equation*}
\begin{split}
\|\bar{\omega}^r_{\bar{\tau}}(\bar{R})\|
&=\|\bar{\omega}^r_{\bar{\tau}}\big(\cosh^2\rho \cdot d\iota(R) \,\big)\|
=\cosh^2{\rho}\,\|\big(\Psi({\omega}(R)\big)^r_{\bar{\tau}}\| \\
&=\cosh^2{\rho}\,\|\big(\Psi(\omega^r_{\tau}(R))\big)^r_{\bar{\tau}}\|
=\cosh^2{\rho}\,\|\omega^r_{\tau}(R)\|\\
&\leq \cosh^2{\rho}\,\|\omega_{\tau}\|_x\cdot\| R\|_{\HH^3}
\leq \frac{\|\omega\|_x}{1-r^2} \\
& \leq \frac{C\exp(-2d(x,C_M))}{1-r^2}\leq \frac{C}{1-r}~.
\end{split}
\end{equation*}
Here the first equality on the second line follows from Lemma \ref{lm:pogorelov_translation and rotation}, which shows that the radial component of the translation part of $\Psi(\omega(R))$ only comes from the radial component of the translation part of $\omega(R)$. The second equality on the second line follows from the first equality in Lemma \ref{lm:pogorelov_translation and rotation}.
The last line is a consequence of Lemma \ref{lm:omega} and the fact that $1-r^2\geq 1-r$. The estimate for $\|\bar{\omega}^r_{\bar{\tau}}(\bar{v})\|$ follows in the same way by replacing $\bar{R}$ (resp. $R$ and $\cosh^2\rho$) by $\bar{v}$ (resp. $v$ and $\cosh\rho$).

  \textbf{Case 2}: For the estimates for $\bar{\omega}^r_{\bar{\sigma}}$,
  we have
\begin{equation*}
\begin{split}
\|\bar{\omega}^r_{\bar{\sigma}}(\bar{R})\|
&=\|\bar{\omega}^r_{\bar{\sigma}}\big(\cosh^2\rho \cdot d\iota(R) \,\big)\|
=\cosh^2{\rho}\,\|\big(\Psi({\omega}(R)\big)^r_{\bar{\sigma}}\| \\
&=\cosh^2{\rho}\,\|\big(\Psi(\omega^r_{\sigma}(R))\big)^r_{\bar{\sigma}}\|
=\cosh^2{\rho}\,\|\omega^r_{\sigma}(R)\|\\
&\leq \cosh^2{\rho}\,\|\omega_{\sigma}\|_x\cdot\| R\|_{\HH^3}
\leq \frac{\|\omega\|_x}{1-r^2} \\
& \leq \frac{C\exp(-2d(x,C_M))}{1-r^2}\leq \frac{C}{1-r}~.
\end{split}
\end{equation*}
Here the first equality on the second line follows from Lemma \ref{lm:pogorelov_translation and rotation}, which shows that the radial component of the rotation part of $\Psi(\omega(R))$ only comes from the radial component of the rotation part of $\omega(R)$. The second equality on the second line follows from the second equality in Lemma \ref{lm:pogorelov_translation and rotation}. The estimate for $\|\bar{\omega}^r_{\bar{\sigma}}(\bar{v})\|$ follows in the same way by replacing $\bar{R}$ (resp. $R$ and $\cosh^2\rho$) by $\bar{v}$ (resp. $v$ and $\cosh\rho$).

  \textbf{Case 3}: Concerning the estimates for $\bar{\omega}^{\perp}_{\bar{\tau}}$, we obtain
\begin{equation*}
\begin{split}
\|\bar{\omega}^{\perp}_{\bar{\tau}}(\bar{R})\|
&=\|\bar{\omega}^{\perp}_{\bar{\tau}}\big(\cosh^2\rho \cdot d\iota(R) \,\big)\|
=\cosh^2{\rho}\,\|\big(\Psi({\omega}(R)\big)^{\perp}_{\bar{\tau}}\|\\
&=\cosh^2{\rho}\,\|\big(\Psi(\omega^{\perp}_{\tau}(R))\big)^{\perp}_{\bar{\tau}}\|
=\cosh^2{\rho}\cdot\sqrt{1-r^2} \,\|\omega^{\perp}_{\tau}(R)\|\\
&\leq \frac{1}{\sqrt{1-r^2}}\,\|\omega_{\tau}\|_x\cdot\| R\|_{\HH^3}
\leq \frac{\|\omega\|_x}{\sqrt{1-r^2}} \\
& \leq \frac{C\exp(-2d(x,C_M))}{\sqrt{1-r^2}}\leq \frac{C}{\sqrt{1-r}}~.
\end{split}
\end{equation*}
Here the first equality on the second line follows from Lemma \ref{lm:pogorelov_translation and rotation}, which shows that the lateral component of the translation part of $\Psi(\omega(R))$ only comes from the lateral component of the translation part of $\omega(R)$. The second equality on the second line follows from the third equality in Lemma \ref{lm:pogorelov_translation and rotation}. The estimate for $\|\bar{\omega}^{\perp}_{\bar{\tau}}(\bar{v})\|$ follows in the same way by replacing $\bar{R}$ (resp. $R$ and $\cosh^2\rho$) by $\bar{v}$ (resp. $v$ and $\cosh\rho$).

 \textbf{Case 4}: As for the estimates for $\bar{\omega}^{\perp}_{\bar{\sigma}}$, we have
\begin{equation*}
\begin{split}
\|\bar{\omega}^{\perp}_{\bar{\sigma}}(\bar{R})\|
&=\|\bar{\omega}^{\perp}_{\bar{\sigma}}\big(\cosh^2\rho \cdot d\iota(R) \,\big)\|
=\cosh^2{\rho}\,\|\big(\Psi({\omega}(R)\big)^{\perp}_{\bar{\sigma}}\|\\
&=\cosh^2{\rho}\,\|\big(\Psi(\omega^{\perp}_{\tau}(R)+\omega^{\perp}_{\sigma}(R))\big)^{\perp}_{\bar{\sigma}}\|\\
&\leq\cosh^2{\rho}\cdot\big(\frac{r}{\sqrt{1-r^2}}\|\omega^{\perp}_{\tau}(R)\|
+\frac{1}{\sqrt{1-r^2}}\|\omega^{\perp}_{\sigma}(R)\|\big)\\
&\leq \frac{1}{(\sqrt{1-r^2})^3}\big(\|\omega_{\tau}\|_x\cdot\| R\|_{\HH^3}+ \|\omega_{\sigma}\|_x\cdot\| R\|_{\HH^3})
\leq \frac{2\|\omega\|_x}{(\sqrt{1-r^2})^3} \\
& \leq \frac{2C\exp(-2d(x,C_M))}{(\sqrt{1-r^2})^3}\leq \frac{2C}{(\sqrt{1-r})^3}~.
\end{split}
\end{equation*}
Here the equality on the second line follows from Lemma \ref{lm:pogorelov_translation and rotation}, which shows that the lateral component of the rotation part of $\Psi(\omega(R))$ comes from the lateral components of both the translation and rotation parts of $\omega(R)$. The first inequality follows from the triangle inequality and the last two equalities in Lemma \ref{lm:pogorelov_translation and rotation}. The estimate for $\|\bar{\omega}^{\perp}_{\bar{\sigma}}(\bar{v})\|$ follows in the same way by replacing $\bar{R}$ (resp. $R$ and $\cosh^2\rho$) by $\bar{v}$ (resp. $v$ and $\cosh\rho$).

\end{proof}

Let $s$ be the deformation section defined in the proof of Lemma \ref{lm:omega}. The following is an estimate on the radial and lateral components of the translation and rotation components of the deformation section $\bar{s}:=\Psi(s)$, which is identified with $(\bar{\tau}, \bar{\sigma})\in\Gamma(T\R^3\times T\R^3)$ (see also \cite[Corollary 3.9]{hmcb}).

\begin{lemma}\label{lm:pogorelov_radial and lateral}
Let $x\in(\Omega\cap\HH^3)\setminus\{O\}$ and $\bar{x}:=\iota(x)\in(\bar{\Omega}\cap\DD^3)\setminus\{\bar{O}\}$. Let $\bar{\tau}^r$ (resp. $\bar{\tau}^{\perp}$) denote the radial (resp. lateral) component of the vector field $\bar{\tau}$ on $\bar{\Omega}\cap\DD^3$. Let $\bar{\sigma}^r$ (resp. $\bar{\sigma}^{\perp}$) denote the radial (resp. lateral) component of the vector field $\bar{\sigma}$ on $\bar{\Omega}\cap\DD^3$. Set $r:=d_{\R^3}(\bar{x},\bar{O})$. Then at $\bar{x}$,
\begin{equation*}
\begin{split}
&\|\bar{\tau}^r(\bar{x})\|\leq C|\log(1-r)|~,\quad\quad \|\bar{\tau}^{\perp}(\bar{x})\|\leq C~,\\
&\|\bar{\sigma}^r(\bar{x})\|\leq C|\log(1-r)|~,\quad\quad \|\bar{\sigma}^{\perp}(\bar{x})\|\leq \frac{C}{\sqrt{1-r}}~.\\
\end{split}
\end{equation*}
\end{lemma}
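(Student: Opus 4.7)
The plan is to obtain the bounds on $(\bar\tau,\bar\sigma)$ by integrating the one-form $\bar\omega = \Psi_*(\omega)$ along the radial segment from the center $\bar O$ to $\bar x$, and then invoking the pointwise bounds on the radial and lateral components of $\bar\omega$ already established in Lemma \ref{lm:estimate-one-form}. The normalization of the deformation described in Section \ref{subsec:from M to H3} gives $\bar{s}(\bar O) = 0$, i.e.\ $\bar\tau(\bar O) = \bar\sigma(\bar O) = 0$, which is exactly the initial condition needed for such an integration.

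Concretely, I would fix $\bar x \in (\bar\Omega \cap \DD^3) \setminus \{\bar O\}$, let $\bar R$ be the unit Euclidean vector from $\bar O$ to $\bar x$, and parameterize the radial segment by $\bar y(s) = s\bar R$, $s \in [0,r]$. Along this segment $\bar R$ remains the radial direction at each $\bar y(s)$. From the expression \eqref{eq:euclidean-one-form} for the flat connection,
\begin{equation*}
  \frac{d}{ds}\bar\sigma(\bar y(s)) \;=\; \bar\nabla_{\bar R}\bar\sigma \;=\; \bar\omega_{\bar\sigma}(\bar R)\big|_{\bar y(s)}~,\qquad
  \frac{d}{ds}\bar\tau(\bar y(s)) \;=\; \bar\omega_{\bar\tau}(\bar R)\big|_{\bar y(s)} + \bar\sigma(\bar y(s))\times \bar R~.
\end{equation*}
Integrating from $0$ to $r$ and using $\bar\tau(\bar O)=\bar\sigma(\bar O)=0$ yields
\begin{equation*}
  \bar\sigma(\bar x) = \int_0^r \bar\omega_{\bar\sigma}(\bar R)|_{\bar y(s)}\, ds~,\qquad
  \bar\tau(\bar x) = \int_0^r \bar\omega_{\bar\tau}(\bar R)|_{\bar y(s)}\, ds + \int_0^r \bigl(\bar\sigma(\bar y(s))\times \bar R\bigr)\, ds~.
\end{equation*}

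The bounds then follow by projecting onto radial and lateral directions. For $\bar\sigma^r$, Lemma \ref{lm:estimate-one-form} gives $\|\bar\omega^r_{\bar\sigma}(\bar R)\| \leq C/(1-s)$, whose integral is $C|\log(1-r)|$. For $\bar\sigma^\perp$, the same lemma gives $\|\bar\omega^\perp_{\bar\sigma}(\bar R)\| \leq 2C/(1-s)^{3/2}$, whose integral is $O(1/\sqrt{1-r})$. For $\bar\tau^r$ only the first integral contributes (the cross product $\bar\sigma\times \bar R = \bar\sigma^\perp \times \bar R$ is purely lateral), and $\int_0^r C/(1-s)\, ds = C|\log(1-r)|$. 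For $\bar\tau^\perp$, the first integral is bounded by $\int_0^r C/\sqrt{1-s}\,ds = O(1)$, and the lateral cross product contributes $\|\bar\sigma^\perp(\bar y(s))\times \bar R\| = \|\bar\sigma^\perp(\bar y(s))\|$, which by the already-established estimate for $\bar\sigma^\perp$ is $O(1/\sqrt{1-s})$ and whose integral is again $O(1)$.

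No step here is really a serious obstacle, since the estimates on $\bar\omega$ do all the work; the only care needed is to carry out the estimates in the right order (first $\bar\sigma$, then $\bar\tau$, because the equation for $\bar\tau$ involves $\bar\sigma$), and to use that the radial direction is constant along a ray through the origin so that radial/lateral decompositions commute with the integration. Absorbing the various multiplicative constants, and enlarging $C$ as needed near $r=0$ where $|\log(1-r)|$ vanishes, gives the four stated bounds.
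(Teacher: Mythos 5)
Your proposal is correct and follows essentially the same approach as the paper: integrate $\bar\omega = d^{\bar D}\bar s$ along the radial segment from $\bar O$ to $\bar x$ (which is the image under $\iota$ of the hyperbolic geodesic from $O$ to $x$), use the normalization $\bar\tau(\bar O)=\bar\sigma(\bar O)=0$, decompose into radial and lateral components, and invoke Lemma \ref{lm:estimate-one-form} on each piece, treating $\bar\sigma$ before $\bar\tau$. Your explicit observation that $\bar\sigma\times\bar R=\bar\sigma^\perp\times\bar R$ is purely lateral (so the cross-product term contributes nothing to $\bar\tau^r$ and contributes $\|\bar\sigma^\perp\|$ to $\bar\tau^\perp$) is exactly the decomposition the paper uses implicitly when it writes the integral formulas for $\bar\tau^r$ and $\bar\tau^\perp$.
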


\begin{proof}
By assumption, the infinitesimal deformation $\dot{g}$ of $g$ is normalized such that $\tau(O)=\sigma(O)=0$ (see Section \ref{subsec:from M to H3}).
Let $\gamma:[0,L]\rightarrow \Omega$ be the geodesic parameterized at speed 1, with $\gamma(0)=O$ and $\gamma(L)=x$. Now we consider the image $\bar{s}$ under the Pogerelov map $\Psi$ of the deformation section $s$. Denote $\bar{\gamma}=\iota(\gamma)$, $\bar{O}=\iota(O)=\bar{\gamma}(0)$ and $\bar{x}=\iota(x)=\bar{\gamma}(r)$, where $r=\tanh L$ and $\bar{\gamma}:[0,r]\rightarrow \bar{\Omega}$ is the geodesic parameterized at speed 1. By assumption, $\bar{\tau}(\bar{O})=\tau(O)=0$, $\bar{\sigma}(\bar{O})=\sigma(O)=0$, then combined with the formula \eqref{eq:euclidean-one-form},
\begin{equation*}
\bar{\tau}(\bar{x})
=\int^{r}_0 \bar{\nabla}_{\bar{\gamma}'(s)}\bar{\tau}~ds
=\int^{r}_0\big(\bar{\omega}_{\bar{\tau}}(\bar{\gamma}'(s))
+\bar{\sigma}(\bar{\gamma}(s))\times\bar{\gamma}'(s)\big)ds~,
\quad\quad
\bar{\sigma}(\bar{x})
=\int^{r}_0 \bar{\nabla}_{\bar{\gamma}'(s)}\bar{\sigma}~ds
=\int^{r}_0 \bar{\omega}_{\bar{\sigma}}(\bar{\gamma}'(s)) ds~.
\end{equation*}
In particular, the radial and lateral components of $\bar{\tau}$ and $\bar{\sigma}$ satisfy that
\begin{equation*}
\begin{split}
&\bar{\tau}^r(\bar{x})
=\int^{r}_0 \bar{\omega}^r_{\bar{\tau}}(\bar{\gamma}'(s))ds~,\quad\quad
\bar{\tau}^{\perp}(\bar{x})
=\int^{r}_0\big(\bar{\omega}^{\perp}_{\bar{\tau}}(\bar{\gamma}'(s))+\bar{\sigma}^{\perp}(\bar{\gamma}(s))\times\bar{\gamma}'(s)\big)ds~,\\
&\bar{\sigma}^r(\bar{x})
=\int^{r}_0 \bar{\omega}^r_{\bar{\sigma}}(\bar{\gamma}'(s))ds~,\quad\quad
\bar{\sigma}^{\perp}(\bar{x})
=\int^{r}_0\bar{\omega}^{\perp}_{\bar{\sigma}}(\bar{\gamma}'(s))ds~.\\
\end{split}
\end{equation*}
Combined with Lemma \ref{lm:estimate-one-form}, we obtain
\begin{equation*}
\begin{split}
&\|\bar{\sigma}^r(\bar{x})\|
\leq\int^{r}_0 \|\bar{\omega}^r_{\bar{\sigma}}(\bar{\gamma}'(s))\|ds
\leq \int^{r}_0 \frac{C}{1-s}ds\leq C|\log(1-r)|~,\\
&\|\bar{\sigma}^{\perp}(\bar{x})\|
\leq\int^{r}_0\|\bar{\omega}^{\perp}_{\bar{\sigma}}(\bar{\gamma}'(s))\|ds
\leq \int^{r}_0 \frac{C}{(\sqrt{1-s})^3}ds \leq \frac{C}{\sqrt{1-r}}~,\\
&\|\bar{\tau}^r(\bar{x})\|
\leq\int^{r}_0 \|\bar{\omega}^r_{\bar{\tau}}(\bar{\gamma}'(s))\|ds
\leq \int^{r}_0 \frac{C}{1-s}ds\leq C|\log(1-r)|~,\\
&\|\bar{\tau}^{\perp}(\bar{x})\|
\leq\int^{r}_0\big(\|\bar{\omega}^{\perp}_{\bar{\tau}}(\bar{\gamma}'(s))\|
+\|\bar{\sigma}^{\perp}(\bar{\gamma}(s))\|\big)ds
\leq\int^{r}_0\left(\frac{C}{\sqrt{1-s}}+\frac{C}{\sqrt{1-s}}\right)ds
\leq C~.\\
\end{split}
\end{equation*}
\end{proof}

\subsection{A deformation vector field on $\partial\bar{\Omega}\setminus\bar{\Lambda}$}
\label{subsec:deformation field}

Recall that $s$ is the deformation section constructed in Step 2 of the proof of Lemma \ref{lm:omega}. Let $u$ denote the deformation vector field on $\Omega\setminus\Lambda$ integrated from the representative one-form $\omega=d^Ds$. We denote by $\bar{u}$ the image under the infinitesimal Pogorelov map $\Upsilon$ of $u$ on $\Omega\cap\HH^3$, which is then a deformation vector field on $\bar{\Omega}\cap\DD^3$ and thus defines a deformation vector field on $\partial_j\bar{\Omega}$ and $\partial_k\bar{\Omega}$ for all $j\in\cJ$ and $k\in\cK$ respectively. We now extend the definition of $\bar{u}$ to $\partial_i\bar{\Omega}$ and $\partial_k\bar{\Omega}^*$ for all $i\in\cI$ and $k\in\cK$.

For the extension to $\partial_i\bar{\Omega}$ of $\bar{u}$, note that by construction, the deformation section $s$, which corresponds to the canonical lift of $u$, is constant along each geodesic ray $r_x$ with $x\in \partial_i \tilde{C}_M$. Then $\bar{s}:=\Psi(s)$ is constant along the image $\iota(r_x)$. By definition, $$\bar{u}(\bar{y})=\bar{s}_{\bar{y}}(\bar{y})=\bar{s}_{\bar{x}}(\bar{y})~,$$
 for all $\bar{x}:=\iota(x)$, $\bar{y}:=\iota(y)$ with $y\in r_x$. So $\bar{u}$ has a natural continuous extension to $\partial_i\bar{\Omega}$, defined as $\bar{s}_{\bar{x}}(\bar{z})$, for each $\bar{z}\in\partial_i\bar{\Omega}$.

 For the extension to $\partial_k\bar{\Omega}^*$ of $\bar{u}$, we first extend $u$ to $\partial_k\Omega^*$, through the duality between a couple $(x,P)$ in $\HH^3$ and a couple $(x^*, P^*)$ in $\bdS^3$, where $P$ is a plane containing $x$, while $x^*\in\bdS^3$ is the point dual to $P$, and $P^*\subset\bdS^3$ is the plane dual to $x$. Indeed, for each $x\in\partial_k\Omega$ and the tangent plane $P$ to $\partial_k\Omega$ at $x$, the first-order deformation of the couple $(x,P)$ under the Killing field $s_x$ induces, through the duality (see Section \ref{ssc:dual}), a first-order deformation of $x^*\in\partial_k\bar{\Omega}^*$, so we obtain a deformation vector at $x^*$, denoted by $u(x^*)$, which is equal to $s_x(x^*)$, since $s_x\in sl_2(\C)\cong so(3,1)$ and the isometries in $SO(3,1)$ preserve the duality relation (see e.g. \cite[Remark 1.7]{hmcb}). In this way, $u$ has an extension to $\partial_k\Omega^*$ and under the infinitesimal Pogorelov map $\Upsilon$, $\bar{u}$ has an extension to $\partial_k\bar{\Omega}^*$ for all $k\in\cK$. In particular, there is a natural correspondence between a pair of points $\bar{x}:=\iota(x)\in\partial_k\bar{\Omega}$ and $\bar{x}^*:=\iota(x^*)\in\partial_k\bar{\Omega}^*$. Note that $\partial_j\bar\Omega\setminus\bar\Lambda$ is contained in $\HH^3$ for each $j\in\cJ$. Such a correspondence also holds between $\bar x:=\iota(x)\in \partial_j\bar\Omega$ and $\bar{x}^*:=\iota(x^*)\in\partial_j\bar{\Omega}^*$. For $\bar x\in\partial_i\bar\Omega$ with $i\in\cI$, we define $\bar x^*$ to be $\bar x$ itself.

For simplicity, we still denote by $\bar{u}$ the extended deformation vector field on $(\bar{\Omega}\cup(\cup_{k\in\cK}\partial_k\bar{\Omega}^*))\setminus\bar{\Lambda}$. Recall that $(\bar{\tau},\bar{\sigma})$ denotes the translation and rotation components of the corresponding deformation section $\bar{s}:=\Psi(s)$ on $\bar{\Omega}\cap\DD^3$.

 \begin{lemma}\label{lm:dual-Killing}
             For each $\bar{x}\in\partial_k\bar{\Omega}$ and $\bar{x}^*\in\partial_k\bar{\Omega}^*$, we have
             $$\bar{u}(\bar{x}^*)=\bar{\tau}(\bar{x})+\bar{\sigma}(\bar{x})\times(\bar{x}^*-\bar{x})~.$$
             \end{lemma}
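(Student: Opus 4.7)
The plan is to unwind the definitions so that the identity becomes an instance of the standard expression of a Euclidean Killing field in terms of its translation and rotation components at a chosen base point.

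First I would identify $\bar u(\bar x^*)$ with the value at $\bar x^*$ of a Euclidean Killing field attached to $\bar x$. By the extension of $u$ to $\partial_k\Omega^*$ described right before the lemma statement, we have $u(x^*)=s_x(x^*)$, where $s_x\in sl_2(\C)$ is the canonical lift of the deformation vector field $u$ at $x$, viewed as a Killing vector field on $\HH^3$ extended to the HS space $\bHS^3$, and in particular to the dual point $x^*\in\bdS^3\subset U$. Applying the infinitesimal Pogorelov map $\Upsilon$ (which is defined on all of $U$, hence at $x^*$), the definition of $\bar u$ and of $\Psi$ in Section \ref{subsec:Pogorelov} then give
\begin{equation*}
\bar u(\bar x^*) \;=\; \Upsilon\bigl(u(x^*)\bigr) \;=\; \Upsilon\bigl(s_x(x^*)\bigr) \;=\; \bar s_{\bar x}(\bar x^*) \;,
\end{equation*}
where $\bar s:=\Psi(s)$ and $\bar s_{\bar x}$ is the Euclidean Killing field associated to $\bar x$. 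Here I am using Lemma \ref{lm:Pogorelov isometric}(1): $\Upsilon$ sends $s_x$, which is Killing for the HS metric on $U$, to a Euclidean Killing field on $\iota(U)\subset\R^3$; this Euclidean Killing field is $\bar s_{\bar x}$ by definition of $\Psi$.

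Next I would apply the standard decomposition of a Euclidean Killing field recalled in Section \ref{subsec:euclidean bundles}. Writing $\bar s_{\bar x}=\bar u_{\bar x}+\bar v_{\bar x}$, the field $\bar u_{\bar x}$ is the infinitesimal pure translation at $\bar x$, hence the constant vector field equal to $\bar\tau(\bar x)=\bar s_{\bar x}(\bar x)$, while $\bar v_{\bar x}$ is the infinitesimal pure rotation at $\bar x$, vanishing at $\bar x$ and with $\bar\nabla_{\bar X}\bar v_{\bar x}=\bar\sigma(\bar x)\times\bar X$ for every $\bar X$. Integrating along the Euclidean straight segment $t\mapsto\bar x+t(\bar x^*-\bar x)$, $t\in[0,1]$, and using that the connection coefficients are constant in $\R^3$, we obtain
\begin{equation*}
\bar s_{\bar x}(\bar x^*) \;=\; \bar\tau(\bar x) + \int_0^1 \bar\sigma(\bar x)\times(\bar x^*-\bar x)\,dt \;=\; \bar\tau(\bar x)+\bar\sigma(\bar x)\times(\bar x^*-\bar x)\;.
\end{equation*}
Combining the two displayed equalities yields the claimed formula.

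The only subtle point I anticipate is justifying that the Euclidean Killing field $\bar s_{\bar x}$ obtained by extending $\Upsilon(s_x)$ from $\iota(\HH^3)=\bD^3$ to all of $\R^3$ coincides at $\bar x^*$ with $\Upsilon(s_x(x^*))$; this is where the fact that $\Upsilon$ is defined on the whole HS domain $U$ (not just on $\HH^3$) and sends HS Killing fields to Euclidean Killing fields (Lemma \ref{lm:Pogorelov isometric}) is crucial, together with the uniqueness of a Euclidean Killing field determined by its value and its covariant derivative at a single point.
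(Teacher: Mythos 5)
Your proposal is correct and follows essentially the same two-step route as the paper: first use the definition of the extension $u(x^*)=s_x(x^*)$ and the Pogorelov map (via Lemma~\ref{lm:Pogorelov isometric}) to identify $\bar u(\bar x^*)$ with $\bar s_{\bar x}(\bar x^*)$, then invoke the standard expression of a Euclidean Killing field through its translation and rotation components at $\bar x$. The only difference is that you spell out this last step by explicitly integrating the decomposition $\bar s_{\bar x}=\bar u_{\bar x}+\bar v_{\bar x}$ along the segment, whereas the paper simply quotes ``the formulation of infinitesimal Euclidean isometries''; you also correctly flag (and resolve via the HS-Killing-field interpretation of $s_x$ on all of $U$) the subtlety that $\Upsilon(s_x(\cdot))$ on $\bD^3$ and its Euclidean extension agree at $\bar x^*$, which the paper leaves implicit.
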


           \begin{proof}
             By definition, $u(x^*)=s_x(x^*)$. Combined with Lemma \ref{lm:Pogorelov isometric} and the definition of $\bar{s}$, under the infinitesimal Pogorelov map $\Upsilon$, we have $$\bar{s}_{\bar{x}}(\bar{x}^*):=\Upsilon(s_x(x^*))=\Upsilon(u(x^*))=\bar{u}(\bar{x}^*)~.$$ By the formulation of infinitesimal euclidean isometries,
             $$\bar{u}(\bar{x}^*)=\bar{s}_{\bar{x}}(\bar{x}^*)=\bar{\tau}(\bar{x})+\bar{\sigma}(\bar{x})\times(\bar{x}^*-\bar{x})~.$$
           \end{proof}

 Let $v$ (resp. $\bar{v}$) denote the orthogonal projection to $(\partial\Omega\cup(\cup_{k\in\cK}\partial_k\Omega^*))\setminus\Lambda$ (resp. $(\partial\bar{\Omega}\cup(\cup_{k\in\cK}\partial_k\bar{\Omega}^*))\setminus\bar{\Lambda}$) of $u$ (resp. $\bar{u}$), called the \emph{tangential component} of $u$ (resp. $\bar{u}$).

\begin{proposition}\label{prop:solution}
 $\bar{v}$ satisfies equation \eqref{eq:E} on $\partial\bar{\Omega}^*\setminus\bar{\Lambda}$. In particular, $\bar{v}$ is holomorphic on $\partial_i\bar{\Omega}$ for all $i\in\cI$.
\end{proposition}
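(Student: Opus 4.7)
The plan is to transport the boundary-preservation hypotheses from $\HH^3$ and $\bdS^3$ into $\R^3$ via the infinitesimal Pogorelov map, and then invoke Lemma \ref{lm:equation} on each boundary component of $\bar{\Omega}^*$. For $j\in\cJ$, since $\dot g$ preserves $h_j$ at first order, $u$ is an isometric vector field on $\partial_j\Omega\subset\HH^3$, and Lemma \ref{lm:Pogorelov isometric}(2) gives that $\bar{u}=\Upsilon(u)$ is an isometric vector field on $\partial_j\bar{\Omega}\subset\R^3$. Lemma \ref{lm:equation} then shows that its tangential component $\bar{v}$ satisfies \eqref{eq:E} on $\partial_j\bar{\Omega}\setminus\bar{\Lambda}$.

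For $k\in\cK$, the deformation preserves the third fundamental form on $\partial_k\Omega$; by Proposition \ref{prop:dual}(3) this is the pull-back through duality of the induced metric on the spacelike dual surface $(\partial_k\Omega)^*\subset\bdS^3$, and the extension of $u$ to $(\partial_k\Omega)^*$ in Section \ref{subsec:deformation field} is defined precisely via this duality and so preserves that induced metric at first order. Thus $u$ restricted to $(\partial_k\Omega)^*$ is an isometric vector field in $\bdS^3$. Applying Lemma \ref{lm:Pogorelov isometric}(2) again, $\bar{u}$ is an isometric vector field on $\partial_k\bar{\Omega}^*\subset\R^3$, and Lemma \ref{lm:equation} yields \eqref{eq:E} for $\bar{v}$ on $\partial_k\bar{\Omega}^*\setminus\bar{\Lambda}$.

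For $i\in\cI$, by Remark \ref{rk:regularity-boundary} the component $\partial_i\bar{\Omega}$ lies on the round unit sphere $\partial\DD^3$. The assumption that $\dot g$ preserves $c_i$ means that $u|_{\partial_i\Omega}$ is a holomorphic vector field, locally of the form $f_i(z)\partial_z$ with $f_i$ holomorphic (see Section \ref{subsec:one-form at infty}). Under the natural identification $\CP^1\cong\partial\DD^3$, this matches $\bar{u}|_{\partial_i\bar{\Omega}}$, which is therefore tangent to $\partial\DD^3$ (so that $\bar{v}=\bar{u}$ there) and holomorphic for the standard complex structure. On the unit sphere the shape operator $\bar{B}$ is a scalar multiple of the identity, so $\bar{\III}$ is proportional to $\bar{I}$, the operator $\bar{B}^{-1}$ acts as a scalar on vectors, and the complex structure $\bar{J}$ associated to $\bar{\II}$ coincides with that of $(\partial\DD^3,\bar{I})$; hence \eqref{eq:E} reduces to the Cauchy-Riemann equation $\nabla^{\bar{I}}_{\bar{X}}\bar{v}+\bar{J}\nabla^{\bar{I}}_{\bar{J}\bar{X}}\bar{v}=0$, which is precisely the holomorphicity of $\bar{v}$. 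This simultaneously gives \eqref{eq:E} on $\partial_i\bar{\Omega}$ and the \emph{in particular} conclusion.

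The most delicate point I expect is the identification in the third paragraph: one has to verify that the extension of $\bar{u}$ to $\partial_i\bar{\Omega}\subset\partial\DD^3$ agrees, under $\CP^1\cong\partial\DD^3$, with the projective vector field $u^i$, and in particular is tangent to the sphere. This will follow from the construction of $\kappa$ in Step 1 of the proof of Lemma \ref{lm:omega} (the Killing field $\kappa^i_x$ on $\HH^3$ is defined as the unique extension of the projective field $(s^i_0)_z$, so $\Psi(\kappa^i_x)$ restricted to the boundary is still $(s^i_0)_z$) together with the fact that $\iota$ extends to the identity on $\HH^3\cup\partial\HH^3\cong\DD^3\cup\partial\DD^3$.
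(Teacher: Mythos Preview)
Your treatment of the boundary components with $j\in\cJ$ and $k\in\cK$ matches the paper's argument exactly: Pogorelov transports the isometry condition to $\R^3$, and Lemma~\ref{lm:equation} closes the case.

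The gap is in the $i\in\cI$ case, precisely at the point you flagged as delicate. Your proposed justification---that $\Psi(\kappa^i_x)$ restricted to $\partial\DD^3$ coincides with the projective field $(s^i_0)_z$, and in particular that $\bar u$ is tangent to the sphere---is false. The map $\Psi$ sends the six-dimensional algebra of hyperbolic Killing fields onto the six-dimensional algebra of Euclidean Killing fields; only a three-dimensional subalgebra of the latter (the rotations about $\bar O$) preserves $\partial\DD^3$. Concretely, an infinitesimal hyperbolic boost through $O$ has $\tau(O)\neq 0$, $\sigma(O)=0$, and by the formulas at the center (see Lemma~\ref{lm:pogorelov_translation and rotation}) its image under $\Psi$ has $\bar\tau(\bar O)\neq 0$, $\bar\sigma(\bar O)=0$, i.e.\ it is a nontrivial Euclidean translation. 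Evaluated at $\bar z\in\partial\DD^3$ this has radial component $\langle \bar\tau(\bar O),\bar z\rangle$, which is nonzero for generic $\bar z$. So $\bar u(\bar z)$ is generally \emph{not} tangent to the sphere, and $\bar v\neq \bar u$ there.

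What survives, and what the paper proves, is the weaker identity $\bar v=\bar u^{\perp}=u$ on $\partial_i\bar\Omega$. Since points of $\partial_i\bar\Omega$ lie on $\partial\DD^3$, the tangential projection $\Pi_{\bar x}$ equals the lateral projection, so $\bar v(\bar x)=\bar u^{\perp}(\bar x)$. One then takes a sequence $\bar x_n\to\bar x$ along the image of the radial geodesic $r_x$; on lateral vectors $\Upsilon$ is just $d\iota$, so $\bar u^{\perp}(\bar x_n)=d\iota(u^{\perp}(x_n))$. Since $u(x_n)=\kappa^i_{x_0}(x_n)\to u(x)$ in the Klein-model (Euclidean) sense and $u(x)$ is tangent to $\partial_i\Omega$, the lateral parts converge to $u(x)$. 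This yields $\bar v(\bar x)=u(x)$, which is holomorphic by hypothesis, and your reduction of \eqref{eq:E} on the sphere to the Cauchy--Riemann equation then finishes the argument. The fix is thus to drop the tangency claim for $\bar u$ and argue directly with the lateral component.
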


   \begin{proof}
         Note that $\partial\bar{\Omega}^*\setminus\bar{\Lambda}=(\cup_{i\in\cI}\partial_i\bar{\Omega})\cup(\cup_{j\in\cJ}\partial_j\bar{\Omega})\cup(\cup_{k\in\cK}\partial_k\bar{\Omega}^*)$. We first show that $\bar{u}$ satisfies equation \eqref{eq:E} on $(\cup_{j\in\cJ}\partial_j\bar{\Omega})\cup(\cup_{k\in\cK}\partial_k\bar{\Omega}^*)$.
         By assumption, $\dot g$ preserves the induced metric on
         $S_j$ and the third fundamental form on $S_k$ for all $j\in\cJ$ and $k\in\cK$. By Statement (3) of Proposition \ref{prop:dual}, $\dot g$ preserves the induced metric on $\partial_k\Omega^*$. Therefore, the restriction of $u$ to $\partial_j\Omega$ and $\partial_k\Omega^*$ are both isometric for all $j\in\cJ$ and $k\in\cK$. Combined with Statement (2) of Lemma \ref{lm:Pogorelov isometric}, the restriction of $\bar{u}$ to $\partial_j\bar{\Omega}$ and $\partial_k\bar{\Omega}^*$ are also isometric for all $j\in\cJ$ and $k\in\cK$. By Lemma \ref{lm:equation}, $\bar{v}$ satisfies equation \eqref{eq:E} on $(\cup_{j\in\cJ}\partial_j\bar{\Omega})\cup(\cup_{k\in\cK}\partial_k\bar{\Omega}^*)$.

         Since $\partial_i\bar{\Omega}\subset\partial\DD^3$, the shape operator $\bar{B}$ of $\partial_i\bar{\Omega}$ is the identity and thus $\bar{I}=\bar{\II}=\bar{\III}$. Then for each $\bar{X}\in T\partial_i\bar{\Omega}$, $$\partial_{\bar{\III}}(\bar{B}^{-1}\bar{v})(\bar{X})
        =\partial_{\bar{\III}}\bar{v}(\bar{X})
        =
        {\nabla}^{\bar{I}}_{\bar{X}}\bar{v}+\bar{J}{\nabla}^{\bar{I}}_{\bar{J}\bar{X}}\bar{v}~,$$
        which implies that $\bar{v}$ is holomorphic on $\cup_{i\in\cI}\partial_i\bar{\Omega}$ if and only if $\bar{v}$ satisfies equation \eqref{eq:E} on $\cup_{i\in\cI}\partial_i\bar{\Omega}$. We just need to show that $\bar{v}$ is holomorphic on $\cup_{i\in\cI}\partial_i\bar{\Omega}$. Since $\dot g$ preserves the conformal structure at infinity on $\partial_i\Omega$ for all $i\in \cI$, $u$ is a holomorphic vector field on $\cup_{i\in\cI}\partial_i\Omega$.

        By definition, $\bar{u}$ continuously extends to $\partial_i\bar{\Omega}$, 
        and therefore
        the sequence $(\bar{u}^{\perp}(\bar{x}_n))$ of lateral components of $\bar{u}(\bar{x}_n)$ converges to the lateral component  $\bar{u}^{\perp}(\bar{x})$, as $\bar{x}_n$ tends to $\bar{x}\in\partial_i\bar{\Omega}$ along the geodesic ray $\iota(r_x)$, with $r_x$ starting from $\tilde{S}^i_1$ orthogonally. Observe that $\partial_i\bar{\Omega}\subset\partial\DD^3$, the lateral component of $\bar{u}$ coincides with its tangential component $\bar{v}$ when restricted to $\cup_{i\in\cI}\partial_i\bar{\Omega}$, so $\bar{v}(\bar{x})=\bar{u}^{\perp}(\bar{x})$ is the limit of $\bar{u}^{\perp}(\bar{x}_n)$. By the definition of the infinitesimal Pogorelov map $\Upsilon$ (see Section \ref{subsec:Pogorelov}), it sends lateral directions to lateral directions. More precisely, $$\bar{u}^{\perp}(\bar{x}_n)=\Upsilon(u^{\perp}(x_n))=d\iota(u^{\perp}(x_n))~.$$
        Note that $u(x_n)$ tends to $u(x)$ as $x_n$ tends to $x\in\partial_i\Omega$ along the above geodesic ray $r_x$ and $u(x)$ is tangent to $\partial_i\Omega$. Combined with the fact that $\iota: U\supset \HH^3\rightarrow \R^3$ is an inclusion, then for each $\bar{x}\in\partial_i\bar{\Omega}$,
        $$\bar{v}(\bar{x})=\bar{u}^{\perp}(\bar{x})
        =\lim\limits_{n\rightarrow\infty} \bar{u}^{\perp}(\bar{x}_n)=
        \lim\limits_{n\rightarrow\infty} d\iota(u^{\perp}(x_n))=u^{\perp}(x)=u(x)~.$$
        Here the limits are with respect to the euclidean metric on $T\overline{\HH^3}$ or $T\overline{\DD^3}$. The second-to-last equality holds with $\partial_i\Omega$ identified with $\partial_i\bar{\Omega}$ under $\iota$. As a consequence, $\bar{v}$ is also holomorphic on $\cup_{i\in\cI}\partial_i\Omega$. The lemma follows.

        \end{proof}

        \subsection{H\"older estimate on $\bar{v}$}\label{subsec:Holder estimate}

In this subsection, we aim to show the following statement about the H\"older continuity of the tangential component $\bar{v}$ of the deformation vector field $\bar{u}$.

\begin{proposition}\label{prop:Holder estimate}
$\bar{v}$ is $C^{\alpha}$ H\"older continuous on $\partial\bar{\Omega}^*$ for all $\alpha\in (0,1)$.
\end{proposition}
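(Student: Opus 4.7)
The plan is to show that $\bar{v}$ extends continuously to $\bar{\Lambda}$ and that this extension is $C^{\alpha}$ Hölder continuous. Since $\bar{v}$ is smooth on $\partial\bar{\Omega}^*\setminus\bar{\Lambda}$ (because $u$ is smooth away from $\Lambda$ and $\Upsilon$ is smooth on $U$), the essential content is the behavior near and across the limit set $\bar{\Lambda}$, whose Hausdorff dimension lies in $[1,2)$ by Remark \ref{rk:Hausdorff dim}.

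First, I would establish that $\bar{v}$ is uniformly bounded on $\partial\bar{\Omega}^*\setminus\bar{\Lambda}$ and extends continuously to $\bar{\Lambda}$. For a point $\bar{x}^* \in \partial_k\bar{\Omega}^*$ near $\bar{\Lambda}$, Lemma \ref{lm:dual-Killing} gives
\[
\bar{u}(\bar{x}^*)=\bar{\tau}(\bar{x})+\bar{\sigma}(\bar{x})\times(\bar{x}^*-\bar{x})~.
\]
By Lemma \ref{lm: angle estimate}, the outward normal at $\bar{x}^*$ makes an angle of order $\sqrt{\|\bar{x}^*\|-1}$ with the radial direction, so the tangent plane is nearly lateral. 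Projecting onto it, the a priori unbounded contributions ($|\log(1-r)|$ from $\bar{\tau}^r,\bar{\sigma}^r$ and $1/\sqrt{1-r}$ from $\bar{\sigma}^{\perp}$, by Lemma \ref{lm:pogorelov_radial and lateral}) are defeated by the small radial component of the tangent plane and by the fact that $\|\bar{x}^*-\bar{x}\|$ is itself comparable to a power of $\sqrt{1-r}$ (since the Euclidean separation between dual points reflects the curvature and the angle estimate). A similar analysis handles the cases $\bar{x}\in\partial_j\bar{\Omega}$ and $\bar{x}\in\partial_i\bar{\Omega}$ (where $\bar{x}^*=\bar{x}$ and the tangent plane actually lies in $\partial\DD^3$), yielding a uniform bound $\|\bar{v}\|\le C$.

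Second, I would bound the tangential covariant derivative. Using the identification $\bar{\omega}(\bar{X})=(\bar{\nabla}_{\bar{X}}\bar{\tau}-\bar{\sigma}\times\bar{X},\bar{\nabla}_{\bar{X}}\bar{\sigma})$ from \eqref{eq:euclidean-one-form}, together with the bounds in Lemma \ref{lm:estimate-one-form}, I decompose any unit tangent $\bar{X}$ to $\partial\bar{\Omega}^*$ into its radial component $\bar{X}^r$ (of size $O(\sqrt{1-r})$ by Lemma \ref{lm: angle estimate}) and its lateral component $\bar{X}^{\perp}$. The singular $1/(1-r)$ bounds of Lemma \ref{lm:estimate-one-form} occur only for radial inputs, so multiplying by $\|\bar{X}^r\|=O(\sqrt{1-r})$ produces bounds of order $1/\sqrt{1-r}$. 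Combined with the angle estimate $\delta_{\partial\bar{\Omega}^*}(\bar{x},\bar{\Lambda})\asymp\sqrt{\|\bar{x}\|-1}$, this gives an estimate of the form
\[
\|\bar{\nabla}^{\partial\bar{\Omega}^*}\bar{v}(\bar{x})\|\le \frac{C}{\delta_{\partial\bar{\Omega}^*}(\bar{x},\bar{\Lambda})^{1-\alpha}}
\]
for every $\alpha\in(0,1)$ (the exponent $\alpha$ can be made arbitrarily close to $1$ by tracking more carefully which estimates in Lemma \ref{lm:estimate-one-form} are actually triggered by tangential directions, absorbing polynomial-in-$\delta$ losses). Integrating this bound along minimizing paths on the $C^{1,1}$ surface $\partial\bar{\Omega}^*$ (Remark \ref{rk:regularity-boundary}) yields $\|\bar{v}(\bar{x})-\bar{v}(\bar{y})\|\le C_\alpha \|\bar{x}-\bar{y}\|^{\alpha}$ for points away from $\bar{\Lambda}$; the extension to points of $\bar{\Lambda}$ follows from Step 1 by continuity.

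The main obstacle is the boundedness step: individual components of $\bar{\tau}$ and $\bar{\sigma}$ genuinely blow up near $\bar{\Lambda}$, and one must exploit the delicate matching between (i) the nearly-lateral geometry of the tangent plane, (ii) the cancellations hidden in $\bar{\tau}+\bar{\sigma}\times(\bar{x}^*-\bar{x})$, and (iii) the rate at which dual points approach each other near $\bar{\Lambda}$, to extract the uniform bound that then feeds the integration argument. The irregularity of $\bar{\Lambda}$ (of Hausdorff dimension possibly close to $2$) is handled by the fact that the estimates are purely along $\partial\bar{\Omega}^*$ and do not require Lemma \ref{lm:D(1,p)-condition} at this stage; that lemma will enter only later, to combine the present Hölder regularity with an integrability estimate and apply Vekua's Lemma \ref{lm:Vekua}.
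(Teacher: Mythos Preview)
Your outline has the right ingredients (radial/lateral decompositions, the angle estimates of Lemma~\ref{lm: angle estimate}, the pointwise bounds of Lemmas~\ref{lm:estimate-one-form} and~\ref{lm:pogorelov_radial and lateral}), and Step~1 is essentially correct: the tangential projection of $\bar\tau(\bar x)+\bar\sigma(\bar x)\times(\bar x^*-\bar x)$ is indeed uniformly bounded once one uses that the tangent plane is nearly lateral and that $\|\bar x^*-\bar x\|\lesssim\sqrt{1-\|\bar x\|}$. The gradient bound you sketch in Step~2 is also true (in fact the paper later obtains the sharper $\|\bar\nabla\bar v\|\le C|\log\delta|$ in Proposition~\ref{prop:Lp bound partially}).

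The gap is in the last step: integrating the tangential gradient along a minimizing geodesic on $\partial\bar\Omega^*$ does \emph{not} give a uniform H\"older bound for pairs $\bar x,\bar y$ lying on different connected components of $\partial\bar\Omega^*\setminus\bar\Lambda$. Any surface path joining such points must meet $\bar\Lambda$, where $\bar v$ is not yet defined and your gradient bound blows up; in the general convex co-compact case $\bar\Lambda$ is a fractal of dimension possibly close to $2$, a short geodesic can cross infinitely many complementary disks, and there is no a priori control on $\int_\gamma\delta^{-(1-\alpha)}$ or even $\int_\gamma|\log\delta|$. Your final sentence (``the extension to points of $\bar\Lambda$ follows from Step~1 by continuity'') only yields continuity of the extension, not the uniform estimate $\|\bar v(\bar x)-\bar v(\bar y)\|\le C_\alpha\|\bar x-\bar y\|^\alpha$ that you need to conclude $C^\alpha$ on all of $\partial\bar\Omega^*$.

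The paper circumvents this entirely by never integrating along the surface. For arbitrary $\bar x,\bar y\in\partial\bar\Omega^*\setminus\bar\Lambda$ it takes the point $\bar z$ on the \emph{chord} $[\bar x,\bar y]\subset\bar\Omega$ nearest to the center $\bar O$, subtracts the constant Euclidean Killing section $(\bar\tau_0,\bar\sigma_0)$ based at $\bar z$, and then controls $\delta\bar\tau(\bar x)$, $\delta\bar\sigma(\bar x)$ by integrating the one-form $\bar\omega$ along a spherical--then--radial path from $\bar z$ to $\bar x$ lying in the \emph{interior} of $\bar\Omega$ (this is the content of Lemma~\ref{lm:estimate-hmcb} and Claim~\ref{clm:deltatau}). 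The resulting five-term decomposition of $\bar v(\bar x)-\bar v(\bar y)$ is then bounded case by case using Claims~\ref{clm:estimate}, \ref{clm:proj angle}, \ref{clm:estimate-angle-dual} and~\ref{clm:Lipschitz}; no path ever touches $\bar\Lambda$. This three-dimensional detour through the convex body is the missing idea in your proposal.
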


Let $\bar{x}\not=\bar{y}\in\partial\bar{\Omega}\setminus\bar{\Lambda}$, and let $\bar{z}$ be the (unique) point on the geodesic segment $[\bar{x},\bar{y}]$ which is nearest to the center $\bar{O}$. Note that $\bar{z}$ might coincide with $\bar{x}$ (resp. $\bar y$), in which case the angle between $[\bar O,\bar x]$ and $[\bar x,\bar y]$ (resp. between $[\bar O,\bar y]$ and $[\bar y,\bar x]$) is at least $\pi/2$. Otherwise, $\bar z\in (\bar x,\bar y)$ and $[\bar O,\bar z]$ is orthogonal to $[\bar x,\bar y]$. However, one can check that $\bar{z}$ must lie in the interior of the segment $[\bar{x},\bar{y}]$ in some specific cases, for instance, if both $\bar{x}$ and $\bar{y}$ lie on $\partial_i\bar{\Omega}\subset \partial\DD^3$ for some $i\in\cI$.

Before giving the proof of Proposition \ref{prop:Holder estimate}, we first consider the following estimates.
\subsubsection{Distance estimates on $\partial\bar{\Omega}\setminus\bar{\Lambda}$ in $\R^3$}
\label{subsec:distance estimate}

We give some comparisons among the distances $\|\bar{x}-\bar{z}\|$, $\|\bar{y}-\bar{z}\|$, $\|\bar{x}-\bar{y}\|$ and $\sqrt{1-\|\bar{z}\|}$ in the following claim, which will be used in the proof of Lemma \ref{lm:Holder estimate-x close to y} later.

\begin{claim}\label{clm:estimate}
  For $\bar{x}\not=\bar{y}\in\partial\bar{\Omega}\setminus\bar{\Lambda}$, we have
  \begin{equation}\label{ineq:general}
    \|\bar{x}-\bar{y}\| \leq 2\max\{\|\bar{x}-\bar{z}\|,\|\bar{y}-\bar{z}\|\}\leq 2\sqrt{2}\cdot\sqrt{1-\|\bar{z}\|}~.
  \end{equation}
\end{claim}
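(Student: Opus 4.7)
The first inequality is just the triangle inequality: $\|\bar{x}-\bar{y}\|\le \|\bar{x}-\bar{z}\|+\|\bar{z}-\bar{y}\|\le 2\max\{\|\bar{x}-\bar{z}\|,\|\bar{y}-\bar{z}\|\}$, so I would dispatch it in one line. The substantive claim is the second inequality, and the key input is that $\partial\bar{\Omega}\setminus\bar{\Lambda}=(\cup_i\partial_i\bar\Omega)\cup(\cup_j\partial_j\bar\Omega)\cup(\cup_k\partial_k\bar\Omega)$ lies in the closed unit ball $\overline{\DD^3}$ (this was recorded in Remark~\ref{rk:regularity-boundary}: $\partial_i\bar\Omega\subset\partial\DD^3$, while $\partial_j\bar\Omega,\partial_k\bar\Omega\subset\DD^3$). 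Hence $\|\bar x\|,\|\bar y\|\le 1$, and consequently $\|\bar z\|\le 1$ too.

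The plan is to split into the two geometric cases flagged in the statement preceding the claim. In Case~(a), where $\bar z$ lies in the relative interior of the segment $[\bar x,\bar y]$, the minimality of $\|\bar z\|$ forces $\bar z-\bar O$ to be orthogonal to $\bar y-\bar x$, so by Pythagoras
\begin{equation*}
\|\bar x-\bar z\|^{2}=\|\bar x\|^{2}-\|\bar z\|^{2}\le 1-\|\bar z\|^{2},
\end{equation*}
and symmetrically for $\|\bar y-\bar z\|^2$. In Case~(b), say $\bar z=\bar x$, the minimality is equivalent to $\bar x\cdot(\bar y-\bar x)\ge 0$, i.e.\ $\bar x\cdot\bar y\ge\|\bar x\|^{2}$. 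Expanding gives
\begin{equation*}
\|\bar y-\bar z\|^{2}=\|\bar y-\bar x\|^{2}=\|\bar y\|^{2}-2\bar x\cdot\bar y+\|\bar x\|^{2}\le \|\bar y\|^{2}-\|\bar x\|^{2}\le 1-\|\bar z\|^{2},
\end{equation*}
and $\|\bar x-\bar z\|=0$ trivially satisfies the bound. The case $\bar z=\bar y$ is symmetric.

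In either case $\max\{\|\bar x-\bar z\|,\|\bar y-\bar z\|\}^{2}\le 1-\|\bar z\|^{2}=(1-\|\bar z\|)(1+\|\bar z\|)\le 2(1-\|\bar z\|)$, so taking square roots yields $\max\{\|\bar x-\bar z\|,\|\bar y-\bar z\|\}\le \sqrt{2}\sqrt{1-\|\bar z\|}$, which combined with the triangle inequality gives \eqref{ineq:general}. There is no real obstacle here; the only subtlety is making sure one correctly identifies the degenerate cases $\bar z=\bar x$ or $\bar z=\bar y$ (allowed by the statement) and handles them by the first-order minimality condition rather than orthogonality, but this is immediate from elementary Euclidean geometry.
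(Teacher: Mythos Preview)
Your proof is correct and follows essentially the same approach as the paper's own proof. The only cosmetic difference is that the paper handles both cases at once via the cosine law for the triangle $\bar O\bar z\bar x$, using that the angle $\angle\bar x\bar z\bar O$ is at least $\pi/2$ (which covers both the interior and endpoint situations), whereas you split explicitly into the orthogonal case (Pythagoras) and the endpoint case (first-order minimality $\bar x\cdot(\bar y-\bar x)\ge 0$); both routes arrive at $\|\bar x-\bar z\|^2\le 1-\|\bar z\|^2$ and finish identically.
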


\begin{proof}
  By the triangle inequality,
  \begin{equation*}
    \|\bar{x}-\bar{y}\|\leq \|\bar{x}-\bar{z}\|+\|\bar{y}-\bar{z}\|\leq  2\max\{\|\bar{x}-\bar{z}\|,\|\bar{y}-\bar{z}\|\}~.
  \end{equation*}
  By definition, $\bar{z}\in \DD^3$ and $\|\bar{z}\|\leq\min\{\|\bar{x}\|,\|\bar{y}\|\}$. Using the cosine formula, we have
  \begin{equation*}
    \|\bar{x}\|^2=\|\bar{z}\|^2+\|\bar{x}-\bar{z}\|^2-2\|\bar{z}\|\cdot\|\bar{x}-\bar{z}\|\cdot\cos\angle\bar{x}\bar{z}\bar{O}~.
  \end{equation*}
  Combined with the facts that $\|\bar{x}\|\leq 1$ and that the angle between $[\bar O,\bar z]$ and $[\bar x,\bar y]$ is at least $\pi/2$, we obtain that
  \begin{equation}\label{eq:cosine}
    \|\bar{x}-\bar{z}\|^2=\|\bar{x}\|^2-\|\bar{z}\|^2+2\|\bar{z}\|\cdot\|\bar{x}-\bar{z}\|\cdot\cos\angle\bar{x}\bar{z}\bar{O}\leq 1-\|\bar{z}\|^2\leq 2(1-\|\bar{z}\|)~.
  \end{equation}
  Similarly, we can show that
  \begin{equation*}
    \|\bar{y}-\bar{z}\|^2\leq 2(1-\|\bar{z}\|)~.
  \end{equation*}
  Therefore,
  \begin{equation*}
    \max\{\|\bar{x}-\bar{z}\|,\|\bar{y}-\bar{z}\|\}\leq \sqrt{2}\sqrt{1-\|\bar{z}\|}~.
  \end{equation*}
  Combined with the above result, this shows the inequality \eqref{ineq:general}.
\end{proof}

\subsubsection{Estimates on $(\bar{\tau}, \bar{\sigma})$}\label{subsec:const section}

Recall that $\bar{\tau}$ and $\bar{\sigma}$ denote the translation and rotation components of the deformation section $\bar{s}:=\Psi(s)$ on $\bar{\Omega}\cap\DD^3$.
We set $$\delta\bar{\tau}=\bar{\tau}-\bar{\tau}_0~,\quad\quad  \delta\bar{\sigma}=\bar{\sigma}-\bar{\sigma}_0~,$$
where $\bar{\tau}_0\in\Gamma(T\bar{\Omega})$ is a Killing vector field and $\bar{\sigma}_0\in\Gamma(T\bar{\Omega})$ is one half of the curl vector field of $\bar{\tau}_0$, with $\bar{\tau}_0(\bar{z})=\bar{\tau}(\bar{z})$ and  $\bar{\sigma}_0(\bar{z})=\bar{\sigma}(\bar{z})$. Also, $(\bar{\tau}_0,\bar{\sigma}_0)$ is identified with a constant section $\bar{\kappa}_0\in\Gamma(\bar{E})$ (see Section \ref{subsec:euclidean bundles}). In particular, $(\bar{\kappa}_0)_{\bar{x}}(\bar{y})=\bar{\tau}_0(\bar{y})$ for all $\bar{x},\bar{y}\in\bar{\Omega}$.

Let $\bar{x}\in\partial\bar{\Omega}\setminus\bar{\Lambda}$. Recall that the deformation one form $\bar{\omega}:=\Psi_*(\omega)$ is defined on $\bar{\Omega}\subset\DD^3$. For $\bar{x}\in(\partial\bar{\Omega}\setminus\bar\Lambda)\cap\DD^3$, by \eqref{eq:euclidean-one-form}, we have $\bar{\omega}_{\bar{\tau}}(\bar{X})=\bar{\nabla}_{\bar{X}} \bar{\tau}-\bar{\sigma}(\bar{x})\times \bar{X}$ and $\bar{\omega}_{\bar\sigma}(\bar X)=\bar{\nabla}_{\bar{X}} \bar{\sigma}$. Observe that $\bar{z}$ (defined at the beginning of Section \ref{subsec:Holder estimate}) can be connected to $\bar{x}$ along first the geodesic segment (with respect to the spherical metric) on the sphere of radius $\|\bar{z}\|$ centered at $\bar{O}$ with endpoint (say $\bar{\xi}$) lying on the line segment $[\bar{O},\bar{x}]$ and then along the radial segment from $\bar{\xi}$ to $\bar{x}$ (see Figure \ref{subfig:integral path-interior}). As shown in \cite[Propositions 4.4 and 4.5]{hmcb}, by considering the integration of $\|\bar{\omega}_{\bar{\tau}}\|$ (resp. $\|\bar{\omega}_{\bar{\sigma}}\|$) along these two paths and using the estimates (see Lemma \ref{lm:estimate-one-form}) of the radial and lateral components of the one-forms $\bar{\omega}_{\bar{\tau}}$ and $\bar{\omega}_{\bar{\sigma}}$, the following estimates on $\|\delta\bar{\tau}(\bar{x})\|$, $\|\delta\bar{\tau}^{\perp}(\bar{x})\|$, $\|\delta\bar \sigma(\bar x)\|$ and $\|\delta\bar\sigma^r(\bar x)\|$ are obtained.

Note that $(\partial\bar\Omega\setminus\bar\Lambda)\cap\DD^3=(\cup_{j\in\cJ}\partial_j\bar{\Omega})\cup(\cup_{k\in\cK}\partial_k\bar{\Omega})$, while $(\partial\bar\Omega\setminus\bar\Lambda)\cap\partial\DD^3=\cup_{i\in\cI}\partial_i\bar{\Omega}$.

\begin{lemma}\label{lm:estimate-hmcb}
For $\bar{x}\in(\cup_{j\in\cJ}\partial_j\bar{\Omega})\cup(\cup_{k\in\cK}\partial_k\bar{\Omega})$, we have the following estimates:
\begin{equation*}
\begin{split}
&\|\delta\bar{\tau}(\bar{x})\|
\leq C \left|\log\left(\frac{1-\|\bar{x}\|}{1-\|\bar{z}\|}\right)\right|~,\\
&\|\delta\bar{\tau}^{\perp}(\bar{x})\|
\leq C \|\bar{x}-\bar{z}\|+C\big(\sqrt{1-\|\bar{z}\|}-\sqrt{1-\|\bar{x}\|}\big)~;\\
&\|\delta\bar\sigma(\bar x)\|\leq \frac{C\|\bar x-\bar z\|}{1-\|\bar z\|}+C(\frac{1}{\sqrt{1-\|\bar x\|}}-\frac{1}{\sqrt{1-\|\bar z\|}})~,\\
&\|\delta\bar\sigma^r(\bar x)\|\leq C\left|\log\left(\frac{1-\|\bar x\|}{1-\|\bar z\|}\right)\right|~.\\
\end{split}
\end{equation*}
In particular, if $\|\bar{x}-\bar{z}\|$ is small enough, then
\begin{equation}\label{eq:x-z-small}
\begin{split}
\sqrt{1-\|\bar{x}\|}\cdot\|\delta\bar{\tau}(\bar{x})\|
&\leq C \sqrt{1-\|\bar{x}\|}\cdot \left|\log\left(\frac{1-\|\bar{x}\|}{1-\|\bar{z}\|}\right)\right|
\leq C \|\bar{x}-\bar{z}\|\cdot \big|\log(\|\bar{x}-\bar{z}\|)\big|~.
 \\
 \|\delta\bar{\tau}^{\perp}(\bar{x})\|
 &\leq C\|\bar{x}-\bar{z}\|+C\big(\sqrt{1-\|\bar{z}\|}-\sqrt{1-\|\bar{x}\|}\big)
 \leq C \|\bar{x}-\bar{z}\|~.
\end{split}
\end{equation}
\end{lemma}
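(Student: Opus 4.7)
The plan is to follow the argument of \cite[Propositions 4.4 and 4.5]{hmcb} and integrate the flat-connection identities. Since $\bar\kappa_0=(\bar\tau_0,\bar\sigma_0)$ is a constant section of $\bar E$, one has $d^{\bar D}\bar\kappa_0=0$, which in the $(\bar\tau_0,\bar\sigma_0)$-description of \eqref{eq:euclidean-one-form} says exactly $\bar\nabla_{\bar X}\bar\tau_0=\bar\sigma_0\times\bar X$ and $\bar\nabla_{\bar X}\bar\sigma_0=0$. Subtracting from the corresponding identities for $(\bar\tau,\bar\sigma)$ gives
\[
\bar\nabla_{\bar X}\delta\bar\sigma=\bar\omega_{\bar\sigma}(\bar X)~, \qquad
\bar\nabla_{\bar X}\delta\bar\tau=\bar\omega_{\bar\tau}(\bar X)+\delta\bar\sigma\times\bar X~.
\]
Since $\delta\bar\tau(\bar z)=\delta\bar\sigma(\bar z)=0$ by the very choice of $\bar\tau_0,\bar\sigma_0$, integrating from $\bar z$ to $\bar x$ along any path $\bar\gamma$ expresses $\delta\bar\tau(\bar x)$ and $\delta\bar\sigma(\bar x)$ as line integrals of $\bar\omega_{\bar\tau}$ and $\bar\omega_{\bar\sigma}$, together with an additional $\delta\bar\sigma\times\bar X$ correction for $\delta\bar\tau$.

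The path $\bar\gamma$ is the concatenation of two canonical pieces: first, a spherical arc on the sphere of radius $\|\bar z\|$ centered at $\bar O$, from $\bar z$ to the point $\bar\xi\in[\bar O,\bar x]$ with $\|\bar\xi\|=\|\bar z\|$, along which $\bar\gamma'(s)$ is purely lateral and $r:=\|\bar\gamma(s)\|$ stays equal to $\|\bar z\|$; second, the radial segment from $\bar\xi$ to $\bar x$, along which $\bar\gamma'(s)$ is purely radial and $r$ runs monotonically from $\|\bar z\|$ to $\|\bar x\|$, so $ds=dr$. Elementary spherical geometry gives that the arc length satisfies $\ell_1\leq(\pi/2)\|\bar z-\bar\xi\|$, while $\|\bar z-\bar\xi\|\leq \|\bar x-\bar z\|$ follows from the cosine formula and the fact that $\angle\bar O\bar z\bar x\geq\pi/2$ (exactly as in the proof of Claim \ref{clm:estimate}).

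With this setup, each of the four estimates is obtained by decomposing the integrand using Lemma \ref{lm:estimate-one-form} and computing elementary primitives. On the radial segment, the primitives $\int(1-r)^{-3/2}\,dr$, $\int(1-r)^{-1}\,dr$ and $\int(1-r)^{-1/2}\,dr$ yield precisely the three quantities $(1-\|\bar x\|)^{-1/2}-(1-\|\bar z\|)^{-1/2}$, $|\log((1-\|\bar x\|)/(1-\|\bar z\|))|$ and $\sqrt{1-\|\bar z\|}-\sqrt{1-\|\bar x\|}$ appearing in the claimed bounds, coming respectively from $\|\bar\omega^\perp_{\bar\sigma}(\bar R)\|\leq C(1-r)^{-3/2}$, from the $(1-r)^{-1}$-bounds on the radial components $\|\bar\omega^r_{\bar\tau}(\bar R)\|$ and $\|\bar\omega^r_{\bar\sigma}(\bar R)\|$, and from $\|\bar\omega^\perp_{\bar\tau}(\bar R)\|\leq C(1-r)^{-1/2}$. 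On the spherical arc, where $r=\|\bar z\|$ is fixed, the integrands $\|\bar\omega^\perp_{\bar\tau}(\bar v)\|\leq C$ and $\|\bar\omega^\perp_{\bar\sigma}(\bar v)\|\leq C/(1-\|\bar z\|)$ multiplied by $\ell_1$ produce the contributions of order $C\|\bar x-\bar z\|$ and $C\|\bar x-\bar z\|/(1-\|\bar z\|)$ respectively. The $\delta\bar\sigma\times\bar X$ correction in the $\delta\bar\tau$-equation is handled by first establishing the $\delta\bar\sigma$-bounds, then using them to majorize this additional contribution, which turns out to be of the same order as the $\bar\omega_{\bar\tau}$-contribution already present.

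The main technical care concerns separating the radial-at-$\bar x$ from the lateral-at-$\bar x$ components of the result when the integrand is defined at points along the path. On the radial segment this is immediate since the unit vector $\bar x/\|\bar x\|$ is constant along $[\bar\xi,\bar x]$, so ``radial at $\bar\gamma(s)$'' equals ``radial at $\bar x$'' pointwise; on the spherical arc the local radial direction rotates by the arc's central angle, but this rotation acts on vectors whose norms are controlled by the same integrand bounds, so it only produces contributions of the same order as those already present. Finally, the estimates \eqref{eq:x-z-small} for small $\|\bar x-\bar z\|$ follow by an elementary algebraic manipulation: writing $a=\sqrt{1-\|\bar x\|}$ and $b=\sqrt{1-\|\bar z\|}$, one has $|b^2-a^2|=|\|\bar x\|-\|\bar z\||\leq \|\bar x-\bar z\|$, hence $b-a=|b^2-a^2|/(a+b)\leq C\|\bar x-\bar z\|$ once $\bar x$ is close enough to $\bar z$, and the quantity $\sqrt{1-\|\bar x\|}\cdot|\log((1-\|\bar x\|)/(1-\|\bar z\|))|$ is dominated by $C\|\bar x-\bar z\|\cdot|\log\|\bar x-\bar z\||$ after a Taylor expansion of the logarithm.
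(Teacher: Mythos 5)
Your overall route is the same as the paper's: the paper itself does not reprove these estimates but refers to \cite[Propositions 4.4 and 4.5]{hmcb} and summarizes the method as integrating $\bar\omega_{\bar\tau}$, $\bar\omega_{\bar\sigma}$ along the spherical-arc-then-radial-segment path from $\bar z$ to $\bar x$ and applying Lemma \ref{lm:estimate-one-form}, which is exactly what you describe, including the flat-connection identities for $(\delta\bar\tau,\delta\bar\sigma)$ and the bookkeeping of the $\delta\bar\sigma\times\bar X$ correction.

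However, your derivation of \eqref{eq:x-z-small} has a genuine gap. You claim $\sqrt{1-\|\bar z\|}-\sqrt{1-\|\bar x\|}\leq C\|\bar x-\bar z\|$ by writing $b-a=|b^2-a^2|/(a+b)$ with $a=\sqrt{1-\|\bar x\|}$, $b=\sqrt{1-\|\bar z\|}$, using $|b^2-a^2|\leq\|\bar x-\bar z\|$ and "once $\bar x$ is close enough to $\bar z$". This does not work: $a+b\to 0$ as $\bar x,\bar z$ approach the limit set $\bar\Lambda$, which is precisely the regime where the estimate is needed, so $1/(a+b)$ is not bounded and the triangle inequality $|\|\bar x\|-\|\bar z\||\leq\|\bar x-\bar z\|$ is not strong enough. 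The actual input is geometric, not a smallness condition: because $\bar z$ is the nearest point to $\bar O$ on $[\bar x,\bar y]$, the angle $\angle\bar O\bar z\bar x$ is at least $\pi/2$, and when $\bar z\in(\bar x,\bar y)$ Pythagoras gives $\|\bar x\|^2-\|\bar z\|^2=\|\bar x-\bar z\|^2$; since $\|\bar x\|+\|\bar z\|\geq 2r_0>0$ this yields $\|\bar x\|-\|\bar z\|\leq C\|\bar x-\bar z\|^2$. Then use $b-a\leq\sqrt{b^2-a^2}$ (valid since $b-a\leq b+a$) to get $b-a\leq\sqrt{\|\bar x\|-\|\bar z\|}\leq C\|\bar x-\bar z\|$. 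It is the quadratic dependence of $\|\bar x\|-\|\bar z\|$ on $\|\bar x-\bar z\|$, not a lower bound on $a+b$, that rescues the estimate. The same $O(\|\bar x-\bar z\|^2)$ relation, together with $\|\bar x-\bar z\|\leq\sqrt{2}\sqrt{1-\|\bar z\|}$ from Claim \ref{clm:estimate}, is also what is needed to justify the first line of \eqref{eq:x-z-small}; a bare Taylor expansion of $\log$ plus the triangle inequality does not suffice.
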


\begin{claim}\label{clm:deltatau}
 Assume that 
 $\bar{x}\in \cup_{i\in\cI}\partial_i\bar{\Omega}$ and $\|\bar{x}-\bar{z}\|$ is small enough. Then
\begin{equation*}
\|\delta\bar{\tau}^{\perp}(\bar{x})\|\leq C\|\bar{x}-\bar{z}\|~.
\end{equation*}
\end{claim}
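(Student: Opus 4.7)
The plan is to reduce to the interior estimate of Lemma \ref{lm:estimate-hmcb} by approaching $\bar x$ radially, pass to the limit using the continuous boundary extension of the deformation vector field (which is available for the lateral component even though the radial one blows up), and then use the Pythagorean identity which holds precisely because $\bar x$ lies on the unit sphere.

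First, I would pick a sequence $\bar{x}_n$ on the open radial segment $(\bar{O},\bar{x})$ with $\bar{x}_n\to\bar{x}$, so $\bar{x}_n\in\bar{\Omega}\cap\DD^3$ for $n$ large. Repeating the path-integral derivation underlying Lemma \ref{lm:estimate-hmcb} along the broken path which consists of a circular arc of radius $\|\bar{z}\|$ from $\bar{z}$ to the point $\bar{\xi}_n$ on the radius $[\bar{O},\bar{x}]$, followed by the radial segment $[\bar{\xi}_n,\bar{x}_n]$, and using the bounds $\|\bar{\omega}_{\bar{\tau}}^{\perp}(\bar{R})\|\leq C/\sqrt{1-r}$ and $\|\bar{\omega}_{\bar{\tau}}^{\perp}(\bar{v})\|\leq C$ from Lemma \ref{lm:estimate-one-form} (the first has an integrable singularity at the unit sphere since $\int_{\|\bar{z}\|}^1 dr/\sqrt{1-r}=2\sqrt{1-\|\bar{z}\|}<\infty$), one obtains the same bound as in the first line of Lemma \ref{lm:estimate-hmcb}:
\[
\|\delta\bar{\tau}^{\perp}(\bar{x}_n)\|\leq C\|\bar{x}_n-\bar{z}\|+C\bigl(\sqrt{1-\|\bar{z}\|}-\sqrt{1-\|\bar{x}_n\|}\bigr)~.
\]

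Next I would pass to the limit $n\to\infty$. Lemma \ref{lm:pogorelov_radial and lateral} shows $\|\bar{\tau}^{\perp}\|$ is uniformly bounded on $\bar{\Omega}\cap\DD^3$, and the proof of Proposition \ref{prop:solution} shows that the deformation field $\bar{u}$ extends continuously to $\partial_i\bar{\Omega}$ along radial rays, with limit value lateral to the sphere; consequently $\bar{\tau}^{\perp}(\bar{x}_n)$ converges, and subtracting the globally smooth Killing field $\bar{\tau}_0$ gives $\delta\bar{\tau}^{\perp}(\bar{x}_n)\to\delta\bar{\tau}^{\perp}(\bar{x})$. Since $\sqrt{1-\|\bar{x}_n\|}\to 0$, the inequality yields
\[
\|\delta\bar{\tau}^{\perp}(\bar{x})\|\leq C\|\bar{x}-\bar{z}\|+C\sqrt{1-\|\bar{z}\|}~.
\]
To clinch the claim I would then invoke the spherical Pythagorean identity: because $\|\bar{x}\|=1$ and $\bar{z}$ is the closest point on $[\bar{x},\bar{y}]$ to $\bar{O}$, whenever $\bar{z}$ lies in the interior of the segment the line $\bar{O}\bar{z}$ is orthogonal to $\bar{x}\bar{y}$, so the right triangle $\bar{O}\bar{z}\bar{x}$ with hypotenuse of length one satisfies $\|\bar{z}\|^2+\|\bar{x}-\bar{z}\|^2=1$, giving
\[
\sqrt{1-\|\bar{z}\|}=\frac{\|\bar{x}-\bar{z}\|}{\sqrt{1+\|\bar{z}\|}}\leq\|\bar{x}-\bar{z}\|~.
\]
Combined with the previous step this delivers the desired bound. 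The main subtlety I anticipate is the degenerate configuration $\bar{z}=\bar{y}$ where the foot of perpendicular from $\bar{O}$ falls outside the chord; here I would argue that for $\|\bar{x}-\bar{z}\|$ small enough this case is compatible with the bound, because if $\bar{y}$ lies on a surface $\partial_j\bar{\Omega}$ or $\partial_k\bar{\Omega}$ approaching $\bar{\Lambda}$, the angle estimate of Lemma \ref{lm: angle estimate} forces the tangential angular separation between $\bar{x}$ and $\bar{y}$ to be comparable to $\sqrt{1-\|\bar{y}\|}$, so the ratio $\sqrt{1-\|\bar{z}\|}/\|\bar{x}-\bar{z}\|$ remains controlled; this is the technical point that requires the most care to verify.
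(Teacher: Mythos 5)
Your proposal is essentially correct and parallels the paper's own strategy of approximating $\bar x$ from the interior of $\DD^3$, applying the interior estimate (the first display of Lemma \ref{lm:estimate-hmcb}), and passing to the limit via the continuous extension of the lateral component. The route differs in two small but real ways. First, you approach $\bar x$ along the radius $(\bar O,\bar x)$, while the paper takes $\bar x_t\in[\bar x,\bar z]$ on the chord; both approach paths are straight segments (hence geodesics in the Klein model) terminating at $\bar x$, and both rely on the same implicit assertion --- that the lateral component of $\bar\tau$ extends continuously to $\partial_i\bar\Omega$, not merely along the specific ray $\iota(r_x)$ from Section \ref{subsec:deformation field} --- so neither is more economical here. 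Second, and more substantively, you take the limit of the \emph{raw} estimate $\|\delta\bar\tau^{\perp}(\bar x_n)\|\le C\|\bar x_n-\bar z\|+C(\sqrt{1-\|\bar z\|}-\sqrt{1-\|\bar x_n\|})$ to obtain $\|\delta\bar\tau^{\perp}(\bar x)\|\le C\|\bar x-\bar z\|+C\sqrt{1-\|\bar z\|}$, and then prove the geometric input $\sqrt{1-\|\bar z\|}\le\|\bar x-\bar z\|$ directly, whereas the paper applies the already-packaged second inequality of \eqref{eq:x-z-small} at the interior points $\bar x_t$ and then takes the limit. Your Pythagorean computation --- which exploits $\|\bar x\|=1$ to get $1-\|\bar z\|^2=\|\bar x-\bar z\|^2$ when $\bar z\in(\bar x,\bar y)$ --- is correct and makes transparent \emph{why} the bound is clean on the sphere, something the paper leaves buried in the citation to \eqref{eq:x-z-small}.

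On the degenerate case $\bar z=\bar y$: your instinct about where the difficulty lies is right, but you should not leave it as a sketch. Observe first that $\bar z=\bar y$ forces $\bar y$ to lie on a \emph{different} component of $\partial\bar\Omega\setminus\bar\Lambda$ than $\bar x$ (as the paper notes, if both $\bar x,\bar y\in\partial_i\bar\Omega\subset\partial\DD^3$ then $\bar z$ is automatically interior to $[\bar x,\bar y]$). With $\|\bar x-\bar y\|$ small, both points are then close to $\bar\Lambda$, and the estimate you need, $\sqrt{1-\|\bar y\|}\le C\|\bar x-\bar y\|$, is precisely the second assertion of Claim \ref{clm:Lipschitz}(2), proved from Lemma \ref{lm: angle estimate} together with the uniform bounds on principal curvatures in Lemma \ref{prop:boundary regularity}. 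Since that claim is stated after the present one, you should either reproduce its short proof in place or re-order the material; once that is done your argument closes.
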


\begin{proof}
Let $(\bar{x}_t)_{t\in[\varepsilon_0,1]}$ be a family of points on the segment $[\bar{x},\bar{z}]$ with the distance $\|\bar{x}_t-\bar{x}\|=1-t$. By Lemma \ref{lm:omega}, the one-form $\omega$ over $\Omega$ (including those points near the boundary $\partial_i\Omega$) has a uniform bound. Observe that $\bar{z}$ can be connected to $\bar{x}_t$ along first the geodesic segment on the sphere of radius $\|\bar{z}\|$ centered at $\bar{O}$ with endpoint (say $\bar{\xi}_t$) lying on the line segment $[\bar{O},\bar{x}_t]$ and then along the segment from $\bar{\xi}_t$ to $\bar{x}_t$ (see Figure \ref{subfig:integral path-boundary}).  As shown in \cite[Proposition 4.4]{hmcb}), integrating $\|\bar{\omega}_{\bar{\tau}}\|$ (resp. $\|\bar{\omega}_{\bar{\sigma}}\|$) along these two paths and combining the estimates (see Lemma \ref{lm:estimate-one-form}) of the radial and lateral components of the one-forms $\bar{\omega}_{\bar{\tau}}$ (resp. $\bar{\omega}_{\bar{\sigma}}$), the results of Lemma \ref{lm:estimate-hmcb} actually still hold with $\bar{x}$ replaced by $\bar{x}_t$.


\begin{figure}
\begin{subfigure}[b]{0.46\textwidth}
\centering
\begin{tikzpicture}[scale=0.85]
\draw  (0,0) ellipse (3 and 3);
\draw [snake=snake][white][fill=gray,opacity=0.3] (-2.6,1.5) --(-1,1.2)--(0,1.1)-- (1,1.2)--(2.6,1.5);
\draw [snake=snake](-2.6,1.5) --(-1,1.2)--(0,1.1)-- (1,1.2)--(2.6,1.5);
\draw [densely dotted, thick, snake=snake] (-2.6,1.5) --(-2.3,1.52)--(-1.2,1.55)-- (-0.8,1.6)--(0,1.5)-- (0.8,1.6)--(1.2,1.55)--(2.2,1.52)--(2.6,1.5);
\draw [snake=snake] (-2.95,0.48)--(0,-3);
\draw [densely dotted, thick, snake=snake][white] [fill=blue,opacity=0.15] (-2.95,0.48)--(-2.7,0.35)--(-1.5,-0.75)--(-1.0,-1.5)--(-0.4,-2.15)--(0,-3);
\draw [densely dotted, thick, snake=snake] (-2.95,0.48)--(-2.7,0.35)--(-1.5,-0.75)--(-1.0,-1.5)--(-0.4,-2.15)--(0,-3);
\draw[line width=0.25mm][white][fill=gray,opacity=0.3](-2.6,1.5) .. controls (-2.1,2.5) and (2.1,2.5) .. (2.6,1.5);
\draw[line width=0.25mm](-2.6,1.5) .. controls (-2.1,2.5) and (2.1,2.5) .. (2.6,1.5);
\draw[line width=0.25mm][white][fill=blue,opacity=0.15](-2.95,0.48) .. controls (-3.1,-1.1) and (-1.5,-2.8) .. (0,-3);
\draw[line width=0.25mm](-2.95,0.48) .. controls (-3.1,-1.1) and (-1.5,-2.8) .. (0,-3);
\draw [line width=0.2mm, densely dashed, red](0,0) ellipse (2.51 and 2.51);
\draw [line width=0.2mm, densely dashed,red](0,0) .. controls (-2,1.95) and (-2,1.95) .. (-2,1.95);
\draw [fill=black] (0,0) ellipse (0.04 and 0.04);
\node at (0.3,0) {$\bar O$};
\draw [fill=black] (-2,1.95) ellipse (0.04 and 0.04);
\node at (-1.85,2.15) {$\bar x$};
\draw [fill=black] (-2.82,-0.5) ellipse (0.04 and 0.04);
\node at (-2.6,-0.55) {$\bar y$};
\draw[blue,line width=0.2mm](-2.82,-0.5) .. controls (-2,1.95) and (-2,1.95) .. (-2,1.95);
\draw[densely dotted, blue,line width=0.2mm](0,0) .. controls (-2.4,0.8) and (-2.4,0.8) .. (-2.4,0.8);
\draw [fill=black] (-2.38,0.8) ellipse (0.04 and 0.04);
\node at (-2.6,0.8) {$\bar z$};
\draw [line width=0.2mm, densely dashed, red](0,0) ellipse (2.51 and 2.51);
\draw [line width=0.2mm, densely dashed,red](0,0) .. controls (-2,1.95) and (-2,1.95) .. (-2,1.95);
\draw [fill=blue] (-1.8,1.75) ellipse (0.04 and 0.04);
\node at (-1.45,1.8) {$\bar{\xi}$};
\draw [line width=0.35mm, red](-1.8,1.75) .. controls (-2,1.95) and (-2,1.95) .. (-2,1.95);
\draw [line width=0.35mm, red](-2.38,0.8) .. controls (-2.2,1.35) and (-1.8,1.75) .. (-1.8,1.75);
\end{tikzpicture}
\caption{\small{$\bar{x}\in\partial\bar{\Omega}\cap \DD^3$.}}
\label{subfig:integral path-interior}
\end{subfigure}
\begin{subfigure}[b]{0.46\textwidth}
\centering
\begin{tikzpicture}[scale=0.85]
\draw  (0,0) ellipse (3 and 3);
\draw [snake=snake][white][fill=yellow,opacity=0.15] (-2.6,1.5) --(-1,1.2)--(0,1.1)-- (1,1.2)--(2.6,1.5);
\draw [snake=snake](-2.6,1.5) --(-1,1.2)--(0,1.1)-- (1,1.2)--(2.6,1.5);
\draw [densely dotted, thick, snake=snake] (-2.6,1.5) --(-2.3,1.52)--(-1.2,1.55)-- (-0.8,1.6)--(0,1.5)-- (0.8,1.6)--(1.2,1.55)--(2.2,1.52)--(2.6,1.5);
\draw [snake=snake] (-2.95,0.48)--(0,-3);
\draw [densely dotted, thick, snake=snake][white] [fill=blue,opacity=0.15] (-2.95,0.48)--(-2.7,0.35)--(-1.5,-0.75)--(-1.0,-1.5)--(-0.4,-2.15)--(0,-3);
\draw [densely dotted, thick, snake=snake] (-2.95,0.48)--(-2.7,0.35)--(-1.5,-0.75)--(-1.0,-1.5)--(-0.4,-2.15)--(0,-3);
\draw[line width=0.25mm][white][fill=yellow,opacity=0.15](-2.6,1.5) .. controls (-1.5,3.4925) and (1.56,3.46) .. (2.6,1.5);
\draw[line width=0.25mm](-2.6,1.5) .. controls (-1.5,3.4925) and (1.56,3.46) .. (2.6,1.5);
\draw[line width=0.25mm][white][fill=blue,opacity=0.15](-2.95,0.48) .. controls (-3.1,-1.1) and (-1.5,-2.8) .. (0,-3);
\draw[line width=0.25mm](-2.95,0.48) .. controls (-3.1,-1.1) and (-1.5,-2.8) .. (0,-3);
\draw [fill=black] (0,0) ellipse (0.04 and 0.04);
\draw [line width=0.35mm, red](-1.8,1.75) .. controls (-2,1.95) and (-2,1.95) .. (-2,1.95);
\draw [line width=0.35mm, red](-2.38,0.8) .. controls (-2.2,1.35) and (-1.8,1.75) .. (-1.8,1.75);
\draw [line width=0.2mm, red, densely dashed](-2,1.95)--(0,0);
\node at (0.3,0) {$\bar O$};
\draw [fill=black] (-2,1.95) ellipse (0.04 and 0.04);
\node at (-2.8, 2) {$\bar x_t$};
\draw [->] (-2.6,2) .. controls (-2.3,2.1) and (-2.3,2.1) .. (-2.05,2);
\draw [fill=black] (-1.86,2.36) ellipse (0.04 and 0.04);
\node at (-1.8,2.6) {$\bar x$};
\draw [fill=black] (-2.82,-0.5) ellipse (0.04 and 0.04);
\node at (-2.64,-0.58) {$\bar y$};
\draw[blue,line width=0.2mm](-2.82,-0.5) .. controls (-1.85,2.36) and (-1.85,2.36) .. (-1.85,2.36);
\draw[densely dotted, blue,line width=0.2mm](0,0) .. controls (-2.4,0.8) and (-2.4,0.8) .. (-2.4,0.8);
\draw [fill=black] (-2.38,0.8) ellipse (0.04 and 0.04);
\node at (-2.6,0.8) {$\bar z$};
\draw [<-] (-1.7, 1.75) .. controls (-1.4,1.95) and (-1.4,1.95) .. (-1.2,1.95);
\draw [fill=blue] (-1.8,1.75) ellipse (0.04 and 0.04);
\node at (-1,1.93) {$\bar{\xi}_t$};
\draw [line width=0.2mm, densely dashed, red](0,0) ellipse (2.51 and 2.51);
\end{tikzpicture}
\caption{\small{$\bar{x}\in\partial\bar{\Omega}\cap\partial\DD^3$.}}
\label{subfig:integral path-boundary}
\end{subfigure}
\caption{\small{Integral path from $\bar{z}$ to $\bar{x}$ (resp. $\bar{x}_t$), where $\bar{x}_t$ tends to $\bar{x}$ as $t\rightarrow 1$ (the shaded regions are two connected components of $\partial\bar{\Omega}\setminus\bar\Lambda$).}}
\end{figure}
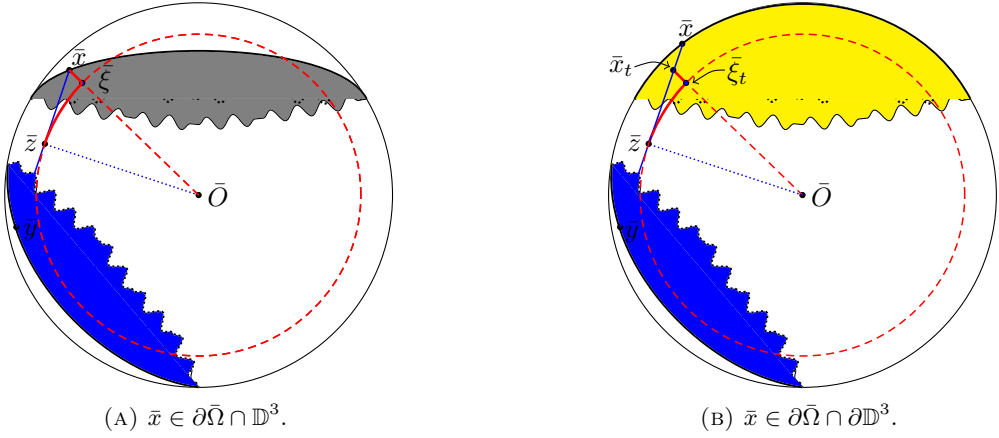

By assumption that $\|\bar{x}-\bar{z}\|$ is small enough, $\|\bar{x}_t-\bar{z}\|$ is small enough. For $t\in(\epsilon_0,1)$, we can apply $\bar{x}_t\in\DD^3\cap\bar{\Omega}$ to the second inequality in Equation \eqref{eq:x-z-small} and obtain that
\begin{equation*}
\|\delta\bar{\tau}^{\perp}(\bar{x}_t)\|\leq C\|\bar{x}_t-\bar{z}\|~.
\end{equation*}
Let $t\rightarrow 1$. Then $\bar{x}_t$ converges to $\bar{x}$. Since the action on the lateral vectors under the infinitesimal Pogorelov map $\Upsilon$ continuously extends to the boundary $\cup_{i\in\cI}\partial_i\bar{\Omega}$ (see e.g. the proof of Proposition \ref{prop:solution}),
\begin{equation*}
\|\delta\bar{\tau}^{\perp}(\bar{x})\|\leq C\|\bar{x}-\bar{z}\|~.
\end{equation*}
The claim follows.
\end{proof}

\subsubsection{Estimates on projection angles to $T\partial\bar{\Omega}$ and $T\partial\bar{\Omega}^*$ of radial and lateral vectors}\label{subsec:estimate-angles}

Let $\bar{x}\in\partial\bar{\Omega}\setminus\bar{\Lambda}$ and let $P_{\bar{x}}$ denote the tangent plane to $\partial\bar{\Omega}$ at $\bar{x}$. Let $\alpha^r_{\bar{x}}$ (resp. $\alpha^{\perp}_{\bar{x}}$) denote the projection angles to the tangent planes $P_{\bar{x}}$ of a radial (resp. lateral) vector at $\bar{z}$. Recall that $\theta_{\bar{x}}$ is the angle between the outwards-pointing normal vector to $\partial\bar{\Omega}$ at $\bar{x}$ and the radial direction $\bar{O}\bar{x}$ at $\bar{x}$.

\begin{claim}\label{clm:proj angle}
  Let $\bar{x}, \bar{y}\in\partial\bar{\Omega}\setminus\bar{\Lambda}$ with $\bar{x}$, $\bar{y}$ close enough to $\bar{\Lambda}$, then
  \begin{equation*}
    \max\{\cos\alpha^r_{\bar{x}},\cos\alpha^r_{\bar{y}}\}
    \leq C\sqrt{1-\|\bar{z}\|}~.
 \end{equation*}
 In particular, the radial component $(\bar{x}-\bar{y})^r$ of $\bar{x}-\bar{y}$ at $\bar{z}$ satisfies that
 \begin{equation}\label{eq:x-y-r}
 \|(\bar{x}-\bar{y})^r\|\leq C\|\bar{x}-\bar{y}\|\cdot\sqrt{1-\|\bar z\|}~.
 \end{equation}
\end{claim}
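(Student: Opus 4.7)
The plan is to bound $\cos\alpha^r_{\bar x}$ first by a spherical triangle-inequality argument controlling the angular difference between the radial direction at $\bar z$ and the normal direction to $P_{\bar x}$, and then to derive the radial component estimate by case analysis on the position of $\bar z$ on $[\bar x,\bar y]$, invoking the convexity of $\bar\Omega$ in the endpoint case.

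For the first bound, let $\hat n_{\bar x}$ denote the outward unit normal to $\partial\bar\Omega$ at $\bar x$ (well-defined by the $C^{1,1}$ regularity in Remark~\ref{rk:regularity-boundary}) and let $\beta$ be the angle between $\hat r_{\bar z}:=\bar z/\|\bar z\|$ and $\hat n_{\bar x}$. By definition, $\cos\alpha^r_{\bar x}=|\sin\beta|$, the norm of the orthogonal projection of $\hat r_{\bar z}$ onto $P_{\bar x}$. The spherical triangle inequality among the three unit vectors $\hat r_{\bar z}$, $\hat r_{\bar x}:=\bar x/\|\bar x\|$ and $\hat n_{\bar x}$ yields $\beta\leq\phi+\theta_{\bar x}$, where $\phi:=\angle\bar z\bar O\bar x$. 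Lemma~\ref{lm: angle estimate} gives $\theta_{\bar x}\leq C\sqrt{1-\|\bar x\|}\leq C\sqrt{1-\|\bar z\|}$, while the law of sines in the triangle $\bar O\bar z\bar x$ combined with the bound $\|\bar x-\bar z\|\leq\sqrt{2}\sqrt{1-\|\bar z\|}$ from the proof of Claim~\ref{clm:estimate} and $\|\bar x\|\geq 1/2$ yields $\phi\leq C\sqrt{1-\|\bar z\|}$. Since $\phi+\theta_{\bar x}<\pi/2$ for $\bar x$ close enough to $\bar\Lambda$, one gets $|\sin\beta|\leq\phi+\theta_{\bar x}\leq C\sqrt{1-\|\bar z\|}$, and the same estimate for $\cos\alpha^r_{\bar y}$ follows symmetrically.

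For the second bound, set $\hat u:=(\bar x-\bar y)/\|\bar x-\bar y\|$, so $\|(\bar x-\bar y)^r\|=\|\bar x-\bar y\|\,|\hat u\cdot\hat r_{\bar z}|$, and it suffices to show $|\hat u\cdot\hat r_{\bar z}|\leq C\sqrt{1-\|\bar z\|}$. If $\bar z$ lies strictly between $\bar x$ and $\bar y$, the minimizing property of $\bar z$ gives $[\bar O,\bar z]\perp[\bar x,\bar y]$, so $\hat u\cdot\hat r_{\bar z}=0$. The remaining case is $\bar z=\bar x$ (the case $\bar z=\bar y$ being symmetric); this forces $\bar x\in\DD^3$, because a chord joining two distinct points of $\partial\DD^3$ attains its minimal distance to $\bar O$ strictly inside $\DD^3$, so both $\bar x$ and $\bar y$ cannot lie on $\partial\DD^3$ simultaneously when $\bar z=\bar x$. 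Here I would combine two inequalities: the minimizing condition at the endpoint $\bar z=\bar x$ gives $\hat r_{\bar x}\cdot\hat u\leq 0$, while convexity of $\bar\Omega$ with $\bar y\in\bar\Omega$ gives $\hat n_{\bar x}\cdot\hat u\geq 0$. Writing $\hat r_{\bar x}=\cos\theta_{\bar x}\,\hat n_{\bar x}+\sin\theta_{\bar x}\,\hat t$ for a unit tangent vector $\hat t\in P_{\bar x}$, these two inequalities force $\cos\theta_{\bar x}(\hat n_{\bar x}\cdot\hat u)\leq\sin\theta_{\bar x}|\hat t\cdot\hat u|\leq\sin\theta_{\bar x}$, and hence $|\hat u\cdot\hat r_{\bar x}|\leq C\sin\theta_{\bar x}\leq C\sqrt{1-\|\bar z\|}$ by Lemma~\ref{lm: angle estimate}.

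The main obstacle is the endpoint case $\bar z=\bar x$, where the orthogonality that trivialises the interior case disappears. One has to exploit simultaneously the convexity of $\bar\Omega$ (to fix the sign of $\hat n_{\bar x}\cdot\hat u$) and the proximity of $\hat n_{\bar x}$ to $\hat r_{\bar x}$ (quantified by Lemma~\ref{lm: angle estimate}); without both ingredients one cannot improve the trivial bound $|\hat u\cdot\hat r_{\bar z}|\leq 1$ to the required $\sqrt{1-\|\bar z\|}$ rate.
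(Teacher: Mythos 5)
Your proof is correct and uses essentially the same geometric ingredients as the paper's argument, just re-packaged. Where you bound $\cos\alpha^r_{\bar x}=\sin\beta$ by the spherical triangle inequality $\beta\leq\phi+\theta_{\bar x}$ (with $\phi=\angle\bar z\bar O\bar x$), the paper introduces the auxiliary plane $Q_{\bar x}$ orthogonal to $[\bar O,\bar x]$ at $\bar x$, sets $\zeta^r_{\bar x}=\pi/2-\phi$, derives $\zeta^r_{\bar x}-\theta_{\bar x}\leq\alpha^r_{\bar x}\leq\zeta^r_{\bar x}+\theta_{\bar x}$, and expands $\cos(\zeta^r_{\bar x}-\theta_{\bar x})$ by the cosine addition formula; these are the same estimate written with complementary angles. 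Your endpoint case $\bar z=\bar x$ likewise mirrors the paper's wedge argument between $Q_{\bar x}$ and $P_{\bar x}$: the minimizing condition $\hat r_{\bar x}\cdot\hat u\leq 0$ and the convexity condition $\hat n_{\bar x}\cdot\hat u\geq 0$ are exactly what confine $\hat u$ (up to sign) to the dihedral wedge of opening $\theta_{\bar x}$, which is the paper's geometric statement. A small advantage of your formulation is that it handles the interior and endpoint subcases of the first bound uniformly (the paper spells out four sign configurations in Figure~\ref{fig:proj-angle}).

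One minor wrinkle worth making explicit: the assertion that $\phi+\theta_{\bar x}<\pi/2$ ``for $\bar x$ close enough to $\bar\Lambda$'' is not literally forced by that hypothesis alone, since $\phi$ depends on $\bar z$, and $\|\bar z\|$ can stay bounded away from $1$ if $\bar x$ and $\bar y$ are far apart on $\partial\bar\Omega$. This does not cost you anything, because in that regime $\sqrt{1-\|\bar z\|}$ is bounded below and $\cos\alpha^r_{\bar x}\leq 1\leq C\sqrt{1-\|\bar z\|}$ for $C$ large; but the argument should state that the nontrivial case is $\|\bar z\|$ near $1$, where the law-of-sines bound $\sin\phi\leq C\|\bar x-\bar z\|\leq C\sqrt{1-\|\bar z\|}$ does force $\phi$ small and legitimizes replacing $\sin\beta$ by $\phi+\theta_{\bar x}$.
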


\begin{proof}
  Let $Q_{\bar{x}}$ denote the plane orthogonal to the segment $[\bar O,\bar x]$ at $\bar{x}$ and let $\zeta^r_{\bar x}$ denote  the projection angle to the plane $Q_{\bar{x}}$ of a radial vector at $\bar{z}$, while the (dihedral) angle between the tangent plane $P_{\bar{x}}$ to $\partial\bar{\Omega}$ at $\bar{x}$ and the plane $Q_{\bar{x}}$ is $\theta_{\bar{x}}$.  As a consequence, the projection angle $\alpha^r_{\bar{x}}$ satisfies
  \begin{equation}\label{eq:proj-angle-interior}
       \zeta^r_{\bar x}-\theta_{\bar{x}}\leq \alpha^r_{\bar{x}}\leq \zeta^r_{\bar x}+\theta_{\bar{x}}~,
  \end{equation}
       where the equalities hold only if the plane $O\bar{x}\bar{y}$ is orthogonal to $P_{\bar{x}}$. In particular, the left equality holds if $\bar z\in(\bar x,\bar y)$ and the inward-pointing normal direction to $P_{\bar{x}}$ at $\bar{x}$ lies on the boundary $\bar{O}\bar{x}$ or inside the triangle $\bar{x}\bar{O}\bar{y}$ (as shown in Figure \ref{fig:proj-angle-interior1}), or $\bar z =\bar x$ (as shown in Figure \ref{fig:proj-angle-z=x}), while the right equality holds if $\bar z\in(\bar x,\bar y)$ and the inward-pointing normal direction to $P_{\bar{x}}$ at $\bar{x}$ lies on the boundary $\bar{O}\bar{x}$ or outside the triangle $\bar{x}\bar{O}\bar{y}$ (as shown in Figure \ref{fig:proj-angle-interior2}). In particular, $\theta_{\bar{x}}=0$ if and only if 
       the inward-pointing normal direction to $P_{\bar{x}}$ at $\bar{x}$ lies on the segment $\bar{O}\bar{x}$.

  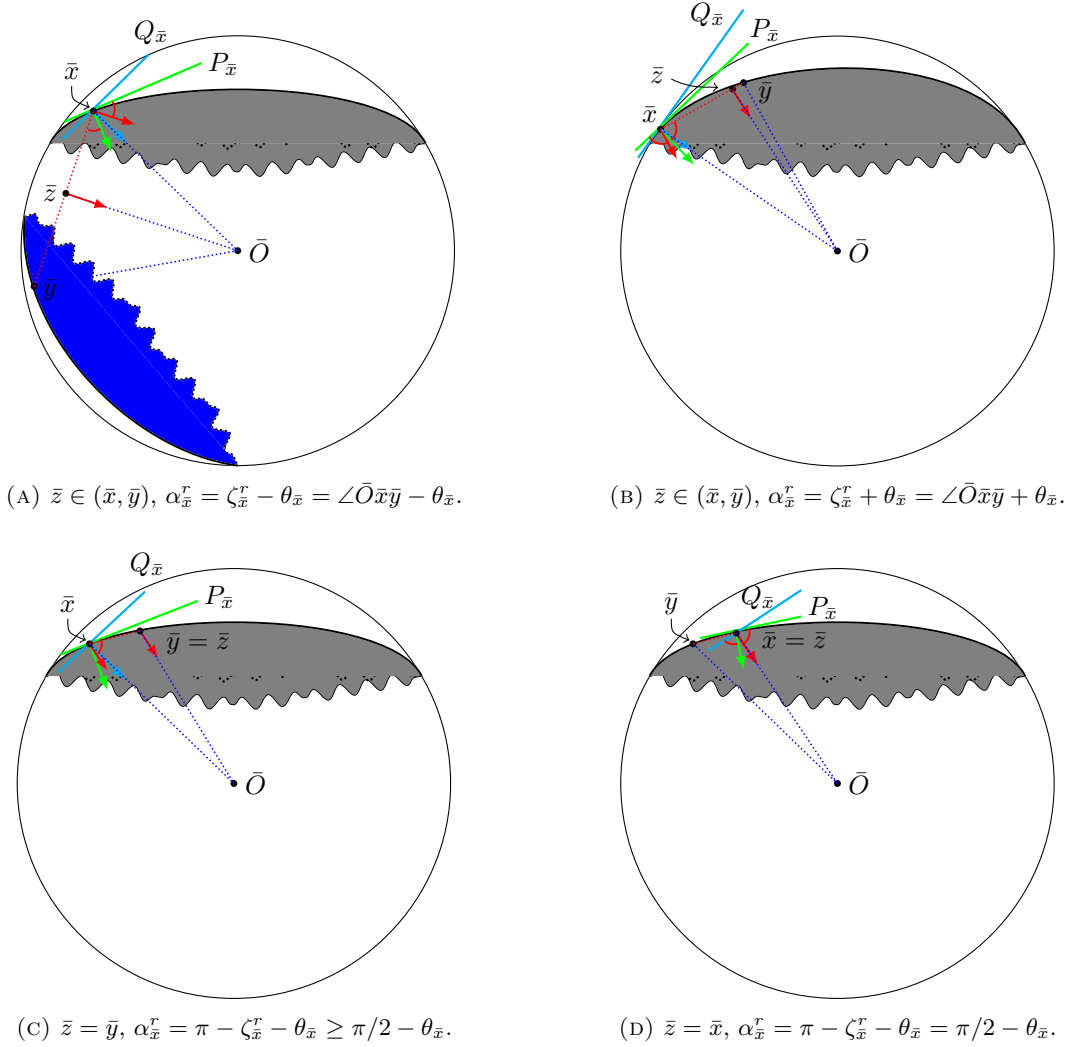
\begin{figure}
\begin{subfigure}[b]{0.46\textwidth}
\centering
\begin{tikzpicture}[scale=0.95]
\draw  (0,0) ellipse (3 and 3);
\draw [snake=snake][white][fill=gray,opacity=0.3] (-2.6,1.5) --(-1,1.2)--(0,1.1)-- (1,1.2)--(2.6,1.5);
\draw [snake=snake](-2.6,1.5) --(-1,1.2)--(0,1.1)-- (1,1.2)--(2.6,1.5);
\draw [densely dotted, thick, snake=snake] (-2.6,1.5) --(-2.3,1.52)--(-1.2,1.55)-- (-0.8,1.6)--(0,1.5)-- (0.8,1.6)--(1.2,1.55)--(2.2,1.52)--(2.6,1.5);
\draw [snake=snake] (-2.95,0.48)--(0,-3);
\draw [densely dotted, thick, snake=snake][white] [fill=blue,opacity=0.15] (-2.95,0.48)--(-2.7,0.35)--(-1.5,-0.75)--(-1.0,-1.5)--(-0.4,-2.15)--(0,-3);
\draw [densely dotted, thick, snake=snake] (-2.95,0.48)--(-2.7,0.35)--(-1.5,-0.75)--(-1.0,-1.5)--(-0.4,-2.15)--(0,-3);
\draw[line width=0.25mm][white][fill=gray,opacity=0.3](-2.6,1.5) .. controls (-2.1,2.5) and (2.1,2.5) .. (2.6,1.5);
\draw[line width=0.25mm](-2.6,1.5) .. controls (-2.1,2.5) and (2.1,2.5) .. (2.6,1.5);
\draw[line width=0.25mm][white][fill=blue,opacity=0.15](-2.95,0.48) .. controls (-3.1,-1.1) and (-1.5,-2.8) .. (0,-3);
\draw[line width=0.25mm](-2.95,0.48) .. controls (-3.1,-1.1) and (-1.5,-2.8) .. (0,-3);
\draw [fill=black] (0,0) ellipse (0.04 and 0.04);
\node at (0.3,0) {$\bar O$};
\draw[green][line width=0.3mm](-2.4,1.8) -- (-0.5,2.62);
\node at (-0.2,2.6) {$P_{\bar x}$};
\draw[-latex][green][line width=0.3mm](-2,1.95) -- (-1.75,1.38);
\draw[cyan][line width=0.3mm](-2.43,1.55) -- (-1.23,2.74);
\draw[-latex][cyan][line width=0.3mm](-2,1.95) -- (-1.53,1.5);
\node at (-1.2,3.05) {$Q_{\bar x}$};
\draw[-latex][red][line width=0.25mm](-2,1.95) -- (-1.42,1.76);
\draw [fill=black] (-2,1.95) ellipse (0.04 and 0.04);
\node at (-2.3,2.5) {$\bar x$};
\draw [->] (-2.3,2.3) .. controls (-2.3,2.1) and (-2.3,2.1) .. (-2.05,2);
\draw[densely dotted,blue,line width=0.2mm](-2,1.95) .. controls (0,0) and (0,0) .. (0,0);
\draw [fill=black] (-2.82,-0.5) ellipse (0.04 and 0.04);
\node at (-2.6,-0.55) {$\bar y$};
\draw[densely dotted,blue,line width=0.2mm](-2.82,-0.5) .. controls (0,0) and (0,0) .. (0,0);
\draw[red,densely dotted,line width=0.2mm](-2.82,-0.5) .. controls (-2,1.95) and (-2,1.95) .. (-2,1.95);
\draw[densely dotted, blue,line width=0.2mm](0,0) .. controls (-2.4,0.8) and (-2.4,0.8) .. (-2.4,0.8);
\draw[-latex][red][line width=0.25mm](-2.38,0.8) -- (-1.8,0.6);
\draw [fill=black] (-2.38,0.8) ellipse (0.04 and 0.04);
\node at (-2.6,0.8) {$\bar z$};
\draw [line width=0.25mm][red] (-1.73,1.87) arc (-30:33:0.2);
\draw [line width=0.25mm][red] (-1.9,1.7) arc (-60:-110:0.2);
\end{tikzpicture}
\caption{\small{$\bar z\in(\bar x,\bar y)$, $\alpha^r_{\bar x}={\zeta}^r_{\bar x}-\theta_{\bar x}=\angle{\bar O\bar x\bar y}-\theta_{\bar x}$.}}
\label{fig:proj-angle-interior1}
\vspace{0.4cm}
\end{subfigure}
\begin{subfigure}[b]{0.46\textwidth}
\centering
\begin{tikzpicture}[scale=0.95]
\draw  (0,0) ellipse (3 and 3);
\draw [snake=snake][white][fill=gray,opacity=0.3] (-2.6,1.5) --(-1,1.2)--(0,1.1)-- (1,1.2)--(2.6,1.5);
\draw [snake=snake](-2.6,1.5) --(-1,1.2)--(0,1.1)-- (1,1.2)--(2.6,1.5);
\draw [densely dotted, thick, snake=snake] (-2.6,1.5) --(-2.3,1.52)--(-1.2,1.55)-- (-0.8,1.6)--(0,1.5)-- (0.8,1.6)--(1.2,1.55)--(2.2,1.52)--(2.6,1.5);
\draw[line width=0.25mm][white][fill=gray,opacity=0.3](-2.6,1.5) .. controls (-1.85,2.8) and (1.85,3) .. (2.6,1.5);
\draw[line width=0.25mm](-2.6,1.5) .. controls (-1.85,2.8) and (1.85,3) .. (2.6,1.5);
\draw [fill=black] (0,0) ellipse (0.04 and 0.04);
\node at (0.3,0) {$\bar O$};
\node at (-1,3.08) {$P_{\bar x}$};
\draw[cyan][line width=0.3mm](-2.76,1.3) -- (-1.3,3.37);
\draw[green][line width=0.3mm](-2.8,1.4) -- (-1.23,2.9);
\node at (-1.8,3.3) {$Q_{\bar x}$};
\draw [fill=black] (-1.3,2.35) ellipse (0.04 and 0.04);
\node at (-1,2.2) {$\bar y$};
\draw [fill=black] (-1.45,2.26) ellipse (0.04 and 0.04);
\node at (-2.5,2.45) {$\bar z$};
\draw[-latex][red][line width=0.25mm](-1.45,2.25) -- (-1.2,1.85);
\draw [->] (-2.3,2.4) .. controls (-2,2.25) and (-1.8,2.3) .. (-1.65,2.28);
\draw [fill=black] (-2.45,1.7) ellipse (0.04 and 0.04);
\node at (-2.6,1.9) {$\bar x$};
\draw[-latex][green][line width=0.25mm](-2.45,1.7) -- (-1.98,1.2);
\draw[-latex][red][line width=0.25mm](-2.45,1.7) -- (-2.22,1.28);
\draw[-latex][cyan][line width=0.25mm](-2.45,1.7) -- (-2,1.4);
\draw[densely dotted,blue,line width=0.2mm](-2.45,1.7) .. controls (0,0) and (0,0) .. (0,0);
\draw[densely dotted,blue,line width=0.2mm](-1.3,2.35) .. controls (0,0) and (0,0) .. (0,0);
\draw[red,densely dotted,line width=0.2mm](-2.45,1.7)--(-1.3,2.35);
\draw[densely dotted,blue,line width=0.2mm](-1.45,2.25)-- (0,0);
\draw [line width=0.25mm][red] (-2.33,1.52) arc (-60:-150:0.2);
\draw [line width=0.25mm][red] (-2.3,1.55) arc (-50:30:0.2);
\end{tikzpicture}
\caption{\small{$\bar z\in(\bar x,\bar y)$, $\alpha^r_{\bar x}={\zeta}^r_{\bar x}+\theta_{\bar x}=\angle{\bar O\bar x\bar y}+\theta_{\bar x}$.}}
\label{fig:proj-angle-interior2}
\vspace{0.4cm}
\end{subfigure}
\quad
\begin{subfigure}[b]{0.46\textwidth}
\centering
\begin{tikzpicture}[scale=0.95]
\draw  (0,0) ellipse (3 and 3);
\draw [snake=snake][white][fill=gray,opacity=0.3] (-2.6,1.5) --(-1,1.2)--(0,1.1)-- (1,1.2)--(2.6,1.5);
\draw [snake=snake](-2.6,1.5) --(-1,1.2)--(0,1.1)-- (1,1.2)--(2.6,1.5);
\draw [densely dotted, thick, snake=snake] (-2.6,1.5) --(-2.3,1.52)--(-1.2,1.55)-- (-0.8,1.6)--(0,1.5)-- (0.8,1.6)--(1.2,1.55)--(2.2,1.52)--(2.6,1.5);
\draw[line width=0.25mm][white][fill=gray,opacity=0.3](-2.6,1.5) .. controls (-2.1,2.5) and (2.1,2.5) .. (2.6,1.5);
\draw[line width=0.25mm](-2.6,1.5) .. controls (-2.1,2.5) and (2.1,2.5) .. (2.6,1.5);
\draw [fill=black] (0,0) ellipse (0.04 and 0.04);
\node at (0.3,0) {$\bar O$};
\draw[green][line width=0.3mm](-2.4,1.8) -- (-0.5,2.55);
\node at (-0.2,2.6) {$P_{\bar x}$};
\draw[-latex][green][line width=0.3mm](-2,1.95) -- (-1.75,1.3);
\draw[cyan][line width=0.3mm](-2.43,1.55) -- (-1.23,2.68);
\draw[cyan][-latex][line width=0.3mm](-2,1.95) -- (-1.5,1.45);
\node at (-1.2,3.05) {$Q_{\bar x}$};
\draw[-latex][red][line width=0.25mm](-1.3,2.13) -- (-1.05,1.75);
\draw[-latex][red][line width=0.25mm](-2,1.94) -- (-1.75,1.58);
\draw [fill=black] (-2,1.95) ellipse (0.04 and 0.04);
\node at (-2.3,2.5) {$\bar x$};
\draw [->] (-2.3,2.3) .. controls (-2.3,2.1) and (-2.3,2.1) .. (-2.05,2);
\draw [fill=black] (-1.3,2.13) ellipse (0.04 and 0.04);
\node at (-0.5,1.95) {$\bar y=\bar z$};
\draw[densely dotted,blue,line width=0.2mm](-1.3,2.13) .. controls (0,0) and (0,0) .. (0,0);
\draw[densely dotted,blue,line width=0.2mm](-2,1.95) .. controls (0,0) and (0,0) .. (0,0);
\draw[red,densely dotted,line width=0.2mm](-2,1.95) .. controls (-1.3,2.13) and (-1.3,2.13) .. (-1.3,2.13);
\draw [line width=0.25mm][red] (-1.9,1.8) arc (-50:20:0.2);
\end{tikzpicture}
\caption{\small{$\bar z=\bar{y}$, $\alpha^r_{\bar x}=\pi-{\zeta}^r_{\bar x}-\theta_{\bar x}\geq\pi/2-\theta_{\bar x}$.}}
\label{fig:proj-angle-z=y}
\end{subfigure}
\begin{subfigure}[b]{0.46\textwidth}
\centering
\begin{tikzpicture}[scale=0.95]
\draw  (0,0) ellipse (3 and 3);
\draw [snake=snake][white][fill=gray,opacity=0.3] (-2.6,1.5) --(-1,1.2)--(0,1.1)-- (1,1.2)--(2.6,1.5);
\draw [snake=snake](-2.6,1.5) --(-1,1.2)--(0,1.1)-- (1,1.2)--(2.6,1.5);
\draw [densely dotted, thick, snake=snake] (-2.6,1.5) --(-2.3,1.52)--(-1.2,1.55)-- (-0.8,1.6)--(0,1.5)-- (0.8,1.6)--(1.2,1.55)--(2.2,1.52)--(2.6,1.5);
\draw[line width=0.25mm][white][fill=gray,opacity=0.3](-2.6,1.5) .. controls (-2.1,2.5) and (2.1,2.5) .. (2.6,1.5);
\draw[line width=0.25mm](-2.6,1.5) .. controls (-2.1,2.5) and (2.1,2.5) .. (2.6,1.5);
\draw [fill=black] (0,0) ellipse (0.04 and 0.04);
\node at (0.3,0) {$\bar O$};
\draw[green][line width=0.3mm](-1.9,2.03) -- (-0.5,2.33);
\node at (-0.2,2.45) {$P_{\bar x}$};
\draw[-latex][green][line width=0.3mm](-1.4,2.1) -- (-1.3,1.58);
\draw[cyan][line width=0.3mm](-1.78,1.85) -- (-0.5,2.7);
\draw[-latex][red][line width=0.3mm](-1.4,2.1) -- (-1.1,1.65);
\node at (-1.1,2.6) {$Q_{\bar x}$};
\draw [fill=black] (-2,1.95) ellipse (0.04 and 0.04);
\node at (-2.3,2.5) {$\bar y$};
\draw [->] (-2.3,2.3) .. controls (-2.3,2.1) and (-2.3,2.1) .. (-2.05,2);
\draw[densely dotted,blue,line width=0.2mm](-2,1.95) .. controls (0,0) and (0,0) .. (0,0);
\draw [fill=black] (-1.4,2.1) ellipse (0.04 and 0.04);
\node at (-0.6,2) {$\bar x=\bar z$};
\draw[densely dotted,blue,line width=0.2mm](-1.4,2.1) .. controls (0,0) and (0,0) .. (0,0);
\draw[densely dotted,red,line width=0.2mm](-2,1.95) .. controls (-1.4,2.1) and (-1.4,2.1) .. (-1.4,2.1);
\draw [line width=0.25mm][red] (-1.3,1.95) arc (-55:15:0.2);
\draw [line width=0.25mm][red] (-1.38,1.95) arc (-75:-130:0.2);
\end{tikzpicture}
\caption{\small{$\bar z=\bar x$, $\alpha^r_{\bar x}=\pi-{\zeta}^r_{\bar x}-\theta_{\bar x}=\pi/2-\theta_{\bar x}$.}}
\label{fig:proj-angle-z=x}
\end{subfigure}
\caption{Examples of the projection angle $\alpha^r_{\bar{x}}$ (with $\bar{x}$, $\bar{y}$ close to $\bar{\Lambda}$) in the case that the plane $\bar O\bar x\bar y$ is orthogonal to the tangent plane $P_{\bar x}$, with the amount of the angle $\alpha^r_{\bar x}$ marked and $\partial\bar\Omega$ partially drawn.}
 \label{fig:proj-angle}
\end{figure}

       Since $\bar{\Omega}$ is strictly convex and the center $\bar{O}$ is contained in the interior of $\bar{\Omega}$, $\min\{\|\bar{x}\|,\|\bar{y}\|\}\geq r_0$ for a constant $r_0>0$.
        If $\bar z\in(\bar x,\bar y)$, then $\zeta^r_{\bar x}=\angle\bar{O}\bar{x}\bar{y}$ and $\cos\zeta^r_{\bar x}=\cos\angle{\bar{O}\bar{x}\bar{y}}=\|\bar{x}-\bar{z}\|/\|\bar{x}\|$.
        Combined with the estimate \eqref{ineq:general} and Lemma \ref{lm: angle estimate}, we obtain
         \begin{equation}\label{eq:proj-angle-cos}
         \cos(\zeta^r_{\bar{x}}-\theta_{\bar{x}})
   =\cos\zeta^r_{\bar{x}}\cdot\cos\theta_{\bar{x}}
   +\sin\zeta^r_{\bar{x}}\cdot\sin\theta_{\bar{x}}
   \leq\|\bar{x}-\bar{z}\|/{r_0}+C\sqrt{1-\|\bar{x}\|}\leq C\sqrt{1-\|\bar{z}\|}~,
         \end{equation}
         for $\bar{x}$ close enough to $\bar{\Lambda}$. Note that the constants $C$ above change accordingly, and we keep the same notation for simplicity (similarly henceforth).

   Note that the projection angle $\zeta^r_{\bar x}$ is always in $[0,{\pi}/{2}]$ and $\theta_{\bar{x}}\geq 0$. If $\bar z\in\partial[\bar x,\bar y]$, then $\alpha^r_{\bar x}=\pi-{\zeta}^r_{\bar x}-\theta_{\bar x}\geq\pi/2-\theta_{\bar x}$ (in particular, $\zeta^r_{\bar x}=\pi/2$ if  $\bar{x}=\bar{z}$), as shown in Figure \ref{fig:proj-angle-z=y} and Figure \ref{fig:proj-angle-z=x}. Using Lemma \ref{lm: angle estimate}, for $\bar{x}$ close enough to $\bar{\Lambda}$,
   $$\cos\alpha^r_{\bar x}\leq \sin \theta_{\bar x}\leq C\sqrt{1-\|\bar x\|}\leq C\sqrt{1-\|\bar z\|}~.$$

   Combined with \eqref{eq:proj-angle-interior} and \eqref{eq:proj-angle-cos}, it follows that for $\bar{x}$ close enough to $\bar{\Lambda}$, $\alpha^r_{\bar{x}}$ always satisfies that $$\cos\alpha^r_{\bar{x}}\leq C\sqrt{1-\|\bar{z}\|}~.$$
   Similarly for $\alpha^r_{\bar{y}}$.

   If $\bar z\in (\bar x,\bar y)$, the vector $\bar{z}$ is orthogonal to $\bar{x}-\bar{y}$. Then $(\bar{x}-\bar{y})^r=0$ and \eqref{eq:x-y-r} naturally holds. If $\bar{z}=\bar{x}$, since $[\bar x,\bar y]\subset \bar\Omega$, the angle between $\bar x-\bar y$ and the plane $Q_{\bar x}$ orthogonal to $[\bar O,\bar x]=[\bar O, \bar z]$ is less than the angle between $Q_{\bar x}$ and the tangent plane $P_{\bar x}$ (which is equal to $\theta_{\bar{x}}$). Combined with Lemma \ref{lm: angle estimate}, for $\bar x$ close enough to $\bar\Lambda$,
   $$\|(\bar x-\bar y)^r\|\leq\|\bar x-\bar y\|\cdot\sin \theta_{\bar x}\leq C\|\bar x-\bar y\|\cdot\sqrt{1-\|\bar z\|}~.$$
   If $\bar{z}=\bar{y}$, we consider the plane $Q_{\bar y}$ orthogonal to $[\bar O,\bar y]=[\bar O, \bar z]$ and the tangent plane $P_{\bar y}$ instead. Similarly, for $\bar y$ close enough to $\bar\Lambda$,
   $$\|(\bar x-\bar y)^r\|\leq\|\bar x-\bar y\|\cdot\sin \theta_{\bar y}\leq C\|\bar x-\bar y\|\cdot\sqrt{1-\|\bar z\|}~.$$
   The claim follows.
   \end{proof}

            Let $\bar{x}\in\partial_k\bar{\Omega}$ and let $\bar{x}^*\in\partial_k\bar{\Omega}^*$ be the dual of the tangent plane to $\partial_k\bar{\Omega}$ at $\bar{x}$. We denote by $P_{\bar{x}}$ (resp. $P_{\bar{x}^*}$) the plane tangent to $\partial_k\bar{\Omega}$ and $\partial_k\bar{\Omega}^*$ at $\bar{x}$ (resp. $\bar{x}^*$). Let $\theta_{\bar{x}^*}$ denote the angle between the radial direction $\bar{O}\bar{x}^*$ at $\bar{x}^*$ and the outward-pointing normal vector to $\partial_k\bar{\Omega}^*$ at $\bar{x}^*$. Let $(\bar{x}^*-\bar{x})^r$ and $(\bar{x}^*-\bar{x})^{\perp}$ denote the radial and lateral components of $(\bar{x}^*-\bar{x})$ at $\bar{x}$. The following are the estimates for these quantities (see \cite[Proposition 4.6]{hmcb}).

\begin{lemma}\label{lm:estimate-dual points}
           There exists a constant $C>0$ such that for $\bar{x}$ close enough to $\bar{\Lambda}$,
          \begin{enumerate}
            \item
            $\|(\bar{x}^*-\bar{x})^r\|\leq C(1-\|\bar{x}\|)~.$
           \item
           $\|(\bar{x}^*-\bar{x})^{\perp}\|\leq C\sqrt{1-\|\bar{x}\|}~.$
            \item
            $\theta_{\bar{x}^*}\leq C\sqrt{1-\|\bar{x}\|}~.$
          \end{enumerate}
          \end{lemma}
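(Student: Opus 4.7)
The plan is to write the polar duality explicitly in the affine chart and reduce all three estimates to direct vector computations, with Lemma \ref{lm: angle estimate} as the only non-trivial input.

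I would first recall that in the affine chart $x_0=1$, the dual of an affine plane $\{y\in\R^3:\alpha\cdot y=1\}$ is the point $\alpha$, and conversely. For $\bar x\in\partial_k\bar\Omega$, write $n$ for the outward unit normal to $\partial_k\bar\Omega$ at $\bar x$, so that $n$ makes angle $\theta_{\bar x}$ with the radial direction $\bar x/\|\bar x\|$. Since $\bar x\in P_{\bar x}$ and the defining covector of $P_{\bar x}$ is parallel to $n$, one obtains
$$\bar x^*=\frac{n}{\|\bar x\|\cos\theta_{\bar x}},\qquad \|\bar x^*\|=\frac{1}{\|\bar x\|\cos\theta_{\bar x}},\qquad \bar x^*\cdot \bar x=1.$$
Because $\bar O$ lies in the interior of $\bar\Omega$, $\|\bar x\|\geq r_0>0$ for $\bar x$ near $\bar\Lambda$, and $\cos\theta_{\bar x}$ is close to $1$ by Lemma \ref{lm: angle estimate}, so all denominators above are uniformly bounded away from zero.

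For (1), the radial component at $\bar x$ is the projection onto $\bar x/\|\bar x\|$; using $\bar x^*\cdot \bar x=1$,
$$\|(\bar x^*-\bar x)^r\|=\frac{|(\bar x^*-\bar x)\cdot\bar x|}{\|\bar x\|}=\frac{1-\|\bar x\|^2}{\|\bar x\|}\leq C(1-\|\bar x\|).$$
For (2), the vector $\bar x$ is purely radial at $\bar x$, so $(\bar x^*-\bar x)^\perp$ equals the lateral component of $\bar x^*$, of norm $\|\bar x^*\|\sin\theta_{\bar x}=\tan\theta_{\bar x}/\|\bar x\|$; applying $\theta_{\bar x}\leq C\sqrt{1-\|\bar x\|}$ from Lemma \ref{lm: angle estimate} gives the bound $C\sqrt{1-\|\bar x\|}$.

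For (3), the symmetry $(S^*)^*=S$ from Proposition \ref{prop:dual} shows that the tangent plane $P_{\bar x^*}$ to $\partial_k\bar\Omega^*$ at $\bar x^*$ has $\bar x$ as its dual point, hence equation $\bar x\cdot y=1$ and unit normal direction $\pm \bar x/\|\bar x\|$. The radial direction at $\bar x^*$ is $\bar x^*/\|\bar x^*\|=n$, so the angle between the unit normal to $P_{\bar x^*}$ and the radial direction at $\bar x^*$ equals $\theta_{\bar x}$, with the sign fixed by the convexity of $\bar\Omega^*$ placing $\bar x$ on the side opposite to $\bar\Omega^*$. Thus $\theta_{\bar x^*}=\theta_{\bar x}\leq C\sqrt{1-\|\bar x\|}$. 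I do not expect any serious obstacle; the only delicate points are the sign choice in (3), which follows from identifying the outward side of $\partial_k\bar\Omega^*$ at $\bar x^*$, and the verification that $\|\bar x\|$ and $\cos\theta_{\bar x}$ stay uniformly positive as $\bar x$ approaches $\bar\Lambda$.
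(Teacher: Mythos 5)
The paper does not prove this lemma itself; it cites \cite[Proposition 4.6]{hmcb}, so there is no in-paper argument to compare against. Your computation is correct and self-contained: in the affine chart the identity $\bar x^*\cdot\bar x=1$ follows from $\bar x\in P_{\bar x}$ and immediately gives (1), the decomposition $(\bar x^*-\bar x)^\perp=(\bar x^*)^\perp$ with $\|(\bar x^*)^\perp\|=\tan\theta_{\bar x}/\|\bar x\|$ together with Lemma \ref{lm: angle estimate} gives (2), and $\theta_{\bar x^*}=\theta_{\bar x}$ gives (3); the denominators $\|\bar x\|\geq r_0$ and $\cos\theta_{\bar x}\to 1$ are uniformly controlled as you note.

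One small slip in your heuristic for the sign in (3): you assert the convexity of $\bar\Omega^*$ places $\bar x$ on the side of $P_{\bar x^*}$ opposite to $\bar\Omega^*$, but since $\bar x\cdot\bar x=\|\bar x\|^2<1$ the point $\bar x$ lies in the \emph{same} half-space as $\bar\Omega^*$ (indeed $\bar x\in\bar\Omega\subset\bar\Omega^*$). The sign is instead pinned down by $\bar O\in\bar\Omega^*$ and $\bar x\cdot\bar O=0<1$, so the outward normal to $\partial\bar\Omega^*$ at $\bar x^*$ is $+\bar x/\|\bar x\|$; since the radial direction at $\bar x^*$ is $\bar x^*/\|\bar x^*\|=n$, one still gets $\theta_{\bar x^*}=\theta_{\bar x}$. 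The conclusion is therefore correct; only the one-line justification of the sign needs this small repair.
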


          Let $\alpha^r_{\bar{x}^*}$ (resp. $\alpha^{\perp}_{\bar{x}^*}$) denote the projection angles to $P_{\bar{x}^*}$ of a radial (resp. lateral) vector at $\bar{z}$.

       \begin{claim}\label{clm:estimate-angle-dual}
        Let $\bar{x}^*,\bar{y}^*\in\cup_{k\in\cK}\partial_k\bar{\Omega}^*$ with $\bar{x}^*$, $\bar{y}^*$ close enough to $\bar{\Lambda}$, then
         \begin{equation}\label{ineq:max-dual}
   \max\{\cos\alpha^r_{\bar{x}^*},\cos\alpha^r_{\bar{y}^*}\}
   \leq C\sqrt{1-\|\bar{z}\|}~.
   \end{equation}
       \end{claim}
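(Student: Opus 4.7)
The approach mirrors that of Claim \ref{clm:proj angle}, transported to the dual surface $\partial\bar\Omega^*$. I would introduce the plane $Q_{\bar x^*}$ through $\bar x^*$ orthogonal to the segment $[\bar O,\bar x^*]$, and let $\zeta^r_{\bar x^*}$ denote the projection angle to $Q_{\bar x^*}$ of a radial vector at $\bar z$. Since the dihedral angle between the tangent plane $P_{\bar x^*}$ to $\partial\bar\Omega^*$ at $\bar x^*$ and $Q_{\bar x^*}$ equals $\theta_{\bar x^*}$, the analogue of \eqref{eq:proj-angle-interior} yields
\[\zeta^r_{\bar x^*}-\theta_{\bar x^*}\leq \alpha^r_{\bar x^*}\leq \zeta^r_{\bar x^*}+\theta_{\bar x^*}.\]
By Lemma \ref{lm:estimate-dual points}(3), $\theta_{\bar x^*}\leq C\sqrt{1-\|\bar x\|}$, and since $\|\bar z\|\leq\|\bar x\|$ this gives $\theta_{\bar x^*}\leq C\sqrt{1-\|\bar z\|}$.

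To estimate $\cos\zeta^r_{\bar x^*}$ I would split into cases. When $\bar z\in(\bar x,\bar y)$, Lemma \ref{lm:estimate-dual points}(1)--(2) lets me write $\bar x^*=\bar x+\delta$ with $\|\delta^r\|\leq C(1-\|\bar x\|)$ and $\|\delta^\perp\|\leq C\sqrt{1-\|\bar x\|}$, and since $\bar z$ is the foot of the perpendicular from $\bar O$ to $[\bar x,\bar y]$, the identity $\bar z\cdot\bar x=\|\bar z\|^2$ yields
\[\frac{\bar z\cdot\bar x^*}{\|\bar z\|\,\|\bar x^*\|}=\frac{\|\bar z\|}{\|\bar x^*\|}+\frac{\bar z\cdot\delta}{\|\bar z\|\,\|\bar x^*\|}.\]
Combining this with $\cos\zeta^r_{\bar x^*}=\sqrt{1-\bigl(\bar z\cdot\bar x^*/(\|\bar z\|\,\|\bar x^*\|)\bigr)^2}$, the estimate $\|\bar x-\bar z\|\leq\sqrt{2}\sqrt{1-\|\bar z\|}$ from Claim \ref{clm:estimate} (which controls the leading term via $1-(\|\bar z\|/\|\bar x^*\|)^2$), and the perturbation bound $O(\sqrt{1-\|\bar x\|})=O(\sqrt{1-\|\bar z\|})$, I obtain $\cos\zeta^r_{\bar x^*}\leq C\sqrt{1-\|\bar z\|}$. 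In the degenerate case $\bar z\in\{\bar x,\bar y\}$, say $\bar z=\bar x$, a direct geometric argument following the treatment of Figures \ref{fig:proj-angle-z=y}--\ref{fig:proj-angle-z=x} in the dual setting gives $\alpha^r_{\bar x^*}\geq \pi/2-\theta_{\bar x^*}-O(\sqrt{1-\|\bar x\|})$, where the extra correction accounts for the tilt between the radial directions at $\bar x$ and at $\bar x^*$; since $1-\|\bar x\|=1-\|\bar z\|$ here, the bound $\cos\alpha^r_{\bar x^*}\leq C\sqrt{1-\|\bar z\|}$ follows.

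Assembling the two regimes via $\cos(\zeta^r_{\bar x^*}\pm\theta_{\bar x^*})=\cos\zeta^r_{\bar x^*}\cos\theta_{\bar x^*}\mp\sin\zeta^r_{\bar x^*}\sin\theta_{\bar x^*}$, and applying the symmetric argument to $\bar y^*$, one arrives at \eqref{ineq:max-dual}. The main obstacle I anticipate is controlling the perturbation term $\bar z\cdot\delta/(\|\bar z\|\,\|\bar x^*\|)$: although $\delta^\perp$ is orthogonal to the radial direction at $\bar x$ (which should provide cancellation), it is not orthogonal to $\bar z$, so a careful accounting is required to ensure that the perturbation remains within the target order $O(\sqrt{1-\|\bar z\|})$ in every geometric configuration, in particular when $\bar z$ is close to an endpoint of $[\bar x,\bar y]$ and the naive estimate of $\|\bar z\cdot\delta\|$ would be weaker.
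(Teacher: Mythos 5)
Your approach is a genuinely different route from the paper's, and it is sound in outline, but it carries a gap that you flag yourself and that the paper's argument neatly avoids.

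The paper does not estimate $\zeta^r_{\bar x^*}$ (or $\angle\bar z\bar O\bar x^*$) at all. Instead it compares $\alpha^r_{\bar x^*}$ to the already-controlled $\alpha^r_{\bar x}$ by noting that the dihedral angle $\angle(P_{\bar x},P_{\bar x^*})$ equals $\theta_{\bar x^*}$. The key observation (which your proposal does not use) is a consequence of Klein-model duality: the radial line $\bar O\bar x^*$ is orthogonal to $P_{\bar x}$, so $n_{\bar x}$ points along $\bar O\bar x^*$, hence $\angle(n_{\bar x},n_{\bar x^*})=\angle(\bar O\bar x^*,n_{\bar x^*})=\theta_{\bar x^*}$. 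This gives $|\alpha^r_{\bar x^*}-\alpha^r_{\bar x}|\leq\theta_{\bar x^*}$, and combining with the chain $|\alpha^r_{\bar x}-\zeta^r_{\bar x}|\leq\theta_{\bar x}$ from Claim \ref{clm:proj angle} and Lemma \ref{lm:estimate-dual points}(3) yields the bound in one line, uniformly over the interior case $\bar z\in(\bar x,\bar y)$ and the endpoint cases $\bar z\in\{\bar x,\bar y\}$ (both already covered by Claim \ref{clm:proj angle}). What the paper's route buys is precisely that you never touch the tilted angle $\angle\bar z\bar O\bar x^*$ and never need to decompose $\bar x^*-\bar x$ at all.

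On your direct estimate of $\cos\zeta^r_{\bar x^*}$: the gap you anticipate is real but closable, provided you are more careful than your sketch indicates. The quantity $\|\bar x-\bar z\|\leq\sqrt 2\sqrt{1-\|\bar z\|}$ from Claim \ref{clm:estimate} does \emph{not} by itself control $1-\bigl(\|\bar z\|/\|\bar x^*\|\bigr)^2$ to the required order $O(1-\|\bar z\|)$; the naive triangle inequality $\|\bar x^*\|-\|\bar z\|\leq\|\bar x^*-\bar z\|=O(\sqrt{1-\|\bar z\|})$ would give only $\cos\zeta^r_{\bar x^*}=O\bigl((1-\|\bar z\|)^{1/4}\bigr)$, which is too weak. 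One needs (a) the Pythagoras relation $\|\bar x\|^2-\|\bar z\|^2=\|\bar x-\bar z\|^2$ coming from $[\bar O,\bar z]\perp[\bar x,\bar y]$, and (b) $\|\bar x^*\|^2-\|\bar x\|^2=2\,\bar x\cdot\delta+\|\delta\|^2=O(1-\|\bar x\|)$, which uses that $\bar x\cdot\delta=\bar x\cdot\delta^r$ and $\|\delta^r\|\leq C(1-\|\bar x\|)$. Similarly, for the perturbation $\bar z\cdot\delta$, decompose $\bar z=\bar z^\parallel+\bar z^\perp$ at $\bar x$: then $\bar z\cdot\delta^\perp=\bar z^\perp\cdot\delta^\perp$, and $\|\bar z^\perp\|=\|\bar z\|\sin\angle\bar z\bar O\bar x=\|\bar z\|\,\|\bar x-\bar z\|/\|\bar x\|=O(\sqrt{1-\|\bar z\|})$, so $|\bar z\cdot\delta|\leq\|\delta^r\|+\|\bar z^\perp\|\|\delta^\perp\|=O(1-\|\bar z\|)$. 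So your computation does go through, but only after this extra accounting — exactly the accounting the paper's dihedral-angle trick makes unnecessary.
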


       \begin{proof}
       Let $\angle(P_{\bar{x}},P_{\bar{x}^*})$ denote the (un-oriented dihedral) angle between $P_{\bar{x}}$ and $P_{\bar{x}^*}$, which is equal to the angle between the outward-pointing normal vectors $n_{\bar{x}}$ and $n_{\bar{x}^*}$ to $\partial\bar{\Omega}$ and $\partial\bar{\Omega}^*$ at $\bar{x}$ and $\bar{x}^*$ respectively. By the duality between a totally geodesic plane in $\HH^3$ and a point in $\bdS^3$ (considered in the Klein models, see Section \ref{ssc:dual}), the radial vector $\bar{O}\bar{x}^*$ is orthogonal to the plane $P_{\bar{x}}$ (which is tangent to $\partial\bar{\Omega}$ at $\bar{x}$). Then $\angle(P_{\bar{x}},P_{\bar{x}^*})$ is equal to the angle between $\bar{O}\bar{x}^*$ and $n_{\bar{x}^*}$, that is, $\theta_{\bar{x}^*}$. Combined with the result for $\alpha^r_{\bar{x}}$ and $\alpha^r_{\bar{y}}$ in the proof of Claim \ref{clm:proj angle}, we obtain
        \begin{equation*}
        \begin{split}
        &\zeta^r_{\bar x}-(\theta_{\bar{x}}+\theta_{\bar{x}^*})
        \leq\alpha^r_{\bar{x}}-\angle(P_{\bar{x}},P_{\bar{x}^*})\leq \alpha^r_{\bar{x}^*}\leq \alpha^r_{\bar{x}}+\angle(P_{\bar{x}},P_{\bar{x}^*}) =\zeta^r_{\bar x}+(\theta_{\bar{x}}+\theta_{\bar{x}^*})~,
        \\
        &\zeta^r_{\bar y}-(\theta_{\bar{y}}+\theta_{\bar{y}^*})\leq
        \alpha^r_{\bar{y}}-\angle(P_{\bar{y}},P_{\bar{y}^*})\leq
        \alpha^r_{\bar{y}^*}\leq
        \alpha^r_{\bar{y}}+\angle(P_{\bar{y}},P_{\bar{y}^*})
        =\zeta^r_{\bar y}+(\theta_{\bar{y}}+\theta_{\bar{y}^*})~.
        \end{split}
        \end{equation*}
       Applying similar estimate as in the proof of Claim \ref{clm:proj angle} for $\cos\alpha^r_{\bar{x}}$ and $\cos\alpha^r_{\bar{y}}$ and using Lemma \ref{lm: angle estimate}, the inequality \eqref{ineq:general} and Statement (3) of Lemma \ref{lm:estimate-dual points}, then for $\bar{x}$ close enough to $\bar{\Lambda}$,
        \begin{equation*}
        \begin{split}
   \cos\alpha^r_{\bar{x}^*}\leq\cos(\zeta^r_{\bar x}-(\theta_{\bar{x}}+\theta_{\bar{x}^*}))
   &=\cos\zeta^r_{\bar x}\cdot\cos(\theta_{\bar{x}}+\theta_{\bar{x}^*})
   +\sin\zeta^r_{\bar x}\cdot\sin(\theta_{\bar{x}}+\theta_{\bar{x}^*})\\
   &\leq\|\bar{x}-\bar{z}\|/{r_0}+C\sqrt{1-\|\bar{x}\|}\leq C\sqrt{1-\|\bar{z}\|}~.
   \end{split}
   \end{equation*}
   Similarly for $\alpha^r_{\bar{y}^*}$. The claim follows.
        \end{proof}

Note that for each $\bar x\in\partial\bar\Omega\cap\HH^3$, we can define the dual point of $\bar x$, denoted by $\bar x^*$.

          \begin{claim}\label{clm:Lipschitz}
          There exists a constant $C>0$ such that
          \begin{enumerate}
             \item  for any 
                 $\bar{x},\bar{y}\in \partial\bar{\Omega}\setminus\bar{\Lambda}$, we have
            $$\frac{1}{C}\|\bar{x}^*-\bar{y}^*\|\leq \|\bar{x}-\bar{y}\|\leq C\|\bar{x}^*-\bar{y}^*\|~.$$
            \item for any 
                $\bar{x},\bar{y}\in\partial\bar{\Omega}\setminus\bar\Lambda$ lying in different components of $\partial\bar{\Omega}\setminus\bar\Lambda$, we have
            $$\|\bar{x}-\bar{y}\|\leq 2\|\bar{x}-\bar{y}^*\|~. $$
             In addition,  if $\bar{x}$ and $\bar{y}$ are close enough to $\bar\Lambda$, then
              \begin{equation*}
            \max\{\sqrt{1-\|\bar x\|},\sqrt{1-\|\bar y\|}\}\leq C\|\bar x-\bar y\|~,
            \end{equation*}
            and
             \begin{equation*}
             \|\bar{x}-\bar{y}^*\|\leq C\|\bar{x}-\bar{y}\|~.
            \end{equation*}
            \end{enumerate}
          \end{claim}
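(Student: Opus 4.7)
The proof establishes Statement~(2) first and then deduces Statement~(1) using Statement~(2) combined with a compactness argument. The key tool is the projective description $\bar y^* = \nu_{\bar y}/\langle\nu_{\bar y}, \bar y\rangle$ in $\R^3$ (for $\bar y\in\partial_k\bar\Omega\cap\DD^3$, with $\nu_{\bar y}$ the outward Euclidean unit normal to $\partial\bar\Omega$ at $\bar y$), together with the boundary estimates of Lemmas~\ref{lm: angle estimate} and \ref{lm:estimate-dual points} and the uniform principal curvature bound of Lemma~\ref{lm:euc pcurv bound}.

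For the first inequality in Statement~(2), I reduce to the case $\bar y\in\DD^3$ (otherwise $\bar y^* = \bar y$ and the inequality is trivial). The supporting tangent plane $P_{\bar y} = \{q : \langle q, \bar y^*\rangle = 1\}$ separates $\bar x$ (which lies on the $\bar\Omega$-side, so $\langle\bar x,\bar y^*\rangle \leq 1$) from $\bar y^*$ (since $\langle\bar y^*,\bar y^*\rangle = \|\bar y^*\|^2 \geq 1$). An elementary geometric argument using this separation, together with the explicit decomposition of $\bar y^*-\bar y$ into components along $\nu_{\bar y}$ and tangent to $P_{\bar y}$, yields the desired inequality; the factor $2$ absorbs the tangential slope contribution controlled by $\theta_{\bar y}$.

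For the second inequality $\max\{\sqrt{1-\|\bar x\|},\sqrt{1-\|\bar y\|}\}\leq C\|\bar x-\bar y\|$ (for $\bar x,\bar y$ in different components close to $\bar\Lambda$), the content is that distinct components of $\partial\bar\Omega\setminus\bar\Lambda$ cannot collide in $\R^3$ faster than the quadratic tangency rate of $\partial\bar\Omega$ to $\partial\DD^3$ along $\bar\Lambda$. Concretely, Lemma~\ref{lm: angle estimate} gives $\delta_{\partial\bar\Omega}(\bar x,\bar\Lambda)\asymp\sqrt{1-\|\bar x\|}$, and any path on $\partial\bar\Omega$ from $\bar x$ to $\bar y$ in different components must cross $\bar\Lambda$, so the intrinsic separation is bounded below by $(\sqrt{1-\|\bar x\|}+\sqrt{1-\|\bar y\|})/C$. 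Using the strict convexity of $\bar\Omega$ and bounded principal curvatures (Lemma~\ref{lm:euc pcurv bound}), this intrinsic separation is comparable to the extrinsic Euclidean distance near $\bar\Lambda$, yielding the estimate. The third inequality then follows from the triangle inequality together with Lemma~\ref{lm:estimate-dual points}.

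Finally for Statement~(1), I split into cases. When $\bar x,\bar y$ lie in the same connected component of $\partial\bar\Omega\setminus\bar\Lambda$, the polar duality descends via the deck action to a smooth diffeomorphism between two closed, strictly convex surfaces in $M$ (or $\partial M$) with uniformly bounded principal curvatures, so compactness yields uniform bi-Lipschitz constants valid on every lifted component in $\R^3$. When $\bar x,\bar y$ lie in different components, apply the triangle inequality $\|\bar x^* - \bar y^*\|\leq\|\bar x^*-\bar x\|+\|\bar x-\bar y\|+\|\bar y^*-\bar y\|$ and bound the outer two terms by $C\|\bar x-\bar y\|$ using Lemma~\ref{lm:estimate-dual points} combined with the second inequality of Statement~(2). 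The main technical obstacle I anticipate is the second inequality of Statement~(2), where transferring the intrinsic separation of components (via Lemma~\ref{lm: angle estimate}) into a uniform extrinsic Euclidean separation requires careful control of how the lifted boundary components accumulate near $\bar\Lambda$.
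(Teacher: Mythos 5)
Your overall route is different from the paper's and has a gap in Statement~(1). You prove Statement~(2) first and then deduce Statement~(1) from it via a case split; the paper does the reverse order, and its proof of~(1) is self-contained and does not depend on~(2): the duality map $D:\partial\bar{\Omega}\to\partial\bar{\Omega}^*$ is the identity on $\partial\DD^3$, is $C^1$ (in particular Lipschitz, with Lipschitz inverse) on the compact surface $\partial\bar{\Omega}$ because the principal curvatures are uniformly bounded (Lemma~\ref{prop:boundary regularity}), and therefore is globally uniformly bi-Lipschitz with no need to distinguish same versus different components.

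The gap in your different-component case is that the triangle-inequality chain $\|\bar x^*-\bar y^*\|\leq\|\bar x^*-\bar x\|+\|\bar x-\bar y\|+\|\bar y-\bar y^*\|$, with the two outer terms controlled by Lemma~\ref{lm:estimate-dual points} and the second inequality of~(2), gives only the one-sided estimate $\|\bar x^*-\bar y^*\|\leq C\|\bar x-\bar y\|$. The reverse inequality $\|\bar x-\bar y\|\leq C\|\bar x^*-\bar y^*\|$ does not follow from what you have written: the triangle inequality in the other direction requires bounding $\|\bar x^*-\bar x\|$ and $\|\bar y^*-\bar y\|$ by $C\|\bar x^*-\bar y^*\|$, and for that you would need an analog of the second inequality in~(2) proved on $\partial\bar{\Omega}^*$ (i.e., $\max\{\sqrt{\|\bar x^*\|-1},\sqrt{\|\bar y^*\|-1}\}\leq C\|\bar x^*-\bar y^*\|$ for $\bar x^*,\bar y^*$ in different components of $\partial\bar\Omega^*\setminus\bar\Lambda$) together with a comparability statement $\sqrt{1-\|\bar x\|}\asymp\sqrt{\|\bar x^*\|-1}$, neither of which appears in your argument. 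Your same-component case is also stated imprecisely: the deck transformations act by projective maps, not Euclidean isometries, so ``compactness of the quotient yields uniform bi-Lipschitz constants on the lifts'' does not follow directly; the correct chain is compactness of $S_j,S_k$ gives bounded hyperbolic/de Sitter principal curvatures, then Lemma~\ref{lm:euc pcurv bound} transfers this to uniform Euclidean principal curvature bounds on the lifted surfaces, which in turn control the differential of the Euclidean duality $\bar x\mapsto\nu_{\bar x}/\langle\nu_{\bar x},\bar x\rangle$. Finally, for the first inequality of~(2), your separation-plus-decomposition argument along $\nu_{\bar y}$ must handle the tangential part of $\bar y^*-\bar y$, which has size $\|\bar y\|\sin\theta_{\bar y}$; since $\theta_{\bar y}$ need not be small away from $\bar\Lambda$, it is not clear that your argument yields the constant~$2$ globally. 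The paper instead uses the geometric observation that $\bar y$ minimizes $\|\bar y^*-\cdot\|$ over the complement of $\bar y$'s component (its Figure~\ref{fig:dist-dual}), from which the factor $2$ follows immediately by the triangle inequality.
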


          \begin{proof}
          We first show Statement (1). By the definition of the duality and the strict convexity of $\partial\bar{\Omega}$, the map $D:\partial\bar{\Omega}\rightarrow \partial\bar{\Omega}^*$ (resp. $D^*:\partial\bar{\Omega}^*\rightarrow \partial\bar{\Omega}$) that takes $\bar{x}$ to $\bar{x}^*$ (resp. $\bar{x}^*$ to $\bar{x}$) is continuous. Note that $D$ and $D^*$ are exactly the identity when restricted to the intersection with $\partial\DD^3$. Furthermore, $D$ is a $C^1$ homeomorphism on $\partial\bar{\Omega}$. Therefore, there is a uniform constant $C>0$, such that $$\frac{1}{C}\|\bar{x}^*-\bar{y}^*\|\leq \|\bar{x}-\bar{y}\|\leq C\|\bar{x}^*-\bar{y}^*\|~,$$
          for all $\bar{x}, \bar{y}\in\partial\bar{\Omega}$. Statement (1) follows. 

          We now consider Statement (2). Without loss of generality, we assume that $\bar{x}\in(\cup_{i\in\cI}\partial_i\bar{\Omega})\cup(\cup_{j\in\cJ}\partial_j\bar{\Omega})$, $\bar{y}\in\cup_{k\in\cK}\partial_k\bar{\Omega}$. Note that $\partial\bar{\Omega}$ is strictly convex and by the definition of duality between convex surfaces in $\HH^3$ and $\bdS^3$, the point $\bar{y}^*$ lies outside of $\partial\DD^3$ and on the other side (opposite to $\bar{x}$) of the tangent plane to $\cup_{k\in\cK}\partial_k\bar{\Omega}$ at $\bar{y}$. Assume that $\bar{y}$ lies on a connected component, say $\partial^0_k\bar{\Omega}$, of $\partial_k\bar{\Omega}\subset\DD^3$ for some $k\in\cK$, with the boundary of $\partial^0_k\bar{\Omega}$ a Jordan curve say $\cC^0_k$ contained in $\bar{\Lambda}$. Observe that the distance between $\bar{y}^*$ and $\bar{y}$ is less than the distance between $\bar{y}^*$ and any point $\bar{p}\in\partial\bar{\Omega}\setminus(\partial^0_k\bar{\Omega}\cup \cC^0_k)$, as shown in Figure \ref{fig:dist-dual}. Thus,
          $$\|\bar{x}-\bar{y}\|\leq \|\bar{x}-\bar{y}^*\|+\|\bar{y}-\bar{y}^*\|\leq 2\|\bar{x}-\bar{y}^*\|~.$$
 By Statement (1) of Lemma \ref{lm: angle estimate}, for $\bar x$ close enough to $\bar \Lambda$,
 $$\sqrt{1-\|\bar x\|}\leq C\delta_{\partial\bar{\Omega}}(\bar x, \bar \Lambda)~.$$
 Since the principal curvatures of $\partial\bar\Omega\setminus\bar{\Lambda}$ are uniformly bounded between two positive constants (see Lemma \ref{prop:boundary regularity}) and $\bar x$, $\bar y$ lie in different components of $\partial\bar\Omega\setminus\bar\Lambda$, then there exists a constant $C>0$ such that
 $$\delta_{\partial\bar{\Omega}}(\bar x, \bar \Lambda)\leq C\|\bar x-\bar y\|~.$$
So we have $\sqrt{1-\|\bar x\|}\leq C\|\bar x-\bar y\|.$ Similarly, for $\bar y$ close enough to $\bar \Lambda$, we can show that
$\sqrt{1-\|\bar y\|}\leq C\|\bar x-\bar y\|$. This implies that for $\bar x$, $\bar y$ close enough to $\bar\Lambda$,
\begin{equation}\label{eq:diff components}
\max\{\sqrt{1-\|\bar x\|},\sqrt{1-\|\bar y\|}\}\leq C\|\bar x-\bar y\|~.
\end{equation}
We stress that \eqref{eq:diff components} only works for $\bar{x},\bar{y}$ lying in different components of $\partial\bar{\Omega}\setminus\bar\Lambda$. If $\bar x$ and $\bar y$ lie on the same component of $\partial\bar{\Omega}\setminus\bar\Lambda$, \eqref{eq:diff components} does not hold since we can take $\|\bar x-\bar y\|$ arbitrarily small.

Observe that $\|\bar{x}-\bar{y}^*\|\leq\|\bar x-\bar y\|+\|\bar{y}-\bar{y}^*\|$. By Statements (1)-(2) of Lemma \ref{lm:estimate-dual points}, $$\|\bar y-\bar y^*\|\leq C\sqrt{1-\|\bar y\|}~.$$ Using \eqref{eq:diff components}, for $\bar x$, $\bar y$ close enough to $\bar\Lambda$, $$\|\bar{x}-\bar{y}^*\|\leq C\|\bar{x}-\bar{y}\|~.$$
Statement (2) follows.

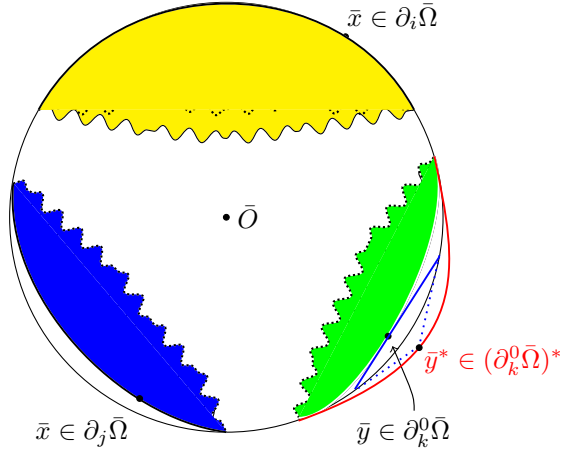
\begin{figure}
\centering
\begin{tikzpicture}[scale=0.95]
\draw  (0,0) ellipse (3 and 3);
\draw [snake=snake][white][fill=yellow,opacity=0.1] (-2.6,1.5) --(-1,1.2)--(0,1.1)-- (1,1.2)--(2.6,1.5);
\draw [snake=snake](-2.6,1.5) --(-1,1.2)--(0,1.1)-- (1,1.2)--(2.6,1.5);
\draw [fill=black] (1.65,2.52) ellipse (0.04 and 0.04);
\node at (2.3,2.8) {$\bar x\in\partial_i\bar\Omega$};
\draw [densely dotted, thick, snake=snake] (-2.6,1.5) --(-2.3,1.52)--(-1.2,1.55)-- (-0.8,1.6)--(0,1.5)-- (0.8,1.6)--(1.2,1.55)--(2.2,1.52)--(2.6,1.5);
\draw [snake=snake] (-2.95,0.48)--(0,-3);
\draw [densely dotted, thick, snake=snake][white] [fill=blue,opacity=0.15] (-2.95,0.48)--(-2.7,0.35)--(-1.5,-0.75)--(-1.0,-1.5)--(-0.4,-2.15)--(0,-3);
\draw [densely dotted, thick, snake=snake] (-2.95,0.48)--(-2.7,0.35)--(-1.5,-0.75)--(-1.0,-1.5)--(-0.4,-2.15)--(0,-3);
\draw[line width=0.25mm][white][fill=yellow,opacity=0.1](-2.6,1.5) .. controls (-1.5,3.4925) and (1.56,3.46) .. (2.6,1.5);
\draw[line width=0.25mm](-2.6,1.5) .. controls (-1.5,3.4925) and (1.56,3.46) .. (2.6,1.5);
\draw[line width=0.25mm][white][fill=blue,opacity=0.15](-2.95,0.48) .. controls (-3.1,-1.1) and (-1.5,-2.8) .. (0,-3);
\draw[line width=0.25mm](-2.95,0.48) .. controls (-3.1,-1.1) and (-1.5,-2.8) .. (0,-3);
\draw [fill=black] (0,0) ellipse (0.04 and 0.04);
\node at (0.3,0) {$\bar O$};
\draw [fill=black] (-1.2,-2.53) ellipse (0.04 and 0.04);
\node at (-2,-3) {$\bar x\in\partial_j\bar\Omega$};
\draw [snake=snake](1,-2.8) --(2.2,-0.5)--(2.88,0.85);
\draw [snake=snake][white][fill=green,opacity=0.15] (1,-2.8) --(1.2,-2)-- (2,-0.3)--(2.88,0.85);
\draw [snake=snake][densely dotted][thick] (1,-2.8) --(1.2,-2)-- (2,-0.3)--(2.88,0.85);
\draw[line width=0.25mm](1,-2.8) .. controls (1.8,-2.8) and (3.25,-0.3) .. (2.88,0.85);
\draw[line width=0.25mm][white][fill=green,opacity=0.15](1,-2.8) .. controls (1.8,-2.8) and (3.25,-0.3) .. (2.88,0.85);
\draw [fill=black] (2.24,-1.66) ellipse (0.04 and 0.04);
\node at (2.45,-3) {$\bar y\in\partial^0_k\bar\Omega$};
\draw[line width=0.15mm][<-](2.3,-1.8) .. controls (2.6,-2.5) and (2.6,-2.5) .. (2.6,-2.7);
\draw[line width=0.25mm][red](1,-2.84) .. controls (3.48,-1.95) and (3.2,-0.7) .. (2.88,0.85);
\draw[line width=0.25mm][blue](1.78,-2.4) .. controls (2.95,-0.55) and (2.95,-0.55) .. (2.95,-0.55);
\draw[line width=0.25mm][blue,dotted](2.95,-0.53) .. controls (2.67,-1.8) and (2.67,-1.8) .. (2.67,-1.8);
\draw[line width=0.25mm][blue,dotted](2.67,-1.8)-- (1.78,-2.42);
\draw [fill=black] (2.67,-1.82) ellipse (0.04 and 0.04);
\node [red] at (3.7,-2) {$\bar y^*\in(\partial^0_k\bar\Omega)^*$};
\end{tikzpicture}
\caption{\small{$\|\bar{y}^*-\bar{y}\|<\|\bar{y}^*-\bar{x}\|$ for all $\bar{x}\in\partial\bar{\Omega}\setminus(\partial^0_k\bar{\Omega}\cup\cC^0_k)$ (where $\partial\bar{\Omega}$ is partially drawn).}}
\label{fig:dist-dual}
\end{figure}
\end{proof}

          We are now ready to show the following key lemma.

\begin{lemma}\label{lm:Holder estimate-x close to y}
          $\bar{v}$ is $C^{\alpha}$ H\"older continuous on $\partial\bar{\Omega}^*\setminus\bar{\Lambda}$ for all $\alpha\in (0,1)$. More precisely, for any $\alpha\in (0,1)$, there exists a constant $C_{\alpha}>0$ (depending on $\alpha$) such that for all $\bar{x},\bar{y}\in \partial\bar{\Omega}^*\setminus\bar{\Lambda}$,
          $$\|\bar{v}(\bar{x})-\bar{v}(\bar{y})\|\leq C_{\alpha}\|\bar{x}-\bar{y}\|^{\alpha}~.$$
\end{lemma}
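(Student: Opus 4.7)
The plan is to compare $\bar v$ at the two dual points $\bar x^*$ and $\bar y^*$ by first subtracting off a global Euclidean Killing field. Let $\bar u_0$ be the Killing field associated to the constant section $\bar\kappa_0=(\bar\tau_0,\bar\sigma_0)$ of Section \ref{subsec:const section}, which agrees with $\bar s$ at $\bar z$, and set $\delta\bar u=\bar u-\bar u_0$. Since $\bar u_0$ extends smoothly to $\R^3$ and $\partial\bar\Omega^*$ is $C^{1,1}$ by Remark \ref{rk:regularity-boundary}, the tangential projection of $\bar u_0$ to $\partial\bar\Omega^*$ is locally Lipschitz. By Claim \ref{clm:Lipschitz}(1), $\|\bar x^*-\bar y^*\|$ is comparable to $\|\bar x-\bar y\|$, so it remains to estimate the tangential projection of $\delta\bar u$ at $\bar x^*$ and at $\bar y^*$ separately in terms of powers of $\|\bar x-\bar y\|$.

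By Lemma \ref{lm:dual-Killing},
\[
\delta\bar u(\bar x^*)=\delta\bar\tau(\bar x)+\delta\bar\sigma(\bar x)\times(\bar x^*-\bar x),
\]
and similarly at $\bar y^*$. Projecting onto $P_{\bar x^*}$ and decomposing $\delta\bar\tau(\bar x)$ and $\delta\bar\sigma(\bar x)$ into radial and lateral parts at $\bar x$, the lateral parts are controlled by $O(\|\bar x-\bar z\|)$ via Lemma \ref{lm:estimate-hmcb} (and by Claim \ref{clm:deltatau} when $\bar x\in\cup_{i\in\cI}\partial_i\bar\Omega$). The radial parts carry the logarithmic factor $|\log((1-\|\bar x\|)/(1-\|\bar z\|))|$, but projection onto $P_{\bar x^*}$ supplies the cosine factor $\cos\alpha^r_{\bar x^*}\leq C\sqrt{1-\|\bar z\|}$ from Claim \ref{clm:estimate-angle-dual}. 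The cross product term is tamed using $\|\bar x^*-\bar x\|\leq C\sqrt{1-\|\bar x\|}$ from Lemma \ref{lm:estimate-dual points}, which absorbs the potentially singular $1/\sqrt{1-\|\bar x\|}$ appearing in $\|\delta\bar\sigma^\perp(\bar x)\|$. Combining yields a pointwise bound of the form
\[
\|\delta\bar v(\bar x^*)\|\leq C\|\bar x-\bar z\|+C\sqrt{1-\|\bar z\|}\,\bigl(1+|\log(1-\|\bar z\|)|\bigr),
\]
and analogously at $\bar y^*$.

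The proof concludes via a case analysis on the positions of $\bar x,\bar y$. If at least one point is at bounded distance from $\bar\Lambda$, then $\partial\bar\Omega^*$ is smooth near it, equation \eqref{eq:Beltrami} has smooth coefficients there by Remark \ref{rk:equation}, and standard elliptic regularity gives the local Lipschitz bound for $\bar v$. If both points are close to $\bar\Lambda$ and lie on distinct connected components of $\partial\bar\Omega^*\setminus\bar\Lambda$, Claim \ref{clm:Lipschitz}(2) provides $\sqrt{1-\|\bar x\|}\leq C\|\bar x-\bar y\|$; combined with the orthogonal decomposition giving $\|\bar x-\bar z\|^2\leq \|\bar x\|^2-\|\bar z\|^2\leq 2(1-\|\bar z\|)$ from \eqref{eq:cosine}, one derives $1-\|\bar z\|\leq C\|\bar x-\bar y\|^2$, so the previous bound becomes $C\|\bar x-\bar y\|\bigl(1+|\log\|\bar x-\bar y\||\bigr)\leq C_\alpha\|\bar x-\bar y\|^\alpha$ for every $\alpha<1$. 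If both points lie on the same connected component of $\partial\bar\Omega^*\setminus\bar\Lambda$ near $\bar\Lambda$, the interior quasiconformal regularity of solutions of the Beltrami equation \eqref{eq:Beltrami}, whose coefficient $\mu$ satisfies $\|\mu\|_\infty<1$ (Remark \ref{rk:equation}), combined with the boundary estimates above via Claim \ref{clm:proj angle}, produces the same $C^\alpha$ bound on $\bar v$.

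The main obstacle will be the same-component subcase near $\bar\Lambda$: Claim \ref{clm:Lipschitz}(2) does not apply there, so one cannot convert $\sqrt{1-\|\bar z\|}$ directly into a power of $\|\bar x-\bar y\|$, and one must instead patch together the interior $C^\alpha$ estimates coming from the quasiconformality of \eqref{eq:Beltrami} with only $L^\infty$ coefficients, with the boundary estimates from Section \ref{subsec:distance estimate}, while ensuring that the logarithm generated by the radial component of $\delta\bar\tau$ only degrades the Hölder exponent to any prescribed $\alpha<1$. This surviving logarithmic factor is precisely what prevents the conclusion from being improved to $\alpha=1$.
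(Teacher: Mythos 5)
Your outline starts in the right direction (subtracting the constant Killing field $\bar\kappa_0$ based at $\bar z$, using the cancellation between the growth of $\bar\sigma$ and the smallness of the projection angles), but the pointwise bound you derive is too weak, and the fallback argument you rely on to close the gap is incorrect. Concretely, you claim $\|\delta\bar v(\bar x^*)\|\leq C\|\bar x-\bar z\|+C\sqrt{1-\|\bar z\|}\,(1+|\log(1-\|\bar z\|)|)$, where the second term comes from projecting the radial parts of $\delta\bar\tau(\bar x)$, $\delta\bar\sigma(\bar x)$ via $\cos\alpha^r_{\bar x^*}\leq C\sqrt{1-\|\bar z\|}$ (Claim \ref{clm:estimate-angle-dual}). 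This is the wrong angle for these terms: $\alpha^r$ controls projection of radial vectors based at $\bar z$, whereas $\delta\bar\tau(\bar x)$ and $\delta\bar\sigma(\bar x)$ live at $\bar x$, so the relevant angle is $\beta^r_{\bar x^*}$ with $\cos\beta^r_{\bar x^*}\leq C\sqrt{1-\|\bar x\|}$ (equation \eqref{eq:beta}). Using the correct angle, the factor $\sqrt{1-\|\bar x\|}$ cancels against the $1/\sqrt{1-\|\bar x\|}$-type singularity in $\delta\bar\sigma^\perp(\bar x)$ from Lemma \ref{lm:estimate-hmcb}, and the pointwise bound tightens to $C\|\bar x-\bar z\|(1+|\log\|\bar x-\bar z\||)$, with no leftover $\sqrt{1-\|\bar z\|}$. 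Since $\|\bar x-\bar z\|\leq\|\bar x-\bar y\|$ and $|\log(1-\|\bar z\|)|\leq C|\log\|\bar x-\bar y\||$ (the latter follows from $\|\bar x-\bar y\|\leq 2\sqrt{2}\sqrt{1-\|\bar z\|}$ in Claim \ref{clm:estimate}, which holds for \emph{any} pair of boundary points), the $C^\alpha$ estimate for all $\alpha<1$ follows uniformly — the ``same-component'' case near $\bar\Lambda$ that you flag as the obstacle simply does not arise once the projection angle is chosen correctly.

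Your proposed fallback for that case — appealing to ``interior quasiconformal regularity'' of the Beltrami equation \eqref{eq:Beltrami} — would not work even if the gap were real: a uniformly elliptic Beltrami-type equation with merely $L^\infty$ coefficient $\mu$ yields $C^\alpha$ regularity only for $\alpha$ depending on $\|\mu\|_\infty$, not for every $\alpha<1$, so it cannot supply the ``all $\alpha\in(0,1)$'' claim. A similar imprecision affects your treatment of the $\bar u_0$ term: you assert that $\Pi\bar u_0$ is ``locally Lipschitz'' on $\partial\bar\Omega^*$, but the Lipschitz constant of this Killing field is on the order of $\|\bar\tau(\bar z)\|+\|\bar\sigma(\bar z)\|\sim 1/\sqrt{1-\|\bar z\|}$ (Lemma \ref{lm:pogorelov_radial and lateral}), which is unbounded as $\bar z\to\partial\DD^3$; the paper's Term (1) and Term (2) instead carry out the radial/lateral decomposition at $\bar z$ and invoke Claim \ref{clm:proj angle} to absorb this growth. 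So the route is recognizably the paper's, but the decisive bookkeeping — which projection angle goes with which term, and which direction of the $\|\bar x-\bar y\|$ vs.\ $\sqrt{1-\|\bar z\|}$ comparison is actually needed — is missing or reversed, and the declared ``obstacle'' is an artifact of that.
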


\begin{proof}




  Since $\bar{v}$ is smooth on $\partial\bar{\Omega}^*\setminus\bar{\Lambda}$ (see Remark \ref{rk:regularity-boundary}), it is a Lipschitz vector field when restricted to a compact subset of $\partial\bar\Omega\setminus \bar\Lambda$. We fix a constant $\epsilon>0$ (to be made precise below) and denote by
  $$ \bar K_1 = \{ \bar x\in \partial\bar\Omega ~|~ d(\bar x,\bar\Lambda)\geq \epsilon\}~. $$

  Note that $\bar K_1$ is a compact subset of $\partial\bar\Omega\setminus\bar\Lambda$. We claim that it suffices to discuss the H\"older continuity of $\bar{v}$ for the set $\bar{K}_2:=\partial\bar{\Omega}^*\setminus ({\rm int}\bar{K}_1\cup\bar{\Lambda})$.
  Indeed, assume that Lemma \ref{lm:Holder estimate-x close to y} holds respectively for $\bar{K}_1$ and $\bar{K}_2$.  For any $\bar{x}\in\bar{K}_1$ and $\bar{y}\in \bar{K_2}$, 
  there exists a point $\bar x'\in\partial\bar K_1$ such that $\max\{\|\bar x-\bar x'\|,\|\bar x'-\bar y\|\}\leq \|\bar{x}-\bar{y}\|$.
  Then
  \begin{equation*}
    \begin{split}
\|\bar{v}(\bar{x})-\bar{v}(\bar{y})\|
&\leq\|\bar{v}(\bar{x})-\bar{v}(\bar{x}')\|+\|\bar{v}(\bar{x}')-\bar{v}(\bar{y})\|\\
&\leq C_{\alpha}(\|\bar{x}-\bar{x}'\|^{\alpha}
      +\|\bar{x}'-\bar{y}\|^{\alpha})\\
&\leq 2C_{\alpha}\|\bar{x}-\bar{y}\|^{\alpha}~.
    \end{split}
  \end{equation*}
  The claim follows.

  We now show the H\"older continuity of $\bar{v}$ restricted to $\bar{K}_2$. Let $\bar{x}, \bar{y}\in\bar{K}_2$. Recall that $[\bar{x},\bar{y}]\subset \DD^3$ denotes the geodesic segment through $\bar{x}$ and $\bar{y}$, while $\bar{z}\in [\bar{x},\bar{y}]$ denotes the point which is nearest to the center $\bar{O}$.

  Recall that in Subsection \ref{subsec:const section},
  $$\delta\bar{\tau}=\bar{\tau}-\bar{\tau}_0~,\quad\quad  \delta\bar{\sigma}=\bar{\sigma}-\bar{\sigma}_0~, $$
  where $(\bar{\tau}_0,\bar{\sigma}_0)\in\Gamma(T\bar{\Omega})\times\Gamma(T\bar{\Omega})$ satisfies that $\bar{\tau}_0(\bar{z})=\bar{\tau}(\bar{z})$ and  $\bar{\sigma}_0(\bar{z})=\bar{\sigma}(\bar{z})$, which determines a flat section $\bar{\kappa}_0\in\Gamma(\bar{E})$: $(\bar{\kappa}_0)_{\bar{x}}=(\bar{\kappa}_0)_{\bar{y}}$ for all $\bar{x},\bar{y}\in\bar{\Omega}$. It follows from 
  the definition of $\bar\sigma$ and $\bar\tau$ as the rotation and translation components of an infinitesimal isometry that $$(\bar{\kappa}_0)_{\bar{y}}(\bar{x})=\bar{\tau}_0(\bar{x})=\bar{\sigma}(\bar{z})\times(\bar{x}-\bar{z})+\bar{\tau}(\bar{z})~,$$
     for all $\bar{x}, \bar{y}\in\bar{\Omega}$. Note that $\bar{v}=\Pi\bar{u}$, where $\bar{u}$ is the extended deformation vector field given in Section \ref{subsec:deformation field}, while $\Pi$ is the orthogonal projection to $\partial\bar{\Omega}^*$, which behaves at $\bar{x}\in\partial\bar{\Omega}^*$, as a morphism $\Pi_{\bar{x}}:T_{\bar{x}}\R^3\rightarrow T_{\bar{x}}\partial\bar{\Omega}^*$. Note that $\bar{u}(\bar{x})=\bar{\tau}(\bar{x})$. 

     Observe that it suffices to discuss the case that $\|\bar x-\bar y\|\leq 1/2$. Otherwise, we can always take the constant $C_{\alpha}$ big enough for each $\alpha\in (0,1)$.

     If $\bar{x},\bar{y}\in(\cup_{i\in\cI}\partial_i\bar{\Omega})\cup(\cup_{j\in\cJ}\partial_j\bar{\Omega})$, we have
  \begin{equation*}
          \begin{split}
          \|\bar{v}(\bar{x})-\bar{v}(\bar{y})\|
          &=\|\Pi_{\bar{x}}(\bar{\tau}(\bar{x}))-
          \Pi_{\bar{y}}(\bar{\tau}(\bar{y}))\|\\
          &=\|\Pi_{\bar{x}}(\delta\bar{\tau}+\bar{\tau}_0)(\bar{x})-
          \Pi_{\bar{y}}(\delta\bar{\tau}+\bar{\tau}_0)(\bar{y})\|\\
          &\leq
          \|\Pi_{\bar{x}}(\bar{\tau}_0(\bar{x}))-\Pi_{\bar{y}} (\bar{\tau}_0(\bar{y}))\|+\|\Pi_{\bar{x}}(\delta\bar{\tau}(\bar{x}))- \Pi_{\bar{y}}(\delta\bar{\tau}(\bar{y}))\|\\
          & \leq \|\Pi_{\bar{x}}\big(\bar{\tau}_0(\bar{x})- \bar{\tau}_0(\bar{y})\big)\|
          +\|\big(\Pi_{\bar{x}}-\Pi_{\bar{y}}\big) \bar{\tau}_0(\bar{y})\|+
          \|\Pi_{\bar{x}}\big(\delta\bar{\tau}(\bar{x})\big)\|
          +\|\Pi_{\bar{y}}\big(\delta\bar{\tau}(\bar{y})\big)\|~.
          \end{split}
          \end{equation*}

 If $\bar{x}^*,\bar{y}^*\in\cup_{k\in\cK}\partial_k\bar{\Omega}^*$, by Lemma \ref{lm:dual-Killing}, we have
        \begin{equation*}\label{ineq:case2}
        \begin{split}
        \|\bar{v}(\bar{x}^*)-\bar{v}(\bar{y}^*)\|&=\|\Pi_{\bar{x}^*}\big(\bar{u}(\bar{x}^*)\big)-\Pi_{\bar{y}^*}\big(\bar{u}(\bar{y}^*)\big)\|\\
        &= \|\Pi_{\bar{x}^*}\big(\bar{\tau}(\bar{x})+\bar{\sigma}(\bar{x})\times(\bar{x}^*-\bar{x})\big)
        -\Pi_{\bar{y}^*}\big(\bar{\tau}(\bar{y})+\bar{\sigma}(\bar{y})\times(\bar{y}^*-\bar{y})\big)\|\\
        &= \|\Pi_{\bar{x}^*}\big((\delta\bar{\tau}+\bar{\tau}_0)(\bar{x})
        +\bar\sigma(\bar{x})\times(\bar{x}^*-\bar{x})\big)
        -\Pi_{\bar{y}^*}\big((\delta\bar{\tau}+\bar{\tau}_0)(\bar{y})
        +\bar\sigma(\bar{y})\times(\bar{y}^*-\bar{y})\big)\|\\
        &\leq \|\Pi_{\bar{x}^*}\big(\bar{\tau}_0(\bar{x})-\bar{\tau}_0(\bar{y})\big)\|+\|(\Pi_{\bar{x}^*}-\Pi_{\bar{y}^*})\bar{\tau}_0(\bar{y})\|
        +\|\Pi_{\bar{x}^*}\big(\delta\bar{\tau}(\bar{x})\big)\|+\|\Pi_{\bar{y}^*}\big(\delta\bar{\tau}(\bar{y})\big)\|\\
        &\quad+
        \|\Pi_{\bar{x}^*}\big(\bar{\sigma}(\bar{x})\times(\bar{x}^*-\bar{x})\big)-\Pi_{\bar{y}^*}\big(\bar{\sigma}(\bar{y})\times(\bar{y}^*-\bar{y})\big)\|~.\\
                \end{split}
        \end{equation*}

If $\bar{x}\in(\cup_{i\in\cI}\partial_i\bar{\Omega})\cup(\cup_{j\in\cJ}\partial_j\bar{\Omega})$, $\bar{y}^*\in\cup_{k\in\cK}\partial_k\bar{\Omega}^*$, by Lemma \ref{lm:dual-Killing}, we have
        \begin{equation*}
        \begin{split}
        \|\bar{v}(\bar{x})-\bar{v}(\bar{y}^*)\|&=\|\Pi_{\bar{x}}\big(\bar{u}(\bar{x})\big)-\Pi_{\bar{y}^*}\big(\bar{u}(\bar{y}^*)\big)\|\\
        &= \|\Pi_{\bar{x}}\big(\bar{\tau}(\bar{x})\big)
        -\Pi_{\bar{y}^*}\big(\bar{\tau}(\bar{y})+\bar{\sigma}(\bar{y})\times(\bar{y}^*-\bar{y})\big)\|\\
        &= \|\Pi_{\bar{x}}\big((\delta\bar{\tau}+\bar{\tau}_0)(\bar{x})\big)
        -\Pi_{\bar{y}^*}\big((\delta\bar{\tau}+\bar{\tau}_0)(\bar{y})+\bar{\sigma}(\bar{y})\times(\bar{y}^*-\bar{y})\big)\|\\
        &\leq \|\Pi_{\bar{x}}\big(\bar{\tau}_0(\bar{x})-\bar{\tau}_0(\bar{y})\big)\|+\|(\Pi_{\bar{x}}-\Pi_{\bar{y}^*})\big(\bar{\tau}_0(\bar{y})\big)\|
        +\|\Pi_{\bar{x}}\big(\delta\bar{\tau}(\bar{x})\big)\|+\|\Pi_{\bar{y}^*}\big(\delta\bar{\tau}(\bar{y})\big)\|\\
        &\quad +\|\Pi_{\bar{y}^*}\big(\bar{\sigma}(\bar{y})\times(\bar{y}^*-\bar{y})\big)\|~.
        \end{split}
        \end{equation*}

  To conclude, it suffices to estimate the following terms:

  \textbf{(1)} The term of the form $\|\Pi_{\bar{x}}\big(\bar{\tau}_0(\bar{x})- \bar{\tau}_0(\bar{y})\big)\|$. 
  Observe that the radial and lateral components of $\bar{\tau}_0(\bar{x})-\bar{\tau}_0(\bar{y})$, at $\bar{z}$, are respectively:
  \begin{equation*}
    \begin{split}
      \big(\bar{\tau}_0(\bar{x})-\bar{\tau}_0(\bar{y})\big)^r&=\bar{\sigma}^{\perp}(\bar{z})\times (\bar{x}-\bar{y})^{\perp}~,\\
      \big(\bar{\tau}_0(\bar{x})-\bar{\tau}_0(\bar{y})\big)^{\perp}
      &=\bar{\sigma}^r(\bar{z})\times (\bar{x}-\bar{y})^{\perp}+
      \bar{\sigma}^{\perp}(\bar{z})\times (\bar{x}-\bar{y})^r~,\\
    \end{split}
  \end{equation*}
  where $\bar{\sigma}^r(\bar{z})$ and $\bar{\sigma}^{\perp}(\bar{z})$ denote the radial and lateral components of $\bar{\sigma}(\bar{z})$ at $\bar{z}$, while $(\bar{x}-\bar{y})^r$ and $(\bar{x}-\bar{y})^{\perp}$ denote the radial and lateral components of $\bar{x}-\bar{y}$ at $\bar{z}$. Using the projection angles $\alpha^r_{\bar{x}}$ and $\alpha^{\perp}_{\bar{x}}$ introduced in Subsection \ref{subsec:estimate-angles}, we have
  \begin{equation}\label{eq:Case1}
    \begin{split}
      \|\Pi_{\bar{x}}\big(\bar{\tau}_0(\bar{x})-\bar{\tau}_0(\bar{y})\big)\|
      &=\|\Pi_{\bar{x}}\big(\big(\bar{\tau}_0(\bar{x})-\bar{\tau}_0(\bar{y})\big)^r
      +\big(\bar{\tau}_0(\bar{x})-\bar{\tau}_0(\bar{y})\big)^{\perp}\big)\|\\
      &\leq\|\Pi_{\bar{x}}\big(\bar{\sigma}^{\perp}(\bar{z})\times (\bar{x}-\bar{y})^{\perp}\big)\|+ \|\Pi_{\bar{x}}\big(\bar{\sigma}^r(\bar{z})\times (\bar{x}-\bar{y})^{\perp}+\bar{\sigma}^{\perp}(\bar{z})\times (\bar{x}-\bar{y})^r\big)\|\\
      &\leq \|\bar{\sigma}^{\perp}(\bar{z})\times (\bar{x}-\bar{y})^{\perp}\|\cdot\cos\alpha^r_{\bar{x}}
      +\big(\|\bar{\sigma}^r(\bar{z})\times (\bar{x}-\bar{y})^{\perp}\|+\|\bar{\sigma}^{\perp}(\bar{z})\times (\bar{x}-\bar{y})^r\|\big)\cdot \cos \alpha^{\perp}_{\bar{x}}\\
      &\leq \frac{C}{\sqrt{1-\|\bar{z}\|}}\cdot\|\bar{x}-\bar{y}\|\cdot C\sqrt{1-\|\bar{z}\|}+ \big(C\big|\log(1-\|\bar{z}\|)\big|\cdot\|\bar{x}-\bar{y}\|
      +\frac{C\|\bar x-\bar y\|\sqrt{1-\|\bar z\|}}{\sqrt{1-\|\bar z\|}}\big)\\
      &\leq C\|\bar{x}-\bar{y}\|+C \big|\log(\|\bar{x}-\bar{y}\|)\big| \cdot \|\bar{x}-\bar{y}\|+C\|\bar{x}-\bar{y}\|\\
      &\leq C\|\bar{x}-\bar{y}\|+ C_{\alpha} \|\bar{x}-\bar{y}\|^{\alpha}\\
      &\leq C_{\alpha}\|\bar{x}-\bar{y}\|^{\alpha}~,\\
    \end{split}
  \end{equation}
  for all $\alpha\in (0,1)$, where $C_{\alpha}$ is a constant depending on $\alpha$. The third inequality follows from Lemma \ref{lm:pogorelov_radial and lateral} and Claim \ref{clm:proj angle}.
  The fourth inequality follows from the inequality \eqref{ineq:general}. The last line follows from the hypothesis that $\|\bar{x}-\bar{y}\|\leq 1/2<1$. By Claim \ref{clm:estimate-angle-dual},
  the above estimate \eqref{eq:Case1} also works when $\Pi_{\bar{x}}$ is replaced with $\Pi_{\bar{x}^*}$. Combined with Claim \ref{clm:Lipschitz}, the corresponding H\"older estimates also hold.

  \textbf{(2)} The term of the form $\|\big(\Pi_{\bar{x}}-\Pi_{\bar{y}}\big) \bar{\tau}_0(\bar{y})\|$. 
  Observe that the radial and lateral components of $\bar{\tau}_0(\bar{y})$ at $\bar{z}$ are respectively:
    \begin{equation*}
    \begin{split}
    \bar{\tau}_0^r(\bar{y})&=\bar{\sigma}^{\perp}(\bar{z})\times(\bar{y}-\bar{z})^{\perp}+\bar{\tau}^r(\bar{z})~,\\
    \bar{\tau}_0^{\perp}(\bar{y})&
    =\bar{\sigma}^r(\bar{z})\times(\bar{y}-\bar{z})^{\perp}
    +\bar{\sigma}^{\perp}(\bar{z})\times(\bar{y}-\bar{z})^r
    +\bar{\tau}^{\perp}(\bar{z})~.\\
    \end{split}
  \end{equation*}
  It follows using Lemma \ref{lm:pogorelov_radial and lateral}, the inequality \eqref{ineq:general} and Claim \ref{clm:proj angle} that
  \begin{eqnarray*}
    \|\bar{\tau}_0^r(\bar{y})\|
    & \leq & \|\bar{\sigma}^{\perp}(\bar{z})\times(\bar{y}-\bar{z})^{\perp}\|
    +\|\bar{\tau}^r(\bar{z}) \|\\
    & \leq & \frac{C}{\sqrt{1-\|\bar z\|}}\cdot\|\bar y-\bar x\|+C|\log(1-\|\bar z\|)| \\
    & \leq & C +C |\log(\|\bar y-\bar x\|)| \\
    & \leq & C |\log(\|\bar y-\bar x\|)|~, \\
    \|\bar{\tau}_0^{\perp}(\bar{y})\|
    & \leq & \|\bar{\sigma}^r(\bar{z})\|\cdot\|(\bar{y}-\bar{z})^{\perp}\|
    +\|\bar{\sigma}^{\perp}(\bar{z})\|\cdot\|(\bar{y}-\bar{z})^r\|
    +\|\bar{\tau}^{\perp}(\bar{z})\| \\
    & \leq & C|\log(1-\|\bar z\|)|\cdot\|\bar{y}-\bar{z}\|+\frac{C}{\sqrt{1-\|\bar z\|}}\cdot \|\bar y-\bar z\|\cdot C\sqrt{1-\|\bar z\|} +C \\
    & \leq & C|\log(\|\bar y-\bar z\|)|\cdot\|\bar{y}-\bar{z}\|+C \\
    & \leq & C~.
  \end{eqnarray*}
  Putting both terms together, we obtain that
  $$ \|\bar{\tau}_0(\bar{y})\|\leq C |\log(\|\bar y-\bar x\|)|~. $$

  We can compute the operator norm of $\Pi_{\bar x}-\Pi_{\bar y}$ as follows. To simplify notations we set $\theta=\angle(\nx,\ny)$. Let $e_1$ be a unit vector orthogonal to both $n_{\bar x}$ and $n_{\bar y}$ (where $n_{\bar x}$ and $n_{\bar y}$ are the outward-pointing unit normal vectors to $\partial\bar{\Omega}$ at $\bar x$ and $\bar y$), and let
  $$ e_2=\frac{\nx-\ny}{2\sin(\theta/2)}~,\quad\quad e_3=\frac{\nx+\ny}{2\cos(\theta/2)}~. $$
  so that $(e_1, e_2, e_3)$ is an orthonormal basis.
  A direct computation shows that the matrix of $\Pi_{\bar x}-\Pi_{\bar y}$ in this basis is
  $$ \left(
    \begin{array}[h]{ccc}
      0 & 0 & 0 \\
      0 & 0 &\sin\theta \\
      0 & \sin\theta & 0
    \end{array}
  \right)~, $$
  and the norm of $\Pi_{\bar x}-\Pi_{\bar y}$ is $\sin\theta$. However it follows from the uniform upper bound on the principal curvatures of $\partial\bar\Omega\setminus\bar{\Lambda}$ (see Lemma \ref{prop:boundary regularity}) that, if $\|\bar x-\bar y\|$ is small enough, then
  $$ \theta=\angle(n_{\bar x},n_{\bar y})\leq C \|\bar x-\bar y\|~, $$
  for some constant $C>0$, and it follows that
  \begin{equation}\label{eq:proj-diff}
  \|\Pi_{\bar x}-\Pi_{\bar y}\|\leq C \|\bar x-\bar y\|~.
  \end{equation}
  As a consequence,
  $$ \|(\Pi_{\bar x}-\Pi_{\bar y})(\bar{\tau}_0(\bar{y}))\| \leq \|\Pi_{\bar x}-\Pi_{\bar y}\|\cdot\|\bar{\tau}_0(\bar{y})\|\leq C \|\bar x-\bar y\|\cdot|\log(\|\bar x-\bar y\|)|~, $$
  so for all $\alpha\in (0,1)$, there exists $C_\alpha>0$ such that
  $$ \|(\Pi_{\bar x}-\Pi_{\bar y})(\bar{\tau}_0(\bar{y}))\| \leq C_\alpha \|\bar x-\bar y\|^\alpha~. $$

  Note that the above inequality \eqref{eq:proj-diff} still holds when $\bar{x}$, $\bar{y}$ are replaced by  $\bar x^*$, $\bar y^*$  (resp. $\bar x$, $\bar y^*$) for $\|\bar x^*-\bar y^*\|$ (resp. $\|\bar x-\bar y^*\|$)
  small enough, since the principal curvatures of $\partial\bar\Omega^*\setminus\bar{\Lambda}$ are uniformly bounded between two positive constants (see Lemma \ref{prop:boundary regularity}). Combined with Claim \ref{clm:Lipschitz}, then for each $\alpha\in (0,1)$, there exists $C_\alpha>0$ such that
    \begin{equation*}
  \|(\Pi_{\bar x^*}-\Pi_{\bar y^*})(\bar{\tau}_0(\bar{y}))\|
  \leq C\|\bar x^*-\bar y^*\|\cdot|\log(\|\bar x-\bar y\|)|
  \leq C\|\bar x^*-\bar y^*\|\cdot|\log(\|\bar x^*-\bar y^*\|)|
  \leq C_{\alpha}\|\bar x^*-\bar y^*\|^{\alpha}~,
  \end{equation*}
   and for $\bar x$ and $\bar y$ lying in different components of $\partial\bar\Omega\setminus \bar\Lambda$,
\begin{equation*}
  \|(\Pi_{\bar x}-\Pi_{\bar y^*})(\bar{\tau}_0(\bar{y}))\|
  \leq C\|\bar x-\bar y^*\|\cdot|\log(\|\bar x-\bar y\|)|
  \leq C\|\bar x-\bar y^*\|\cdot|\log(\|\bar x-\bar y^*\|)|
  \leq C_{\alpha}\|\bar x-\bar y^*\|^{\alpha}~.
\end{equation*}

  \textbf{(3)} The term of the form $\|\Pi_{\bar{x}}\big(\delta\bar{\tau}(\bar{x})\big)\|$.   Let $\beta^r_{\bar{x}}$ and $\beta^{\perp}_{\bar{x}}$ denote the projection angle to the tangent plane $P_{\bar{x}}$ of a radial and lateral vector at $\bar{x}$ and let $\delta\bar{\tau}^{r}(\bar{x})$ and $\delta\bar{\tau}^{\perp}(\bar{x})$ denote the radial and lateral components of $\delta\bar{\tau}(\bar{x})$ at $\bar x$. Then $\beta^r_{\bar{x}}=\pi/2-\theta_{\bar{x}}$ and
          $\beta^{\perp}_{\bar{x}}\in [0, \theta_{\bar{x}}]$. In particular, $\theta_{\bar{x}}=0$ if $\bar{x}\in\partial_i\bar{\Omega}$.

          \begin{equation}\label{eq:Case3}
          \begin{split}
          \|\Pi_{\bar{x}}(\delta\bar{\tau}(\bar{x}))\|&\leq \|\Pi_{\bar{x}}(\delta\bar{\tau}^{\perp}(\bar{x}))\|+
          \|\Pi_{\bar{x}}(\delta\bar{\tau}^{r}(\bar{x}))\|\\
          &=\|\delta\bar{\tau}^{\perp}(\bar{x})\|\cdot \cos\beta^{\perp}_{\bar{x}}+
          \|\delta\bar{\tau}^{r}(\bar{x})\|\cdot\cos\beta^r_{\bar{x}}\\
          &\leq \|\delta\bar{\tau}^{\perp}(\bar{x})\|\cdot 1 + \|\delta\bar{\tau}^r(\bar{x})\|\cdot \sin\theta_{\bar{x}}\\
          &\leq \|\delta\bar{\tau}^{\perp}(\bar{x})\| + \|\delta\bar{\tau}(\bar{x})\|\cdot C\sqrt{1-\|\bar{x}\|} \\
          &\leq C\|\bar x-\bar z\|+C\|\bar{x}-\bar{z}\|\cdot \big|\log\|\bar{x}-\bar{z}\|\big|\\
          &\leq C_\alpha\|\bar{x}-\bar{y}\|^{\alpha},
          \end{split}
          \end{equation}
          for all $\alpha\in(0,1)$, where $C_{\alpha}$ is a constant depending on $\alpha$. The third inequality follows from Lemma \ref{lm: angle estimate}, while the fourth inequality results from \eqref{eq:x-z-small} and Claim \ref{clm:deltatau}. Note that the estimate for the term $\|\Pi_{\bar{y}}\big(\delta\bar{\tau}(\bar{y})\big)\|$ follows in the same way as above.

          Let $\beta^r_{\bar{x}^*}$ and $\beta^{\perp}_{\bar{x}^*}$ denote the projection angles to the tangent plane $P_{\bar{x}^*}$ of a radial and lateral vector at $\bar{x}$. By the fact that $\angle(P_{\bar{x}},P_{\bar{x}^*})=\theta_{\bar x^*}$ (see the proof of Claim \ref{clm:estimate-angle-dual}), we have
        \begin{equation*}
       \pi/2-(\theta_{\bar x}+\theta_{\bar x^*}) =\beta^r_{\bar{x}}-\angle(P_{\bar{x}},P_{\bar{x}^*})
       \leq \beta^r_{\bar{x}^*}
       \leq \beta^r_{\bar{x}}+\angle(P_{\bar{x}},P_{\bar{x}^*})
       = \pi/2-(\theta_{\bar x}-\theta_{\bar x^*})~.
        \end{equation*}
        Hence, combined with Lemma \ref{lm: angle estimate} and Lemma \ref{lm:estimate-dual points},
        \begin{equation}\label{eq:beta}
        \cos\beta^r_{\bar x^*}\leq \sin(\theta_{\bar x}+\theta_{\bar x^*})\leq \sin\theta_{\bar x}+\sin\theta_{\bar x^*}\leq C\sqrt{1-\|\bar x\|}~.
        \end{equation}
        Similarly for $\beta^r_{\bar{y}^*}$. Thanks to Statement (3) of Lemma \ref{lm:estimate-dual points}, the above estimate \eqref{eq:Case3} also works when $\Pi_{\bar{x}}$ (or $\Pi_{\bar{y}}$) is replaced with $\Pi_{\bar{x}^*}$ (or $\Pi_{\bar{y}^*}$). Combined with Claim \ref{clm:Lipschitz}, the corresponding H\"older estimates also hold.

   \textbf{(4)} The term $\|\Pi_{\bar{x}^*}\big(\bar{\sigma}(\bar{x})\times(\bar{x}^*-\bar{x})\big)-\Pi_{\bar{y}^*}\big(\bar{\sigma}(\bar{y})\times(\bar{y}^*-\bar{y})\big)\|$
   for $\bar{x}^*,\bar{y}^*\in\cup_{k\in\cK}\partial_k\bar{\Omega}^*$.
   Recall that $\bar\sigma(\bar x)=\delta\bar\sigma(\bar x)+\bar\sigma_0(\bar x)$.  Since $(\bar\tau_0,\bar\sigma_0)$ represents a constant section $\kappa_0\in\Gamma(\bar E)$, whose value at each point of $\bar\Omega$ is the same Euclidean Killing field. Note that the curl of an Euclidean Killing field is constant, so $\bar\sigma_0$ is constant, namely $\bar\sigma_0(\bar x)=\bar\sigma_0(\bar z)=\bar\sigma(\bar z)$ for all $\bar x\in\bar{\Omega}$.
  \begin{equation}\label{eq:both dual}
  \begin{split}
  &\quad\|\Pi_{\bar{x}^*}\big(\bar{\sigma}(\bar{x})\times(\bar{x}^*-\bar{x})\big)-\Pi_{\bar{y}^*}\big(\bar{\sigma}(\bar{y})\times(\bar{y}^*-\bar{y})\big)\|\\
  &\leq\|\Pi_{\bar{x}^*}\big(\delta\bar{\sigma}(\bar{x})\times(\bar{x}^*-\bar{x})\big)\|
 +\|\Pi_{\bar{y}^*}\big(\delta\bar{\sigma}(\bar{y})\times(\bar{y}^*-\bar{y})\big)\|
 +\|\Pi_{\bar{x}^*}\big(\bar{\sigma}(\bar z)\times (\bar x^*-\bar x)\big)-\Pi_{\bar y^*}\big(\bar{\sigma}(\bar z)\times (\bar y^*-\bar y)\big)\|~.
 \end{split}
 \end{equation}

Observe that the first two terms on the right-hand side of \eqref{eq:both dual} are of the same form, we just need to consider the first term.   Note that the radial and lateral components, at $\bar{x}$, of $\delta\bar{\sigma}(\bar{x})\times(\bar{x}^*-\bar{x})$ are respectively
 \begin{equation*}
              \begin{split}
              \big(\delta\bar{\sigma}(\bar{x})\times(\bar{x}^*-\bar{x})\big)^r
              &=\delta\bar{\sigma}^{\perp}(\bar{x})\times(\bar{x}^*-\bar{x})^{\perp},\\
              \big(\delta\bar{\sigma}(\bar{x})\times(\bar{x}^*-\bar{x})\big)^{\perp}
              &=\delta\bar{\sigma}^r(\bar{x})\times(\bar{x}^*-\bar{x})^{\perp}+
              \delta\bar{\sigma}^{\perp}(\bar{x})\times(\bar{x}^*-\bar{x})^r~.\\
              \end{split}
              \end{equation*}
 By Lemma \ref{lm:estimate-hmcb}, Lemma \ref{lm:estimate-dual points}, Claim \ref{clm:Lipschitz}, the above estimate for $\beta^r_{\bar x^*}$ and the fact that $\|\bar z\|\leq \|\bar x\|$, we have
\begin{equation*}
\begin{split}
\|\Pi_{\bar{x}^*}\big(\delta\bar{\sigma}(\bar{x})\times(\bar{x}^*-\bar{x})\big)^r\|
&\leq\|\delta\bar{\sigma}^{\perp}(\bar{x})\times(\bar{x}^*-\bar{x})^{\perp}\|\cdot \cos \beta^r_{\bar{x}^*}\\
&\leq C\left(\frac{\|\bar x-\bar z\|}{1-\|\bar z\|}+(\frac{1}{\sqrt{1-\|\bar x\|}}-\frac{1}{\sqrt{1-\|\bar z\|}})\right)\cdot C\sqrt{1-\|\bar x\|}\cdot C\sqrt{1-\|\bar z\|}\\
&\leq C\|\bar x-\bar z\|+C(\sqrt{1-\|\bar z\|}-\sqrt{1-\|\bar x\|})\\
&\leq C\|\bar x-\bar y\|\leq C\|\bar x^*-\bar y^*\|~,\\
\|\Pi_{\bar{x}^*}\big(\delta\bar{\sigma}(\bar{x})\times(\bar{x}^*-\bar{x})\big)^{\perp}\|
&\leq\|\delta\bar{\sigma}^r(\bar{x})\times(\bar{x}^*-\bar{x})^{\perp}\|+\|\delta\bar{\sigma}^{\perp}(\bar{x})\times(\bar{x}^*-\bar{x})^r\|\\
&\leq C\left|\log\left(\frac{1-\|\bar x\|}{1-\|\bar z\|}\right)\right|\cdot C\sqrt{1-\|\bar x\|} \\
& + C\left(\frac{\|\bar x-\bar z\|}{1-\|\bar z\|}+(\frac{1}{\sqrt{1-\|\bar x\|}}-\frac{1}{\sqrt{1-\|\bar z\|}})\right)\cdot C(1-\|\bar x\|)\\
&\leq C \|\bar{x}-\bar{z}\|\cdot \big|\log(\|\bar{x}-\bar{z}\|)\big|+C\|\bar x-\bar z\|+C(\sqrt{1-\|\bar z\|}-\sqrt{1-\|\bar x\|})\\
&\leq C_{\alpha}\|\bar x-\bar y\|^{\alpha}\leq C_{\alpha}\|\bar x^*-\bar y^*\|^{\alpha}~,
\end{split}
\end{equation*}
for all $\alpha\in(0,1)$, where $C_{\alpha}>0$ is a constant depending on $\alpha$.

Now we consider the last term on the right-hand side of \eqref{eq:both dual}.
Observe that
\begin{equation}\label{eq:killing-proj-diff}
\begin{split}
&\quad\|\Pi_{\bar{x}^*}\big(\bar{\sigma}(\bar z)\times (\bar x^*-\bar x)\big)-\Pi_{\bar y^*}\big(\bar{\sigma}(\bar z)\times (\bar y^*-\bar y)\big)\|\\
&\leq \|\Pi_{\bar{x}^*}\big(\bar{\sigma}(\bar z)\times ((\bar x^*-\bar x)-(\bar y^*-\bar y))\big)\|
+\|(\Pi_{\bar{x}^*}-\Pi_{\bar{y}^*})\big(\bar{\sigma}(\bar z)\times (\bar y^*-\bar y)\big)\|~.\\
\end{split}
\end{equation}

For the first term on the right-hand side of \eqref{eq:killing-proj-diff}, we consider the radial and lateral components, at $\bar x$, of $\bar{\sigma}(\bar z)\times ((\bar x^*-\bar x)-(\bar y^*-\bar y))$ respectively. By Lemma \ref{lm:pogorelov_radial and lateral}, Claim \ref{clm:Lipschitz} and the above estimate for $\beta^r_{\bar x^*}$, we obtain
\begin{equation*}
\begin{split}
\|\Pi_{\bar{x}^*}\big(\bar{\sigma}(\bar z)\times ((\bar x^*-\bar x)-(\bar y^*-\bar y))\big)^r\|
&\leq \|\bar\sigma^{\perp}(\bar z)\|\cdot\|((\bar x^*-\bar y^*)-(\bar x-\bar y)\big)^{\perp}\|\cdot\cos\beta^r_{\bar x^*}\\
&\leq \frac{C}{\sqrt{1-\|\bar z\|}}\cdot C\|\bar x-\bar y\|\cdot C\sqrt{1-\|\bar x\|}\\
&\leq C\|\bar x-\bar y\|\leq C\|\bar x^*-\bar y^*\|~,\\
\|\Pi_{\bar{x}^*}\big(\bar{\sigma}(\bar z)\times((\bar x^*-\bar x)-(\bar y^*-\bar y))\big)^{\perp}\|
&\leq\|\bar{\sigma}^r(\bar z)\|\cdot\|\big((\bar x^*-\bar y^*)-(\bar x-\bar y)\big)^{\perp}\|+\|\bar{\sigma}^{\perp}(\bar z)\|\cdot\|\big((\bar x^*-\bar y^*)-(\bar x-\bar y)\big)^r\|\\
&\leq C\|\log(1-\|\bar z\|)\|\cdot C\|\bar x-\bar y\|+\frac{C}{\sqrt{1-\|\bar z\|}}\cdot C\|\bar x-\bar y\|\cdot C\sqrt{1-\|\bar x\|}\\
&\leq C_{\alpha}\|\bar x-\bar y\|^{\alpha}\leq C_{\alpha}\|\bar x^*-\bar y^*\|^{\alpha}~,\\
\end{split}
\end{equation*}
for all $\alpha\in (0,1)$, where $C_{\alpha}$ depends on $\alpha$.

   For the last term on the right-hand side of  \eqref{eq:killing-proj-diff}, applying the estimate for $\|\Pi_{\bar x^*}-\Pi_{\bar y^*}\|$ in term (2), Lemma \ref{lm:pogorelov_radial and lateral} and Lemma \ref{lm:estimate-dual points} yield that
   \begin{equation*}
   \begin{split}
   \quad\|(\Pi_{\bar{x}^*}-\Pi_{\bar{y}^*})\big(\bar{\sigma}(\bar z)\times (\bar y^*-\bar y)\big)\|
   &\leq\|\Pi_{\bar{x}^*}-\Pi_{\bar{y}^*}\|\cdot\|\bar\sigma(\bar z)\|\cdot \|\bar y^*-\bar y\|\\
   &\leq C\|\bar x^*-\bar y^*\|\cdot \max\{\frac{C}{\sqrt{1-\|\bar z\|}}, C\|\log(1-\|\bar z\|)\|\} \cdot \sqrt{1-\|\bar y\|}\\
   &\leq C\|\bar x^*-\bar y^*\|~.
   \end{split}
   \end{equation*}

   Combining the above estimates, for each $\alpha\in(0,1)$, there is a constant $C_{\alpha}>0$ (depending on $\alpha$) such that $$\|\Pi_{\bar{x}^*}\big(\bar{\sigma}(\bar{x})\times(\bar{x}^*-\bar{x})\big)-\Pi_{\bar{y}^*}\big(\bar{\sigma}(\bar{y})\times(\bar{y}^*-\bar{y})\big)\|\leq
   C_{\alpha}\|\bar x^*-\bar y^*\|^{\alpha}.$$

  \textbf{(5)} The term $\|\Pi_{\bar{y}^*}\big(\bar{\sigma}(\bar{y})\times(\bar{y}^*-\bar{y})\big)\|$ for $\bar{x}\in(\cup_{i\in\cI}\partial_i\bar{\Omega})\cup(\cup_{j\in\cJ}\partial_j\bar{\Omega})$, $\bar{y}^*\in\cup_{k\in\cK}\partial_k\bar{\Omega}^*$.
  We consider the the radial and lateral components, at $\bar{y}$, of $\bar{\sigma}(\bar{y})\times(\bar{y}^*-\bar{y})$ respectively. Note that $\|\bar{z}\|\leq\|\bar{y}\|$, $\|\bar{x}-\bar{y}\|\leq 1/2<1$, and $\bar x$, $\bar y$ lie in different components of $\partial\bar\Omega\setminus\bar \Lambda$. Applying the estimate \eqref{eq:beta} for $\beta^r_{\bar x^*}$ (with $\bar x^*$ replaced by $\bar y^*$), Lemma \ref{lm:pogorelov_radial and lateral}, Lemma \ref{lm:estimate-dual points} and Claim \ref{clm:Lipschitz}, we obtain
              \begin{equation*}
              \begin{split}
              \|\Pi_{\bar{y}^*}\big(\bar{\sigma}(\bar{y})\times(\bar{y}^*-\bar{y})\big)^r\|
              &=\|\bar{\sigma}^{\perp}(\bar{y})\times(\bar{y}^*-\bar{y})^{\perp}\|\cdot \cos \beta^r_{\bar{y}^*}\\
              &\leq \frac{C}{\sqrt{1-\|y\|}}\cdot C\sqrt{1-\|\bar y\|}\cdot C\sqrt{1-\|\bar y\|}\\
              &\leq C\sqrt{1-\|\bar{y}\|}\leq C\|\bar{x}-\bar{y}\|
              \leq C\|\bar{x}-\bar{y}^*\|~,\\
              \|\Pi_{\bar{y}^*}\big(\bar{\sigma}(\bar{y})\times(\bar{y}^*-\bar{y})\big)^{\perp}\|
              &\leq\|\bar{\sigma}^r(\bar{y})\times(\bar{y}^*-\bar{y})^{\perp}\|+
             \|\bar{\sigma}^{\perp}(\bar{y})\times(\bar{y}^*-\bar{y})^r\|\\
             &\leq C\big|\log\sqrt{1-\|\bar{y}\|}\big|\cdot\sqrt{1-\|\bar{y}\|}
             +\frac{C}{\sqrt{1-\|\bar{y}\|}}\cdot(1-\|\bar{y}\|)\\
              &\leq C_{\alpha}\left(\sqrt{1-\|\bar{y}\|}\right)^{\alpha}\leq C_{\alpha}\|\bar{x}-\bar{y}\|^{\alpha} \leq C_{\alpha}\|\bar{x}-\bar{y}^*\|^{\alpha}~,\\
              \end{split}
              \end{equation*}
              for all $\alpha\in(0,1)$, where $C_{\alpha}$ is a constant depending on $\alpha$. Therefore, for each $\alpha\in(0,1)$, there exists a constant $C_{\alpha}>0$ (depending on $\alpha$) such that
              \begin{equation*}
              \|\Pi_{\bar{y}^*}\big(\bar{\sigma}(\bar{y})\times(\bar{y}^*-\bar{y})\big)\|\leq C_{\alpha}\|\bar{x}-\bar{y}^*\|^{\alpha}~.
              \end{equation*}

              As a consequence, the lemma follows.
\end{proof}

\begin{proof}[\textbf{Proof of Proposition \ref{prop:Holder estimate}}]
   By Lemma \ref{lm:Holder estimate-x close to y}, $\bar{v}$ is $C^{\alpha}$ H\"older continuous on $\partial\bar{\Omega}^*\setminus\bar{\Lambda}$ for all $\alpha\in (0,1)$. In particular, $\bar{v}$ is uniformly continuous on $\partial\bar{\Omega}^*\setminus\bar{\Lambda}$. Therefore, $\bar{v}$ can be continuously extended to the limit set $\bar{\Lambda}$ in a unique way and is then continuous on the whole $\partial\bar{\Omega}^*$. As a consequence, $\bar{v}$ is $C^{\alpha}$ H\"older continuous on $\partial\bar{\Omega}^*$ for all $\alpha\in (0,1)$.
\end{proof}

\subsection{Analytical properties of $\bar{v}$}
     Recall that $\tilde{S}^i_t$ is the lift in $\Omega$ of the equidistant surface $S^i_t$ at distance $t$ from $\partial_iC_M$ and $\bar{S}^i_t=\iota(\tilde{S}^i_t)$. We take $t_0\geq 1$ large enough such that the distance from the center $\bar{O}$ to $\cup_{i\in\cI}\bar{S}^i_{t_0}$ is at least $r_0$ for a constant $r_0\in (0,1)$.

     Let $\tilde{G}_{t_0}:\cup_{i\in\cI}\tilde{S}^i_{t_0}\rightarrow\cup_{i\in\cI}\partial_i\Omega$ denote the hyperbolic Gauss map (see Subsection \ref{subsec:foliation}). The following is an adapted version of Proposition 5.3 in \cite{hmcb}, which can be proved in the same manner.

    \begin{lemma}\label{lm:Minkowski-Lp}
   Let $\delta$ be the distance to $\bar{\Lambda}$ on the surface $\bar{\Sigma}:=(\cup_{i\in\cI}\bar{S}^i_{t_0})\cup(\cup_{j\in\cJ}\partial_j\bar{\Omega})\cup(\cup_{k\in\cK}\partial_k\bar{\Omega})\cup\bar{\Lambda}\subset \R^3$. For any $p>0$, $\log\delta$ is in $L^p$ for the area form of the induced metric on $\bar{\Sigma}\subset\R^3$.
   \end{lemma}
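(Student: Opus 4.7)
The plan is to reduce the bound on $\int_{\bar\Sigma}|\log \delta|^p\, da$ to a polynomial decay estimate on the area of sublevel sets of $\delta$, which in turn is controlled by the Minkowski dimension of $\bar\Lambda$. Since $\log\delta$ is bounded on the compact set $\{\delta\geq 1\}\subset \bar\Sigma$, only the neighbourhood $\{\delta<1\}$ of $\bar\Lambda$ contributes, so it is enough to estimate, for small $r>0$,
$$ A(r) := \mathrm{Area}\bigl(\{\bar x\in\bar\Sigma : \delta(\bar x)<r\}\bigr)~. $$

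The first step is to compare the intrinsic distance $\delta$ to the ambient Euclidean distance to $\bar\Lambda$. By Lemma \ref{lm: angle estimate} applied to $\partial_j\bar\Omega$ and $\partial_k\bar\Omega$, together with Remark \ref{rk:equidistant-angle} applied to the equidistant surfaces $\bar S^i_{t_0}$, there is $C>1$ such that for $\bar x\in\bar\Sigma$ close enough to $\bar\Lambda$,
$$ \frac{\sqrt{1-\|\bar x\|}}{C}\;\leq\;\delta(\bar x)\;\leq\; C\sqrt{1-\|\bar x\|}~. $$
Near $\bar\Lambda\subset\partial\mathbb{D}^3$, the surface $\bar\Sigma$ is $C^{1,1}$, tangent to $\partial\mathbb{D}^3$ along $\bar\Lambda$ (Remark \ref{rk:regularity-boundary}), with principal curvatures bounded between two positive constants (Lemma \ref{prop:boundary regularity} and Remark \ref{rk:equidistant-angle}), so the ambient distance $d_{\mathbb{R}^3}(\bar x,\bar\Lambda)$ is also comparable to $\sqrt{1-\|\bar x\|}$. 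Thus $\delta$ and $d_{\mathbb{R}^3}(\cdot,\bar\Lambda)$ are comparable up to a multiplicative constant in a neighbourhood of $\bar\Lambda$.

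The second step is the Minkowski covering argument. By Remark \ref{rk:Hausdorff dim}, $\bar\Lambda$ has Minkowski dimension $D_{\bar\Lambda}\in[1,2)$. Fix any $s\in(D_{\bar\Lambda},2)$. Then for $r$ small, $\bar\Lambda$ can be covered by at most $C_s r^{-s}$ Euclidean balls of radius $r$; enlarging each ball by a uniform factor, this covers an ambient neighbourhood of $\bar\Lambda$ containing $\{\bar x\in\bar\Sigma:\delta(\bar x)<r\}$ by the comparability above. Each ball of radius $\lesssim r$ intersects $\bar\Sigma$ in a region of area $\leq C r^2$, thanks to the uniform bounds on the principal curvatures provided by Lemma \ref{prop:boundary regularity} and Lemma \ref{lm:euc pcurv bound} (which guarantee a local Lipschitz-graph representation with uniform constants). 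Combining these two estimates yields
$$ A(r)\;\leq\; C\, r^{\,2-s}~, $$
with $\beta:=2-s>0$.

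For the final step, split $\{\delta<1\}$ into dyadic annuli $A_n:=\{2^{-n-1}\leq \delta<2^{-n}\}$ for $n\geq 0$. On each $A_n$ one has $|\log\delta|\leq (n+1)\log 2$, hence
$$ \int_{\bar\Sigma}|\log\delta|^p\, da \;\leq\; C + (\log 2)^p\sum_{n=0}^{\infty}(n+1)^{p}\, A(2^{-n}) \;\leq\; C + C\sum_{n=0}^{\infty}(n+1)^{p}\, 2^{-n\beta} \;<\;\infty~, $$
which gives the claim for every $p>0$. The main technical point is the distance comparison near $\bar\Lambda$ and the uniform area bound $\leq C r^2$ for ball intersections; both rest on the $C^{1,1}$ regularity and bounded principal curvatures of $\bar\Sigma\setminus\bar\Lambda$ established in Lemma \ref{prop:boundary regularity} and Remark \ref{rk:equidistant-angle}, together with the quadratic tangency of $\bar\Sigma$ to $\partial\mathbb{D}^3$ at $\bar\Lambda$.
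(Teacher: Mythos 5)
Your argument is correct and is (to the extent one can reconstruct it) essentially the argument the authors intend: the paper defers the proof to~\cite[Prop.~5.3]{hmcb}, and the Minkowski-covering scheme you describe is the natural and standard one. The three ingredients you use are exactly the ones the paper sets up for this purpose: the distance-to-$\bar\Lambda$ estimate $\delta(\bar x)\asymp\sqrt{1-\|\bar x\|}$ from Lemma~\ref{lm: angle estimate} and Remark~\ref{rk:equidistant-angle}; the fact that the Minkowski dimension of $\bar\Lambda$ is strictly less than $2$ (Remark~\ref{rk:Hausdorff dim}, combining Sullivan and Bishop); and a local area bound on $\bar\Sigma$ near $\bar\Lambda$. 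With $A(r)\lesssim r^{2-s}$ for some $s\in(D_{\bar\Lambda},2)$, the dyadic sum $\sum (n+1)^p 2^{-n(2-s)}$ converges for every $p>0$.

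Two small remarks. First, the ``comparability'' you use in the covering step is actually one-sided and trivial: you only need $d_{\R^3}(\bar x,\bar\Lambda)\le\delta(\bar x)$, so $\{\delta<r\}$ is automatically contained in the Euclidean $r$-neighbourhood of $\bar\Lambda$; the two-sided comparison with $\sqrt{1-\|\bar x\|}$ is not needed there (it is only needed if one wanted to translate estimates between $\log\delta$ and $\log d_{\R^3}$). Second, the local area bound $\mathrm{Area}(\bar\Sigma\cap B(\bar y,r))\le Cr^2$ is most cleanly obtained by observing that $\bar\Sigma$ is the boundary of a \emph{convex} domain: it bounds $\bar\Omega''$, which is the intersection of $\bar\Omega$ with the convex regions bounded by each $\bar S^i_{t_0}$ together with $\partial\DD^3$; then $\bar\Sigma\cap B(\bar y,r)\subset\partial\bigl(\bar\Omega''\cap B(\bar y,2r)\bigr)$ and monotonicity of surface area of nested convex bodies gives $\mathrm{Area}\le 16\pi r^2$ with a universal constant. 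Your curvature-based route also works (bounded curvature of the $C^{1,1}$ surface $\bar\Sigma$ gives positive reach and a uniform Lipschitz-graph scale), but one must be slightly careful that the pieces $\bar S^i_{t_0}$, $\partial_j\bar\Omega$, $\partial_k\bar\Omega$ glue $C^{1,1}$ across $\bar\Lambda$ so that the reach is bounded below uniformly; the convexity argument sidesteps this.
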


\begin{proposition}\label{prop:Lp bound partially}
$\bar{\nabla}\bar{v}\in L^{p}\big(T(\partial\bar{\Omega}^*)\big)$ for all $p>2$.
\end{proposition}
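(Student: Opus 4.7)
The plan is to establish a pointwise bound of the form $\|\bar\nabla \bar v(\bar x)\|\leq C|\log \delta(\bar x)|$ on $\partial\bar\Omega^*\setminus\bar\Lambda$, where $\delta$ denotes the distance to $\bar\Lambda$ along $\partial\bar\Omega^*$. Combined with Lemma \ref{lm:Minkowski-Lp} (which says $\log\delta\in L^q$ for every $q>0$, applied after noting that $\cup_{k\in\cK}\partial_k\bar\Omega^*$ enjoys the same Minkowski-type bound as $\cup_{k\in\cK}\partial_k\bar\Omega$ via Claim \ref{clm:Lipschitz}), this immediately yields $\bar\nabla\bar v\in L^p(T\partial\bar\Omega^*)$ for every $p>2$. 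Since $\bar v$ is smooth outside $\bar\Lambda$ by Remark \ref{rk:regularity-boundary}, only the behaviour near $\bar\Lambda$ matters.

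The computation is a differentiated version of the one used in Lemma \ref{lm:Holder estimate-x close to y}. Let $\bar y\in\partial\bar\Omega^*$ be close to $\bar\Lambda$ and let $\bar X$ be a unit vector tangent to $\partial\bar\Omega^*$ at $\bar y$. Writing $\bar v=\Pi\bar u$ with $\Pi$ the orthogonal projection onto $T\partial\bar\Omega^*$ and $\bar u$ the extended deformation vector field, we decompose
\[
\bar\nabla_{\bar X}\bar v \;=\; (\bar\nabla_{\bar X}\Pi)(\bar u) + \Pi(\bar\nabla_{\bar X}\bar u)~.
\]
The first term is controlled by the shape operator of $\partial\bar\Omega^*$, which is uniformly bounded by Lemma \ref{prop:boundary regularity}, times $\|\bar u\|$. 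Using Lemma \ref{lm:pogorelov_radial and lateral} (and, in the dual case $\bar y=\bar x^*$, Lemma \ref{lm:dual-Killing} combined with Lemma \ref{lm:estimate-dual points}), one has $\|\bar u\|\lesssim |\log\delta|$, giving a logarithmic contribution. For the second term, expand $\bar\nabla_{\bar X}\bar u$ using \eqref{eq:euclidean-one-form}: in the interior case this is $\bar\omega_{\bar\tau}(\bar X)+\bar\sigma\times\bar X$, and in the dual case one picks up the extra terms $\bar\omega_{\bar\sigma}(\bar X)\times(\bar x^*-\bar x)$ and $\bar\sigma\times(\bar X^*-\bar X)$ (after differentiating the formula in Lemma \ref{lm:dual-Killing}).

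The key geometric inputs are:
\begin{enumerate}[(a)]
\item Any unit tangent vector $\bar X$ to $\partial\bar\Omega^*$ near $\bar\Lambda$ satisfies $\|\bar X^r\|\lesssim\sqrt{1-\|\bar y\|}\lesssim \delta$ and $\|\bar X^\perp\|\leq 1$, by Lemma \ref{lm: angle estimate} and Lemma \ref{lm:estimate-dual points} together with the geometric description of tangent planes close to $\bar\Lambda$.
\item The projection $\Pi_{\bar y}$ damps radial vectors by a factor $\sin\theta_{\bar y}\lesssim \delta$ (Lemma \ref{lm: angle estimate}, \ref{lm:estimate-dual points}), while lateral vectors are projected essentially isometrically.
\end{enumerate}
Plugging the bounds of Lemma \ref{lm:estimate-one-form} on the radial/lateral parts of $\bar\omega_{\bar\tau}$ and $\bar\omega_{\bar\sigma}$ into this radial/lateral decomposition, every singular factor $1/\delta$ or $1/\delta^2$ appearing in $\bar\omega_{\bar\tau}(\bar X)$ and $\bar\omega_{\bar\sigma}(\bar X)\times(\bar x^*-\bar x)$ is killed by the $\delta$ or $\delta^2$ smallness coming either from the projection or from $\|\bar X^r\|\lesssim\delta$ and $\|\bar x^*-\bar x\|\lesssim\delta$. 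The only term that does not cancel completely is the ``Killing piece'' $\Pi(\bar\sigma\times\bar X)$, where $\bar\sigma^r$ contributes a lateral $(\bar\sigma^r\times\bar X^\perp)$ of size $|\log\delta|$, producing the logarithmic bound.

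Adding all contributions gives $\|\bar\nabla_{\bar X}\bar v\|\lesssim |\log\delta|$, hence the desired $L^p$ membership via Lemma \ref{lm:Minkowski-Lp}. The case $\bar y\in\cup_{i\in\cI}\partial_i\bar\Omega\subset\partial\DD^3$ is handled separately using that $\bar v$ coincides there with a holomorphic vector field $u^i$ on $\partial_i\Omega$ whose Schwarzian descends to a bounded quadratic differential on the closed surface $\partial_iM$ (Section \ref{subsec:one-form at infty}), which bounds $\bar\nabla\bar v$ by a constant near $\bar\Lambda$ on this part. The main technical obstacle is the bookkeeping of the mixed dual/non-dual case in which $\bar y=\bar x^*$ and one must simultaneously control the derivative of $\bar\tau$, $\bar\sigma$ and the cross product with $\bar x^*-\bar x$; the organisation of the argument is the same as in step (4) of the proof of Lemma \ref{lm:Holder estimate-x close to y}, but one replaces Hölder estimates on $\bar v$ with pointwise $L^\infty$ estimates on its derivative, with the $|\log\delta|$ replacing the $\delta^\alpha$ ``gain''.
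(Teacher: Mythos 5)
Your overall strategy is the right one: obtain a pointwise bound of the form $\|\bar\nabla\bar v\|\lesssim|\log\delta|$ on $\partial\bar\Omega^*\setminus\bar\Lambda$, then apply Lemma \ref{lm:Minkowski-Lp}. On $\cup_{j\in\cJ}\partial_j\bar\Omega$ and $\cup_{k\in\cK}\partial_k\bar\Omega^*$ your decomposition and the interplay between the damping factors ($\sin\theta\lesssim\delta$, $\|\bar X^r\|\lesssim\delta$, $\|\bar x^*-\bar x\|\lesssim\delta$) and the singular factors from Lemma \ref{lm:estimate-one-form} is in line with what the paper does (by reference to \cite[Cor.\ 5.4, Lemma 5.8]{hmcb}), though you should note that the worst singularity in Lemma \ref{lm:estimate-one-form} is $\sim\delta^{-3}$, not $\delta^{-2}$, so the bookkeeping of which damping factors multiply which singular ones needs more care than you display.

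The genuine gap is in your treatment of $\cup_{i\in\cI}\partial_i\bar\Omega$, where you claim that because the Schwarzian of $u^i$ descends to a bounded quadratic differential on the closed surface $\partial_iM$, ``$\bar\nabla\bar v$ is bounded by a constant near $\bar\Lambda$.'' This does not follow. The boundedness of the descended Schwarzian controls the third derivative $f_i'''(z)$ of the local expression $u^i(z)=f_i(z)\partial_z$; it gives no information on $f_i'(z)$, which is what governs $\bar\nabla\bar v$ on $\partial_i\bar\Omega$. Since $u^i$ is only automorphic (not equivariant), neither $f_i$, $f_i'$, nor $f_i''$ descends to $\partial_iM$, and indeed the translation and rotation components $\bar\tau$, $\bar\sigma$ of the associated section grow like $|\log(1-\|\cdot\|)|$ by Lemma \ref{lm:pogorelov_radial and lateral}, so the first derivative of $\bar v$ is not \emph{a priori} bounded. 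The paper's actual argument on the $\cI$ part writes $\bar u(\bar x)=\bar\sigma(\bar\eta_{\bar x})\times(\bar x-\bar\eta_{\bar x})+\bar\tau(\bar\eta_{\bar x})$ for $\bar\eta_{\bar x}=\bar G^{-1}(\bar x)$ on the equidistant surface $\cup_{i\in\cI}\bar S^i_{t_0}$, establishes the crucial comparison $\|\bar G^*\bar X\|\leq C\|\bar X\|$ for the hyperbolic Gauss map in the Euclidean norm, and then differentiates through $\bar G$ to obtain $\|\bar\nabla_{\bar X}\bar v\|\leq C|\log\sqrt{1-\|\bar\eta_{\bar x}\|}|$ — a logarithmic bound, not a constant one. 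This Gauss-map step is the essential new idea you are missing; without it the $\cI$ part of the proposition is not proved.
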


\begin{proof}
It is known that $\bar{\nabla}\bar{v}\in L^{p}(T(\cup_{j\in\cJ}\partial_j\bar{\Omega}))$ for all $p>2$, see \cite[Corollary 5.4]{hmcb}. Also, one can show that $\bar{\nabla}\bar{v}\in L^{p}\big(T(\cup_{k\in\cK}\partial_k\bar{\Omega}^*)\big)$ for all $p>2$, following the method in \cite[Lemma 5.8]{hmcb}. It suffices to show that $\bar{\nabla}\bar{v}\in L^{p}(T(\cup_{i\in\cI}\partial_i\bar{\Omega}))$ for all $p>2$.

We define $\bar{G}=\iota \circ \tilde{G}_{t_0}\circ\iota^{-1}:\cup_{i\in\cI}\bar{S}^i_{t_0}\rightarrow \cup_{i\in\cI}\partial_i\bar{\Omega}$. For $\bar{x}\in\cup_{i\in\cI}\partial_i\bar{\Omega}$, the lateral component of $\bar{u}(\bar{x})$ is exactly its tangential component $\bar{v}(\bar{x})$ and $\bar{x}$ uniquely determines the point $\bar{\eta}_{\bar{x}}:=\bar{G}^{-1}(\bar{x})\in \bar{S}^i_{t_0}$. Then $\bar{v}(\bar{x})=\Pi_{\bar{x}}(\bar{u}(\bar{x}))$ and by the extension to $\cup_{i\in\cI}\partial_i\bar{\Omega}$ of $\bar{u}$ defined in Section \ref{subsec:deformation field},
\begin{equation*}
                 \bar{u}(\bar{x})
                =\bar{\tau}(\bar{x})=\bar{\kappa}_{\bar{x}}(\bar{x})
                =\bar{\kappa}_{\bar{\eta}_{\bar{x}}}(\bar{x})
                =\bar{\sigma}(\bar{\eta}_{\bar{x}})\times(\bar{x}-\bar{\eta}_{\bar{x}})
                +\bar{\tau}(\bar{\eta}_{\bar{x}})~,
                \end{equation*}
                where $\bar{\kappa}\in\Gamma(\bar{E})$ is the constant section along the geodesic $[\bar{x}, \bar{\eta}_{\bar{x}}]$ with $\bar{\kappa}_{\bar{\eta}_{\bar{x}}}(\bar{\eta}_{\bar{x}})=\bar{\tau}(\bar{\eta}_{\bar{x}})$.

                We claim that for any $\bar{X}\in T_{\bar{x}}(\cup_{i\in\cI}\partial_i\bar{\Omega})$ with $\|\bar{X}\|\leq 1$, we have that $\|\bar{\nabla}_{\bar{X}}\bar{v}\|\leq C\big|\log\sqrt{1-\|\bar{\eta}_{\bar{x}}\|}\big|$ for 
                $\bar{x}$ close to $\bar{\Lambda}$. Indeed,
        \begin{equation*}
        \begin{split}
        \|\bar{\nabla}_{\bar{X}}\bar{v}\|
        &=\|\bar{\nabla}_{\bar{X}}(\Pi\bar{\tau})\|
        =\|(\bar{\nabla}_{\bar{X}}\Pi)\bar{\tau}+\Pi_{\bar{x}}\big(\bar{\nabla}_{\bar{X}}\bar{\tau}\big)\|\\
        &\leq \|\bar{\nabla}_{\bar{X}}\Pi\|\cdot\|\bar{\sigma}(\bar{\eta}_{\bar{x}})\times(\bar{x}-\bar{\eta}_{\bar{x}})
                +\bar{\tau}(\bar{\eta}_{\bar{x}})\|
        +\|\Pi_{\bar{x}}\big(\bar{\nabla}_{\bar{X}}(\bar{\sigma}(\bar{\eta}_{\bar{x}})\times(\bar{x}-\bar{\eta}_{\bar{x}})
                +\bar{\tau}(\bar{\eta}_{\bar{x}}))\big)\|~. \\
        \end{split}
        \end{equation*}
   Similarly as discussed in Lemma \ref{lm:Holder estimate-x close to y}, it suffices to consider the behavior of $\|\bar{\nabla}_{\bar{X}}\bar{v}\|$ near the limit set $\bar{\Lambda}$.
   \begin{Step-nabla}\label{step:no.1}
	We first estimate the first term on the right-hand side of the above inequality.
    \end{Step-nabla}
        Since $\cup_{i\in\cI}\partial_i\bar{\Omega}$ is contained in $\partial \DD^3$, the principal curvatures are both $1$ at each point of $\cup_{i\in\cI}\partial_i\bar{\Omega}$. This implies that $\|\bar{\nabla}_{\bar{X}}\Pi\|\leq C$ for a constant $C>0$.
        Note that $\cup_{i\in\cI}\bar{S}^i_{t_0}$ is tangent to $\partial\DD^3$ along $\bar{\Lambda}$ and lies outside of the convex hull of $\bar{\Lambda}$ (which contains $\bar{O}$ in its interior). Besides, the segment $[\bar{\eta}_{\bar{x}},\bar{x}]$ is the image under $\iota$ of the geodesic in $\HH^3$ orthogonal to the equidistant surface $\cup_{i\in\cI}\tilde{S}^i_{t_0}$. We consider the triangle $\bar{O}\bar{x}\bar{\eta}_{\bar{x}}$. Observe that the angle $\angle\bar{O}\bar{\eta}_{\bar{x}}\bar{x}\in [\pi/2,\pi]$ (as shown in Figure \ref{fig:Poincare-Klein} for instance).
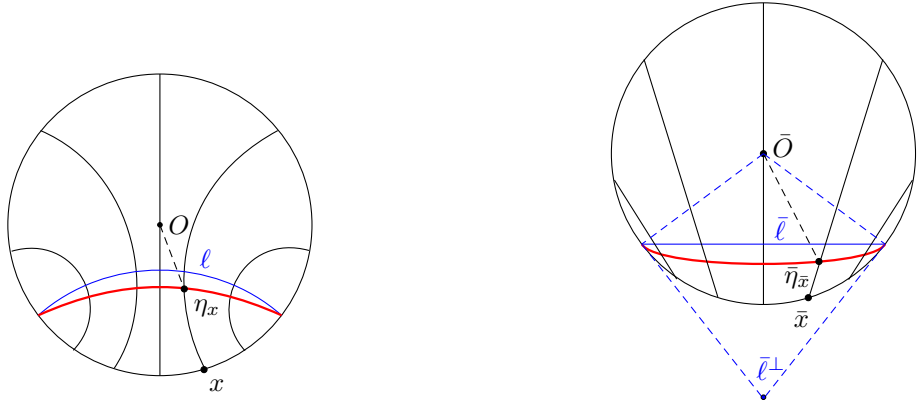
\begin{figure}
\begin{subfigure}[b]{0.46\textwidth}
\centering
\begin{tikzpicture}[scale=1]
\draw  (0,0) ellipse (2 and 2);
\draw [fill=black] (0,0) ellipse (0.03 and 0.03);
\node at (0.25,0) {$O$};
\draw (0,2) .. controls (0,-2) and (0,-2) .. (0,-2);
\draw (-1.57,1.25) .. controls (-0.3,0.7) and (0,-1) .. (-0.6,-1.9);
\draw (1.55,1.25) .. controls (0,0.4) and (0.2,-1.2) .. (0.6,-1.92);
\draw (-1.95,-0.35) .. controls (-1.1,-0.1) and (-0.6,-1.1) .. (-1.14,-1.664);
\draw (1.96,-0.34) .. controls (1.1,-0.1) and (0.6,-1.1) .. (1.12,-1.67);
\draw[blue] (-1.6,-1.2) .. controls (-0.7,-0.4) and (0.7,-0.4) .. (1.6,-1.2);
\node [blue] at (0.6,-0.45) {$\ell$};
\draw [line width=0.3mm] [red] (-1.6,-1.2) .. controls (-0.5,-0.7) and (0.5,-0.7) .. (1.6,-1.2);
\draw [fill=black] (0.32,-0.85) ellipse (0.04 and 0.04);
\node at (0.62,-1.1) {$\eta_x$};
\draw [fill=black] (0.58,-1.92) ellipse (0.04 and 0.04);
\node at (0.75,-2.13) {$x$};
\draw [densely dashed](0,0) .. controls (0.32,-0.85) and (0.32,-0.85) .. (0.32,-0.85);
\end{tikzpicture}
\end{subfigure}
\begin{subfigure}[b]{0.46\textwidth}
\centering
\begin{tikzpicture}
\draw  (0,0) ellipse (2 and 2);
\draw [fill=black] (0,0) ellipse (0.04 and 0.04);
\node at (0.26,0.1) {$\bar{O}$};
\draw (0,2) .. controls (0,-2) and (0,-2) .. (0,-2);
\draw[blue] (-1.6,-1.2) .. controls (1.6,-1.2) and (1.6,-1.2) .. (1.6,-1.2);
\node [blue] at (0.22,-1) {$\bar{\ell}$};
\draw [line width=0.3mm] [red](-1.6,-1.2) .. controls (-1.4,-1.55) and (1.4,-1.55) .. (1.6,-1.2);
\draw (-1.57,1.25) .. controls (-0.6,-1.9) and (-0.6,-1.9) .. (-0.6,-1.9);
\draw (-1.96,-0.35) .. controls (-1.14,-1.664) and (-1.14,-1.664) .. (-1.14,-1.664);
\draw (1.96,-0.35) .. controls (1.12,-1.67) and (1.12,-1.67) .. (1.12,-1.67);
\draw (1.57,1.25) .. controls (0.6,-1.9) and (0.6,-1.9) .. (0.6,-1.9);
\draw [fill=black] (0.73,-1.43) ellipse (0.04 and 0.04);
\node at (0.45,-1.65) {$\bar{\eta}_{\bar x}$};
\draw [fill=black] (0.59,-1.91) ellipse (0.04 and 0.04);
\node at (0.5,-2.2) {$\bar{x}$};
\draw [densely dashed](0,0) .. controls (0.73,-1.43) and (0.73,-1.43) .. (0.73,-1.43);
\draw [densely dashed,blue](0,0) .. controls (-1.6,-1.2) and (-1.6,-1.2) .. (-1.6,-1.2);
\draw [densely dashed,blue](0,0) .. controls (1.6,-1.2) and (1.6,-1.2) .. (1.6,-1.2);
\draw [densely dashed,blue](1.6,-1.2) .. controls (0,-3.25) and (0,-3.25) .. (0,-3.25);
\draw [densely dashed,blue](-1.6,-1.2) .. controls (0,-3.25) and (0,-3.25) .. (0,-3.25);
\draw [fill=blue] (0,-3.23) ellipse (0.03 and 0.03);
\node [blue] at (0.1,-2.85) {$\bar{\ell}^{\perp}$};
\end{tikzpicture}
\end{subfigure}
\caption{\small{The geodesics and angles in Poincar\'{e} model (left) and Klein model (right) of $\HH^2$, where the bold line represents an equidistant curve from a geodesic along the orthogonal direction towards $\partial_{\infty}\HH^2$ and the point $\bar{\ell}^{\perp}$ is the dual of the geodesic $\bar{\ell}$.}}
\label{fig:Poincare-Klein}
\end{figure}

 Then by the cosine formula $$\|\bar{x}-\bar{\eta}_{\bar{x}}\|^2+\|\bar{\eta}_{\bar{x}}\|^2-2\|\bar{x}
        -\bar{\eta}_{\bar{x}}\|\cdot\|\bar{\eta}_{\bar{x}}\|\cos \angle\bar{O}\bar{\eta}_{\bar{x}}\bar{x} =1~,$$
  it follows that
  $$\|\bar{x}-\bar{\eta}_{\bar{x}}\|\leq \sqrt{1-\|\bar{\eta}_{\bar{x}}\|^2}
        =\sqrt{(1+\|\bar{\eta}_{\bar{x}}\|)(1-\|\bar{\eta}_{\bar{x}}\|)}
        \leq 2\sqrt{1-\|\bar{\eta}_{\bar{x}}\|}~.$$
        So we obtain
        \begin{equation}\label{ineq:x-eta}
        \|\bar{x}-\bar{\eta}_{\bar{x}}\|\leq 2\sqrt{1-\|\bar{\eta}_{\bar{x}}\|}~.
        \end{equation}
        Combined with Lemma \ref{lm:pogorelov_radial and lateral},
        we have
        \begin{equation*}
        \begin{split}
        \|\bar{\nabla}_{\bar{X}}\Pi\|\cdot\|\bar{\sigma}(\bar{\eta}_{\bar{x}})\times(\bar{x}-\bar{\eta}_{\bar{x}})
                +\bar{\tau}(\bar{\eta}_{\bar{x}})\|
                &\leq C\big(\|\bar{\sigma}(\bar{\eta}_{\bar{x}})\|\cdot\|\bar{x}-\bar{\eta}_{\bar{x}}\|+\|\bar{\tau}(\bar{\eta}_{\bar{x}})\|\big)\\
                &\leq C\max\left\{|\log (1-\|\bar{\eta}_{\bar{x}}\|)|~, \frac{1}{\sqrt{1-\|\bar{\eta}_{\bar{x}}\|}}\right\}\cdot\sqrt{1-\|\bar{\eta}_{\bar{x}}\|}+ C |\log(1-\|\bar{\eta}_{\bar{x}}\|)|\\
                &\leq C |\log\sqrt{1-\|\bar{\eta}_{\bar{x}}\|}|~.
        \end{split}
        \end{equation*}
    \begin{Step-nabla}\label{step:no.1b}
	We now consider the second term.
    \end{Step-nabla}
   Note that $\bar{\eta}_{\bar{x}}=\bar{G}^{-1}(\bar{x})$. Combined with the formula \eqref{eq:euclidean-one-form}, we have
    \begin{equation}\label{eq:nabla}
    \begin{split}
    \bar{\nabla}_{\bar{X}} \big(\bar{\tau}(\bar{\eta}_{\bar{x}})\big)&=\bar{\nabla}_{\bar{X}} \big(\bar{\tau}(\bar{G}^{-1}(\bar{x}))\big)
    =\bar{\nabla}_{\bar{G}^*\bar{X}} \big(\bar{\tau}(\bar{\eta}_{\bar{x}})\big)=
    \bar{\sigma}(\bar{\eta}_{\bar{x}})\times \bar{G}^*\bar{X}+\bar{\omega}_{\bar{\tau}}(\bar{G}^*\bar{X})~,\\
   \bar{\nabla}_{\bar{X}} \big(\bar{\sigma}(\bar{\eta}_{\bar{x}})\big)&=\bar{\nabla}_{\bar{X}} \big(\bar{\sigma}(\bar{G}^{-1}(\bar{x}))\big)=\bar{\nabla}_{\bar{G}^*\bar{X}} \big(\bar{\sigma}(\bar{\eta}_{\bar{x}})\big)=\bar{\omega}_{\bar{\sigma}}(\bar{G}^*\bar{X})~.
    \end{split}
    \end{equation}
    We now view $\bar{\nabla}_{\bar{X}}\big(\bar{\sigma}(\bar{\eta}_{\bar{x}})\times(\bar{x}-\bar{\eta}_{\bar{x}})\big)$ and $\bar{\nabla}_{\bar{X}}(\bar{\tau}(\bar{\eta}_{\bar{x}}))$ as tangent vectors in $T_{\bar{\eta}_{\bar{x}}}\bar{\Omega}$. Combined with \eqref{eq:nabla}, the lateral and radial components of $\bar{\nabla}_{\bar{X}}\big(\bar{\sigma}(\bar{\eta}_{\bar{x}})\times(\bar{x}-\bar{\eta}_{\bar{x}})\big)$, at $\bar{\eta}_{\bar{x}}$, are respectively:
    \begin{equation}\label{eq:part1}
    \begin{split}
   &\Big(\bar{\nabla}_{\bar{X}}\big(\bar{\sigma}(\bar{\eta}_{\bar{x}})\times(\bar{x}-\bar{\eta}_{\bar{x}})\big)\Big)^{\perp}
    =\big(\bar{\omega}_{\bar{\sigma}}(\bar{G}^*\bar{X})\times(\bar{x}-\bar{\eta}_{\bar{x}})
    +\bar{\sigma}(\bar{\eta}_{\bar{x}})\times(\bar{X}-\bar{G}^*\bar{X})\big)^{\perp}\\
    &=\bar{\omega}^r_{\bar{\sigma}}(\bar{G}^*\bar{X})\times(\bar{x}-\bar{\eta}_{\bar{x}})^{\perp}
    +\bar{\omega}^{\perp}_{\bar{\sigma}}(\bar{G}^*\bar{X})\times(\bar{x}-\bar{\eta}_{\bar{x}})^r
    +\bar{\sigma}^r(\bar{\eta}_{\bar{x}})\times(\bar{X}-\bar{G}^*\bar{X})^{\perp}
     +\bar{\sigma}^{\perp}(\bar{\eta}_{\bar{x}})\times(\bar{X}-\bar{G}^*\bar{X})^r~,\\
   &\Big(\bar{\nabla}_{\bar{X}}\big(\bar{\sigma}(\bar{\eta}_{\bar{x}})\times(\bar{x}-\bar{\eta}_{\bar{x}})\big)\Big)^r
  =\big(\bar{\omega}_{\bar{\sigma}}(\bar{G}^*\bar{X})\times(\bar{x}-\bar{\eta}_{\bar{x}})
    +\bar{\sigma}(\bar{\eta}_{\bar{x}})\times(\bar{X}-\bar{G}^*\bar{X})\big)^r\\
    & =\bar{\omega}^{\perp}_{\bar{\sigma}}(\bar{G}^*\bar{X})\times(\bar{x}-\bar{\eta}_{\bar{x}})^{\perp}+
    \bar{\sigma}^{\perp}(\bar{\eta}_{\bar{x}})\times(\bar{X}-\bar{G}^*\bar{X})^{\perp}~.\\
    \end{split}
    \end{equation}
     The lateral and radial components of $\bar{\nabla}_{\bar{X}}\big(\bar{\tau}(\bar{\eta}_{\bar{x}})\big)$, at $\bar{\eta}_{\bar{x}}$, are respectively:
    \begin{equation}\label{eq:part2}
    \begin{split}
    \Big(\bar{\nabla}_{\bar{X}}\big(\bar{\tau}(\bar{\eta}_{\bar{x}})\big)\Big)^{\perp}
    &=\bar{\sigma}^r(\bar{\eta}_{\bar{x}})\times(\bar{G}^*\bar{X})^{\perp}+\bar{\sigma}^{\perp}(\bar{\eta}_{\bar{x}})\times(\bar{G}^*\bar{X})^r
    +\bar{\omega}^{\perp}_{\bar{\tau}}(\bar{G}^*\bar{X})~,\\
     \Big(\bar{\nabla}_{\bar{X}}\big(\bar{\tau}(\bar{\eta}_{\bar{x}})\big)\Big)^r
    &=\bar{\sigma}^{\perp}(\bar{\eta}_{\bar{x}})\times(\bar{G}^*\bar{X})^{\perp}+\bar{\omega}^r_{\bar{\tau}}(\bar{G}^*\bar{X})~.\\
    \end{split}
    \end{equation}
    Here $\bar{G}^*(\bar{X})$ and $\bar{X}-\bar{G}^*(\bar{X})$ are viewed as  tangent vectors in $T_{\bar{\eta}_{\bar{x}}}\bar{\Omega}$, and $\bar{G}^*\bar{X}=(\bar{G}^*\bar{X})^{r}+(\bar{G}^*\bar{X})^{\perp}$.


    We will need the fact that 
    \begin{equation}
      \label{eq:gauss}
      \|\bar{G}^*\bar{X}\|\leq C \|\bar X\|~,
    \end{equation}
    for all $\bar{X}\in T_{\bar{x}}(\cup_{i\in\cI}\partial_i\bar{\Omega})$ and $\bar x\in\cup_{i\in\cI}\partial_i\bar{\Omega}$.

    Since $\bar{G}$ is the hyperbolic Gauss map, but the norm considered is Euclidean, so we outline with some care the argument to prove this inequality. To simplify the argument, we assume that the tangent plane to $\cup_{i\in\cI}\bar{S}^i_{t_0}$ at $\bar{\eta}_{\bar x}=\bar{G}^{-1}(\bar{x})$ is the horizontal plane $P_r$ of equation $x_3=r$ for some $r\in (0,1)$, see Figure \ref{fig:gauss} (note that the coordinates here are chosen such that the projective model $\HH^3$ is the open unit ball $\DD^3:=\{(x_1,x_2,x_3):x_1^2+x_2^2+x_3^2<1\}$). We first notice that, since the principal curvatures of $\iota^{-1}(\cup_{i\in\cI}\bar{S}^i_{t_0})=\cup_{i\in\cI}\tilde{S}^i_{t_0}$ are uniformly bounded (see Remark \ref{rk:comparsion}), the norm of the pull-back by $\bar G$ of a vector on $\partial\DD^3$ is within a bounded multiplicative constant of the norm of the pull-back of this vector by the Gauss map $\bar G_r$ of $P_r$, which is defined as $\bar{G}_r=\iota \circ G_{r}\circ\iota^{-1}:P_r\rightarrow \partial\DD^3$, where $G_r:\iota^{-1}(P_r)\rightarrow\partial_{\infty}\HH^3$ denotes the hyperbolic Gauss map. So it is sufficient to prove the inequality with $\bar G$ replaced by $\bar G_r$. So we now focus on estimating this Gauss map $\bar{G}_r$ of $P_r$.

    The image in the projective model of the point
    $P^*_r$ dual to $P_r$ has equation $(0,0, 1/r)$. The Gauss map $\bar G_r$ sends any point $\bar y\in P_r$ to the intersection with $\partial \DD^3$ of the line segment from $\bar y$ to $P^*_r$. We denote by $\alpha$ the angle between $\bar{G}_r^*\bar{X}$
     and $[\bar{\eta}_{\bar x}, P_r^*]$ and by $\beta$ the angle between $\bar X$ and $[\bar x, P_r^*]$. Finally let $x_3$ be the vertical coordinate of 
     $\bar{x}$. An elementary geometric argument then shows that
    $$ \| \bar{G}_r^*\bar{X}\| =\frac{\sin\beta}{\sin\alpha} \frac{1-r^2}{1-rx_3}\| \bar{X}\|~.$$
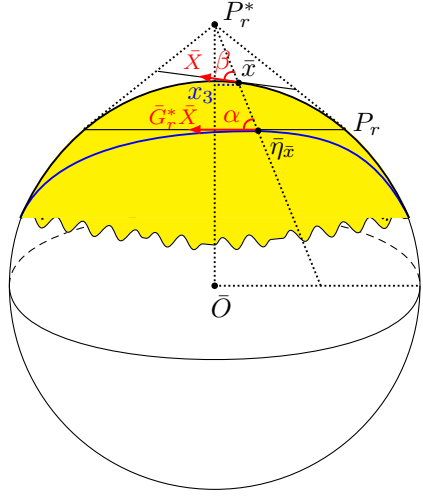
\begin{figure}
\begin{tikzpicture}[scale=0.9]
\draw  (0,0) ellipse (3 and 3);
\draw[densely dashed] (-3,0) .. controls (-3,1.45) and (3,1.45) .. (3,0);
\draw (-3,0) .. controls (-3,-1.45) and (3,-1.45) .. (3,0);
\draw [snake=snake][white][fill=yellow,opacity=0.15] (-2.83,1.0) --(-1,0.7)--(0,0.6)-- (1,0.7)--(2.83,1.0);
\draw [snake=snake](-2.83,1.0) --(-1,0.7)--(0,0.6)-- (1,0.7)--(2.83,1.0);
\draw [densely dotted, thick, snake=snake] (-2.84,1.0) --(-2.3,1.15)--(-1.2,1.18)-- (-0.8,1.15)--(0,1.15)-- (0.8,1.18)--(1.2,1.15)--(2.2,1.13)--(2.84,1.0);
\draw[line width=0.25mm][white][fill=yellow,opacity=0.15](-2.83,1.0) .. controls (-1.88,3.68) and (1.88,3.68) .. (2.83,1.0);
\draw [fill=black] (0,0) ellipse (0.04 and 0.04);
\node at (0.1,-0.3) {$\bar O$};
\draw[line width=0.25mm, densely dotted](0,0)--(0,3.85);
\draw[line width=0.25mm][blue](-2.84,1.0) .. controls (-2.25,2.39) and (0.4,2.26) .. (0.65,2.28)..controls (1.55,2.235) and (2.3,2.1) .. (2.84,1.0);
\draw[line width=0.25mm](-2.84,1.0) .. controls (-1.68,3.685) and (1.68,3.685) .. (2.84,1.0);
\node at (2.23,2.38) {$P_r$};
\draw[line width=0.15mm](-0.83,3.16)--(1.2,2.89);
\node at (0.5,3.2) {$\bar{x}$};
\draw[line width=0.27mm][blue][densely dotted](0.35,2.96)--(0,2.96);
\node at (-0.2,2.81) {\color{blue}{$x_3$}};
\node at (0.12,3.29) {\color{red}$\beta$};
\node at (-0.31,3.33) {\footnotesize{\color{red}$\bar X$}};
\draw [line width=0.25mm][red](0.3,3.14) arc (65:178:0.11);
\draw[-latex][red][line width=0.25mm](0.35,3) -- (-0.25,3.09);
\draw[line width=0.15mm][black](-1.9,2.3)--(1.9,2.3);
\node at (1,2.02) {$\bar{\eta}_{\bar x}$};
\draw [line width=0.25mm][red](0.58,2.45) arc (80:185:0.13);
\node at (0.25,2.5) {\color{red}$\alpha$};
\node at (-0.6,2.48) {\footnotesize{\color{red}$\bar{G}^*_r\bar X$}};
\draw[-latex][red][line width=0.25mm](0.65,2.3) -- (-0.4,2.3);
\draw[line width=0.25mm][densely dotted](-1.91,2.31)--(0,3.85);
\draw[line width=0.25mm][densely dotted](1.91,2.31)--(0,3.85);
\draw[line width=0.25mm][densely dotted](0,0)--(3,0);
\draw[line width=0.25mm][densely dotted](1.55,0)--(0,3.85);
\draw [fill=black] (0,3.85) ellipse (0.04 and 0.04);
\draw [fill=black](0.35,2.99) ellipse (0.04 and 0.04);
\draw [fill=black] (0.635,2.285) ellipse (0.04 and 0.04);
\node at (0.36,4) {$P^*_r$};
\end{tikzpicture}
\caption{Estimate on the Gauss map $\bar{G}_r$ of $P_r$ (the shaded region is a connected component of $\cup_{i\in\cI}\partial_i\bar{\Omega}$, and $P_r$ is the tangent plane to $\cup_{i\in\cI}\bar{S}^i_{t_0}$ at $\bar{\eta}_{\bar x}$).}
\label{fig:gauss}
\end{figure}
   However it is clear that
   $$ 1\leq \frac{1-r^2}{1-rx_3}\leq 1+r \leq 2~,$$
   while $\sin\beta\leq \sin\alpha$. This concludes the proof of \eqref{eq:gauss}.

    Combining this estimate with Remark \ref{rk:equidistant-angle}, we have
    \begin{equation}\label{eq:GX}
    \begin{split}
    \|(\bar{G}^*\bar{X})^{\perp}\|&\leq\|\bar{G}^*\bar{X}\|\leq C~,\\
    \|(\bar{G}^*\bar{X})^{r}\|&=\|\bar{G}^*\bar{X}\|\sin \theta_{\bar{\eta}_{\bar{x}}}\leq  C\sqrt{1-\|\bar{\eta}_{\bar{x}}\|}~,
    \end{split}
    \end{equation}
     for all $\|\bar{X}\|\leq 1$, where $\theta_{\bar{\eta}_{\bar{x}}}$ is the angle at $\bar{\eta}_{\bar{x}}$ between the outward-pointing normal vector to $\cup_{i\in\cI}\bar{S}^i_{t_0}$ at $\bar{\eta}_{\bar{x}}$ and the radial direction along $\bar{O}\bar{\eta}_{\bar{x}}$ at $\bar{\eta}_{\bar{x}}$. Let $\alpha_{\bar{\eta}_{\bar{x}}}:=\angle{\bar{x}\bar{O}\bar{\eta}_{\bar{x}}}$. 
    Note that $\|\bar{\eta}_{\bar{x}}\|\in [r_0, 1)$ by assumption.
    \begin{equation*}
    \sin\alpha_{\bar{\eta}_{\bar{x}}}
    =\sqrt{1-\cos^2\alpha_{\bar{\eta}_{\bar{x}}}}
    =\sqrt{1-\big(\frac{1+\|\bar{\eta}_{\bar{x}}\|^2-\|\bar{x}-\bar{\eta}_{\bar{x}}\|^2}{2\|\bar{\eta}_{\bar{x}}\|}\big)^2}
    \leq \frac{\|\bar{x}-\bar{\eta}_{\bar{x}}\|\sqrt{2(1+\|\bar{\eta}_{\bar{x}}\|^2)}}{2\|\bar{\eta}_{\bar{x}}\|}
    \leq C\|\bar{x}-\bar{\eta}_{\bar{x}}\|
    \leq C\sqrt{1-\|\bar{\eta}_{\bar{x}}\|}~.
    \end{equation*}
    The last inequality follows from \eqref{ineq:x-eta}. Combined with the assumption that $\|\bar{X}\|\leq 1$ and \eqref{eq:GX}, we have
    \begin{equation*}
    \begin{split}
    \|(\bar{X}-\bar{G}^*\bar{X})^{\perp}\|&\leq\|\bar{X}^{\perp}\|+\|(\bar{G}^*\bar{X})^{\perp}\|\leq C~,\\
    \|(\bar{X}-\bar{G}^*\bar{X})^{r}\|
    &\leq\|\bar{X}^r\|+\|(\bar{G}^*\bar{X})^r\|\\
    &
    \leq \|\bar{X}\|\cdot\sin\alpha_{\bar{\eta}_{\bar{x}}}+\|(\bar{G}^*\bar{X})^r\|\\
    &\leq  C\sqrt{1-\|\bar{\eta}_{\bar{x}}\|}~.\\
    \end{split}
    \end{equation*}
   By \eqref{ineq:x-eta}, we have
    \begin{equation*}
    \|(\bar{x}-\bar{\eta}_{\bar{x}})^{\perp}\|
    \leq\|\bar{x}-\bar{\eta}_{\bar{x}}\|\leq 2\sqrt{1-\|\bar{\eta}_{\bar{x}}\|}~.
    \end{equation*}
    On the other hand,
    \begin{equation*}
    \begin{split}
     \|(\bar{x}-\bar{\eta}_{\bar{x}})^r\|
     &=\|\bar{x}-\bar{\eta}_{\bar{x}}\|\cdot\cos(\pi-\angle{\bar{O}\bar{\eta}_{\bar{x}}\bar{x}})\\
     &=\|\bar{x}-\bar{\eta}_{\bar{x}}\|\frac{1-\|\bar{x}-\bar{\eta}_{\bar{x}}\|^2-{\|\bar{\eta}_{\bar{x}}\|}^2}{2\|\bar{x}-\bar{\eta}_{\bar{x}}\|\cdot\|\bar{\eta}_{\bar{x}}\|}\\
     &\leq 1-\|\bar{\eta}_{\bar{x}}\|~.
    \end{split}
    \end{equation*}
     The last inequality follows from applying the triangle inequality $\|\bar{x}-\bar{\eta}_{\bar{x}}\|\geq \|\bar{x}\|-\|\bar{\eta}_{\bar{x}}\|= 1-\|\bar{\eta}_{\bar{x}}\|$ into the above numerator.

     Let $P_{\bar{x}}$ denote the plane tangent to $\cup_{i\in\cI}\partial_i\bar{\Omega}$ at $\bar{x}$. The projection angle, say $\beta^r_{\bar{\eta}_{\bar{x}}}$, to $P_{\bar{x}}$ of a radial line through $\bar{\eta}_{\bar{x}}$ is $\pi/2-\alpha_{\bar{\eta}_{\bar{x}}}$, while the projection angle, say $\beta^{\perp}_{\bar{\eta}_{\bar{x}}}$, to $P_{\bar{x}}$ of a lateral vector at $\bar{\eta}_{\bar{x}}$ is in $[0,\alpha_{\bar{\eta}_{\bar{x}}}]$.

    Substituting the decomposition $\bar{G}^*\bar{X}=(\bar{G}^*\bar{X})^{r}+(\bar{G}^*\bar{X})^{\perp}$ into the $\bar{\omega}^r_{\bar{\sigma}}(\cdot)$, $\bar{\omega}^{\perp}_{\bar{\sigma}}(\cdot)$, $\bar{\omega}^r_{\bar{\tau}}(\cdot)$ and $\bar{\omega}^{\perp}_{\bar{\tau}}(\cdot)$ in \eqref{eq:part1} and \eqref{eq:part2}, and using Lemma \ref{lm:estimate-one-form}, Lemma \ref{lm:pogorelov_radial and lateral}, \eqref{ineq:x-eta} and the above estimates, we have
    \begin{equation*}
    \begin{split}
    \|\Pi_{\bar{x}}\big(\big(\bar{\nabla}_{\bar{X}}\big(\bar{\sigma}(\bar{\eta}_{\bar{x}})\times(\bar{x}-\bar{\eta}_{\bar{x}})\big)^{\perp}\big)\|
    &\leq\|\big(\bar{\nabla}_{\bar{X}}\big(\bar{\sigma}(\bar{\eta}_{\bar{x}})\times(\bar{x}-\bar{\eta}_{\bar{x}})\big)^{\perp}\|
    \leq C\big|\log\sqrt{1-\|\bar{\eta}_{\bar{x}}\|}\big|~, \\
    \|\Pi_{\bar{x}}\big(\big(\bar{\nabla}_{\bar{X}}\big(\bar{\sigma}(\bar{\eta}_{\bar{x}})\times(\bar{x}-\bar{\eta}_{\bar{x}})\big)^r\big)\|
    &=\|\big(\bar{\nabla}_{\bar{X}}\big(\bar{\sigma}(\bar{\eta}_{\bar{x}})\times(\bar{x}-\bar{\eta}_{\bar{x}})\big)^r\|\cdot \cos\beta^r_{\bar{\eta}_{\bar{x}}}\\
    &=\|\big(\bar{\nabla}_{\bar{X}}\big(\bar{\sigma}(\bar{\eta}_{\bar{x}})\times(\bar{x}-\bar{\eta}_{\bar{x}})\big)^r\|\cdot \sin\alpha_{\bar{\eta}_{\bar{x}}}\leq C~.\\
    \|\Pi_{\bar{x}}\big(\big(\bar{\nabla}_{\bar{X}}(\bar{\tau}(\bar{\eta}_{\bar{x}}))\big)^{\perp}\big)\|
    &\leq \|\big(\bar{\nabla}_{\bar{X}}(\bar{\tau}(\bar{\eta}_{\bar{x}}))\big)^{\perp}\|\leq C\big|\log\sqrt{1-\|\bar{\eta}_{\bar{x}}\|}\big|~, \\
    \|\Pi_{\bar{x}}\big(\big(\bar{\nabla}_{\bar{X}}(\bar{\tau}(\bar{\eta}_{\bar{x}}))\big)^r\big)\|
    &= \|\big(\bar{\nabla}_{\bar{X}}(\bar{\tau}(\bar{\eta}_{\bar{x}}))\big)^r\|\cdot \cos \beta^r_{\bar{\eta}_{\bar{x}}}= \|\big(\bar{\nabla}_{\bar{X}}(\bar{\tau}(\bar{\eta}_{\bar{x}}))\big)^r\|\cdot \sin\alpha_{\bar{\eta}_{\bar{x}}}\leq C~.\\
     \end{split}
     \end{equation*}
     Combining Steps 1-2, we obtain that
     \begin{equation*}
     \|\bar{\nabla}_{\bar{X}}\bar{v}\|\leq C\big|\log\sqrt{1-\|\bar{\eta}_{\bar{x}}\|}\big|~.
     \end{equation*}
     The claim follows. Let $\delta_{\cup_{i\in\cI}\bar{S}^i_{t_0}}(\bar{x},\bar{\Lambda})$ denote the distance from the point $\bar{x}\in\bar{\Omega}$ to the limit set $\bar{\Lambda}$ along the surface $\cup_{i\in\cI}\bar{S}^i_{t_0}$. By Remark \ref{rk:equidistant-angle},
     \begin{equation*}
     \|\bar{\nabla}_{\bar{X}}\bar{v}\|\leq C\big|\log\delta_{\cup_{i\in\cI}\bar{S}^i_{t_0}}(\bar{\eta}_{\bar{x}},\bar{\Lambda})\big|~, \end{equation*}
     for all $\bar{X}\in T_{\bar{x}}(\cup_{i\in\cI}\partial_i\bar{\Omega})$ with $\|\bar{X}\|\leq 1$. Combined with Lemma \ref{lm:Minkowski-Lp},  we have $\bar{\nabla}\bar{v}\in L^{p}(T(\cup_{i\in\cI}\partial_i\bar{\Omega}))$.
      \end{proof}

\begin{proof}[\textbf{Proof of Proposition \ref{lm:local rigidity-mfld}}]
  We consider the representative one-form $\omega$ associated to the infinitesimal deformation $\dot g$ (see Lemma \ref{lm:omega}) and the corresponding deformation vector field $\bar{u}$ on $\partial\bar{\Omega}^*\setminus\bar{\Lambda}$ under the infinitesimal Pogorelov map. By Lemma \ref{prop:boundary regularity}, $\partial\bar{\Omega}^*$ is in $D_{2,\infty}$. By Proposition \ref{prop:solution}, the tangential component $\bar{v}$ of $\bar{u}$ is a solution of Equation \eqref{eq:E}.

  It follows from Proposition \ref{prop:Holder estimate} that $\bar{v}$ is $C^{\alpha}$ on $\partial\bar{\Omega}^*$ for all $\alpha\in(0,1)$. Combined with Remarks \ref{rk:Hausdorff dim} and \ref{rk:regularity-boundary}, Proposition \ref{prop:Lp bound partially} and Lemma \ref{lm:D(1,p)-condition}, $\bar{v}$ is in the class of $D_{1,p}$. It then follows from Lemma \ref{lm:Vekua} that $\bar{v}$ is a trivial solution, which means that $\bar{u}$ is a trivial infinitesimal deformation. 
  After adding a global Killing vector field, we can therefore suppose that 
  $\bar u=0$ on $\partial\bar{\Omega}^*$.

  Since $\bar u$ is the image of $u$ by the infinitesimal Pogorelov map, $u=0$ on $\partial\Omega^*$ (see Lemma \ref{lm:Pogorelov isometric}). $u$ is therefore zero on each of the $\partial_i\Omega, i\in \cI$, on all the $\partial_j\Omega, j\in \cJ$, and by duality, also on each of the $\partial_k\Omega, k\in \cK$.
  Since the deformation vector field $u$ is zero on each boundary component, it implies that $\dot g$ does not change the holonomy representation of $(M,g)$ (at first order), and moreover the deformation vector field is zero on each boundary surface of $N\subset M$.
\end{proof}

\section{Convex co-compact manifolds}
\label{sec:convex-co compact}

\subsection{Metrics of constant curvatures on the boundary}\label{subsec:metrics}

Before considering the proof of Theorem \ref{tm:main-cc} in the general case, we provide here a proof in the special case where the boundary metrics $h_j$ and $h^*_k$ have constant curvature. This special case will serve as an illustration of the ideas used in the proof, but it will also be used in the proof of the general case.

Recall that each boundary component of $M$ is incompressible and $\cCC(M)$ is equivalently the 
quasiconformal deformation space of a convex co-compact 
representation of $\pi_1(M)$ (see Section \ref{subsec:deformation}). As a special case of Theorem \ref{tm:ab},
$\cCC(M)$ is a complex analytic manifold biholomorphic to the Teichm\"uller space $\cT_{\partial M}$ of $\partial M$, of (real) dimension $\sum^n_{l=1}(6\mathfrak{g}_l-6)$, where $\mathfrak{g}_l$ is the genus of the boundary component $\partial_lM$.

\begin{lemma} \label{lm:special}
  Theorem \ref{tm:main-cc} holds when $h_j$ has constant curvature $K_j\in(-1,0)$ and  $h^*_k$ has constant curvature $K^*_k\in (-\infty, 0)$, where $j\in \cJ$, $k\in \cK$, and $\cI\sqcup \cJ\sqcup \cK=\{1,\dots,n\}$.
\end{lemma}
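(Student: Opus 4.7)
The plan is to parametrize $\cCC(M)$ by the prescribed boundary data via a continuous map, and prove this map is a global diffeomorphism via local rigidity (already at hand) plus a properness argument based on the good geometric control available for constant curvature surfaces. Specifically, I would define
\begin{equation*}
  \Phi: \cCC(M) \longrightarrow \prod_{i\in \cI}\cT_{\partial_i M} \times \prod_{j\in \cJ}\cT_{\partial_j M} \times \prod_{k\in \cK}\cT_{\partial_k M}
\end{equation*}
as follows. For $g\in \cCC(M)$ and $i\in\cI$, the $i$-th component is simply the conformal structure at infinity on $\partial_iM$, well defined by Theorem \ref{tm:ab}. For $j\in\cJ$, since $K_j\in(-1,0)$, the end $\cE^j$ of $(M,g)$ is foliated by a unique family of smooth strictly convex surfaces of constant Gaussian curvature ranging over $(-1,0)$ (by the results of Labourie and Rosenberg--Spruck on $K$-surfaces in convex co-compact ends with incompressible boundary); let $S_j$ be the unique leaf of curvature $K_j$ and let $\Phi_j(g)$ be the conformal class of its induced metric, which, being of constant curvature $K_j$, is uniquely determined up to isotopy by this conformal class. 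For $k\in\cK$, using the duality $S\leftrightarrow S^{\ast}$ of Proposition \ref{prop:dual}, prescribing a metric of constant curvature $K^{\ast}_k\in(-\infty,0)$ as third fundamental form is equivalent to prescribing the induced metric on the dual $K$-surface of constant curvature $K_k=K^{\ast}_k/(K^{\ast}_k-1)\in(-1,0)$, so the same existence/uniqueness foliation result applies and we take the conformal class of the third fundamental form.

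Both source and target are real-analytic manifolds of real dimension $\sum_{l=1}^n(6\mathfrak{g}_l-6)$: this is Theorem \ref{tm:ab} (since the boundary is incompressible, $\mathrm{Mod}_0(\partial M)$ is trivial) for the source, and the sum of Teichm\"uller dimensions for the target. Continuity of $\Phi$ follows from standard continuous dependence of solutions to the constant curvature equation on parameters, applied to the $K$-surface foliation and its dual. The key reason to expect $\Phi$ to be a local diffeomorphism is Proposition \ref{lm:local rigidity-mfld}: if $\dot g\in T_g\cCC(M)$ lies in $\ker d\Phi_g$, then $\dot g$ preserves each $c_i$, each conformal class of the constant-curvature induced metric on $S_j$ (and hence, since the total area is a topological invariant, the induced metric $h_j$ itself up to isotopy), and each conformal class of the third fundamental form on $S_k$ (hence $h^{\ast}_k$ up to isotopy); by local rigidity $\dot g$ is trivial, so $d\Phi_g$ is injective, and by equality of dimensions it is an isomorphism.

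The main obstacle, and the step that deserves most attention, is properness of $\Phi$. Given $g_n\in\cCC(M)$ with $\Phi(g_n)\to (c_i^{\infty},[h_j^{\infty}],[h^{\ast,\infty}_k])$, the strategy is: (i) the areas of the $K_j$-surfaces and of the dual $K_k$-surfaces are topological invariants depending only on the genus and the curvature, and convergence of the constant-curvature metric implies a uniform lower bound on their injectivity radii; (ii) for $K$ away from $0$ and $-1$, the principal curvatures of a $K$-surface are pinched between two positive constants depending only on $K$ and the area, giving uniform $C^{2,\alpha}$ bounds on the $S_j$ (and on the $S_k$ via duality); (iii) together with the convergence of the conformal structure at infinity on $\partial_iM$ (handled by Ahlfors-Bers compactness), these bounds prevent both collapse and degeneration of the convex core, and the holonomy representations $\rho_{g_n}$ remain in a compact subset of the character variety; by standard Kleinian group compactness they subconverge to a convex co-compact representation realizing the limit data. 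Combining these ingredients, $\Phi$ is a proper local diffeomorphism, hence a covering map, between two connected real-analytic manifolds; since the target is a contractible product of Teichm\"uller spaces, $\Phi$ is a homeomorphism, which gives simultaneously existence and uniqueness and concludes the proof.
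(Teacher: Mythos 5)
Your proposal is correct and follows essentially the same route as the paper: define the boundary-data map, invoke Theorem~\ref{tm:ab} for the dimension count, use Proposition~\ref{lm:local rigidity-mfld} to get injectivity of the differential hence a local diffeomorphism, establish properness via the uniform control on $K$-surfaces (the paper packages this as Lemma~\ref{lm:compact}), and conclude that a proper local diffeomorphism onto the contractible product of Teichm\"uller spaces is a homeomorphism. One small slip: the duality formula should read $K_k = K^{\ast}_k/(1-K^{\ast}_k)$ (inverting $K^{\ast}=K/(K+1)$ from Proposition~\ref{prop:dual}), not $K^{\ast}_k/(K^{\ast}_k-1)$, which has the wrong sign and would land outside $(-1,0)$; since you correctly state $K_k\in(-1,0)$ immediately afterwards, this is clearly just a typo and does not affect the argument.
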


\begin{proof}
For each $k\in\cK$, let $K_k=K^*_k/(1-K^*_k)$. For the prescribed constants $K_j$ and $K_k$ (with $j\in\cJ, k\in\cK$), we define a map 
$$\phi_{\cJ,\cK}:\cCC(M)\rightarrow\Pi_{l\in\cI\sqcup\cJ\sqcup\cK}\cT_{\partial_lM}~,$$
which sends a metric $g\in\cCC(M)$ to the conformal structures at infinity on $\partial_iM$ for $i\in \cI$, the hyperbolic metrics homothetic to the induced metrics on the $K_j$-surfaces (i.e. the surface of constant curvature $K_j$) isotopic to $\partial_jM$ for $j\in\cJ$, and the hyperbolic metrics homothetic to the the third fundamental forms on the $K_k$-surfaces isotopic to $\partial_kM$ for $k\in\cK$.

Theorem \ref{tm:main-cc} will immediately follow if we can show that $\phi_{\cJ,\cK}$ is a homeomorphism, by considering 
$c_i\in\cT_{\partial_iM}$ for each $i\in\cI$, 
$|K_j|h_j\in\cT_{\partial_jM}$ for each $j\in\cJ$, and 
$|K^*_k|h^*_k\in \cT_{\partial_kM}$ for each $k\in\cK$ in the target domain.

It is not hard to see that $\phi_{\cJ,\cK}$ is well-defined, by the Ahlfors Finiteness Theorem \cite[Chap 3.1]{marden:hyperbolic}, Statement (4) of Proposition \ref{prop:dual} and the fact that each end of a convex co-compact hyperbolic manifold admits a unique foliation by surfaces of constant (Gauss) curvature (also called $K$-foliation for simplicity) \cite[Theorem 2]{Lab91}.
Moreover, $\phi_{\cJ,\cK}$ is smooth, see e.g. \cite{Ber70}.

By the Ahlfors-Bers Theorem, $\cCC(M)$ has the same dimension as the space 
$\Pi_{l\in\cI\sqcup\cJ\sqcup\cK}\cT_{\partial_lM}$. Combined with Proposition \ref{lm:local rigidity-mfld}, $\phi_{\cJ,\cK}$ is a local diffeomorphism. Note that the principal curvatures on the surfaces homotopic to $\partial_jM$ and $\partial_kM$ (which have constant curvature) are uniformly bounded, so $\phi_{\cJ,\cK}$ is proper by Lemma \ref{lm:compact} below. Combining the above results, $\phi_{\cJ,\cK}$ is a homeomorphism and the lemma follows.
\end{proof}

\subsection{Proof of Theorem \ref{tm:main-cc}}

We prove here Theorem \ref{tm:main-cc}.

We consider a compact manifold $M$ with non-empty boundary, and whose interior admits a convex co-compact hyperbolic metric. Let 
$\cE^l$ ($1\leq l\leq n$) be  an end of $M$, which is topologically the product of a closed surface $S_l$ of genus at least $2$ by $\R_{>0}$.

\begin{definition}\label{def:submflds}
We introduce the following notations:
  \begin{itemize}
  \item For each $i\in \cI$ and each 
      $c_i\in \cT_{\partial_iM}$, let $\cU_i(c_i)\subset \cCC(M)$ be the subset consisting of convex co-compact metrics on ${\rm int}(M)$ such that the conformal structure at infinity on $\partial_iM$ is $c_i$.
  \item For each $j\in \cJ$ and each smooth metric $h_j$ of curvature $K>-1$, let $\cV_j(h_j)\subset \cCC(M)$ be the space of convex co-compact metrics 
      on ${\rm int}(M)$ such that $\cE^j$ contains a closed surface $S_j$ isotopic to $\partial_jM$ with induced metric isotopic to $h_j$.
  \item For each $k\in \cK$ and each smooth metric $h^*_k$ of curvature $K<1$, with closed, contractible geodesics of length $L>2\pi$, let $\cW_k(h^*_k)\subset \cCC(M)$ be the space of convex co-compact metrics on ${\rm int}(M)$ such that $\cE^k$ contains a closed surface $S_k$ isotopic to $\partial_kM$ with third fundamental form isotopic to $h^*_k$.
  \end{itemize}
\end{definition}

Before proving the following lemma, we introduce another perspective of $\cCC(M)$. Let $\cCP(\partial_lM)$ denote the space of complex projective structures on $\partial_lM$ $(l\in\cI\sqcup\cJ\sqcup\cK)$ and let $\cCP(\partial M)$ denote the space of complex projective structures on $\partial M$, which is the product of the spaces $\cCP(\partial_lM)$. 
For each $l$, $\cCP(\partial_lM)$ is a complex manifold, with a smooth structure and a complex structure that can be defined in terms of the images in ${\rm PSL}_2(\C)$ of a finite generating set, see \cite{dumas-survey} for more details. The space $\cCP(\partial M)$ is a complex symplectic manifold, and $\cCC(M)$ is a Lagrangian submanifold of $\cCP(\partial M)$ of (real) dimension $\sum_{l=1}^n(6\mathfrak{g}_l-6)$, see \cite{McMullen}.

\begin{lemma}\label{lm:codim-submanifold}
  The subsets $\cU_i(c_i)$ (resp. $\cV_j(h_j)$, $\cW_k(h^*_k)$) are smooth submanifolds of $\cCC(M)$ of codimension $6\mathfrak{g}_i-6$ (resp. $6\mathfrak{g}_j-6$, $6\mathfrak{g}_k-6$), where $\mathfrak{g}_l$ is the genus of the boundary component $\partial_lM$ ($1\leq l\leq n$). 
\end{lemma}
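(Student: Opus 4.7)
For $\cU_i(c_i)$, the plan is direct: since $\partial M$ is incompressible, no simple closed curve in $\partial M$ is compressible in $M$, so $\mathrm{Mod}_0(\partial M)$ is trivial. Theorem~\ref{tm:ab} then gives a biholomorphism $\Phi: \cCC(M) \to \prod_l \cT_{\partial_lM}$, and $\cU_i(c_i) = \Phi^{-1}\bigl(\{c_i\} \times \prod_{l \neq i} \cT_{\partial_lM}\bigr)$ is therefore a smooth submanifold of codimension $\dim_{\R}\cT_{\partial_iM} = 6\mathfrak{g}_i - 6$.

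For $\cV_j(h_j)$ my plan is to set up an implicit function theorem in H\"older spaces. Fix $g_0 \in \cV_j(h_j)$, a convex surface $S_0 \subset \cE^j$ with induced metric isotopic to $h_j$, and parameterize nearby convex embeddings by normal graphs $u \in C^{2,\alpha}(S_0)$ over $S_0$. Consider the smooth map
\begin{equation*}
  \Psi_j : \cCC(M) \times C^{2,\alpha}(S_0) \longrightarrow \mathrm{Met}^{0,\alpha}(\partial_jM)/\mathrm{Diff}_0,
  \qquad (g,u) \longmapsto [I_g(S_0^u)]~,
\end{equation*}
where $S_0^u$ is the normal graph of $u$ in $g$ and the induced metric is pulled back to $\partial_jM$ via a fixed identification $\partial_jM \to S_0$. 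The key step is showing that $d\Psi_j$ at $(g_0, 0)$ is surjective (a submersion). The partial derivative $\partial_u \Psi_j$ is essentially the linearization of the Weyl problem for a convex surface in a fixed hyperbolic background: classical arguments (infinitesimal rigidity of closed convex surfaces in $\HH^3$, in the form of Blaschke's theorem adapted to the hyperbolic setting) show that this linearization is a Fredholm operator whose cokernel is naturally identified with the $(6\mathfrak{g}_j-6)$-dimensional space of holomorphic quadratic differentials on $(\partial_jM, [h_j])$. It remains to show that $\partial_g \Psi_j$ covers this cokernel; this is where Proposition~\ref{lm:local rigidity-mfld} enters, combined with the fact that $\cCC(M)$ has dimension $\sum_l(6\mathfrak{g}_l-6)$ and the conformal-structure-at-infinity projection to $\cT_{\partial_jM}$ is surjective (via Ahlfors--Bers).

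Once $d\Psi_j$ is surjective at $(g_0,0)$, the preimage $\Psi_j^{-1}([h_j])$ is a smooth Banach submanifold, and its projection onto $\cCC(M)$ has local image equal to $\cV_j(h_j)$. The differential of this projection at $(g_0,0)$ has kernel equal to $\{(0,\dot u) : \partial_u\Psi_j(\dot u) = 0\}$, which by the infinitesimal Weyl rigidity for closed convex surfaces in a hyperbolic manifold reduces to infinitesimal ambient Killing fields tangent to $S_0$; since $(M, g_0)$ is convex co-compact with incompressible boundary, no such non-trivial Killing fields exist, so the projection is an immersion (indeed a local diffeomorphism onto its image). A dimension count then gives that $\cV_j(h_j)$ is a smooth submanifold of codimension $\dim(\mathrm{coker}\,\partial_u\Psi_j) = 6\mathfrak{g}_j - 6$. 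The case of $\cW_k(h^*_k)$ is treated identically, with the induced metric replaced by the third fundamental form; the dual picture described in Section~\ref{ssc:dual} (Proposition~\ref{prop:dual}) converts the problem to an induced-metric problem for a convex spacelike surface in $\bdS^3$, and the same Fredholm analysis yields a cokernel of dimension $6\mathfrak{g}_k - 6$.

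The main obstacle I anticipate is a clean and rigorous treatment of the Fredholm property of $\partial_u\Psi_j$ and the identification of its cokernel with holomorphic quadratic differentials on $(\partial_jM, [h_j])$. This is classical in spirit (going back to the infinitesimal rigidity theory of convex surfaces of Weyl, Blaschke, and Pogorelov), but a careful formulation in the convex co-compact setting is needed; in particular one must verify that the surjectivity of $\partial_g\Psi_j$ onto the cokernel is a consequence of Proposition~\ref{lm:local rigidity-mfld} rather than an additional input.
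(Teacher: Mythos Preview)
Your argument for $\cU_i(c_i)$ is correct and matches the paper. For $\cV_j(h_j)$ and $\cW_k(h^*_k)$ the paper takes a substantially different route, and your outline has a real gap.

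The paper does not parameterize nearby surfaces by normal graphs over $S_0$. Instead it works in the larger space $\cCP(\partial M)$ of complex projective structures on $\partial M$, using that a hyperbolic end is determined by an element of $\cCP(\partial_jM)$, and that this end contains a convex surface with induced metric $h_j$ if and only if there exists a bundle morphism $B_j:TS_j\to TS_j$ solving the Gauss--Codazzi system for $h_j$. That system is elliptic in $B_j$ with index $6\mathfrak{g}_j-6$ (citing \cite[Lemma~3.1]{L4}). Ellipticity gives only the lower bound $\geq 6\mathfrak{g}_j-6$ on the dimension of linearized solutions; to get equality the paper runs a dimension count against the infinitesimal rigidity of \cite[Lemma~5.1]{hmcb}: were the solution space larger, one could produce, by putting $K$-surfaces on the other ends, a nonzero first-order deformation in $\cCC(M)$ fixing all boundary induced metrics. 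This shows $\bar\cV_j(h_j)\subset\cCP(\partial M)$ is a submanifold of codimension $6\mathfrak{g}_j-6$; a second dimension count, again against \cite{hmcb}, controls the intersection $\bar\cV_j(h_j)\cap\cCC(M)=\cV_j(h_j)$. The case $\cW_k(h^*_k)$ is identical via \cite[Lemma~5.7]{hmcb}. Proposition~\ref{lm:local rigidity-mfld} is not used here at all.

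In your formulation the linearization $\partial_u\Psi_j$ at $u=0$ is $\dot u\mapsto[-2\dot u\,\II_0]$, a zeroth-order multiplication map. Lifted to $(\dot u,X)\mapsto -2\dot u\,\II_0+L_X h_j$, the principal symbol is carried by $X$ alone and is $X\mapsto\xi\otimes X^\flat+X^\flat\otimes\xi$, which has one-dimensional cokernel at each $\xi\neq 0$; the operator is not elliptic, and the Fredholm property with cokernel exactly $6\mathfrak{g}_j-6$ does not follow from ``classical infinitesimal rigidity'' --- that input controls the \emph{kernel}, not the cokernel. Even if one identifies the $L^2$-cokernel with the kernel of a first-order elliptic operator of index $6\mathfrak{g}_j-6$ on traceless tensors, this yields only $\dim\geq 6\mathfrak{g}_j-6$; equality still needs a separate rigidity input, which is precisely what \cite{hmcb} supplies in the paper's argument. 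Your proposed mechanism (``$\partial_g\Psi_j$ covers the cokernel via Proposition~\ref{lm:local rigidity-mfld}'') is not explained, and it is not clear how a rigidity statement (injectivity of a derivative) yields the needed surjectivity without a dimension count of the type the paper carries out. Finally, $\mathrm{Met}^{0,\alpha}/\mathrm{Diff}_0$ is not a Banach manifold without a slice construction; the paper sidesteps this entirely by parameterizing via $B_j$ and $\cCP(\partial M)$.
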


\begin{proof}
  By the Ahlfors-Bers Theorem, $\cU_i(c_i)$ is a smooth submanifold of codimension $6\mathfrak{g}_i-6$ for each $i\in\cI$.

  Recall that $\cCP(\partial M)$ is the space of complex projective structures on $\partial M$. If $\sigma=(\sigma_1,\cdots, \sigma_n)\in\cCP(\partial M)$, then each $\sigma_l$ defines a hyperbolic end $\cE^l$, which admits a unique foliation by surfaces $S_{l,K}$ of constant curvature $K\in (-1,0)$, see \cite[Theorem 2]{Lab91}.

  For each $j\in \cJ$, and for each metric $h_j$ on $\partial_jM$, we denote by $\bar \cV_j(h_j)$ the subset of $\cCP(\partial M)$ of elements such that the hyperbolic end $\cE^j$ contains a surface $S_j$ homotopic to $\partial_jM$ with induced metric $h_j$. We will show that $\bar \cV_j(h_j)$ is a smooth submanifold of $\cCP(\partial M)$. Elements of $\bar \cV_j(h_j)$ are determined by a complex projective structure $\sigma_l\in \cCP(\partial_lM)$ for each $l\neq j$, such that $(h_j, B_j)$ satisfies the Gauss and Codazzi equations, where $B_j:TS_j\rightarrow TS_j$ is a bundle morphism. Those equations form an elliptic system in $B_j$ of index $6\mathfrak{g}_j-6$, see \cite[Lemma 3.1]{L4}, which we will call $\cS_j$.

  Since $\cS_j$ has index $6\mathfrak{g}_j-6$, the space of solutions of the linearization of $\cS_j$ (which correspond to the first-order deformation $\dot{B}_j$ of $B_j$ with fixed induced metric $h_j$ on $S_j$) has dimension at least $6\mathfrak{g}_j-6$. We claim that this dimension is always equal to $6\mathfrak{g}_j-6$. Indeed, suppose that it is strictly higher, that is, there is in $T_{\sigma_j}\cCP(\partial_jM)$ a space $W_j$ of dimension at least $6\mathfrak{g}_j-5$ of deformations $\dot B_j$ with fixed induced metric $h_j$ on $S_j$. Let $K\in (-1,0)$, consider for all $l\neq j$ the space $W_l\subset T_{\sigma_l}\cCP(\partial_lM)$ of first-order deformations for which the induced metric $h_{l,K}$ on $S_{l,K}$ does not vary. Similarly as discussed above for $\dot B_j$, the space $W_l$ has dimension at least $6\mathfrak{g}_l-6$.
  As a consequence,
  \begin{equation*}
  \dim\left(\Pi_{l=1}^n W_l\right)\geq \sum_{l=1}^n(6\mathfrak{g}_l-6)+1~,
  \end{equation*}
  and
   \begin{equation*}
  \codim \left(\Pi_{l=1}^n W_l\right) \leq \sum_{l=1}^n(6\mathfrak{g}_l-6)-1 < \dim(\cCC(M))~.
    \end{equation*}
  This implies that there exists a non-zero first-order deformation in $\Pi_{l=1}^n W_l$ which is tangent to $\cCC(M)$, which translates as a non-zero first-order deformation that preserves at first order the induced metric on $S_j$ and on all the $S_{l,K}$ $(l\neq j)$. This contradicts the infinitesimal rigidity result \cite[Lemma 5.1]{hmcb}. This shows that $\codim(T_{\sigma}\bar\cV_j(h_j))=6\mathfrak{g}_j-6$ at each point, so that $\bar\cV_j(h_j)$ is a smooth submanifold of $\cCP(\partial M)$ of codimension $6\mathfrak{g}_j-6$.

  Note that $\cV_j(h_j)=\bar \cV_j(h_j)\cap \cCC(M)$.
  For each $\sigma\in \cV_j(h_j)$, we have 
  \begin{equation*}
  \begin{split}
  \dim((T_{\sigma}\bar\cV_j(h_j))\cap(T_\sigma\cCC(M)))
  &\geq \dim(T_{\sigma}\bar \cV_j(h_j))-\codim(\cCC(M))~,
   \end{split}
 \end{equation*}
 where $\codim(\cCC(M))$ denotes the codimension of $\cCC(M)$ in $\cCP(\partial M)$. Therefore,
  \begin{equation*}
  \begin{split}
  \dim((T_{\sigma}\bar\cV_j(h_j))\cap(T_\sigma\cCC(M)))
  &\geq \sum_{l=1}^n(12\mathfrak{g}_l-12)-(6\mathfrak{g}_j-6)-\sum_{l=1}^n(6\mathfrak{g}_l-6)\\
  &\geq \sum_{l\not= j}(6\mathfrak{g}_l-6)~.
  \end{split}
  \end{equation*}
  We claim that
  $$\dim((T_{\sigma}\bar\cV_j(h_j))\cap(T_\sigma\cCC(M)))=\sum_{l\not= j}(6\mathfrak{g}_l-6)~.$$
  Otherwise, $$\dim((T_{\sigma}\bar\cV_j(h_j))\cap(T_\sigma\cCC(M)))\geq \sum_{l\not= j}(6\mathfrak{g}_l-6)+1~.$$
  Let $W_{\sigma}$ denote the space of the first-order deformations of $\sigma$ which preserve the induced metric $h_j$ on $S_j$ and the induced metric $h_{l,K}$ on $S_{l,K}$ $(l\not=j)$. 
  By the main result in \cite[Theorem 0.1]{hmcb}, for the prescribed metric $h_j$ on $\partial_jM$, and each hyperbolic metric $h'_l\in\cT_{\partial_lM}$ ($l\not=j$), there is a unique convex co-compact hyperbolic metric, say $g$ (more precisely, $g\in\cV_j(h_j)$), on ${\rm int}(M)$ with the induced metric isotopic to $h_j$ on a surface isotopic to $\partial_jM$ and with the induced metric isotopic to $(1/|K|)h'_l$ on a surface isotopic to $\partial_lM$ ($l\not=j$). Also, it is shown in \cite[Lemma 5.1]{hmcb} that there is no non-trivial first-order deformation of $g$ in $\cCC(M)$ that preserves the induced metric $h_j$ and the induced metric $(1/|K|)h'_l$ for all $l\not=j$. Therefore,
  $$\dim W_{\sigma}=\dim((T_{\sigma}\bar\cV_j(h_j))\cap(T_\sigma\cCC(M)))-\sum_{l\not=j}\dim(\cT_{\partial_lM})
  \geq \sum_{l\not= j}(6\mathfrak{g}_l-6)+1-\sum_{l\not=j}(6\mathfrak{g}_l-6)\geq 1~.$$
  This contradicts again the infinitesimal rigidity result \cite[Lemma 5.1]{hmcb}. So the claim follows and $\cV_j(h_j)$ is a smooth submanifold of $\cCC(M)$ of codimension $6\mathfrak{g}_j-6$.

  The same argument can be used, for all $k\in \cK$, to show that the deformation space in $\cCP(\partial_kM)$ which leaves invariant at first order the third fundamental form $h_k^*$ on a surface 
  $S_k\subset \cE^k$ has codimension $6\mathfrak{g}_k-6$ --- one only needs to consider, for $l\neq k$, the deformations for which the $K$-surfaces $S_{l,K}$ have constant third fundamental form, instead of constant induced metric, and use the rigidity result \cite[Lemma 5.7]{hmcb} relative to the third fundamental form on the boundary. This leads to the fact that $\cW_k(h^*_k)$ is a smooth submanifold of $\cCC(M)$ of codimension $6\mathfrak{g}_k-6$ for all $k\in \cK$.
\end{proof}

We now introduce basic definitions that will be needed for transversality arguments below. 

\begin{definition}\label{def:intersection}
Let $V_1, V_2, \ldots, V_k$ ($k\geq 2$) be linear subspaces of $\R^n$. We say that they have transversal intersection (sometimes also called normal intersection, see \cite[Chapter 1, \S 5]{GP}) if
$$V_{i_1}+(V_{i_2}\cap \cdots\cap V_{i_k})=\R^n~, $$
whenever $\{i_1,i_2,\ldots, i_k\}=\{1,2,\ldots,k\}$, or equivalently, $$\codim(V_1)+\cdots+\codim(V_k)=\codim(V_1\cap\cdots\cap V_k)~.$$
\end{definition}

Here the sum of the codimensions of the submanifolds  $\cU_i(c_i), i\in \cI$, $\cV_j(h_j), j\in \cJ$ and $\cW_k(h^*_k), k\in \cK$ is equal to the dimension of $\cCC(M)$. Combined with Definition \ref{def:intersection}, 
these submanifolds intersect tranversally at a point $g\in \cCC(M)$ if and only if  $$ \left(\cap_{i\in \cI} T_g\cU_i(c_i)\right)\cap \left(\cap_{j\in \cJ} T_g\cV_j(h_j)\right)\cap \left(\cap_{k\in \cK} T_g\cW_k(h^*_k)\right)=\{ 0\}~, $$
see \cite[Chapter 1, \S 5, p. 32]{GP}.

\begin{lemma} \label{lm:transversal}
 Assume that $g\in\cCC(M)$ is an intersection point of the submanifolds $\cU_i(c_i)$, $\cV_j(h_j)$ and $\cW_k(h^*_k)$ over all $i\in \cI, j\in \cJ, k\in \cK$. Then these submanifolds intersect at $g$ transversally. In particular, $g$ is an isolated intersection point.
\end{lemma}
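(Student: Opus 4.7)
The plan is to deduce this transversality statement directly from the infinitesimal rigidity result Proposition \ref{lm:local rigidity-mfld} combined with the codimension count from Lemma \ref{lm:codim-submanifold}. First I would unpack the tangent space description: a first-order deformation $\dot g\in T_g\cCC(M)$ belongs to $T_g\cU_i(c_i)$ exactly when it preserves the conformal structure $c_i$ on $\partial_iM$ at first order; it belongs to $T_g\cV_j(h_j)$ exactly when it preserves the induced metric $h_j$ on a surface isotopic to $S_j$ at first order (this follows from the implicit function theorem applied in the proof of Lemma \ref{lm:codim-submanifold}, which also identifies $\cV_j(h_j)$ as a smooth submanifold); and similarly it belongs to $T_g\cW_k(h^*_k)$ exactly when it preserves the third fundamental form $h^*_k$ on a surface isotopic to $S_k$ at first order.

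Given such an unpacking, any element $\dot g$ lying in the intersection
\[
\Big(\bigcap_{i\in\cI} T_g\cU_i(c_i)\Big)\cap \Big(\bigcap_{j\in\cJ} T_g\cV_j(h_j)\Big)\cap \Big(\bigcap_{k\in\cK} T_g\cW_k(h^*_k)\Big)
\]
automatically satisfies the hypotheses of Proposition \ref{lm:local rigidity-mfld}, and therefore must be trivial. Hence the above intersection reduces to $\{0\}$.

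To conclude transversality in the sense of Definition \ref{def:intersection}, I would verify the codimension balance. By Lemma \ref{lm:codim-submanifold}, each of $\cU_i(c_i)$, $\cV_j(h_j)$ and $\cW_k(h^*_k)$ has codimension $6\mathfrak g_l-6$ in $\cCC(M)$, where $\mathfrak g_l$ is the genus of the relevant boundary component. Summed over $\cI\sqcup\cJ\sqcup\cK=\{1,\dots,n\}$, the total is $\sum_{l=1}^n (6\mathfrak g_l-6)$, which by the Ahlfors--Bers Theorem \ref{tm:ab} equals $\dim_\R \cCC(M)$. This matches the codimension of $\{0\}$, so the criterion in Definition \ref{def:intersection} is satisfied and the submanifolds meet transversally at $g$. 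Since the expected dimension of the intersection is then zero, $g$ must be an isolated point of the intersection.

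The substantive content of this lemma sits entirely in Proposition \ref{lm:local rigidity-mfld} (established through the Pogorelov map and Vekua's lemma in Section \ref{sec:local rigidity}) and in the codimension computation of Lemma \ref{lm:codim-submanifold}; once these are in hand, the argument above is a formal dimension count, so there is no genuine obstacle to overcome in the present lemma itself.
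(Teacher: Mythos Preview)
Your proposal is correct and follows essentially the same approach as the paper: both reduce transversality to showing that the intersection of the tangent spaces is $\{0\}$ via Proposition \ref{lm:local rigidity-mfld}, combined with the observation that the sum of the codimensions from Lemma \ref{lm:codim-submanifold} equals $\dim\cCC(M)$. The paper presents this as a short contradiction argument, while you spell out the tangent-space interpretation and the dimension count more explicitly, but the substance is the same.
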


\begin{proof}
  Since the sum of the codimensions is equal to $\dim(\cCC(M))$, to see these submanifolds intersect transversally at $g$, it suffices to show at $g\in \cCC(M)$,
  $$ \left(\cap_{i\in \cI} T_g\cU_i(c_i)\right)\cap \left(\cap_{j\in \cJ} T_g\cV_j(h_j)\right)\cap \left(\cap_{k\in \cK} T_g\cW_k(h^*_k)\right)=\{ 0\}~. $$

  Suppose by contradiction that the above intersection is not zero, there is a non-trivial tangent vector $\dot{g}\in T_g\cCC(M)$, which preserves the conformal structure $c_i$ (on $\partial_iM$), the induced metric $h_j$ (on a surface isotopic to $\partial_jM$), and the third fundamental form $h^*_k$ (on a surface isotopic to $\partial_kM$) at first order for all $i\in\cI$, $j\in\cJ$ and $k\in\cK$. This contradicts Proposition \ref{lm:local rigidity-mfld}. In particular, $g$ is an isolated intersection point. The lemma follows.
\end{proof}

We will also need a compactness lemma. From here on, a ``boundary data" for $M$ is an $n$-tuple composed of conformal structures $c_i$ on $\partial_iM$, $i\in \cI$, metrics $h_j$ of curvatures $K> -1$ on $\partial_jM$, $j\in \cJ$, and metrics $h^*_k$ of curvature $K<1$ with closed, contractible geodesics of length $L>2\pi$ on $\partial_kM$, $k\in \cK$.

We denote the set of all these boundary data (up to isotopy for each element of the $n$-tuples) by $\cX$, which is equipped with the product topology induced by the $C^{\infty}$-topology for each component space (in particular, for each $i\in\cI$, the component space is the Teichm\"uller space $\cT_{\partial_iM}$, which is equivalently the space of hyperbolic metrics on $\partial_iM$, up to isotopy).

For each $l\in\cI\sqcup\cJ\sqcup\cK$, a sequence $(h_{l,n})_{n\in\N}$ of Riemannian metrics on $\partial_lM$ in the $l$-th component space is said to be {\em bounded} if for each homotopy class $[\gamma]\in\pi_1(\partial_lM)$, the sequence of the lengths of the shortest curves (which is unique for $l\in\cI$) in $[\gamma]$ with respect to $h_{l,n}$ are bounded between two positive constants. A sequence of boundary data in $\cX$ is said to be {\em bounded} if each component sequence is bounded.

We first recall a compactness result of Labourie \cite[Th\'{e}or\`{e}me D]{L1} about the convergence of isometric immersions of a surface into $\HH^3$, which will be used in the sequel.

\begin{theorem}[Labourie]\label{thm:Labourie}
  Let $\iota_n:S\rightarrow \HH^3$ be a sequence of immersions of a surface $S$ such that the pull-back $\iota^*_n(h)$ of the hyperbolic metric $h$ of $\HH^3$ converges smoothly to a metric $g_0$. Assume that there is a point $p\in S$ such that the sequence of the images $\iota_n(p)$ are contained in a compact subset of $\HH^3$. If the integral of the mean curvature is uniformly bounded, then there is a subsequence of $\iota_n$ which converges smoothly to an isometric immersion $\iota_{\infty}$ such that $\iota^*_{\infty}(h)=g_0$.
\end{theorem}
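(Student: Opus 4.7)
The plan is to reduce this to a convergence statement for the fundamental invariants of the immersions (first and second fundamental forms) using the fundamental theorem of surface theory in space forms, and then to deploy Gauss-Codazzi plus elliptic regularity to upgrade an integral bound on mean curvature to pointwise bounds on the shape operator.

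First I would recall the fundamental theorem of surface theory: an immersion $\iota: S \to \HH^3$ is determined, up to a global isometry of $\HH^3$, by its first fundamental form $I = \iota^*h$ and shape operator $B: TS \to TS$, provided $(I, B)$ satisfies the Gauss and Codazzi equations. So the first move is: write $I_n = \iota_n^*h$ (converging smoothly to $g_0$) and $B_n$ for the shape operator of $\iota_n$. The point-compactness hypothesis fixes the image of $p$ up to a compact set, and after composing with a suitable isometry of $\HH^3$ we may also fix the differential $d\iota_n(p)$ up to a compact subset of the frame bundle; it then suffices to produce a $C^\infty$-convergent subsequence of $B_n$, since passing through the fundamental theorem will then deliver the limiting immersion $\iota_\infty$.

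The heart of the argument is producing local $C^0$ bounds on $B_n$. Work in a local isothermal coordinate $z = x+iy$ for $I_n$ (such coordinates depend smoothly on $I_n$, so they converge). Decompose $B_n = H_n \operatorname{Id} + B_{n,0}$, where $B_{n,0}$ is traceless. Codazzi translates into the statement that the Hopf differential $q_n\, dz^2$ encoding $B_{n,0}$ is \emph{holomorphic} with respect to the complex structure of $I_n$. Gauss gives the pointwise identity
\begin{equation*}
|B_{n,0}|^2_{I_n} = 2(H_n^2 - 1 - K_{I_n})~,
\end{equation*}
so $K_{I_n}$ bounded (from smooth convergence of $I_n$) plus any pointwise bound on $H_n$ would close the loop. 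The integral hypothesis $\int |H_n|\, dA \le C$ does not directly give this, but holomorphicity of $q_n$ together with the local $L^1$ theory for holomorphic functions (mean-value / Cauchy representation) upgrades any integral control on $|q_n|^2$ to interior $C^k$ bounds. Combined with the Gauss identity above, this yields an a priori interior bound on $|B_{n,0}|$ and hence, via Gauss, on $|H_n|$ as well, on any relatively compact subdomain.

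Once $B_n$ is bounded in $C^0_{\mathrm{loc}}$, Codazzi (a first-order linear elliptic system in $B_n$ with coefficients built from $I_n$ and its first derivatives) plus Gauss (an algebraic constraint relating $\det B_n$ to $K_{I_n}$) bootstrap to uniform $C^k_{\mathrm{loc}}$ bounds on $B_n$ for every $k$, using the smooth convergence of $I_n$. Arzelà-Ascoli then extracts a subsequence with $B_n \to B_\infty$ in $C^\infty_{\mathrm{loc}}$, and $(g_0, B_\infty)$ satisfies Gauss-Codazzi. The fundamental theorem of surface theory, together with the normalization at $p$, reconstructs an isometric immersion $\iota_\infty: S \to \HH^3$ with $\iota_\infty^*h = g_0$, and the convergence $\iota_n \to \iota_\infty$ follows from the ODE dependence of the developing immersion on $(I_n, B_n)$ and the initial frame.

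The main obstacle I expect is the step turning the $L^1$ bound on $H_n$ (which a priori could concentrate at a point) into a genuine interior bound on $q_n$. The resolution relies crucially on two features special to this setting: the holomorphicity of the Hopf differential (which forbids concentration), and the Gauss equation coupling $|q_n|^2$ to $H_n^2$ modulo a bounded term, which prevents the traceless and trace parts of $B_n$ from individually running off to infinity while the combination remains integrable.
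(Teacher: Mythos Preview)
The paper does not give its own proof of this statement: it is quoted verbatim as Labourie's compactness theorem and cited as \cite[Th\'eor\`eme D]{L1}, so there is no in-paper argument to compare against. That said, your proposal has a genuine gap that would need to be fixed before it could stand on its own.

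The step that fails is the claim that ``Codazzi translates into the statement that the Hopf differential $q_n\,dz^2$ encoding $B_{n,0}$ is \emph{holomorphic}.'' This is only true for constant mean curvature immersions. For a general immersion into a space form, the Codazzi equation in an isothermal coordinate $z$ for $I_n=e^{2u_n}|dz|^2$ reads
\[
\partial_{\bar z}\,q_n \;=\; \tfrac{1}{2}\,e^{2u_n}\,\partial_z H_n~,
\]
so $q_n$ is holomorphic precisely when $H_n$ is constant. Since the theorem places no CMC hypothesis on the $\iota_n$, you cannot invoke the mean-value/Cauchy machinery for holomorphic functions to pass from integral to interior pointwise control on $q_n$. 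Your own closing paragraph identifies exactly this passage as ``the main obstacle,'' but the proposed resolution (holomorphicity of $q_n$ forbidding concentration) rests on a premise that is false in the generality of the statement. Moreover, the Gauss identity you write couples $|B_{n,0}|^2$ to $H_n^2$ only up to the bounded term $K_{I_n}$, so without an independent mechanism the trace and traceless parts can blow up together while $\int H_n\,dA_n$ stays bounded; nothing in your outline rules this out once holomorphicity is removed.

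Labourie's actual argument does not go through the Hopf differential. It is a blow-up argument: assuming the shape operators are unbounded at some sequence of points, one rescales the immersions around those points so that the second fundamental forms become normalized, and passes to a limit which is a complete surface in Euclidean $3$-space (the rescaling flattens the ambient curvature). The uniform bound on $\int H_n\,dA_n$ survives the rescaling and forces the limit surface to be a plane, contradicting the normalization of the second fundamental form. If you want to supply a proof here rather than cite Labourie, that is the line to develop; the elliptic bootstrap you describe after obtaining $C^0_{\mathrm{loc}}$ bounds on $B_n$ is fine, but the route to those $C^0$ bounds has to be replaced.
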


It deserves mentioning that Theorem \ref{thm:Labourie} is local, in the sense that no assumption of compactness of $S$ or completeness of $\iota^*_n(h)$ is required. This theorem has an analog in the de Sitter space $\bdS^3$ (see \cite[Theorem 5.5]{these}), as stated below.

\begin{theorem}\label{thm:schlenker}
 Let $\iota_n: S \rightarrow \bdS^3$ be a sequence of  immersions of a surface $S$ such that the pull-back $\iota^*_n(h^*)$ of the dS metric $h^*$ of $\bdS^3$ converges smoothly to a metric $g_0$. Assume that there is a point $p\in S$ such that the sequence of 
 the 1-jets $j^1\iota_n(p)$ are contained in a compact subset of 
 $T\bdS^3$. 
 If the integral of the mean curvature is uniformly bounded, then there is a subsequence of $\iota_n$ which converges smoothly to an isometric immersion $\iota_{\infty}$ such that $\iota^*_{\infty}(h^*)=g_0$.
\end{theorem}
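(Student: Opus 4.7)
The plan is to adapt Labourie's compactness argument from Theorem \ref{thm:Labourie} to the Lorentzian setting of $\bdS^3$, exploiting that the immersions are spacelike (since $g_0$ is Riemannian) so that the Gauss--Codazzi system retains its ellipticity. An alternative route, available when the limit is strictly convex, is to reduce directly to Theorem \ref{thm:Labourie} via the polar duality of Subsection \ref{ssc:dual}.

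First, from the smooth convergence $\iota_n^*(h^*)\to g_0$ with $g_0$ Riemannian, the immersions $\iota_n$ are eventually spacelike and their first fundamental forms $I_n$ are uniformly $C^k$-bounded on compact subsets for every $k$. The Gauss equation for a spacelike surface in $\bdS^3$ takes the form $K_{I_n}=1-\det B_n$, where $B_n$ is the shape operator, so $\det B_n$ is uniformly bounded thanks to the convergence of $I_n$; combined with the hypothesized integral bound on the mean curvature $H_n=\frac12\trace(B_n)$, this yields initial control on the $B_n$. The 1-jet bound at $p\in S$ anchors the sequence, preventing escape to infinity and fixing the gauge of the timelike normal direction.

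Following Labourie's strategy, I would then combine the Codazzi equation $d^{\nabla^{I_n}}B_n=0$ with the Gauss equation to form a quasilinear elliptic system for $\iota_n$, ellipticity being ensured by the spacelike condition. The integral bound on $H_n$ is to be upgraded to $L^p$-bounds, for some $p>2$, on the full second fundamental form $\II_n$ on compact subsets; Calderon--Zygmund and Schauder estimates then bootstrap these to $C^{k,\alpha}$-bounds for every $k$. An Arzela--Ascoli argument extracts a subsequence of $\iota_n$ converging smoothly on compact subsets of $S$ to a smooth immersion $\iota_\infty$ with $\iota_\infty^*(h^*)=g_0$.

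The main obstacle is precisely the passage from the $L^1$ control on $H_n$ to $L^p$ control on $\II_n$, which is the core difficulty of Labourie's original argument and requires a delicate local analysis; the adaptation to $\bdS^3$ proceeds because the arguments are local in nature and only rely on the Riemannian character of the first fundamental form together with the ellipticity of the Gauss--Codazzi system for spacelike immersions. When the limit satisfies $K_{g_0}<1$ pointwise, a cleaner route avoids this difficulty: one passes to dual immersions $\iota_n^*\colon S\to\HH^3$, whose induced metrics are the third fundamental forms $\III_n=I_n(B_n\cdot,B_n\cdot)$ of $\iota_n$. Using Proposition \ref{prop:dual} and that principal curvatures are inverted under duality (so that the area form transforms by $|\det B_n|$, which is bounded away from zero by $\det B_n=1-K_{I_n}>0$), both the integral mean-curvature bound and the 1-jet anchor transfer to $\iota_n^*$; Theorem \ref{thm:Labourie} then yields a smooth limit $\iota_\infty^*$ in $\HH^3$, and $\iota_\infty:=(\iota_\infty^*)^*$ is the desired limit in $\bdS^3$.
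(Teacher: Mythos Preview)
The paper does not actually prove Theorem \ref{thm:schlenker}: it is quoted as \cite[Theorem 5.5]{these} and only stated for later use. So there is no proof in the paper to compare your proposal against, and the relevant question is whether your sketch stands on its own.

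Your primary route --- transporting Labourie's local compactness argument to spacelike immersions in $\bdS^3$ --- is indeed the intended approach and is what is carried out in the cited reference. Your outline is accurate in spirit (Gauss--Codazzi is still elliptic for spacelike immersions, the $1$-jet hypothesis fixes both the basepoint and the timelike normal, and the core difficulty is exactly the upgrade from $L^1$ control on $H_n$ to higher integrability of $\II_n$), but it remains a sketch: the hard step you flag is genuinely hard and is not something that follows from general Calder\'on--Zygmund input alone; it requires the specific monotonicity/area arguments that Labourie developed and that the cited paper adapts. As a write-up this is acceptable as a pointer to the literature, but it is not a self-contained proof.

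Your alternative duality route, however, has a real gap. To apply Theorem \ref{thm:Labourie} to the dual immersions $\iota_n^*:S\to\HH^3$ you need their induced metrics $\III_n=I_n(B_n\cdot,B_n\cdot)$ to converge smoothly to a limit. But $\III_n$ depends on $B_n$, and smooth convergence of $B_n$ is exactly what you are trying to extract; the hypotheses only give smooth convergence of $I_n$ and an $L^1$ bound on $\trace B_n$. Knowing $\det B_n=1-K_{I_n}$ is bounded and bounded away from zero lets you transfer the \emph{integral} of the mean curvature and the area form, but it does not give convergence of $\III_n$. So the duality shortcut, as written, is circular. (It does become a legitimate argument once one has a priori uniform two-sided bounds on the principal curvatures, which is how the paper actually uses Theorem \ref{thm:schlenker} via Lemma \ref{lm:principal-curv-dual}; but then one no longer needs the $L^1$ mean-curvature hypothesis at all.)
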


We now introduce the following estimate on the principal curvatures of complete, embedded convex surfaces in $\HH^3$ of variable curvature (see e.g. \cite[Lemma 4.1]{weylgen}).

\begin{lemma}\label{lm:principal-curv}
Let $\epsilon\in(0,1)$. There exists a constant $0<k_0<1$ (depending only on $\epsilon$) such that if $\Sigma\subset\HH^3$ is a complete, embedded convex surface with induced metric of curvature varying in $[-1+\epsilon,1/\epsilon]$ and injectivity radius bounded from below by $\epsilon$, then the principal curvatures of $\Sigma$ are in $[k_0,1/k_0]$. 
\end{lemma}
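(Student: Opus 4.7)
The strategy is to argue by contradiction via a blow-up analysis, combining the Gauss equation with the compactness theorem of Labourie (Theorem~\ref{thm:Labourie}) and the classical Hartman--Nirenberg classification of complete flat convex surfaces in $\R^3$.

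First, a reduction via the Gauss equation: for a convex surface in $\HH^3$, $K_\Sigma = k_1 k_2 - 1$, so $k_1 k_2 = K_\Sigma + 1 \in [\epsilon, 1+1/\epsilon]$. Therefore bounding the larger principal curvature $k_1$ from above is automatically equivalent to bounding $k_2$ from below, and it suffices to establish a uniform upper bound on $k_1$ that depends only on $\epsilon$.

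Suppose for contradiction that there exist sequences $(\Sigma_n, p_n)$ satisfying the hypotheses with $\lambda_n := k_1(p_n) \to \infty$. By a standard Hamilton-type point-picking procedure, we may assume $k_1 \leq 2\lambda_n$ on a neighborhood of $p_n$ of intrinsic $g_n$-radius much larger than $\lambda_n^{-1}$. Apply an isometry of $\HH^3$ to send each $p_n$ to a fixed basepoint $O$ with $T_{p_n}\Sigma_n$ mapped to a fixed plane. Rescale the ambient metric by $\lambda_n^2$: the rescaled hyperbolic space $(\HH^3, \lambda_n^2 h)$ has sectional curvature $-\lambda_n^{-2} \to 0$ and converges on each bounded Euclidean region to $\R^3$. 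In the rescaled picture, the surface $\tilde{\Sigma}_n$ has $\tilde k_1(O) = 1$, principal curvatures bounded by $2$ on a domain whose rescaled intrinsic radius tends to $+\infty$, rescaled intrinsic curvature of order $O(\lambda_n^{-2})$, and rescaled injectivity radius tending to $+\infty$. The pointwise bound on the second fundamental forms makes the integral of the mean curvature on each fixed Euclidean ball uniformly bounded, so Theorem~\ref{thm:Labourie} applies and a subsequence of the immersions $\tilde\iota_n$ converges smoothly to an isometric immersion $\iota_\infty: (\R^2, g_\infty) \to \R^3$ of a complete flat plane as a smooth convex surface $\Sigma_\infty \subset \R^3$, with $k_1(O) = 1$.

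By the Hartman--Nirenberg theorem, a complete flat convex surface in $\R^3$ is either a plane or a generalized cylinder over a plane convex curve. A plane is excluded by $k_1(O) = 1$. The remaining task---which is the main obstacle of the argument---is to rule out the cylinder case. In a cylinder $k_2 \equiv 0$, so the rescaled product $\tilde k_1 \tilde k_2 \to 0$ uniformly on compact sets, which is consistent with the rescaled Gauss equation $\tilde k_1 \tilde k_2 = (K_{\Sigma_n}+1)/\lambda_n^2 \to 0$; the contradiction is therefore more subtle and must use the strict lower bound $K_{\Sigma_n} \geq -1+\epsilon$ in a quantitative way. One way is to apply the same blow-up argument to the dual surfaces $\Sigma_n^* \subset \bdS^3$: by Proposition~\ref{prop:dual}, the induced metric on $\Sigma_n^*$ has curvature $K^* = K_{\Sigma_n}/(K_{\Sigma_n}+1)$ bounded strictly below $1$, and the duality exchanges the roles of $k_1$ and $k_2$ up to a pointwise formula, so the blow-up of $\Sigma_n^*$ around $p_n^*$ yields (via the analogous compactness statement of Theorem~\ref{thm:schlenker}) a complete spacelike surface in the flat limit that would again be forced to be a plane or cylinder; the two limits are related by Euclidean duality and only a round-sphere-type rigid limit is compatible with both, contradicting $\tilde k_1(O) = 1$ while $\tilde k_2(O) \to 0$. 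Once the cylinder case is excluded we obtain $k_1 \leq 1/k_0$ uniformly, and the Gauss equation then gives $k_2 \geq k_0$, completing the proof.
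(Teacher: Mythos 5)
The paper does not prove this lemma --- it simply cites \cite[Lemma~4.1]{weylgen} --- so there is no internal proof to compare against, and you were right to attempt an argument from scratch. Your framework is a reasonable one: the Gauss-equation reduction to an upper bound on $k_1$ is correct; the point-picking, rescaling, and passage to a complete flat convex limit in $\R^3$ via Labourie's compactness is a standard and sound mechanism; and Hartman--Nirenberg correctly reduces the question to excluding the cylinder. You also correctly flag the cylinder case as the crux.

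The gap is that the dual-surface argument you propose does not rule out the cylinder, and as written it would not produce a contradiction. Blowing up $\Sigma_n^*$ at the corresponding scale yields the \emph{same} qualitative picture rather than an inconsistent one: the rescaled de Sitter ambient degenerates to Minkowski $\R^{2,1}$ (not Euclidean $\R^3$), the Gauss equation there reads $K=-\det B$, and the limit is again a complete flat convex spacelike surface with one vanishing principal curvature, i.e.\ a cylinder, whose axis is simply the orthogonal principal direction. Two cylinders with orthogonal axes are perfectly consistent, so nothing forces a ``round-sphere-type rigid limit.'' The appeal to ``Euclidean duality'' is also misplaced: the polar duality between $\HH^3$ and $\bdS^3$ is taken with respect to the sphere at infinity in the projective model, and under the rescaling by $\lambda_n$ centered at $p_n$ this sphere recedes to infinity, so the duality degenerates and does not relate the two blow-up limits. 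Finally, even the convergence of the dual blow-up is unjustified, since the rescaled induced metric of $\Sigma_n^*$ (the third fundamental form of $\Sigma_n$) is anisotropically stretched by a factor $(k_1/k_2)^2\to\infty$ in the $e_1$-direction relative to the rescaled $I_n$, whereas Theorem~\ref{thm:schlenker} requires the pulled-back metrics to converge. The cylinder exclusion therefore remains a genuine gap and needs a different idea to close it.
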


This lemma has a dual version in the following, which gives an estimate on the principal curvatures of complete, embedded spacelike convex surfaces in $\bdS^3$ of variable curvature.

\begin{lemma}\label{lm:principal-curv-dual}
Let $\epsilon\in(0,1)$. There exists a constant $0<k_0<1$ (depending only on $\epsilon$) such that if $\Sigma\subset\bdS^3$ is a complete, embedded spacelike convex surface with induced metric of curvature varying in $[-1/\epsilon,1-\epsilon]$ and with each closed, contractible geodesic having length $L\geq 2\pi+\epsilon$, then the principal curvatures of $\Sigma$ are in $[k_0,1/k_0]$.
\end{lemma}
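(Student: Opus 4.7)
The plan is to establish Lemma \ref{lm:principal-curv-dual} as the de Sitter dual of Lemma \ref{lm:principal-curv}, by a contradiction argument using Theorem \ref{thm:schlenker} in place of Theorem \ref{thm:Labourie}. A direct reduction to Lemma \ref{lm:principal-curv} via the duality of Proposition \ref{prop:dual} is tempting but delicate: the induced metric on the dual surface $\Sigma^{*}\subset\HH^{3}$ is $\III_{\Sigma}$, not $I_{\Sigma}$, so the hypothesis on the length of closed contractible geodesics of $I_{\Sigma}$ does not immediately translate into an injectivity radius bound for $\Sigma^{*}$, since the two metrics differ through the shape operator, which is precisely what we aim to control.

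Suppose, for contradiction, that there exist sequences $(\Sigma_n)$ satisfying the hypotheses and points $p_n\in\Sigma_n$ at which the largest principal curvature $\lambda^n_1(p_n)\to\infty$. The Gauss equation $\lambda^n_1\lambda^n_2 = 1-K^*_n\in[\epsilon, 1+1/\epsilon]$ makes this equivalent to $\lambda^n_2(p_n)\to 0$. Using isometries of $\bdS^{3}$, normalize so that $p_n = p_0$ and $T_{p_n}\Sigma_n = P_0$ are independent of $n$. A Klingenberg-type comparison argument, based on the upper curvature bound $K_n\le 1-\epsilon$ and the lower bound $\ell_n \ge 2\pi + \epsilon$ on closed contractible geodesics, yields a uniform positive lower bound $\rho_0$ on the injectivity radius of $(\Sigma_n, I_n)$ at $p_0$. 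On the intrinsic disk of radius $\rho_0/2$ around $p_0$, the metrics $I_n$ have two-sided bounded sectional curvature and positive injectivity radius, so by Cheeger-Gromov compactness they converge smoothly, after subsequence, to a limit metric $I_\infty$ on a fixed disk $D$. Applying Theorem \ref{thm:schlenker} to the isometric immersions $\iota_n\colon (D,I_n)\to\bdS^{3}$, whose $1$-jets at $p_0$ are uniformly bounded by construction, yields a subsequential smooth limit $\iota_\infty\colon (D,I_\infty)\to\bdS^{3}$; the principal curvatures of $\iota_\infty$ at $p_0$ are finite, which, combined with the $C^\infty$-convergence of the second fundamental forms, contradicts $\lambda^n_1(p_0)\to\infty$.

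The main obstacle is verifying the integrability assumption of Theorem \ref{thm:schlenker}: one needs the $L^1$-norm of the mean curvatures $H_n = \lambda^n_1+\lambda^n_2$ on $D$ to remain uniformly bounded, which is not automatic since $H_n(p_0)\to\infty$ under our blow-up assumption. The plan to address this is to combine the two-sided bound $\lambda^n_1\lambda^n_2\in[\epsilon,1+1/\epsilon]$ with the Codazzi equation to control the rate at which the shape operator can vary, showing that the region on which $H_n$ exceeds a threshold $M$ has intrinsic area controlled by $C/M^{\alpha}$ for some $\alpha>0$ derived from the Codazzi estimates; a dyadic decomposition then yields a uniform bound on $\int_D H_n\,dA_n$. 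This is the technical heart of the argument and is expected to proceed by standard elliptic estimates for Codazzi tensors, using the intrinsic bounded geometry established in the previous step.
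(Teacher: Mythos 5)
The paper's proof takes exactly the duality route you set aside. It applies Lemma \ref{lm:principal-curv} to the dual surface $\Sigma^*\subset\HH^3$, whose induced metric is $\III_\Sigma$ and whose curvature is pinned in a compact subinterval of $(-1,\infty)$ by Proposition \ref{prop:dual}. The ``delicate'' point you flag --- that the hypothesis concerns closed geodesics of $I_\Sigma=\III_{\Sigma^*}$ rather than the injectivity radius of $I_{\Sigma^*}$ --- is handled by a contrapositive, not by a direct comparison of metrics: if a sequence $\Sigma_n^*$ had injectivity radius tending to zero, then since the curvature of $I_{\Sigma_n^*}$ is uniformly bounded above, Klingenberg's lemma would force short closed geodesics for $I_{\Sigma_n^*}$, and these (by a Rivin--Hodgson/Schlenker-type fact relating $I$- and $\III$-geodesics on convex surfaces in $\HH^3$) would produce closed contractible geodesics of $\III_{\Sigma_n^*}=I_{\Sigma_n}$ with lengths tending to $2\pi$, contradicting $L\ge 2\pi+\epsilon$. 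So the length hypothesis does translate into the injectivity radius bound you were missing; the bridge is a qualitative statement about geodesics on the dual pair, not a pointwise estimate on the shape operator.

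The gap in your alternative is exactly where you acknowledge it, and the proposed repair does not close it. Applying Theorem \ref{thm:schlenker} requires a uniform bound on $\int_D H_n\,dA_n$, but you postulate $\lambda_1^n(p_0)\to\infty$, so $H_n$ blows up at $p_0$ and the hypotheses give no control on the measure of the set where it is large. The Gauss equation fixes $\lambda_1^n\lambda_2^n=1-K_n\in[\epsilon,1+1/\epsilon]$, and Codazzi controls only the symmetry of $\nabla^{I_n}\II_n$; neither yields an $L^1$ bound on $H_n=\lambda_1^n+\lambda_2^n$ without a pointwise bound on the ratio $\lambda_1^n/\lambda_2^n$, which is precisely the conclusion sought. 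The sketched ``dyadic decomposition plus Codazzi elliptic estimates'' is therefore circular as stated: a quantitative version of that step is essentially equivalent to the lemma itself. Without the $L^1$ bound, Theorem \ref{thm:schlenker} simply does not apply and no contradiction is produced. (Were the bound available, the rest would indeed work: intrinsic bounded geometry gives Cheeger--Gromov precompactness of $(D,I_n)$, and $C^\infty$-convergence of $\II_n$ would contradict the pointwise blow-up.) The virtue of the duality route is precisely that it delegates this extrinsic compactness problem to the already-proved Lemma \ref{lm:principal-curv} in $\HH^3$.
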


\begin{proof}
  Let $\Sigma^*$ be the surface dual to $\Sigma$ in $\HH^3$. It is a complete, embedded convex surface which has curvature  $K^*=K/(1-K)\in [-1+\epsilon/(1+\epsilon), (1-\epsilon)/\epsilon]$, by Proposition \ref{prop:dual} and the curvature condition of $\Sigma$. We claim that the injectivity radius of the surface $\Sigma^*$ has a positive lower bound. Otherwise, suppose that there is a sequence  $(\Sigma_n)_{n\in\N}$ of complete, embedded spacelike convex surfaces in $\bdS^3$ (satisfying the conditions of curvature and injectivity radius given in the assumption) 
  for which the dual surface $\Sigma_n^*$ has an injectivity radius going to zero, there would be a sequence of closed geodesics on $\Sigma_n^*$ whose lengths tend to zero, and this implies that there would be a sequence of closed, contractible geodesics of $\Sigma_n$ with lengths tending to $2\pi$. This contradicts the assumption.

  One can thus apply Lemma \ref{lm:principal-curv} to $\Sigma^*$, which therefore has principal curvatures in $[k_0, 1/k_0]$ for some $k_0\in(0,1)$. The principal curvatures of $\Sigma$ are the inverse of the corresponding principal curvatures of $\Sigma^*$, and the lemma follows.
\end{proof}

We will need the following statement for the proof of the next lemma.

\begin{lemma} \label{lm:closed}
  Let $(g_n)_{n\in \N}$ be a sequence of convex co-compact metrics on 
  ${\rm int}(M)$, converging to a limit $g_\infty$ in $\cCC(M)$. Let 
  $\epsilon\in(0,1)$.
  \begin{enumerate}
  \item Let $j\in \cJ$, assume that for each $n\in \N$, $({\rm int}(M), g_n)$ contains in the end 
      $\cE^j$ an embedded, locally convex surface with 
      induced metric 
      $h_{j,n}$  of curvature $K\in [-1+\epsilon, 1/\epsilon]$ and injectivity radius bounded from below by $\epsilon$. Assume further that 
      $h_{j,n}\to h_{j,\infty}$, where $h_{j,\infty}$ is a smooth metric on $\partial_jM$. Then $({\rm int}(M), g_\infty)$ contains in the end 
      $\cE^j$ an embedded, locally convex surface with induced metric $h_{j,\infty}$.
  \item Let $k\in \cK$, assume that for each $n\in \N$, $({\rm int}(M), g_n)$ contains in the end 
      $\cE^k$ an embedded, locally convex surface with third fundamental form 
      $h^{*}_{k,n}$ of curvature $K\in [-1/\epsilon, 1-\epsilon]$ with closed, contractible geodesics of length $L\geq 2\pi+\epsilon$. Assume further that 
      $h^*_{k,n}\to h_{k,\infty}^*$. Then $({\rm int}(M), g_\infty)$ contains in the end 
      $\cE^k$ an embedded, locally convex surface with third fundamental form $h_{k,\infty}^*$.
  \end{enumerate}
\end{lemma}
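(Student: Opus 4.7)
The two parts should be proved in parallel: for (i) one works with $\Sigma_n\subset ({\rm int}(M),g_n)$ directly, while for (ii) one works with the dual surface $\Sigma_n^*$ in the de Sitter end where the third fundamental form $h_{k,n}^*$ becomes the induced metric. In both cases, the strategy is to extract a smoothly convergent subsequence of the lifted immersions to $\HH^3$ (resp. $\bdS^3$) via Labourie's compactness Theorem \ref{thm:Labourie} (resp. its de Sitter analogue Theorem \ref{thm:schlenker}), and then descend.

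First I would install uniform curvature bounds. For (i), the smooth convergence $h_{j,n}\to h_{j,\infty}$ upgrades the hypotheses to a uniform injectivity radius lower bound and uniform curvature bound in $[-1+\epsilon,1/\epsilon]$, so Lemma \ref{lm:principal-curv} produces $k_0\in (0,1)$ such that all principal curvatures of $\Sigma_n$ lie in $[k_0,1/k_0]$. For (ii), Lemma \ref{lm:principal-curv-dual} applies to $\Sigma_n^*$ in $\bdS^3$ (using that closed contractible geodesics of $h_{k,n}^*$ have length $\geq 2\pi+\epsilon$), which gives the same bound on $\Sigma_n^*$ (and, by inversion, on $\Sigma_n$ itself).

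Next I would lift to universal covers. Using that $g_n\to g_\infty$ in $\cCC(M)$, choose developing maps $\mathrm{dev}_n$ for $g_n$ whose holonomies $\rho_n$ converge (up to conjugation) to $\rho_{g_\infty}$, so that a fixed point of $C_{g_\infty}$ corresponds to a fixed basepoint $O\in\HH^3$ independent of $n$. Lifting $\Sigma_n$ gives a $\rho_n$-equivariant immersion $\tilde\iota_n:\tilde S_j\to\HH^3$ (respectively $\tilde S_k\to\bdS^3$ via duality for (ii)). The bounded area of $h_{j,n}$ combined with the uniform principal curvature bound yields a uniform bound on the total integral of the mean curvature of a fundamental domain. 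Crucially, one must show that some point $\tilde x_n\in \tilde\Sigma_n$ stays in a compact subset of $\HH^3$: the uniform upper bound $1/k_0$ on principal curvatures forbids $\tilde\Sigma_n$ from approaching the pleating locus of $\partial_j\tilde C_{g_n}$, while the strict positive lower bound $k_0$ on the induced-metric curvature $\lambda\mu-1\geq \epsilon$ combined with the uniform area bound forbids $\tilde\Sigma_n$ from drifting out to $\partial_\infty\HH^3$ (where equidistant surfaces have exponentially growing area in a fundamental domain). With $\tilde x_n$ compactly contained, Theorem \ref{thm:Labourie} (resp. Theorem \ref{thm:schlenker}, whose $1$-jet hypothesis is supplied by the spacelike bound from the principal curvatures of $\Sigma_n^*$) produces a subsequence with $\tilde\iota_n\to\tilde\iota_\infty$ smoothly, where $\tilde\iota_\infty^*$ of the ambient metric is the lift of $h_{j,\infty}$ (resp. $h_{k,\infty}^*$).

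Finally I would descend and verify geometric properties. Equivariance of $\tilde\iota_\infty$ follows from passing $\tilde\iota_n(\gamma\cdot \tilde x)=\rho_n(\gamma)\cdot\tilde\iota_n(\tilde x)$ to the limit, using $\rho_n\to\rho_{g_\infty}$; hence $\tilde\iota_\infty$ descends to an immersion $\iota_\infty:S_j\to({\rm int}(M),g_\infty)$ (resp.\ $S_k\to({\rm int}(M),g_\infty)$ via the inverse duality applied to $\Sigma_{\infty}^*$). Local convexity is preserved since principal curvatures converge and remain in $[k_0,1/k_0]$; in particular the limit is strictly convex. Embeddedness is preserved because the lower bound $k_0>0$ yields a uniform normal injectivity radius, so self-intersections cannot arise in the limit. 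That the limit lies in the end $\cE^j$ (resp. $\cE^k$) of $({\rm int}(M),g_\infty)$ follows from the convergence of the convex cores induced by $g_n\to g_\infty$, which keeps each $\Sigma_n$ (hence the limit) on the correct side. The hard part is the uniform compactness in Step 2; the remaining arguments are standard once that is in place.
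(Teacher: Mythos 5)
Your overall strategy is the same as the paper's: lift to $\rho_n$-equivariant isometric immersions of $\widetilde{\partial_j M}$ into $\HH^3$ (respectively dual surfaces in $\bdS^3$), use Lemma \ref{lm:principal-curv} / Lemma \ref{lm:principal-curv-dual} for uniform principal-curvature bounds, invoke Theorem \ref{thm:Labourie} / Theorem \ref{thm:schlenker} for a smoothly convergent subsequence, and pass the equivariance to the limit. The one place you diverge is the basepoint-in-a-compact-set hypothesis of Labourie's theorem, which you treat as ``the hard part,'' attacking it geometrically via distance-to-pleating-locus and area-growth arguments that you only sketch. The paper sidesteps this entirely: since each $\phi_n$ is only defined up to left composition by an element of ${\rm PSL}_2(\C)$ (with the holonomy transforming by conjugation), one simply normalizes so that $\phi_n(x_0)=y_0$ for a fixed $x_0\in\widetilde{\partial_jM}$ and $y_0\in\HH^3$; then the compactness is automatic, and the convergence $\rho_n\to\rho_\infty$ (up to a final conjugation matching $[\rho_\infty']=[\rho_\infty]$) is read off from the smooth convergence of $\phi_n$ on the orbit of $x_0$, together with the hypothesis that $[\rho_n]\to[\rho_\infty]$ in $\cCC(M)$. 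Your geometric route can likely be made rigorous (Lemma \ref{lm:distance} is nearby in spirit), but as written it introduces unnecessary hand-waving where a one-line gauge fixing suffices; I would replace it with the normalization argument.
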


\begin{proof}
  For point (1), for each $n\in \N$, let $\rho_n:\pi_1(\partial_jM)\to {\rm PSL}_2(\C)$ be the restriction to $\pi_1(\partial_jM)$ of the holonomy representation of $g_n$, and let $\rho_\infty:\pi_1(\partial_jM)\to {\rm PSL}_2(\C)$ be be the restriction to $\pi_1(\partial_jM)$ of the holonomy representation of $g_\infty$. By construction, there exists for each $n\in \N$ an isometric embedding 
  $\phi_n:(\widetilde{\partial_jM}, \tilde{h}_{j,n})\to \HH^3$, where we denote by 
  $\tilde{h}_{j,n}$ the lift of 
  $h_{j,n}$ to $\widetilde{\partial_jM}$, which is equivariant with respect to $\rho_n$.

  The $\phi_n$, $n\in \N$ are well-defined up to left composition by an element of ${\rm PSL}_2(\C)$. Let $x_0\in \widetilde{\partial_jM}$ and let $y_0\in \HH^3$. Choosing the normalization by conjugation in a suitable way, we can suppose that $\phi_n(x_0)=y_0$ for each $n\in \N$. Since we already know that the images of the $\phi_n$ have principal curvature uniformly bounded (see Lemma \ref{lm:principal-curv}) and $h_{j,n}\to h_{j,\infty}$ by assumption, it then follows from Theorem \ref{thm:Labourie} that, after extracting a subsequence, $(\phi_n)_{n\in \N}$ converges to an isometric embedding 
  $\phi_\infty: (\widetilde{\partial_jM}, \tilde{h}_{j,\infty})\to \HH^3$. Note that the assumption that $(g_n)_{n\in \N}$ converges to $g_{\infty}$ implies that $(\rho_n)_{n\in \N}$ converges to $\rho_{\infty}$. Since $\phi_n$ is equivariant under $\rho_n$ for all $n$, it follows that $\phi_\infty$ is equivariant under $\rho_\infty$, and point (1) follows.

  Point (2) is proved in the same manner, except that one needs to consider the dual surfaces in the de Sitter space and 
  apply Theorem \ref{thm:schlenker} and Lemma \ref{lm:principal-curv-dual} instead.
\end{proof}

\begin{lemma} \label{lm:compact}
  Let $(g_n)_{n\in \N}$ be a sequence in $\cCC(M)$ 
  such that $({\rm int}(M),g_n)$ contains a geodesically convex subset with boundary data $X_n\in\cX$, $n\in \N$. Suppose that the sequence $(X_n)_{n\in\N}$ remain bounded in $\cX$. Then $(g_n)_{n\in \N}$ has a convergent subsequence.
  Moreover, if $(X_n)_{n\in\N}$ converges to $X_{\infty}$, then $(g_n)_{n\in \N}$ has a subsequence converging to a limit $g_{\infty}$ such that $({\rm int}(M), g_{\infty})$ contains a geodesically convex subset with boundary data $X_{\infty}$.
  \end{lemma}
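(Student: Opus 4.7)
The plan is to exploit the Ahlfors--Bers parameterization (Theorem \ref{tm:ab}), which, since $M$ has incompressible boundary, identifies $\cCC(M)$ with the Teichm\"uller space $\cT_{\partial M}$. It then suffices to show that the conformal structures at infinity $\sigma_{l,n}$ on each boundary component $\partial_l M$ remain in a compact subset of $\cT_{\partial_lM}$; since compact subsets of Teichm\"uller space are characterized by the length spectrum, this reduces to showing that $\ell_{\sigma_{l,n}}([\gamma])$ stays bounded between two positive constants for each essential isotopy class $[\gamma]\in \pi_1(\partial_l M)$.

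For $l=i\in\cI$ this is immediate, since $\sigma_{i,n}=c_{i,n}$ is bounded in $\cT_{\partial_iM}$ by hypothesis. For $l=j\in\cJ$, I would compare $\sigma_{j,n}$ to $h_{j,n}$ via the convex core of $({\rm int}(M),g_n)$. First, the $g_n$-geodesic length of $[\gamma]$ (i.e., the translation length of the image of $[\gamma]$ under the holonomy) is bounded above by $\ell_{h_{j,n}}([\gamma])$, since the $h_{j,n}$-geodesic on $\partial_jN_n$ is a representative of $[\gamma]$ in $({\rm int}(M),g_n)$. Sullivan's quasi-isometry theorem provides a universal bi-Lipschitz comparison between the hyperbolic length in $\sigma_{j,n}$ and the $g_n$-translation length, realized via the nearest-point retraction to the pleated boundary $\partial_jC(M,g_n)$ of the convex core. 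Combined, this gives a uniform upper bound on $\ell_{\sigma_{j,n}}([\gamma])$. For the lower bound, I argue by contradiction: a subsequence along which $\ell_{\sigma_{j,n}}([\gamma])\to 0$ would, via Sullivan, produce arbitrarily short closed geodesics on $\partial_jC(M,g_n)$ and hence Margulis tubes in $\cE^j$; the convex surface $\partial_j N_n$ must cross each such tube, yielding an arbitrarily short simple closed representative of $[\gamma]$ on $(\partial_j N_n, h_{j,n})$, contradicting the lower bound on $\ell_{h_{j,n}}([\gamma])$. For $l=k\in\cK$, the same argument works after passing through the duality of Proposition \ref{prop:dual}, with $\partial_kN_n$ replaced by the convex spacelike dual surface $(\partial_k N_n)^*\subset\bdS^3$ carrying the induced metric $h_{k,n}^*$. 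A diagonal extraction then yields convergence of all $\sigma_{l,n}$ and, by Ahlfors--Bers, of $g_n\to g_\infty$ in $\cCC(M)$.

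For the ``moreover'' part, assume $X_n\to X_\infty$ smoothly. Smooth convergence on the compact boundary surfaces gives uniform two-sided bounds on the Gaussian curvatures of $h_{j,n}$ and $h_{k,n}^*$, together with uniform lower bounds on systoles and on the lengths of contractible closed geodesics. Thus the hypotheses of Lemma \ref{lm:closed}(1) and (2) hold uniformly in $n$ for some fixed $\epsilon>0$, and the lemma yields, in each end of $({\rm int}(M),g_\infty)$ corresponding to $j\in\cJ$ (resp.\ $k\in\cK$), an embedded convex surface with induced metric $h_{j,\infty}$ (resp.\ third fundamental form $h_{k,\infty}^*$). For $i\in\cI$, continuity of the Ahlfors--Bers map together with $g_n\to g_\infty$ and $c_{i,n}\to c_{i,\infty}$ ensures that the conformal structure at infinity of $g_\infty$ on $\partial_iM$ is $c_{i,\infty}$. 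Assembling these boundary data yields the desired geodesically convex subset of $({\rm int}(M),g_\infty)$ with boundary data $X_\infty$.

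The main obstacle is the lower bound on $\ell_{\sigma_{j,n}}([\gamma])$ in the second paragraph. The upper bound follows cleanly from the $1$-Lipschitz property of the nearest-point retraction combined with Sullivan's theorem, but the reverse direction --- ruling out degeneration of $\sigma_{j,n}$ --- requires propagating a hypothetical short geodesic from the convex core outward to the convex surface $\partial_j N_n$ despite the lack of a priori bounds on its principal curvatures. The Margulis-tube argument sketched above must be executed with care, exploiting convexity of $\partial_j N_n$ to guarantee that this surface intersects the tube in a topologically essential way.
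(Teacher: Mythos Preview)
Your overall architecture matches the paper's: reduce to Ahlfors--Bers and show that the conformal structures at infinity $\sigma_{l,n}$ stay bounded in $\cT_{\partial_l M}$. The ``moreover'' part via Lemma~\ref{lm:closed} also matches. The divergence is in \emph{how} you bound $\sigma_{j,n}$ for $j\in\cJ$ (and analogously $\sigma_{k,n}$ for $k\in\cK$), and this is where your proposal has a real gap.

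The paper does \emph{not} route through the convex core and Sullivan. Instead it observes that the boundedness of $(h_{j,n})$ gives uniform two-sided curvature bounds \emph{and} a uniform lower bound on the injectivity radius; then Lemma~\ref{lm:principal-curv} yields uniform bounds on the principal curvatures of the surfaces $\partial_j N_n$. With principal curvatures pinched in $[k_0,1/k_0]$, the nearest-point retraction from $(\partial_\infty\cE^j,\sigma_{j,n})$ to $(\partial_j N_n,h_{j,n})$ is uniformly bi-Lipschitz (the paper cites \cite[Proposition~3.11]{convexhull}), giving both the upper \emph{and} the lower length-spectrum bounds at once. For $k\in\cK$ the duality of Proposition~\ref{prop:dual} and the uniform principal curvature bound make the dual map $(\partial_k N_n,h_{k,n})\to((\partial_kN_n)^*,h_{k,n}^*)$ uniformly bi-Lipschitz as well, so one composes the two comparisons.

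Your Margulis-tube argument for the lower bound, by contrast, is incomplete exactly where you flag it. Knowing only that the $g_n$-translation length of $[\gamma]$ tends to $0$ does not force the convex surface $\partial_j N_n$ to meet the tube: the tube has finite radius, and without a priori control on the principal curvatures there is no bound on how far $\partial_j N_n$ sits from the convex core (cf.\ Lemma~\ref{lm:distance}, whose proof itself relies on Lemma~\ref{lm:principal-curv}). Convexity of $\partial_j N_n$ alone does not pin down its location in the end. So the missing ingredient in your scheme is precisely Lemma~\ref{lm:principal-curv}; once you invoke it, the detour through Sullivan and Margulis tubes becomes unnecessary, since the bi-Lipschitz retraction from infinity directly to $\partial_j N_n$ already gives the two-sided comparison you need.
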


\begin{proof}
  Let $X_n=(c_{i,n}, h_{j,n}, h^*_{k,n})_{i\in\cI, j\in\cJ, k\in\cK}$ and let $\cE^n_l$ denote the hyperbolic end of $({\rm int}(M),g_n)$, $l\in \cI\sqcup\cJ\sqcup\cK$. Let $S^n_i$ ($i\in\cI$) denote the boundary at infinity of $({\rm int}(M),g_n)$ whose conformal structure is $c_{i,n}$ and let $S^n_j$ (resp. $S^n_k$) denote the smooth convex surface in $\cE^n_j$ (resp. $\cE^n_k$) whose induced metric (resp. third fundamental form) is $h_{j,n}$ (resp. $h^*_{k,n}$), with $j\in\cJ$ (resp. $k\in\cK$).

  By assumption, the sequence of the induced metrics $h_{j,n}$ (resp. third fundamental forms $h^*_{k,n}$) on $S^n_j$ (resp. $S^n_k$) remains bounded in the space of $C^{\infty}$ metrics on $\partial_jM$ (resp. $\partial_kM$) with curvature $K>-1$ (resp. $K<1$ and with closed, contractible geodesics of length $L>2\pi$). Combined with Proposition \ref{prop:dual}, the sequence of the induced metrics (say $h_{k,n}$) on $S^n_k$ is also bounded. Note that $S^n_j$ (resp. $S^n_k$) is a smooth convex closed surface of the same topological type for all $n$. Then the injectivity radius of $S^n_j$ (resp. $S^n_k$) is bounded from below by some constant $\epsilon>0$ for all $n$. Combining the above curvature conditions for $h_{j,n}$ and $h^*_{k,n}$ (and thus for $h_{k,n}$) and applying Lemma \ref{lm:principal-curv} to a lift of $S^n_j$ (resp. $S^n_k$) to $\HH^3$, the principal curvatures on $S^n_j$ (resp. $S^n_k$) are uniformly bounded between two positive constants.

  This allows us to apply a similar analysis as Proposition 3.11 of \cite{convexhull} to show that the nearest point retraction from the boundary at infinity of $\cE^n_j$ (resp. $\cE^n_k$) with the conformal hyperbolic metric to the smooth convex surface $S^n_j$ (resp. $S^n_k$) with the induced metric is uniformly bi-Lipschitz for all $n$. By 
  the relation between the first and third fundamental forms and the uniform boundedness of principal curvatures on $S^n_k$ (as shown above), 
  the duality map from $S^n_k$ to its dual surface $(S^n_k)^*$ (both with respect to induced metrics) is uniformly bi-Lipschitz for all $n$. So there is a uniformly bi-Lipschitz map from the boundary at infinity of $\cE^n_k$ (with the conformal hyperbolic metric) to $S^n_k$ (with the third fundamental form).

  As a consequence, the conformal structures at infinity say $c_{l,n}$ on the boundary at infinity of $\cE^l$ ($l\in\cJ\sqcup\cK$) are also bounded. Combined with the hypothesis that the conformal structure $c_{i,n}$ on $S^n_i$ are bounded, it follows from the Ahlfors-Bers theorem (see Theorem \ref{tm:ab}) that the sequence $(g_n)_{n\in \N}$ of convex co-compact metrics realizing the boundary data $X_n$ is also bounded, and thus has a convergent sub-sequence.

  If $(X_n)_{n\in\N}$ converges to $X_{\infty}\in\cX$, using the above result, then, up to extracting a subsequence, $(g_n)_{n\in \N}$ converges to a limit, say $g_{\infty}\in\cCC(M)$.  By assumption, $({\rm int}(M),g_n)$ contains a geodesically convex subset with boundary data $X_n\in\cX$. Using the Ahlfors-Bers Theorem, the conformal structure on the boundary component $\partial_iM$ at infinity of $({\rm int}(M),g_{\infty})$ is the $i$-th component prescribed in $X_{\infty}$. The existence of locally convex surfaces with induced metric or third fundamental form prescribed in $\cX$ in the other ends of $({\rm int}(M),g_{\infty})$ follows from Lemma \ref{lm:closed}.
\end{proof}

\begin{proof}[\textbf{Proof of Theorem \ref{tm:main-cc}}]
  Let $X^0=(c_{i,0}, h_{j,0}, h^*_{k,0})_{i\in\cI, j\in\cJ, k\in\cK}$ be a boundary data for $M$. We assume that $X^0$ is realized by a certain metric $g_0\in \cCC(M)$. For simplicity, we denote $c_{i,0}=c_i$, $h_{j,0}=h_j$ and $h^*_{k,0}=h^*_k$.

  Similarly, let $X^1=(c_{i,1}, h_{j,1}, h^*_{k,1})_{i\in\cI, j\in\cJ, k\in\cK}$ be another boundary data of the same type. We then choose a smooth 1-parameter family $(X^t)_{t\in [0,1]}$ in $\cX$ connecting $X^0$ to $X^1$. We denote by $U\subset [0,1]$ the maximal subset of $[0,1]$ containing $0$ such that $X^t$ is realized as the boundary data of a metric $g_t\in \cCC(M)$ for all $t\in U$.

  Note that under the smooth variation of the boundary data $c_i$, $h_j$ and $h^*_k$ determined by $X^t$, the submanifolds $\cU_i(c_i)$, $\cV_j(h_j)$, $\cW_k(h^*_k)$ vary smoothly in $\cCC(M)$ (this can be seen by the smoothness of the parametrization map in the proof of Ahlfors-Bers Theorem \cite{Ber70} and the main results in \cite{hmcb}). By Lemma \ref{lm:transversal}, the submanifolds $\cU_i(c_i)$, $\cV_j(h_j)$ and $\cW_k(h^*_k)$ over all $i\in \cI, j\in \cJ, k\in \cK$ intersect transversally at $g_0$ (with $g_0$ isolated). So the submanifolds $\cU^t_i(c_{i,t})$, $\cV^t_j(h_{j,t})$, $\cW^t_k(h^*_{k,t})$ corresponding to the smooth variation $c_{i,t}, h_{j,t}, h^*_{k,t}$ determined by $X^t$ keep intersecting each other for $t$ sufficiently small, with each intersection point isolated and varying smoothly in $\cCC(M)$. Therefore, $U$ is open. By Lemma \ref{lm:compact}, $U$ is also closed, so $U=[0,1]$.

  It follows from this argument that any metric realizing $X^0$ can be deformed in $\cCC(M)$ uniquely (once the path $(X^t)_{t\in [0,1]}$ is chosen) to a metric realizing $X^1$. Note that $\cCC(M)$ is path-connected. So each boundary data can be realized, and moreover the number of realizations is the same for all boundary data, since each intersection point (which realizes the boundary data $X^0$) of $\cU_i(c_i)$, $\cV_j(h_j)$ and $\cW_k(h^*_k)$ over all $i\in \cI, j\in \cJ, k\in \cK$ varies smoothly and keeps isolated under the smooth variation $(X^t)_{t\in [0,1]}$.

  However we already know from Lemma \ref{lm:special} that for constant curvature metrics this number is $1$, and this concludes the proof.
\end{proof}

\section{Convex domains in $\HH^3$}
\label{sc:convexH}

We turn in this section to convex domains in $\HH^3$. We will be using the results obtained in the previous section and obtain results on convex domains in $\HH^3$ by approximating them, in a suitable sense, by convex domains in quasifuchsian manifolds.

\subsection{The classical conformal welding and generalized gluing maps}

\subsubsection{The classical conformal welding of a Jordan curve in $\CP^1$}

Let $C\subset\CP^1$ be an oriented Jordan curve. We denote by $U^+$ and $U^-$ the connected components of $\CP^1\setminus C$ on the left and right sides of $C$, which corresponds to the positive and negative sides (or the ``upper" and ``lower" sides) of $C$. By the Riemann mapping theorem, there is a conformal (or bi-holomorphic) map, say $\varphi^+_C$ (resp. $\varphi^-_C$), that maps the upper (resp. lower) half-plane of $\C$ to $U^+$ (resp. $U^-$). Let $\partial\varphi^+_C$ (resp. $\partial\varphi^-_C$) denote the boundary correspondence (this is a homeomorphism by Carath\'eodory's theorem). The classical \emph{conformal welding} is defined as $(\partial\varphi^-_C)^{-1}\circ\partial\varphi^+_C$, which is well-defined up to pre- and post-composition by elements of $\PSL_2(\R)$. For simplicity, we consider the \emph{normalized} representative (that is, it preserves $0,1,\infty\in\partial_{\infty}\HH^2\cong \RP^1$) and denote it by $\Phi_{C}$ (see e.g. \cite{convexhull}).

We first recall the notion of quasi-circles and quasisymmetric maps.

\begin{definition} A Jordan curve $C\subset\CP^1$ is said to be a $k$-quasi-circle ($k\geq 1$) if it is the image of $\RP^1$ under a $k$-quasiconformal homeomorphism from $\CP^1$ to itself. We say $C$ is normalized if it goes through the three points $0,1,\infty$.
\end{definition}

\begin{definition}
An orientation-preserving homeomorphism $f:\RP^1\rightarrow \RP^1$ is called $k$-quasisymmetric ($k\geq 1$) if it is the boundary extension of a $k$-quasiconformal homeomorphism from $\HH^2$ to itself.

We say a quasisymmetric homeomorphism $f$ is quasifuchsian if there is a surface group $\pi_1(S)$ and two Fuchsian representations $\rho_i:\pi_1(S)\rightarrow \PSL_2(\R)$ ($i=1,2$) such that $f$ is $(\rho_1,\rho_2)$-equivariant. More precisely, $$f(\rho_1(\gamma)x)=\rho_2(\gamma)f(x),$$ for all $\gamma\in\pi_1(S)$ and $x\in\RP^1$.
\end{definition}

Let $\cQC$ denote the space of normalized quasi-circles in $\CP^1$ (with the Hausdorff topology) and let $\cT$ denote the universal Teichm\"uller space, which is the space of normalized quasisymmetric homeomorphisms of $\RP^1$. The following is a classical result of Ahlfors and Bers \cite{ahlfors-reflections,ahlfors:riemann}.

\begin{theorem}\label{thm:Ahlfors-Bers}
Let $C\in\cQC$ be a normalized $k$-quasi-circle. Then the conformal welding $\Phi_C$ of $C$ is a normalized 
$k$-quasisymmetric homeomorphism in $\cT$. Conversely, given a normalized $k$-quasisymmetric homeomorphism $f\in\cT$, there is a unique normalized 
$k$-quasi-circle in $\cQC$ whose conformal welding is $f$.
\end{theorem}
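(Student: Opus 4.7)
The plan is to treat the two directions separately, using quasiconformal extension and the measurable Riemann mapping theorem as the main analytic tools.

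For the forward direction, given a normalized $k$-quasi-circle $C$, there exists by definition a $k$-quasiconformal homeomorphism $F: \CP^1 \to \CP^1$ with $F(\RP^1) = C$, which we may assume sends the upper half-plane $\HH^+$ to $U^+$. I would introduce the auxiliary maps $h^{\pm} := (\varphi^{\pm}_C)^{-1} \circ F|_{\HH^{\pm}}$; since post-composition with a conformal map preserves the Beltrami coefficient, each $h^{\pm}$ is a $k$-quasiconformal self-homeomorphism of the corresponding half-plane, and thus extends to a quasisymmetric boundary homeomorphism $h^{\pm}|_{\RP^1}$. A direct computation on $\RP^1$ gives $\Phi_C = h^-|_{\RP^1} \circ (h^+|_{\RP^1})^{-1}$, and a Schwarz-reflection construction combining $h^+$ and the reflected $h^-$ produces an explicit quasiconformal extension of $\Phi_C$ to $\HH^+$, establishing quasisymmetry. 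Normalization is achieved by pre- and post-composing with elements of $\PSL_2(\R)$.

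For existence in the converse direction, starting from a normalized quasisymmetric $f: \RP^1 \to \RP^1$, I would first extend $f$ to a quasiconformal self-homeomorphism $F$ of $\HH^+$ via the Beurling--Ahlfors extension, then define a Beltrami differential $\mu$ on $\CP^1$ by $\mu := \bar\partial F / \partial F$ on $\HH^+$ and $\mu := 0$ on $\HH^-$. The measurable Riemann mapping theorem produces a quasiconformal homeomorphism $\Psi: \CP^1 \to \CP^1$ with Beltrami coefficient $\mu$. By construction $\Psi$ is conformal on $\HH^-$, and $\Psi \circ F^{-1}$ is conformal on $\HH^+$, since its Beltrami coefficient vanishes. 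Setting $C := \Psi(\RP^1)$, one then identifies $\Psi|_{\HH^-}$ and $\Psi \circ F^{-1}|_{\HH^+}$ with $\varphi^-_C$ and $\varphi^+_C$ respectively (up to the three-point normalization), and reads off $\Phi_C = f$ from the boundary values.

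For uniqueness, I would argue as follows: if $C_1, C_2$ are normalized quasi-circles with $\Phi_{C_1} = \Phi_{C_2}$, the map defined by $\varphi^+_{C_2} \circ (\varphi^+_{C_1})^{-1}$ on $U^+_1$ and $\varphi^-_{C_2} \circ (\varphi^-_{C_1})^{-1}$ on $U^-_1$ extends continuously across $C_1$ (precisely because the weldings coincide) to a homeomorphism $\Psi$ of $\CP^1$ that is conformal on the complement of $C_1$. The conformal removability of quasi-circles then forces $\Psi$ to be globally M\"obius, and the normalization at $0,1,\infty$ yields $C_1 = C_2$.

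The main obstacle is the uniqueness step: existence is essentially a direct application of the measurable Riemann mapping theorem, but passing from ``conformal off a quasi-circle'' to ``globally M\"obius'' rests on the conformal removability of quasi-circles, a substantive theorem of Ahlfors which is not automatic from the quasiconformal setup. A secondary (and more quantitative) difficulty is tracking the precise dependence of the quasisymmetric constant on the quasiconformal dilatation of $C$; a naive composition of the two $k$-quasiconformal maps $h^{\pm}$ only gives a $k^2$ bound, so to obtain the sharp constant $k$ one must exploit the Schwarz-reflection extension of $\Phi_C$ rather than simply composing $h^-$ with $(h^+)^{-1}$.
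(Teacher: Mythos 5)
The paper does not prove this theorem; it states it as a classical result and cites \cite{ahlfors-reflections,ahlfors:riemann}, so there is no in-paper argument to compare against. Your sketch is a correct reconstruction of the standard Ahlfors--Bers argument: the factorization $\Phi_C = h^-|_{\RP^1}\circ (h^+|_{\RP^1})^{-1}$ with $h^\pm = (\varphi^\pm_C)^{-1}\circ F|_{\HH^\pm}$ for the forward direction, the measurable Riemann mapping theorem applied to a Beltrami coefficient supported on one half-plane for existence, and gluing plus conformal removability of quasicircles for uniqueness.

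The two difficulties you flag yourself are the genuinely delicate ones. On the dilatation constant: the reflected composition $(j\circ h^-\circ j)\circ (h^+)^{-1}$ is only $k^2$-quasiconformal, so the sharp ``$k$ to $k$'' bound in the statement requires more than Schwarz reflection; in practice the paper only uses this theorem qualitatively (uniform dependence on $k$, e.g.\ in the proofs of Propositions \ref{prop:convergence-I} and \ref{prop:convergence-II}), so the imprecision is harmless for the applications. On removability: the passage from ``homeomorphism of $\CP^1$ conformal off the quasicircle $C_1$'' to ``globally M\"obius'' does need the theorem that quasicircles are conformally removable (equivalently, are null sets for quasiconformal mappings), which you correctly identify as a substantive input rather than a triviality. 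Two bookkeeping points you elide, neither fatal: depending on which half-plane carries the Beltrami coefficient, your construction returns a $C$ with $\Phi_C = f^{-1}$ rather than $f$, which is immaterial since $k$-quasisymmetry is preserved under inversion; and the final step of uniqueness, from ``$\Psi$ is M\"obius'' to ``$\Psi = \mathrm{id}$ and hence $C_1 = C_2$'', requires the Riemann maps $\varphi^\pm_{C_i}$ to be chosen compatibly with the normalization so that $\Psi$ fixes $0,1,\infty$.
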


Recall that the concepts of complex structure and conformal structure are equivalent on two-dimensional oriented Riemannian manifolds (see e.g. \cite[Theorem 1.7]{imayoshi-taniguchi}).

\begin{lemma}\label{lm:conf-bihol}
Let $X_1$, $X_2$ be the Riemann surfaces induced by oriented Riemannian surfaces $(S_1, h_1)$ and $(S_2, h_2)$, respectively. Then $f:(S_1, h_1)\rightarrow (S_2,h_2)$ is an orientation-preserving conformal map if and only if $f:X_1\rightarrow X_2$ is a bi-holomorphic map.
\end{lemma}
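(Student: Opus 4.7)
The plan is to reduce the equivalence to the corresponding planar statement via isothermal coordinates. First I would recall that the Riemann surface structure $X_i$ induced by $(S_i,h_i)$ is defined by the atlas of isothermal coordinates: charts $z:U\to\C$ in which $h_i$ has the form $\lambda^2(z)|dz|^2$ for some smooth positive function $\lambda$. Existence of such charts on any smooth oriented Riemannian surface is the Korn--Lichtenstein theorem, and the compatibility of two isothermal charts on the same $(S_i,h_i)$ automatically produces holomorphic transition maps, so $X_i$ depends only on the conformal class of $h_i$.

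Next I would carry out the key pointwise computation: for a smooth orientation-preserving map $f=u+iv$ between open subsets of $\C$, the differential $df_p$ is a non-zero orientation-preserving conformal linear map of $\R^2$ if and only if $u_x=v_y$ and $u_y=-v_x$, equivalently $\partial_{\bar z}f=0$ with $f_z\neq 0$. This is the classical identification between orientation-preserving conformal similarities of $\R^2$ and non-zero complex multiplications, and it is the step that converts the metric condition into the Cauchy--Riemann equations.

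With those two inputs in place, both implications follow with no serious work. If $f:(S_1,h_1)\to(S_2,h_2)$ is an orientation-preserving conformal map, then in isothermal charts $z$ near $p\in S_1$ and $w$ near $f(p)\in S_2$ the condition $f^*h_2=\mu\, h_1$ reads $\lambda_2^2(f(z))\,|df|^2=\mu(z)\lambda_1^2(z)|dz|^2$; dividing by the positive factor shows that $f$ is Euclidean-conformal in the chart, so $\partial_{\bar z}f=0$, and $f:X_1\to X_2$ is holomorphic. Since a conformal map is a diffeomorphism, its inverse is also conformal hence holomorphic, so $f$ is biholomorphic. Conversely, a biholomorphism $f:X_1\to X_2$ is represented in isothermal charts by a holomorphic diffeomorphism of planar domains, hence Euclidean-conformal; multiplying by the conformal factors gives $f^*h_2=(\lambda_2\circ f/\lambda_1)^2|f_z|^2\, h_1$, a positive multiple of $h_1$, so $f$ is orientation-preserving conformal.

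There is no real obstacle; the only point deserving care is to invoke (rather than reprove) the existence of isothermal coordinates on a smooth oriented Riemannian surface. Once this is taken as standard, everything else is a direct verification using the linear-algebraic equivalence between conformality and the Cauchy--Riemann equations.
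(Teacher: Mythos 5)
Your proof is correct and is the standard isothermal-coordinate argument for this classical equivalence. The paper itself does not supply a proof at all: it states the lemma immediately after a reminder that conformal and complex structures coincide on oriented surfaces, citing Imayoshi--Taniguchi, Theorem 1.7, as the reference. Your write-up is essentially the content of that cited result: existence of isothermal charts (Korn--Lichtenstein), the linear-algebra identification of orientation-preserving conformal maps of $\R^2$ with solutions of the Cauchy--Riemann equations, and then a routine chart-by-chart verification in both directions. The only mild point to flag is the usual one you already handled implicitly: for the ``biholomorphic'' direction to make sense, ``conformal map'' must be read as conformal diffeomorphism, which is the intended meaning in the paper's usage.
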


\subsubsection{The gluing maps for unbounded convex domains in $\HH^3$ of the first type}

We now consider the first type of unbounded convex domain (see Section \ref{subsec:unbounded convex domain}). Let $\Omega$ be a convex subset in $\HH^3$ with its convex boundary $\partial\Omega\subset\HH^3$ an open complete disk and its ideal boundary $\partial_{\infty}\Omega\subset(\partial_{\infty}\HH^3\cong\CP^1)$ a closed disk. We denote by $\partial(\partial_\infty\Omega)$ the boundary of $\partial_{\infty}\Omega$ in $\partial_{\infty}\HH^3$. Then $\partial(\partial_{\infty}\Omega)=\partial_{\infty}(\partial\Omega)$ is a Jordan curve in $\CP^1$.

Let $I$ (resp. $\III$) denote the induced metric (resp. third fundamental form) on $\partial\Omega$ and let $\HH^{2+}$ (resp. $\HH^{2-}$) denote the upper (resp. lower) half-plane in $\C$. Let 
$\varphi^c_{\Omega}: \HH^{2+}\rightarrow {\rm int}(\partial_{\infty}\Omega)$ be a bi-holomorphic map. We denote by $h^{\pm}_0$ the hyperbolic metric on $\HH^{2\pm}$.  Let $\varphi^I_{\Omega}: \HH^{2-}\rightarrow \partial\Omega$ be a map such that $(\varphi^I_{\Omega})^*I$ is conformal to $h^{-}_0$ and let $\varphi^{\III}_{\Omega}: \HH^{2-}\rightarrow \partial\Omega$ be a map such that $(\varphi^{\III}_{\Omega})^*\III$ is conformal to $h^-_0$. By the Carath\'eodory's theorem and Lemma \ref{lm:conf-bihol}, each of the maps $\varphi^c_{\Omega}$, $\varphi^I_{\Omega}$ and $\varphi^{\III}_{\Omega}$ extends to a homeomorphism on the boundary of $\HH^2$. We define the ``conformal welding" between the conformal structure on 
${\rm int}(\partial_{\infty}\Omega)$ and the conformal structure on $\partial\Omega$ induced by $I$ (resp. $\III$), called the \emph{$I$-gluing map} (resp. \emph{$\III$-gluing map}) of $\Omega$ in the following. 

\begin{definition}\label{def:I-III gluing maps of first type}
  The $I$-gluing map of $\Omega$ is defined as $$\Phi^I_{\Omega}:=(\partial \varphi^I_{\Omega})^{-1}\circ\partial \varphi^c_{\Omega}:\RP^1\rightarrow \RP^1.$$ The $\III$-gluing map of $\Omega$ is defined as $$\Phi^{\III}_{\Omega}:=(\partial \varphi^{\III}_{\Omega})^{-1}\circ\partial \varphi^c_{\Omega}:\RP^1\rightarrow \RP^1~. $$
  Here the maps $\varphi^c_{\Omega}$, $\varphi^I_{\Omega}$ and $\varphi^{\III}_{\Omega}$ are always chosen such that the above gluing maps are \emph{normalized}, that is, preserving $0,1,\infty$.
\end{definition}

 We stress that under the normalization condition, the above pair $(\varphi^c_{\Omega},\varphi^I_{\Omega})$ (resp. $(\varphi^c_{\Omega},\varphi^{\III}_{\Omega})$) is unique, up to left composition with a common element of $\PSL_2(\C)$. In particular, the $I$- (resp. $\III$-) gluing map $\Phi^I_{\Omega}$ (resp. $\Phi^{\III}_{\Omega}$) of $\Omega$ is independent of the choice of the representative pair $(\varphi^c_{\Omega},\varphi^I_{\Omega})$ (resp. $(\varphi^c_{\Omega},\varphi^{\III}_{\Omega})$). The above definition is well-defined.

\begin{example} \label{ex:gluing map I}
  If $\partial_{\infty}(\partial\Omega)=\partial(\partial_{\infty}\Omega)$ is a round circle in $\CP^1$, and the induced metric (or third fundamental form) on $\partial\Omega$ has constant curvature,  then the gluing maps $\Phi^{I}_{\Omega}$ and $\Phi^{\III}_{\Omega}$ are both the identity.

If $\partial_{\infty}(\partial\Omega)=\partial(\partial_{\infty}\Omega)$ is a Jordan curve invariant under the group $\Gamma_{\rho}$ given by the quasifuchsian representation  $\rho:\pi_1(S)\rightarrow\PSL_2(\C)$, then $\Omega/\Gamma_{\rho}$ is a convex subset in the quasifuchsian hyperbolic manifold $\HH^3/\Gamma_{\rho}$ with one conformal boundary surface $(\partial_{\infty}\Omega)/\Gamma_{\rho}$ and one convex boundary surface $(\partial\Omega)/\Gamma_{\rho}$.

The map $(\varphi^c_{\Omega})^{-1}: {\rm int}(\partial_{\infty}\Omega)\rightarrow \HH^{2+}$ conjugates the action of $\rho$ on $\partial_{\infty}\Omega$ to a properly discontinuous action by isometries on $\HH^2$, which is the Fuchsian representation $\rho_c$ of the complex structure $c$ on $({\rm int}(\partial_{\infty}\Omega))/\Gamma_{\rho}:$
$$(\varphi^c_{\Omega})^{-1} \circ\rho\circ \varphi^c_{\Omega}=\rho_c.$$

The map $(\varphi^I_{\Omega})^{-1}: \partial\Omega\rightarrow \HH^{2-}$ (resp. $(\varphi^{\III}_{\Omega})^{-1}: \partial\Omega\rightarrow \HH^{2-}$) conjugates the action of $\rho$ on $\partial\Omega$ to a properly discontinuous action by isometries on $\HH^2$, which is the Fuchsian representation $\rho_I$ (resp. $\rho_{\III}$) of the hyperbolic metric in the conformal class of the induced metric (resp. third fundamental form) on $(\partial\Omega)/\Gamma_{\rho}$ (via the Riemannian uniformization theorem):
      $$(\varphi^I_{\Omega})^{-1}\circ\rho\circ \varphi^I_{\Omega}=\rho_I \quad \quad (resp. \,\, (\varphi^{\III}_{\Omega})^{-1}\circ\rho\circ\varphi^{\III}_{\Omega} =\rho_{\III} \,\, ).$$
      Combined with Definition \ref{def:I-III gluing maps of first type}, we obtain that
      $$\Phi^I_{\Omega}\circ\rho_c=\rho_I\circ \Phi^I_{\Omega}\quad \quad (resp. \,\, \Phi^{\III}_{\Omega}\circ\rho_c=\rho_{\III}\circ \Phi^{\III}_{\Omega}\,\, ).$$
      Therefore, $\Phi^{I}_{\Omega}$ (resp. $\Phi^{\III}_{\Omega}$) is a $(\rho_c,\rho_I)$-equivariant (resp. $(\rho_c,\rho_{\III})$-equivariant) quasisymmetric homeomorphism.
\end{example}


\subsubsection{The gluing maps for unbounded convex domains in $\HH^3$ of the second type}
For the second type of unbounded convex domain (see Section \ref{subsec:unbounded convex domain}), let $\Omega\subset \HH^3$ be a convex domain such that $\partial\Omega$ is the disjoint union of two complete disks, denoted here by $\partial_-\Omega$ and $\partial_+\Omega$, which ``meet'' at infinity along a Jordan curve, oriented such that $\partial_+\Omega$ and $\partial_-\Omega$ lie on its left and right-hand sides respectively (see e.g. \cite{convexhull}).

In this case, $\partial_{\infty}(\partial_-\Omega)=\partial_{\infty}(\partial_+\Omega)=\partial_{\infty}\Omega$ is a Jordan curve. Let $I_{\pm}$ and $\III_{\pm}$ denote the induced metric and third fundamental form on $\partial_{\pm}\Omega$. Let $\varphi^{I_{\pm}}_{\Omega}: \HH^{2\pm}\rightarrow \partial_{\pm}\Omega$ be a map such that $(\varphi^{I_{\pm}}_{\Omega})^*(I_{\pm})$ is conformal to $h^{\pm}_0$ and let $\varphi^{\III_{\pm}}_{\Omega}: \HH^{2\pm}\rightarrow \partial_{\pm}\Omega$ be a map such that $(\varphi^{\III_{\pm}}_{\Omega})^*(\III_{\pm})$ is conformal to $h^{\pm}_0$. Note that each of the maps $\varphi^{I_{\pm}}_{\Omega}$ and $\varphi^{\III_{\pm}}_{\Omega}$ extends to a homeomorphism on the boundary of $\HH^2$, by Lemma \ref{lm:conf-bihol} and Caratheodory's theorem.

In the following, we define the ``conformal welding" between the conformal structure on $\partial_{+}\Omega$ induced by $I_+$ (resp. $\III_+$) and the conformal structure on $\partial_{-}\Omega$ induced by $I_-$ (resp. $\III_-$), called the \emph{$I$-gluing map} (resp. \emph{$\III$-gluing map}) of $\Omega$. Besides, we can also define a ``conformal welding" between the conformal structure on $\partial_{+}\Omega$ induced by $\III_+$ and the conformal structure on $\partial_{-}\Omega$ induced by $I_-$, called the \emph{$(\III,I)$-mixed gluing map} of $\Omega$ (see more precisely below). 

\begin{definition}\label{def:I-III gluing maps of second type}
  The $I$-gluing map of $\Omega$ is defined as $$\Phi^{I}_{\Omega}:=(\partial \varphi^{I_-}_{\Omega})^{-1}\circ\partial \varphi^{I_+}_{\Omega}:\RP^1\rightarrow \RP^1.$$ The $\III$-gluing map of $\Omega$ is defined as  $$\Phi^{\III}_{\Omega}:=(\partial \varphi^{\III_{-}}_{\Omega})^{-1}\circ\partial \varphi^{\III_{+}}_{\Omega}:\RP^1\rightarrow \RP^1.$$ The $(\III,I)$-mixed gluing map of $\Omega$ is defined as $$\Phi^{(\III,I)}_{\Omega}:=(\partial \varphi^{I_{-}}_{\Omega})^{-1}\circ\partial \varphi^{\III_{+}}_{\Omega}:\RP^1\rightarrow \RP^1~.$$
  Here the maps $\varphi^{I_{\pm}}_{\Omega}$ and $\varphi^{\III_{\pm}}_{\Omega}$ are also chosen such that the above gluing maps are \emph{normalized}.
\end{definition}

We stress that under the normalization condition, the above pair $(\varphi^{I_+}_{\Omega},\varphi^{I_-}_{\Omega})$ (resp. $(\varphi^{\III_+}_{\Omega},\varphi^{\III_-}_{\Omega})$, $(\varphi^{\III_+},\varphi^{I_-})$) is unique, up to left composition with a common element of $\PSL_2(\C)$. As a consequence, the $I$- (resp. $\III$-, $(\III,I)$-) gluing map $\Phi^I_{\Omega}$ (resp. $\Phi^{\III}_{\Omega}$, $\Phi^{(\III,I)}_{\Omega}$) of $\Omega$ is independent of the choice of the representative pair $(\varphi^{I_+}_{\Omega},\varphi^{I_-}_{\Omega})$ (resp. $(\varphi^{\III_+}_{\Omega},\varphi^{\III_-}_{\Omega})$, $(\varphi^{\III_+},\varphi^{I_-})$). The above definition is well-defined. Similarly we can also define the $(I,\III)$-mixed gluing map of $\Omega$. Without loss of generality, we only consider the  $(\III,I)$-mixed gluing map here.

\begin{example} \label{ex:gluing map II}
  If $\partial_{\infty}\Omega$ is a round circle in $\CP^1$, 
  and the induced metric (or third fundamental form) on $\partial\Omega$ has constant curvature,  the gluing maps $\Phi^{I}_{\Omega}$, $\Phi^{\III}_{\Omega}$ and $\Phi^{(\III,I)}_{\Omega}$ are the identity. 
  If $\partial_{\infty}\Omega$ is a Jordan curve invariant under the group $\Gamma_{\rho}$ given by a quasifuchsian representation $\rho:\pi_1(S)\rightarrow\PSL_2(\C)$, then $\Omega/\Gamma_{\rho}$ is a convex subset in the quasifuchsian hyperbolic manifold $\HH^3/\Gamma_{\rho}$ with two convex boundary surfaces $(\partial_{\pm}\Omega)/\Gamma_{\rho}$.

Similarly as Example \ref{ex:gluing map II}, the map $(\varphi^{I_{\pm}}_{\Omega})^{-1}: \partial_{\pm}\Omega\rightarrow \HH^{2\pm}$ (resp. $(\varphi^{\III_{\pm}}_{\Omega})^{-1}: \partial_{\pm}\Omega\rightarrow \HH^{2\pm}$) conjugates the action of $\rho$ on $\partial_{\pm}\Omega$ to the Fuchsian representation $\rho_{I^{\pm}}$ (resp. $\rho_{\III^{\pm}}$) of the hyperbolic metric in the conformal class of the induced metric (resp. third fundamental form) on $(\partial_{\pm}\Omega)/\Gamma_{\rho}$. As a consequence, the gluing map $\Phi^{I}_{\Omega}$ (resp. $\Phi^{\III}_{\Omega}$) is a $(\rho_{I^+},\rho_{I^-})$-equivariant (resp. $(\rho_{\III^+},\rho_{\III^-})$-equivariant) quasisymmetric homeomorphism. The mixed gluing map $\Phi^{(\III,I)}_{\Omega}$ 
is a $(\rho_{\III^+},\rho_{I^-})$-equivariant 
quasisymmetric homeomorphism.
\end{example}
The above gluing maps can be viewed as generalized versions of the classical conformal welding.

\subsubsection{Approximation of quasisymmetric maps}
It is shown in \cite[Lemma 3.4]{BS17} and \cite[Proposition 9.1]{convexhull} that a quasisymmetric homeomorphism can be approximated by a sequence of quasifuchsian quasisymmetric homeomorphisms:

\begin{proposition}\label{prop:qcirc approx}
Any quasisymmetric homeomorphism $f:\RP^1\rightarrow \RP^1$ is the $C^0$-limit of a sequence of uniformly quasisymmetric quasifuchsian homeomorphisms $f_n$.
\end{proposition}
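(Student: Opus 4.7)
The plan is to represent $f$ by the Beltrami coefficient of a quasiconformal extension, approximate this coefficient by ones that are invariant under a sequence of Fuchsian surface groups whose fundamental domains exhaust $\HH^2$, and invoke the measurable Riemann mapping theorem to pass the approximation to the boundary values. First, extend $f$ to a quasiconformal homeomorphism $F:\CP^1\to\CP^1$ preserving $\HH^{2\pm}$ and commuting with complex conjugation (for instance via the Beurling--Ahlfors extension on $\HH^{2+}$, extended to $\HH^{2-}$ by Schwarz reflection), and write $\mu$ for its Beltrami coefficient, with $k:=\|\mu\|_\infty<1$.

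Next, fix a basepoint $z_0\in\HH^{2+}$ and choose a sequence of cocompact torsion-free Fuchsian surface groups $\Gamma_n<\PSL_2(\R)$ whose Dirichlet fundamental domain $D_n$ at $z_0$ contains the hyperbolic ball $B_{\HH^2}(z_0,r_n)$ with $r_n\to\infty$; this is achieved, for example, by taking a sequence of finite covers of a fixed closed hyperbolic surface group whose injectivity radius at $z_0$ tends to infinity. Define $\mu_n$ on $D_n$ by $\mu_n|_{D_n}=\mu|_{D_n}$, extended to $\HH^{2+}$ by $\Gamma_n$-equivariance
\[
\mu_n(\gamma z)\,\overline{\gamma'(z)}/\gamma'(z)=\mu_n(z),\qquad \gamma\in\Gamma_n,
\]
and extended to $\HH^{2-}$ by $\mu_n(\bar z)=\overline{\mu_n(z)}$. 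The resulting $\mu_n$ is real-symmetric, $\Gamma_n$-invariant on $\CP^1$, and satisfies $\|\mu_n\|_\infty\le k$. Let $F_n:\CP^1\to\CP^1$ be the unique normalized (fixing $0,1,\infty$) quasiconformal solution of $\bar\partial F_n=\mu_n\,\partial F_n$. By $\Gamma_n$-invariance together with uniqueness of the normalized solution, $F_n$ conjugates $\Gamma_n$ to a discrete subgroup $\rho_n(\Gamma_n)<\PSL_2(\C)$; the real symmetry of $\mu_n$ forces $F_n$ to preserve $\RP^1$, so $\rho_n(\Gamma_n)<\PSL_2(\R)$ is Fuchsian, and $f_n:=F_n|_{\RP^1}$ is an $(\mathrm{id},\rho_n)$-equivariant, hence quasifuchsian, quasisymmetric homeomorphism with quasisymmetry constant bounded only in terms of $K=(1+k)/(1-k)$, uniformly in $n$.

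For every $z\in\CP^1\setminus\RP^1$, the condition $d(z,z_0)<r_n$ eventually holds, at which point $z\in D_n$ (or the reflection of $D_n$) and $\mu_n(z)=\mu(z)$; hence $\mu_n\to\mu$ pointwise almost everywhere on $\CP^1$, with the uniform bound $k<1$. The Ahlfors--Bers continuous dependence of normalized quasiconformal solutions on their Beltrami coefficients then gives $F_n\to F$ uniformly on $\CP^1$ in the spherical metric, and restriction to $\RP^1$ yields $f_n\to f$ in $C^0$. The main obstacle is producing the approximating coefficients $\mu_n$ that are simultaneously Fuchsian-equivariant of surface-group type, uniformly bounded by $k$, and pointwise convergent to $\mu$: the equivariant extension from a fundamental domain handles the first two properties automatically, while the third relies on the key geometric input that the fundamental domains $D_n$ can be chosen to exhaust $\HH^{2+}$ around $z_0$. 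The remaining ingredients---existence of such $\Gamma_n$, quasiconformal extension of $f$, and Ahlfors--Bers stability---are classical.
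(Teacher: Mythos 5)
Your proof is correct. The paper does not give its own argument for this proposition — it only cites \cite[Lemma 3.4]{BS17} and \cite[Proposition 9.1]{convexhull} — but your construction (equivariant Beltrami coefficients over exhausting Dirichlet fundamental domains, real symmetry of $\mu_n$ to force the conjugated group to be Fuchsian, and Ahlfors--Bers stability to upgrade a.e.\ convergence of the $\mu_n$ with a uniform $L^\infty$ bound to $C^0$ convergence of the normalized solutions) is exactly the standard argument underlying those references.
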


\subsection{Fuchsian approximations of non-positively curved metrics}

For the convenience of construction, in this subsection we use the Poincar\'{e} unit disk model of $\HH^2$, which is the unit open disk $\DD^2=\{z\in\C:|z|<1\}$ with the Poincar\'{e} metric $$h_0=\frac{4}{(1-|z|^2)^2}|dz|^2.$$

\begin{definition}
Let $h$ be a smooth, complete conformal metric on an open disk $D\subset\CP^1$ and let $\rho:\pi_1(S)\rightarrow \PSL_2(\R)$ be a Fuchsian representation. We say that $h$ is $\rho$-invariant if there is a conformal map $\varphi:\HH^2\rightarrow D$ with $\varphi^*h=e^{2u}h_0$, where $u:\HH^2\rightarrow \R$ is a smooth function, such that
 $$e^{2u(\rho(\gamma)x)}h_0(\rho(\gamma)x)=e^{2u(x)}h_0(x),$$
for all $\gamma\in\pi_1(S)$ and $x\in\HH^2$.
\end{definition}

\begin{proposition}\label{prop:approx III}
Let $h^*$ be  a smooth, complete, bounded
conformal metric of curvature $K_{h^*}\in[-1/\epsilon,0]$ on $\DD^2$ and let $(\rho^*_n)_{n\in\N^+}$ be a sequence of Fuchsian representations of $\Gamma_n:=\pi_1(S_n)$ whose fundamental domain contains a closed hyperbolic disk $B_H(0,\tau_n)$ centered at $0$ with radius $\tau_n$, where $\tau_n$ strictly increases to $\infty$. Then there is a sequence of $\rho^*_n$-invariant conformal metrics, say $h^*_n$, on $\DD^2$, with curvature $K_{h^*_n}\in[-1/\epsilon',0]$ (for some $\epsilon'>0$), which uniformly converges to $h^*$ smoothly on compact subsets of $\DD^2$.
\end{proposition}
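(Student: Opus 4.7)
The plan is to pass through a prescribed Gauss curvature problem on the compact quotient surfaces $S_n:=\DD^2/\rho^*_n$, using Kazdan--Warner's theorem to produce invariant conformal factors with controlled curvature, then to pass to the limit in $C^\infty_{\mathrm{loc}}$.

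First I set up the conformal factor. Writing $h^*=e^{2u}h_0$, the Gauss equation reads $\Delta_{h_0}u=-1-K_{h^*}e^{2u}$, so the assumption $K_{h^*}\in[-1/\epsilon,0]$ translates into $-1\le\Delta_{h_0}u\le e^{2u}/\epsilon-1$. Boundedness of $h^*$ gives an upper bound on $u$; completeness together with the lower bound $K_{h^*}\ge-1/\epsilon$ yields, by the Cheng--Yau estimate mentioned after Definition~\ref{def:bounded conf}, the reverse inequality $h^*\ge\epsilon h_0$ and hence a lower bound on $u$. Interior Schauder estimates applied to the semilinear equation on $h_0$-balls of unit radius produce uniform $C^k$-bounds on $u$ throughout $\DD^2$.

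Next I build the approximating problem. Let $F_n$ be the Dirichlet fundamental domain of $\rho^*_n$ based at $0$; by hypothesis $F_n\supset B_H(0,\tau_n)$. Define a $\rho^*_n$-invariant smooth function $K_n:\DD^2\to[-1/\epsilon',0]$, for some $\epsilon'\ge\epsilon$, by prescribing it on $F_n$: equal to $K_{h^*}$ on $B_H(0,\tau_n/2)$, equal to the constant $-1$ on $F_n\setminus B_H(0,\tau_n-1)$, and smoothly interpolated in the intermediate annulus; then extend by $\rho^*_n$-equivariance. This is possible because the annulus has hyperbolic width $\tau_n/2-1\to\infty$, leaving ample room for a smooth interpolation remaining in $[-1/\epsilon',0]$. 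The descent of $K_n$ to the closed surface $S_n$ of genus $\ge 2$ is nonpositive and not identically zero, so by Kazdan--Warner's theorem on prescribed Gauss curvature in a conformal class there exists a unique conformal metric $e^{2v_n}h_{0,n}$ on $(S_n,h_{0,n})$ realizing it. Lifting yields a $\rho^*_n$-invariant $u_n\in C^\infty(\DD^2)$ solving $\Delta_{h_0}u_n=-1-K_n e^{2u_n}$, so $h^*_n:=e^{2u_n}h_0$ is $\rho^*_n$-invariant with curvature $K_n\in[-1/\epsilon',0]$.

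It then remains to prove $u_n\to u$ in $C^\infty_{\mathrm{loc}}(\DD^2)$. A uniform $L^\infty$-bound on $u_n$ comes from comparison with the constant sub/super-solutions permitted by the uniform range of $K_n$, after which the semilinear PDE gives uniform $C^k$-bounds on compact sets through interior elliptic regularity. Arzel\`a--Ascoli produces subsequential convergence $u_{n_k}\to u_\infty$ in $C^\infty_{\mathrm{loc}}$; since $K_n\to K_{h^*}$ in $C^\infty_{\mathrm{loc}}$, the limit satisfies $\Delta_{h_0}u_\infty=-1-K_{h^*}e^{2u_\infty}$ on all of $\DD^2$ and inherits the two-sided bound on $u$.

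The main obstacle is the identification $u_\infty=u$: this amounts to a uniqueness statement for bounded conformal metrics on the noncompact disk $\DD^2$ with the same prescribed nonpositive Gauss curvature. The difference $\phi=u_\infty-u$ satisfies a linear elliptic equation $\Delta_{h_0}\phi=c\phi$ with nonnegative bounded coefficient $c=-2K_{h^*}e^{2\xi}$. Since on $B_H(0,\tau_n/2)$ both $u_n$ and $u$ solve exactly the same PDE, one can combine an exhaustion with the maximum principle: interior Schauder estimates on the nested balls $B_H(0,\tau_n/2)$, together with the strict maximum principle for $\Delta\phi=c\phi$ on the open set where $c>0$, force $\phi\equiv 0$. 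Controlling the boundary behavior of $u_n-u$ on the exhausting balls as $n\to\infty$ is the most delicate part of the argument, but once it is achieved the whole sequence converges and the desired $h^*_n$ is obtained.
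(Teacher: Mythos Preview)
Your approach is genuinely different from the paper's, and it contains a real gap at the identification step that you yourself flag as ``the most delicate part'' without actually carrying it out.

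The paper avoids PDE existence theory entirely. It constructs an explicit rotationally symmetric barrier $v^*_n$ on $\DD^2$ with $v^*_n=c^*_0:=\inf u^*$ on a large ball $B(0,l_n)$ and $v^*_n=c^*_1:=\sup u^*$ outside $B(0,t_n)\subset P^*_n$, with curvature of $e^{2v^*_n}h_0$ controlled in $[-C/\epsilon,0]$ on the transition annulus. Then $w^*_n:=\max\{u^*,v^*_n\}$ equals $u^*$ on $B(0,l_n)$ (since $v^*_n=c^*_0\le u^*$ there) and equals the constant $c^*_1$ outside $B(0,t_n)$ (since $v^*_n=c^*_1\ge u^*$ there). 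After a convolution smoothing at the max locus, one gets a metric $e^{2\hat w^*_n}h_0$ that coincides with $h^*$ on $B(0,l_n)$ and has constant curvature near $\partial P^*_n$; it therefore extends $\rho^*_n$-equivariantly across the fundamental domain boundary. Convergence is trivial: on any compact $K$, $h^*_n=h^*$ for all large $n$.

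Your route, by contrast, produces $u_n$ as the global Kazdan--Warner solution on $S_n$, and then you must show $u_n\to u$ on compacta. The issue is exactly the one you identify: uniqueness of bounded solutions to $\Delta_{h_0}v=-1-K_{h^*}e^{2v}$ on the noncompact disk. The linearized equation $\Delta_{h_0}\phi=c\phi$ with $c=-2K_{h^*}e^{2\xi}\ge 0$ does \emph{not} in general force a bounded $\phi$ to vanish on $\HH^2$ once $c$ is allowed to vanish on large sets, which the hypothesis $K_{h^*}\in[-1/\epsilon,0]$ permits. When $c\equiv 0$ on an open set one is left with a bounded harmonic function there, and on $\HH^2$ bounded harmonic functions are far from unique. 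Your maximum-principle sketch only transfers the problem to controlling $u_n-u$ on $\partial B_H(0,\tau_n/2)$, and nothing in the Kazdan--Warner construction gives you that boundary control: the solution on the inner ball depends on the global data on $S_n$, including the region where you set $K_n=-1$, and there is no mechanism forcing $u_n$ to match $u$ near that transition. So the step ``once it is achieved'' is doing all the work, and I do not see how to achieve it in general.

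If you want to rescue the PDE approach, you would need either to restrict to $K_{h^*}\le -\delta<0$ (where a genuine Liouville theorem via Omori--Yau does give uniqueness), or to arrange the approximation so that $u_n$ agrees with $u$ on a large ball by construction rather than by identification --- but the latter is precisely what the paper does with its explicit barrier.
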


\begin{proof}
Let $P^*_n$ denote the fundamental domain of $\rho^*_n$ that contains the closed hyperbolic disk $B_H(0,\tau_n)$. Let $u^*:\DD^2\rightarrow \R$ be a smooth function such that $h^*=e^{2u^*}h_0$. 
Since $h^*$ is a complete, bounded (see Definition \ref{def:bounded conf}) conformal metric with curvature at least $-1/\epsilon$, combined with \cite[Theorem 2.8]{mateljevic}, there exist constants $c^*_1>0>c^*_0=\log\sqrt{\epsilon}$ such that $c^*_0\leq u^*\leq c^*_1$ .

We are going to define a real-valued function on $\DD^2$, called $v^*_n$, which is used to construct the desired metric $h^*_n$.
If $u^*$ is constant on $\DD^2$, we set $v^*_n:=u^*$. If $u^*$ is not constant on $\DD^2$, without loss of generality, we assume that $u^*$ is not constant on $B_H(0,\tau_1)$. We construct the desired $h^*_n$ using the following steps:

\textbf{Step 1}: For each $n\in \N$, we construct a smooth $v^*_n:\DD^2\rightarrow\R$ which satisfies the following conditions:
\begin{enumerate}[(i)]
 \item $c^*_0\leq v^*_n<c^*_1$ on $\mathring{B}(0,t_n)$, where $\mathring{B}(0,t_n)$ denote the open euclidean disk of radius $t_n$ and $t_n=\tanh(\tau_n/2)$.
  \item $v^*_n$ is equal to $c^*_1$ outside $\mathring{B}(0,t_n)$, and equal to $c^*_0$ in $B(0,l_n)$, for a sequence $(l_n)_{n\in \N}$ (to be determined) such that $l_n<t_n$ and $l_n\to 1$ as $n\to \infty$.
  \item  $v^*_n$ is rotationally symmetric (i.e. $v^*_n$ is invariant under the rotation centered at $0$) on $\DD^2$.
  \item There is a constant $C>1$ such that on $\mathring{B}_H(0,t_n)\setminus B_H(0,l_n)$, the (Gaussian) curvature of $e^{2v^*_n}h_0$ satisfies
    $$ -C/\epsilon\leq e^{-2v^*_n}(\Delta_{h_0}v^*_n-1)
    \leq 0~,$$
    for $n$ sufficiently large, where $\Delta_{h_0}v^*_n:=-tr(D^{h_0} dv^*_n)=-((1-|z|^2)^2/4)(\partial_{xx}+\partial_{yy})v^*_n$, and $D^{h_0}$ is the Levi-Civita connection of $h_0$.
\end{enumerate}
We will construct $v_n^*$ as a function on $|z|:=r\in[0,1)$ -- which is then extended to a rotationally invariant function on $\DD^2$ -- depending on 
the parameter $t_n\in (0,1)$ and will show that, 
if $n$ is sufficiently large, then $v_n^*$ satisfies conditions (i)--(iv). Note that $v_n^*$ satisfies condition (i) and thanks to the expression of the Laplace operator for radial functions, 
to show condition (iv), it suffices to show that there exists a constant $C>1$ such that
\begin{equation}
  \label{eq:polar}
  C-1=\frac {C}{\epsilon} e^{2c_0^*}-1\geq \frac {(1-r^2)^2}{4}((v_n^*)''(r) + \frac 1r(v_n^*)'(r)) \geq -1~,
\end{equation}
for $n$ sufficiently large (note that $e^{2c^*_0}=\epsilon$).

Given $s\in(0,1)$, we define $l,t\in [0,1]$ such that $l<s<t$ and that
\begin{equation}
  \label{eq:mean}
  \frac 2{1-s} = \frac 1{1-l}+\frac 1{1-t}~,
\end{equation}
\begin{equation}
  \label{eq:log}
  \frac{(1-l)(1-t)}{(1-s)^2} = \exp(2(c^*_1-c^*_0))~.
\end{equation}
Note that there is indeed a unique possible of $l,t$ satisfying those conditions.
This is more easily seen by setting $l'=1/(1-l), s'=1/(1-s), t'=1/(1-t)$, since the conditions then become
$$ 2s'=l'+t'~, ~~ l't'=s'^2\exp(-2(c^*_1-c^*_0))~, $$
and if we further set
$$ d=s'-l'=t'-s' $$
then $l't'=s'^2-d^2$ and the second equation can be written as
$$ s'^2-d^2= s'^2\exp(-2(c^*_1-c^*_0))~, $$
which uniquely determines $d$, and therefore $l',t'$ and $l,t$. More precisely,
\begin{equation*}
d=s'\sqrt{1-e^{-2(c^*_1-c^*_0)}},\quad l'=s'(1-\sqrt{1-e^{-2(c^*_1-c^*_0)}}), \quad t'=s'(1+\sqrt{1-e^{-2(c^*_1-c^*_0)}})~.
\end{equation*}

Moreover it is important to note that $l,t\to 1$ as $s\to 1$, since as $s\to 1$, 
$s'\to \infty$, $d\to \infty$ proportionally to $s'$, and therefore 
$l', t'\to \infty$ proportionally to $s'$, too.


We now define $v_n^*$ as follows. For each $t_n=\tanh(\tau_n/2)$, set $$s_n:=1-(1-t_n)(1+\sqrt{1-e^{-2(c^*_1-c^*_0)}})~,$$ $$l_n:=1-(1-s_n)(1-\sqrt{1-e^{-2(c^*_1-c^*_0)}})^{-1}
=1-e^{2(c^*_1-c^*_0)}(1-t_n)(1+\sqrt{1-e^{-2(c^*_1-c^*_0)}})^2~.$$
Then $l_n, s_n, t_n$ satisfies $0<l_n<s_n<t_n<1$, \eqref{eq:mean} and \eqref{eq:log}.
\begin{itemize}
\item For $r\in [0,l_n]$, $v_n^*(r)=c_0^*$~,
\item for $r\in (l_n,s_n)$, $v_n^*(r)=f_n(r)=(1/2)(\log((1-l_n)/(1-r)) -(r-l_n)/(1-l_n))+c^*_0$~,
\item for $r\in [s_n,t_n)$~, $v_n^*(r)=g_n(r)=(1/2)(\log((1-r)/(1-t_n))+(r-t_n)/(1-t_n))+c^*_1$~,
\item for $r\in [t_n,1)$, $v_n^*(r)=c^*_1$~.
\end{itemize}
It is easy to check that $f_n(l_n)=c^*_0$ and that $g_n(t_n)=c^*_1$. Moreover,
\begin{equation*}
\begin{split}
  g_n(s_n)-f_n(s_n)
  &= \frac{1}{2}(\log\frac{1-s_n}{1-t_n}+\frac{s_n-t_n}{1-t_n})+c^*_1 - \frac{1}{2}(\log\frac{1-l_n}{1-s_n}-\frac{s_n-l_n}{1-l_n})-c^*_0 \\
  &= \frac{1}{2}\left(\log\frac{(1-s_n)^2}{(1-t_n)(1-l_n)} + 1 - \frac{1-s_n}{1-t_n} + 1-\frac{1-s_n}{1-l_n}\right) + c^*_1-c^*_0 \\
  & = \frac{1}{2}\log\frac{(1-s_n)^2}{(1-t_n)(1-l_n)}+ c^*_1-c^*_0~,
\end{split}
\end{equation*}
where the last step used \eqref{eq:mean}. However \eqref{eq:log} then shows that the right-hand side is zero, so that $f_n(s_n)=g_n(s_n)$.

One can also check quite directly that
$$ f'_n(r) = \frac{1}{2}\left( \frac 1{1-r} - \frac 1{1-l_n}\right)~, $$
$$ g'_n(r) =  \frac{1}{2}\left( -\frac 1{1-r} + \frac 1{1-t_n}\right)~, $$
so that $f'_n(l_n)=g'_n(t_n)=0$.
Moreover, $f'_n(s_n)=g'_n(s_n)$ by \eqref{eq:mean}.

The second derivatives of $f_n$ and $g_n$ can also be computed quite directly.
$$ f''_n(r)=\frac 1{2(1-r)^2}~,~~ g''_n(r) = -\frac 1{2(1-r)^2}~. $$
It follows that
\begin{eqnarray*}
   \frac{(1-r^2)^2}4 \left(\frac{f'_n(r)}r+f''_n(r)\right) & = & \frac{(1+r)^2}8 \left(1+ \frac{1-r}r (1- \frac{1-r}{1-l_n})\right)~,
\end{eqnarray*}
for $r\in[l_n,s_n]$. Note that on $[l_n,s_n]$ the right-hand side is greater than $0$ and less than $(1-l_n)/l_n+1$ (which is uniformly bounded for sufficiently large $n$, since $l_n\rightarrow 1$). So the restriction of $v^*_n$ to $[l_n,s_n]$ satisfies \eqref{eq:polar}.

Similarly,
\begin{eqnarray*}
 \frac{(1-r^2)^2}4 \left(\frac{g'_n(r)}r+g''_n(r)\right) & = & \frac{(1+r)^2}8\left(- 1+\frac{(1-r)^2}r(\frac{1}{1-t_n}-\frac{1}{1-r}) \right)~,
\end{eqnarray*}
for $r\in [s_n,t_n]$.  Note that on $[s_n, t_n]$ the right-hand side is greater than $-1/2$ and less than the following:
$$\frac{1-s_n}{1-t_n}\cdot \frac{t_n-s_n}{s_n}~,$$
which is uniformly bounded for sufficiently large $n$, since by definition 
$1-t_n$ is proportional to $1-s_n$, and also $0<s_n<t_n<1$ with $s_n\rightarrow 1$. So the restriction of $v^*_n$ to $[s_n,t_n]$ satisfies \eqref{eq:polar}. Moreover, the restriction of $v^*_n$ to $[0,l_n]$ and $[t_n,1)$ clearly satisfies \eqref{eq:polar}.

With an additional smoothing by convolution, $v^*_n$ can be replaced by a smooth function while changing its first and second derivatives by at most a small constant. Since the convolution of two functions has the derivative property that $(f\ast g)^{(n)}=f^{(n)}\ast g$ for all $n\in\N$, one can choose an appropriate convolution such that the new obtained function (still denote by $v^*_n$) satisfies (i)-(iv). This concludes the proof of Step 1.

\textbf{Step 2}: Let $w^*_n:=\max\{u^*,v^*_n\}$. Then $w_n$ is equal to $u^*$ on $B(0,l_n)$, and equal to $c^*_1$ outside $B(0,t_n)$. So the metric $e^{2w^*_n}h_0$ has constant curvature $-1/e^{2c^*_1}$ outside $B(0,t_n)$. At each point $z\in B(0,t_n)$ where $u^*$ and $v^*_n$ take the same value, $w^*_n(z)<c^*_1$ and might not be differentiable. However since $w^*_n$ is defined as the maximum of two functions, the restriction of $w^*_n$ to any curve 
through a non-differentiable point (say $z$) of $w^*_n$ in the locus where $u^*=v^*_n$
has a non-negative jump in its first derivative 
at $z$. This implies that $(\partial_{xx}+\partial_{yy})w^*_n$ is bounded from below by a non-negative constant (in a distributional sense). We can therefore smooth $w^*_n$ (e.g. by convolution) in a neighborhood $U_z$ (which is contained in $B(0,t_n)$) of the locus where $u^*=v^*_n$, to obtain a new function (denoted by $\hat{w}^*_n$) such that $\hat{w}^*_n$ is smooth, strictly convex and
satisfies the following in $U_z$ (with value in $[c^*_0,c^*_1]$):
$$ C-1\geq \frac {(1-r^2)^2}{4}(\partial_{xx}+\partial_{yy})\hat{w}_n^* \geq -1~,$$
for a constant $C>1$.

Therefore, we obtain a smooth metric $e^{2\hat{w}^*_n}h_0$ on $\DD^2$ with curvature $-C/\epsilon\leq K_{e^{2\hat{w}^*_n}h_0}\leq 0$, which coincides with $h^*$ on $\mathring{B}(0,l_n)$ and has constant curvature $-1/e^{2c^*_1}$ outside $B(0,t_n)$.


\textbf{Step 3}: Now we construct $h^*_n$ in the following way: for any $\gamma_n\in\Gamma_n$, we push forward the metric $e^{2\hat{w}^*_n}h_{0}$ on $P_n$ to $\rho^*_n(\gamma_n)P_n$ by $\rho^*_n(\gamma_n)$. Note that the metric on each $\rho^*_n(\gamma_n)P_n$ extends to the boundary $\partial(\rho^*_n(\gamma_n)P_n)$ in the $C^{\infty}$-sense, 
since the metric $e^{2\hat{w}^*_n} h_0$ on $P_n$ is a scaling of $h_0$ by a constant at the points sufficiently close to $\partial P_n$ and we are translating the metric on $P_n$ to $\rho^*_n(\gamma_n)P_n$ by isometries of $h_0$.

By construction, $h^*_n$ is $\rho^*_n$-invariant. Moreover, for each compact subset $K'\subset\DD^2$, there is $N'=N'(K')>0$ sufficiently large, such that $K'\subset \mathring{B}(0,l_{N'})$, and thus $h^*_n=h^*$ on $K'$ for all $n\geq N'$. Therefore, $h^*_n$ uniformly converges to $h^*$ smoothly on compact subsets of $\DD^2$.
\end{proof}

\subsection{Fuchsian approximations of metrics with $K>-1$}
\begin{proposition}\label{prop:approx I}
Let $h$ be a smooth, complete, bounded
conformal metric of curvature  $K_h\in[-1+\epsilon,1/\epsilon]$ on $\DD^2$ and let $(\rho_n)_{n\in\N^+}$ be a sequence of Fuchsian representations of surface groups $\Gamma_n:=\pi_1(S_n)$ whose fundamental domain contains a closed hyperbolic disk $B_H(0,\tau_n)$ centered at $0$ with radius $\tau_n$, where $\tau_n$ strictly increases to $\infty$. Then there is a sequence of $\rho_n$-invariant conformal metrics, say $h_n$, on $\DD^2$, with curvature $K_{h_n}\in [-1+\epsilon',1/\epsilon']$ for some $\epsilon'>0$, which uniformly converges to $h$ smoothly on compact subsets of $\DD^2$.
\end{proposition}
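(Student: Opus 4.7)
The plan is to follow exactly the three-step strategy used in the proof of Proposition \ref{prop:approx III}, adapted to the new curvature range. First I would write $h = e^{2u}h_0$ and establish two-sided bounds $c_0 \leq u \leq c_1$: the upper bound $u \leq c_1 := \log\sqrt{C}$ comes directly from the hypothesis that $h$ is bounded, while the lower bound $u \geq c_0 := \log(1/\sqrt{1-\epsilon})$ comes from \cite[Theorem 2.8]{mateljevic} applied with $c = 1-\epsilon > 0$, using the assumption $K_h \geq -1+\epsilon$. Denote by $P_n$ the fundamental domain of $\rho_n$ that contains the closed hyperbolic ball $B_H(0,\tau_n)$.

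For \textbf{Step 1}, I would construct, for each $n$, a smooth rotationally symmetric function $v_n$ on $\DD^2$ that equals $c_0$ on a disk $B(0,l_n)$, equals $c_1$ outside $B(0,t_n)$ (with $t_n = \tanh(\tau_n/2)$ and $l_n \to 1$), and transitions monotonically in between. The construction would use the same piecewise recipe as for $v_n^*$, namely functions of the form $f_n$ on $[l_n,s_n]$ and $g_n$ on $[s_n,t_n]$ with matching values and derivatives at the joint, followed by a convolution smoothing. The novelty here, compared to Proposition \ref{prop:approx III}, is that the curvature $K_{e^{2v_n}h_0} = e^{-2v_n}(\Delta_{h_0}v_n - 1)$ must be verified to lie in a \emph{two-sided} range $[-1+\epsilon'', 1/\epsilon'']$ for some $\epsilon'' \in (0,\epsilon)$ independent of $n$. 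The upper bound comes from the strict convexity of $f_n$ (and the corresponding radial Laplacian estimate), while the lower bound requires that $g_n''$ not be too negative on $[s_n,t_n]$. By choosing the parameters with a suitable margin, these bounds can be made uniform in $n$.

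For \textbf{Step 2}, set $w_n := \max(u, v_n)$, which coincides with $u$ on $B(0,l_n)$ (where $v_n = c_0 \leq u$) and equals $c_1$ outside $B(0,t_n)$ (where $v_n = c_1 \geq u$). Smooth $w_n$ by convolution in a small neighborhood of the locus $\{u = v_n\}$ to obtain $\hat{w}_n$. For \textbf{Step 3}, define $h_n$ on $\DD^2$ by pushing the metric $e^{2\hat{w}_n}h_0$ on $P_n$ to each translate $\rho_n(\gamma_n)P_n$; since $\hat{w}_n = c_1$ near $\partial P_n$, this yields a smooth $\rho_n$-invariant metric. The convergence $h_n \to h$ smoothly on compact subsets follows because any compact $K' \subset \DD^2$ eventually lies in $B(0,l_N)$, where $h_n = h$ for all $n \geq N$.

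The \textbf{main obstacle} is the curvature control in Step 2. In Proposition \ref{prop:approx III}, the max operation automatically preserved the one-sided bound $K \leq 0$: the jump in the first derivative of $w^*_n$ at points of $\{u^* = v^*_n\}$ is non-negative, which forces $(\partial_{xx}+\partial_{yy})w^*_n \geq 0$ distributionally, and hence $\Delta_{h_0}\hat{w}_n^* - 1 \leq 0$ after smoothing. Here, however, the \emph{same} distributional contribution pushes $K$ \emph{downward}, which is dangerous for the lower bound $K \geq -1+\epsilon'$. The crux of the argument will therefore be to verify that at points where $u = v_n$, the jump $|\nabla u - \nabla v_n|$ is sufficiently small for the convolution to distribute the distributional Laplacian over an area large enough to ensure $e^{-2\hat{w}_n}(\Delta_{h_0}\hat{w}_n - 1) \geq -1+\epsilon'$. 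This can be arranged by choosing $\epsilon' < \epsilon$ with a margin, and by tuning the transition parameters of $v_n$ so that the $C^1$-mismatch with $u$ on the transition locus is uniformly controlled; the slack in the Mateljevic bound $u \geq c_0$ (strict inequality when $K_h > -1+\epsilon$) provides exactly the room needed.
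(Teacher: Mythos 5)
Your proposal genuinely departs from the paper's construction at the key step, and the departure introduces a gap that I don't think your sketch resolves.

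\textbf{The paper uses $\min$, not $\max$, and the opposite transition direction.} In the paper's Step 1, $v_n$ is equal to the \emph{upper} constant $c_1$ on $B(0,l_n)$ and decreases to the \emph{lower} constant $c_0$ outside $B(0,t_n)$; then $w_n:=\min\{u,v_n\}$. Your proposal inverts both choices: $v_n=c_0$ near the center, $v_n=c_1$ near $\partial P_n$, and $w_n:=\max\{u,v_n\}$. Both choices make $w_n$ equal $u$ on $B(0,l_n)$ and constant near $\partial P_n$, so the gluing in Step 3 works either way. The difference is entirely in the distributional curvature contribution at the transition locus $\{u=v_n\}$, and that difference is decisive.

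\textbf{Why $\max$ fails where $\min$ succeeds.} With the sign convention $\Delta_{h_0}w=-\frac{(1-|z|^2)^2}{4}(\partial_{xx}+\partial_{yy})w$ and $K_{e^{2w}h_0}=e^{-2w}(\Delta_{h_0}w-1)$, a concave kink (from $\min$) adds a \emph{positive} singular measure to $\Delta_{h_0}w$, hence pushes $K$ \emph{up}; a convex kink (from $\max$) pushes $K$ \emph{down}. In the present proposition the non-negotiable constraint is $K>-1$ --- that is the hypothesis of the Weyl problem and of the rigidity theory used throughout the paper --- while the upper bound $K\leq 1/\epsilon'$ is soft, since $\epsilon'$ can be taken as small as needed. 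The paper's $\min$ puts the kink on the benign side: after smoothing by convolution in $U_z$, the distributional upper bound on $(\partial_{xx}+\partial_{yy})w_n$ passes to $\hat{w}_n$, which gives the lower bound on $\Delta_{h_0}\hat{w}_n$, and hence the lower bound $K\geq -1+\epsilon/C$ for free; only the harmless upper bound needs the constant $C$. Your $\max$ reverses this: the kink attacks exactly the crucial lower bound $K\geq -1+\epsilon'$. You identify this yourself as ``the main obstacle'' but the proposed remedy --- that the jump $|\nabla u-\nabla v_n|$ on the transition locus is small enough that convolution restores $K\geq -1+\epsilon'$ --- is not available. Nothing controls $\nabla u$ on $\{u=v_n\}$ beyond the global bound $c_0\leq u\leq c_1$, and the ``slack'' in Mateljevi\'{c}'s estimate $u\geq c_0$ bounds $u$, not $\nabla u$; it gives no handle on the size of the gradient jump. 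As $n\to\infty$ the transition annulus moves toward $\partial\DD^2$, where $\nabla u$ can be large in the Euclidean coordinate, so there is no uniform smallness to exploit. The straightforward fix is to do what the paper does: flip $v_n$ so that it decreases from $c_1$ to $c_0$, use $\min$, and let the singular term aim at the flexible upper bound instead. Everything else in your plan (the two-sided bound on $u$, the piecewise-defined radial transition, the convolution, the equivariant extension, and the convergence argument) matches the paper.

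Incidentally, this also explains why Proposition~\ref{prop:approx III} does use $\max$: there the crucial bound is $K\leq 0$, and the convex kink from $\max$ pushes $K$ down, preserving $K\leq 0$ and sacrificing only the soft lower bound $-1/\epsilon'$. The operation is chosen in each case so that the singular term lands on the side that can be absorbed into $\epsilon'$.
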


\begin{proof}
Let $P_n$ denote the fundamental domain of $\rho_n$ that contains the closed hyperbolic disk $B_H(0,\tau_n)$. Let $u:\DD^2\rightarrow \R$ be a smooth function such that $h=e^{2u}h_0$. Note that $h$ is a complete, bounded (see Definition \ref{def:bounded conf}) conformal metric with curvature at least $-1+\epsilon$. Then there exist constants $c_1>c_0:=-\log\sqrt{1-\epsilon}>0$ such that $c_0\leq u\leq c_1$ (see \cite[Theorem 2.8]{mateljevic}).

We are going to define a real-valued function on $\DD^2$, called $v_n$, which is used to construct the desired metric $h_n$.
If $u$ is constant on $\DD^2$, we set $v_n:=u$. If $u$ is not constant on $\DD^2$, without loss of generality, we assume that $u$ is not constant on $B_H(0,\tau_1)$. We construct the desired $h_n$ using the following steps:

\textbf{Step 1}:
For each $n$, we construct a smooth function $v_n:\DD^2\rightarrow\R$ satisfying the following conditions:
\begin{enumerate}[(i)]
  \item $c_0<v_n\leq c_1$ on $\mathring{B}(0,t_n)$, where $\mathring{B}(0,t_n)$ denote the open euclidean disk of radius $t_n$ and $t_n=\tanh(\tau_n/2)$.
  \item $v_n$ is equal to $c_0$ outside $\mathring{B}(0,t_n)$, and equal to $c_1$ in $B(0,l_n)$, for a sequence $(l_n)_{n\in \N}$ (to be determined) such that $l_n<t_n$ and $l_n\to 1$ as $n\to \infty$.
  \item  $v_n$ is rotationally symmetric (i.e. $v_n$ is invariant under the rotation centered at $0$).
  \item On $\mathring{B}(0,t_n)\setminus B_H(0,l_n)$, there is a constant $C>1$ such that the (Gaussian) curvature of $e^{2v_n}h_0$ satisfies
      $$-1+(\epsilon/C)\leq K_{e^{2v_n}h_0}=e^{-2v_n}(\Delta_{h_0}v_n-1)\leq C/\epsilon,$$
      for $n$ sufficiently large. 
\end{enumerate}

Similarly as the construction in Proposition \ref{prop:approx III}, we will construct $v_n$ as a function on $|z|:=r\in[0,1)$ -- which is then extended to a rotationally invariant function on $\DD^2$ satisfying conditions (i)--(iv). Note that $v_n$ satisfies condition (i) with $v_n\geq c_0>0$ and thanks to the expression of the Laplace operator for radial functions, to show condition (iv), it suffices to show that there exists a constant $C>1$ (to be determined later) such that
\begin{equation}
  \label{eq:polar-hyp}
   e^{2c_0}(1-\epsilon/C)-1\geq \frac {(1-r^2)^2}{4}(v_n''(r) + \frac 1r v'_n(r)) \geq -(C/\epsilon)e^{2c_0}-1~,
\end{equation}
for $n$ sufficiently large.
Denote $a:=\epsilon/(1-\epsilon)$. Replacing $c^*_0, c^*_1$ by 
$c_0/a, c_1/a$
in \eqref{eq:log} and using the corresponding $l_n, s_n, t_n$ as defined in the proof of Proposition \ref{prop:approx III} (with $c^*_0, c^*_1$ replaced by $c_0/a, c_1/a$), we construct $v_n$ below:
\begin{itemize}
\item For $r\in [0,l_n]$, $v_n(r)=c_1$~,
\item for $r\in (l_n,s_n)$, 
    $v_n(r)=f_n(r)=(a/2)(\log((1-r)/(1-l_n))+(r-l_n)/(1-l_n))+c_1$~,
\item for $r\in [s_n,t_n)$~, 
    $v_n(r)=g_n(r)=(a/2)(\log((1-t_n)/(1-r)) -(r-t_n)/(1-t_n))+c_0$~,
\item for $r\in [t_n,1)$, $v_n(r)=c_0$~.
\end{itemize}
One can check by a direct computation that $f_n(l_n)=c_1$, $g_n(t_n)=c_0$ and $f_n(s_n)=g_n(s_n)$. Furthermore,
$$ 
f'_n(r) = \frac{a}{2}\left( -\frac 1{1-r} + \frac {1}{1-l_n}\right)~,\quad g'_n(r) = \frac{a}{2}\left(\frac {1}{1-r}-\frac 1{1-t_n}\right)~. $$
Then $f'_n(l_n)=g'_n(t_n)=0$  and $f'_n(s_n)=g'_n(s_n)$. Moreover,
$$ 
f''_n(r)=-\frac a{2(1-r)^2}~,\quad g''_n(r) = \frac a{2(1-r)^2}~. $$
Recall that $a=\epsilon/(1-\epsilon)$. It follows that
\begin{eqnarray*}
\frac{(1-r^2)^2}4 \left(\frac{f'_n(r)}r+f''_n(r)\right) & = &  
\frac{\epsilon(1+r)^2}{8(1-\epsilon)}\left(-1+\frac{(1-r)^2}r\left(\frac{1}{1-l_n}-\frac{1}{1-r}\right) \right)~,
\end{eqnarray*}
for $r\in[l_n,s_n]$. Note that on $[l_n,s_n]$ the right-hand side is less than $0$ and greater than 
the following:
$$
-\frac{\epsilon}{1-\epsilon}(\frac{1}{2}+\frac{1-l_n}{1-s_n}\cdot\frac{s_n-l_n}{2l_n})~,$$
which is uniformly bounded for sufficiently large $n$, using the facts that $1-l_n$ is proportional to $1-s_n$ (by definition), $0<l_n<s_n<1$ and $l_n\rightarrow 1$. In particular, there exists a constant $C_0>0$, such that $$-\frac{\epsilon}{1-\epsilon}(\frac{1}{2}+\frac{1-l_n}{1-s_n}\cdot\frac{s_n-l_n}{2l_n})
\geq -\frac{\epsilon}{1-\epsilon}(\frac{1}{2}+C_0)~.$$
On the other hand,
\begin{eqnarray*}
   \frac{(1-r^2)^2}4 \left(\frac{g'_n(r)}r+g''_n(r)\right) & = & \frac{\epsilon(1+r)^2}{8(1-\epsilon)} \left(1+ \frac{1-r}r \left(1- \frac{1-r}{1-t_n}\right)\right)~,
\end{eqnarray*}
for $r\in[s_n,t_n]$.
Note that on $[s_n,t_n]$ the right-hand side is less than 
$\frac{\epsilon}{2(1-\epsilon)}$ and greater than the following:
$$-\frac{\epsilon}{1-\epsilon}(\frac{1-s_n}{1-t_n}\cdot \frac{t_n-s_n}{2s_n}-\frac{1}{8})~,$$
which is uniformly bounded for sufficiently large $n$, using the facts that $1-t_n$ is proportional to $1-s_n$ (by definition), $0<s_n<t_n<1$ and $s_n\rightarrow 1$. In particular, there exists a constant $C_1>0$, such that
$$-\frac{\epsilon}{1-\epsilon}(\frac{1-s_n}{1-t_n}\cdot \frac{t_n-s_n}{2s_n}-\frac{1}{8})\geq-\frac{\epsilon}{1-\epsilon}(C_1-\frac{1}{8}) ~.$$
Note that $e^{2c_0}=1/(1-\epsilon)$ and $\epsilon\in(0,1)$. Take $C=\max\{C_0+2,C_1+2\}$, we have
\begin{equation*}
-\frac{\epsilon}{1-\epsilon}\max\{\frac{1}{2}+C_0, C_1-\frac{1}{8}\}>\frac{-(C+\epsilon-\epsilon^2)}{\epsilon(1-\epsilon)}=-(C/\epsilon)e^{2c_0}-1~,
\end{equation*}
\begin{equation*}
0<\frac{\epsilon}{2(1-\epsilon)}<\frac{\epsilon}{1-\epsilon}\cdot\frac{C-1}{C}=e^{2c_0}(1-\epsilon/C)-1~.
\end{equation*}
So the restriction of $v_n$ to $[l_n,t_n]$ satisfies \eqref{eq:polar-hyp}.

As discussed in Proposition \ref{prop:approx III}, one can apply to $v_n$ an appropriate convolution such that the new function (still denote by $v_n$) satisfies (i)-(iv). This concludes the proof of Step 1.

\textbf{Step 2}: Let $w_n:=\min\{u,v_n\}$. Then $w_n$ is equal to $u$ on $B(0,l_n)$, and equal to $c_0$ outside $B(0,t_n)$. So the metric $e^{2w_n}h_0$ has constant curvature $-1+\epsilon$ outside $B(0,t_n)$. At each point $z\in B(0,t_n)$ where $u$ and $v_n$ take the same value, $w_n(z)>c_0$ but might not be differentiable there. However since $w_n$ is defined as the minimum of two functions, the restriction of $w_n$ to any curve through a non-differentiable point (say $z$) of $w_n$ in the locus where $u=v_n$ has a non-positive jump in its first derivative at $z$. This implies that $(\partial_{xx}+\partial_{yy})w_n$ is bounded from above by a non-positive constant (in a distributional sense). We can therefore smooth $w_n$ (e.g. by convolution) in 
a neighborhood $U_z$ (which is contained in $B(0,t_n)$) of the locus where $u=v_n$, to obtain a new obtained function denoted by $\hat{w}_n$) such that $\hat{w}_n$ is smooth, strictly concave and satisfies the following in $U_z$ (with value in $[c_0,c_1]$):
$$ e^{2c_0}(1-\epsilon/C)-1\geq \frac {(1-r^2)^2}{4}(\partial_{xx}+\partial_{yy})\hat{w}_n \geq -(C/\epsilon)e^{2c_0}-1~,$$
for a constant $C>1$.

Therefore, we obtain a smooth metric $e^{2\hat{w}_n}h_0$ on $\DD^2$ with curvature $-1+(\epsilon/C)\leq K_{e^{2\hat{w}_n}h_0}\leq  C/\epsilon$, which coincides with $h$ on $\mathring{B}(0,l_n)$ and has constant curvature $-1+\epsilon$ outside $B(0,t_n)$.

\textbf{Step 3}: Now we construct $h_n$ in the following way: for any $\gamma_n\in\Gamma_n$, we push forward the metric $e^{2\hat{w}_n}h_{0}$ on $P_n$ to $\rho_n(\gamma_n)P_n$ by $\rho_n(\gamma_n)$. Note that the metric on each $\rho_n(\gamma_n)P_n$ extends to the boundary $\partial(\rho_n(\gamma_n)P_n)$ in the $C^{\infty}$-sense, since the metric $e^{2\hat{w}_n} h_0$ on $P_n$ is a scaling of $h_0$ by a constant at the points sufficiently close to $\partial P_n$ and we are translating the metric on $P_n$ to $\rho_n(\gamma_n)P_n$ by isometries of $h_0$.

By construction, $h_n$ is $\rho_n$-invariant. Moreover, for each compact subset $K\subset\DD^2$, there is $N=N(K)>0$ sufficiently large, such that $ K\subset\mathring{B}(0,l_N)$, and thus $h_n=h$ on $K$ for all $n\geq N$. Therefore, $h_n$ uniformly converges to $h$ smoothly on compact subsets of $\DD^2$.
\end{proof}

We remark here that the sequence of metrics $(h^*_n)_{n\in\N^+}$ (resp. $(h_n)_{n\in\N^+}$) constructed in Proposition \ref{prop:approx III} (resp. Proposition \ref{prop:approx I}) to approximate the prescribed bounded conformal metric $h^*$ (resp. $h$) are {\em uniformly bounded} conformal metrics, in the sense that $h^*_n\leq e^{2c^*_1}h_0$ (resp. $h_n\leq e^{2c_1}h_0$) for all $n\in\N^+$, where $c^*_1>0$ (resp. $c_1>0$) depends only on $h^*$ (resp. $h$).

\subsection{Proof of Theorem \ref{tm:main-iv}}\label{subsec: proof-Thm 1.2}

Let $D_-, D_+\subset\CP^1$ be disjoint quasi-disks with $\partial D_-=\partial D_+$, denoted by $C$ for simplicity, which is oriented such that $D_+$ and $D_-$ lie on the left and right sides of $C$. By definition, $C$ is a quasi-circle. Let $\epsilon\in(0,1)$ and let $h_-$ (resp. $h_+$) be a smooth, complete, bounded conformal metric of curvature varying in $[-1+\epsilon,1/\epsilon]$ on $D_-$ (resp. $D_+$), and $h^*_-$ (resp. $h^*_+$) be a smooth, complete, bounded conformal metric of curvature varying in $[-1/\epsilon,0]$ on $D_-$ (resp. $D_+$).

\subsubsection{Equivalent statements of Theorem \ref{tm:main-iv}}

\begin{proposition}\label{prop:equiv}
Let $\Phi_C$ be the classical conformal welding of $C$. The statements (i)-(v) in Theorem \ref{tm:main-iv} are respectively equivalent to the following:
\begin{enumerate}[(I)]
  \item There is a convex domain $\Omega\subset\HH^3$ of first type with the induced metric on $\partial\Omega$ isometric to $h_-$, and the $I$-gluing map $\phi^I_{\Omega}$ being $\Phi_C$.

  \item There is a convex domain $\Omega\subset\HH^3$ of first type with the third fundamental form on $\partial\Omega$ isometric to $h^*_-$,  and the $\III$-gluing map $\phi^{\III}_{\Omega}$ being $\Phi_C$.

  \item There is a convex domain $\Omega\subset\HH^3$ of second type with the induced metric on $\partial_{\pm}\Omega$ isometric to $h_{\pm}$, and the $I$-gluing map $\phi^{I}_{\Omega}$ being $\Phi_C$.

\item There is a convex domain $\Omega\subset\HH^3$ of second type with the third fundamental form on $\partial_{\pm}\Omega$ isometric to $h^*_{\pm}$, and the $\III$-gluing map $\phi^{\III}_{\Omega}$ being $\Phi_C$.
 \item There is a convex domain $\Omega\subset\HH^3$ of second type with the induced metric (resp. third fundamental form) on $\partial_{-}\Omega$ (resp. $\partial_{+}\Omega$) isometric to $h_-$ (resp. $h^*_+$), and the $(\III,I)$-mixed gluing map $\phi^{(\III,I)}_{\Omega}$ being $\Phi_C$.

\end{enumerate}
\end{proposition}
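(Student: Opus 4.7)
The plan is to establish each equivalence (i)$\Leftrightarrow$(I) through (v)$\Leftrightarrow$(V) by showing that the continuous map $\phi$ of Theorem \ref{tm:main-iv} packages exactly the same data as the pair consisting of the intrinsic Riemannian boundary data together with the normalized gluing map $\Phi^\bullet_\Omega$. The translation between the two points of view will be mediated by uniformizing Riemann maps $\psi_\pm:\HH^{2\pm}\to D_\pm$ of the quasi-disks, and the identity $\Phi^\bullet_\Omega=\Phi_C$ is precisely the compatibility condition ensuring that two piecewise definitions of $\phi$ match continuously across $C$.

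I would first fix Riemann maps $\psi_\pm$, which extend to homeomorphisms of the closures by Carath\'eodory's theorem (applicable since $C$ is a Jordan curve), normalized so that $(\partial\psi_-)^{-1}\circ\partial\psi_+=\Phi_C$. Since each of $h_\pm$ and $h^*_\pm$ is a conformal metric on $D_\pm\subset\CP^1$, its pullback by $\psi_\pm$ is a conformal metric on $\HH^{2\pm}$, so the admissibility conditions in Definitions \ref{def:I-III gluing maps of first type} and \ref{def:I-III gluing maps of second type} will be satisfied automatically.

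For the forward direction (i)$\Rightarrow$(I), given $\phi$ as in Theorem \ref{tm:main-iv}(i), I would set $\varphi^c_\Omega:=\phi\circ\psi_+$ and $\varphi^I_\Omega:=\phi\circ\psi_-$. The first is biholomorphic by Lemma \ref{lm:conf-bihol}; the second satisfies $(\varphi^I_\Omega)^*I=\psi_-^*h_-$, which is conformal to $h_0^-$. Then
$$\Phi^I_\Omega=(\partial\varphi^I_\Omega)^{-1}\circ\partial\varphi^c_\Omega=(\partial\psi_-)^{-1}\circ(\partial\phi)^{-1}\circ(\partial\phi)\circ\partial\psi_+=\Phi_C~,$$
and $\phi|_{D_-}$ itself realizes the isometry $(D_-,h_-)\cong(\partial\Omega,I)$. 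For the converse (I)$\Rightarrow$(i), I would pick an isometry $F:(D_-,h_-)\to(\partial\Omega,I)$ (which exists by hypothesis and is automatically conformal) and a conformal $G:D_+\to\partial_\infty\Omega$, set $\varphi^I_\Omega:=F\circ\psi_-$ and $\varphi^c_\Omega:=G\circ\psi_+$, and reverse the above computation: the hypothesis $\Phi^I_\Omega=\Phi_C$ forces $\partial F=\partial G$ on $C$ (after using the freedom to adjust $F$ within its $\mathrm{Isom}(h_-)$-orbit and $G$ within its $\mathrm{Conf}(D_+)$-orbit to achieve the normalization). Defining $\phi:=F$ on $D_-$ and $\phi:=G$ on $D_+$, with the common boundary value on $C$, yields the required continuous map with $\phi^*I=h_-$.

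The remaining four equivalences follow the same template, replacing $I$ by $\III$ where appropriate (and invoking Proposition \ref{prop:dual} to confirm that $\III$ is indeed a conformal metric on $\partial\Omega$), or using both $\psi_+$ and $\psi_-$ for the two-sided convex domains in (iii), (iv), and (v). The main obstacle is the normalization bookkeeping in the converse direction: one must simultaneously arrange $F$ (or its $\III$-counterpart) to be genuinely isometric and ensure that the resulting pair is normalized, so that the formal identity of gluing maps translates into the pointwise matching $\partial F=\partial G$ on $C$. This is purely formal but requires careful tracking of the $\PSL_2(\R)$-freedoms on both sides and of the common left $\PSL_2(\C)$ ambiguity isolated in Definitions \ref{def:I-III gluing maps of first type} and \ref{def:I-III gluing maps of second type}.
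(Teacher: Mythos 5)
Your proposal is essentially correct and follows the same overall strategy as the paper: both directions pivot on the observation that the continuous map $\phi$ of Theorem~\ref{tm:main-iv} packages precisely the same data as the pair consisting of the isometry type of the induced metric or third fundamental form together with the normalized gluing map, mediated by the uniformizing maps $\psi_\pm=\varphi^\pm_C$ and by Lemma~\ref{lm:conf-bihol}. Your forward direction coincides with the paper's argument.

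In the converse you diverge in a useful way. The paper fixes the normalized pair $(\varphi^c_\Omega,\varphi^I_\Omega)$, sets $\phi:=\varphi^I_\Omega\circ\psi_-^{-1}$ on $D_-$, and then must argue that this specific conformal map is in fact an isometry; its justification (``So this isometry is an identity'') is asserted rather than proved, and the correct underlying reason is that the discrepancy between $\varphi^I_\Omega\circ\psi_-^{-1}$ and an actual isometry $F$ is conjugate to an element of the stabilizer of $\Omega$ in $\PSL_2(\C)$, which preserves $I$. Your route --- declare $\phi:=F$ on $D_-$ so that $\phi^*I=h_-$ is automatic, then hunt for a compatible conformal $G$ on $D_+$ --- sidesteps this entirely, shifting the burden to the boundary matching $\partial F=\partial G$. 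That check does go through: choose $G$ so that the pair $(G\circ\psi_+,F\circ\psi_-)$ is normalized, which is a three-point condition solvable using the $\mathrm{Conf}(D_+)$-freedom in $G$ alone (you do not actually need the $\mathrm{Isom}(h_-)$-orbit of $F$, which is fortunate since that orbit is generically a single point); then $\Phi^I_\Omega=\Phi_C$ forces $(\partial F)^{-1}\circ\partial G=\mathrm{id}$ on $C$. So the ``normalization bookkeeping'' you flag at the end is indeed the real crux, but it resolves exactly as you expect, and if anything your version isolates the relevant $\PSL_2(\R)$-degree of freedom more cleanly than the paper does.
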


\begin{proof}
We discuss the following two cases, according to the types of convex domains.

\textbf{Case 1}: For the case of convex domains of first type (including the equivalence for Statements (I)-(II)), we just need to show the equivalence between Statements (i) and (I), since the equivalence between Statements (ii) and (II) will follow in the same way.

We first show that Statement (i) implies Statement (I). Let $\Omega$ be the convex subset appearing in Statement (i). Since $\phi^*I=h_-$ on $D_-$, the induced metric on $\partial\Omega$ is isometric to $h_-$. Recall that $\Phi_C=(\partial\varphi^-_C)^{-1}\circ (\partial\varphi^+_C)$, where $\varphi^{\pm}_C:\HH^{2\pm}\rightarrow D_{\pm}$ is the biholomorphic map (that takes $0,1,\infty$ to the same three points of $C$). Note that $\varphi^{\pm}_C$ is well-defined up to left composition with a common M\"obius transformation.

Set $\varphi^+:=\phi\circ \varphi^+_C:\HH^{2+}\rightarrow \partial_{\infty}\Omega$ and $\varphi^-:=\phi\circ\varphi^-_C:\HH^{2-}\rightarrow \partial\Omega$. By assumption, $\phi$ is bi-holomorphic from $D_+$ to $\partial_{\infty}\Omega$, then $\varphi^+$ is bi-holomorphic. Since $\phi^*I=h_-$ on $D_-$, we have
$$(\varphi^-)^*(I)=(\phi\circ \varphi^-_C)^*(I)=(\varphi^-_C)^*(\phi^*I)=(\varphi^-_C)^*(h_-),$$
which is conformal to the hyperbolic metric $h^-_0$ (this follows from Lemma \ref{lm:conf-bihol} and the fact that $\varphi^-_C: \HH^{2-}\rightarrow D_-$ is bi-holomorphic and $h_-$ is a conformal metric on $D_-$). Note that
$$(\partial\varphi^-)^{-1}\circ\partial\varphi^+
=(\phi\circ\partial\varphi^-_C)^{-1}\circ(\phi\circ \partial\varphi^+_C)=(\partial\varphi^-_C)^{-1}\circ (\partial\varphi^+_C)=\Phi_C,$$
which is normalized. So $(\varphi^+,\varphi^-)$ is a representative pair for the $I$-gluing map $\Phi^I_{\Omega}$ of $\Omega$. Then there is an element $f_1\in\PSL_2(\C)$ such that $\varphi^c_{\Omega}=f_1\circ\varphi^+$ and $\varphi^I_{\Omega}=f_1\circ\varphi^-$.
Hence,
$$\Phi^I_{\Omega}
=(\partial\varphi^I_{\Omega})^{-1}\circ\partial\varphi^c_{\Omega}
=(f_1\circ\partial\varphi^-)^{-1}\circ(f_1\circ\partial\varphi^+)
=\Phi_{C}.$$
Therefore, $\Omega$ is the convex domain in Statement (I).

We now show that Statement (I) implies Statement (i).  Let $\Omega$ be the convex subset given in Statement (I). Recall that $\Phi^I_{\Omega}=(\partial\varphi^I_{\Omega})^{-1}\circ\partial\varphi^c_{\Omega}$ and $\Phi_C=(\partial\varphi^-_C)^{-1}\circ (\partial\varphi^+_C)$.
By Statement (I), $\Phi^I_{\Omega}=\Phi_C$, then
\begin{equation}\label{eq:condition-(I)}
(\partial\varphi^I_\Omega)\circ(\partial\varphi^-_C)^{-1}=\partial\varphi^c_{\Omega}\circ(\partial\varphi^+_C)^{-1}.
\end{equation}
We claim that $\Omega$ is a desired convex domain for Statement (i). Set $\phi:=\varphi^c_{\Omega}\circ(\varphi_C^+)^{-1}$ on $D_+$ and $\phi:=\varphi^I_{\Omega}\circ(\varphi^-_C)^{-1}$ on $D_-$. By \eqref{eq:condition-(I)}, $\phi$ is continuous on $\CP^1=D_+\cup C\cup D_-$. Moreover, $\phi$ is bi-holomorphic from $D_+$ to $\partial_{\infty}\Omega$. It remains to show that $\phi^*I=h_-$ on $D_-$. Note that
\begin{equation}\label{eq:pull-back}
\phi^*I=(\varphi^I_{\Omega}\circ(\varphi^-_C)^{-1})^*I
=((\varphi^-_C)^{-1})^*((\varphi^I_{\Omega})^*I).
\end{equation}
By the definition of $\varphi^I_{\Omega}: \HH^{2-}\rightarrow \partial\Omega$, it follows that $(\varphi^I_{\Omega})^*I$ is conformal to $h^-_0$, namely, $$(\varphi^I_{\Omega})^*I=e^{2u_I}h^-_0,$$
where $u_I:\HH^{2-}\rightarrow \R$ is a smooth function. By assumption, $h_-$ is a conformal metric on $D_-$, and $\varphi^-_C:\HH^{2-}\rightarrow D_-$ is bi-holomorphic. Then by Lemma \ref{lm:conf-bihol},
$$(\varphi^-_C)^*h_-=e^{2u_{h_-}}h^-_0,$$
where $u_{h_-}:\HH^{2-}\rightarrow \R$ is a smooth function. Since $I$ and $h_-$ are isometric by Statement (I), it then follows from Lemma \ref{lm:conf-bihol} that the isometry between $(\HH^{2-}, e^{2u_I}h^-_0)$ and $(\HH^{2-}, e^{2u_{h_-}}h^-_0)$ is a bi-holomorphic map on $\HH^{2-}$ (also a hyperbolic isometry). So this isometry is an identity and therefore $u_I=u_{h_-}$.

Combined with \eqref{eq:pull-back}, we obtain that
$$\phi^*I
=((\varphi^-_C)^{-1})^*(e^{2u_I}h^-_0)
=((\varphi^-_C)^{-1})^*(e^{2u_{h_-}}h^-_0)
=((\varphi^-_C)^{-1})^*((\varphi^-_C)^*h_-)=h_-.$$


\textbf{Case 2}:
For the case of convex domains of second type (including the equivalence for Statements (III)-(V)), we just need to show one of them, since the others will follow in the same philosophy.
We show the equivalence between Statements (iii) and (III) for instance.

We first show that Statement (iii) implies Statement (III). We can apply analogous analysis as Case 1. Let $\Omega$ be the convex subset appearing in Statement (iii). Set $\varphi^+:=\phi\circ \varphi^+_C:\HH^{2+}\rightarrow \partial_+\Omega$ and $\varphi^-:=\phi\circ\varphi^-_C:\HH^{2-}\rightarrow \partial_-\Omega$.  Let $I_{\pm}$ denote the induced metric on $\partial_{\pm}\Omega$. By Statement (iii), $\phi^*I_{\pm}=h_{\pm}$, so the induced metric on $\partial_{\pm}\Omega$ is isometric to $h_{\pm}$. Then $$(\varphi^{\pm})^*(I_{\pm})=(\phi\circ \varphi^{\pm}_C)^*(I_{\pm})=(\varphi^{\pm}_C)^*(\phi^*I_{\pm})=(\varphi^{\pm}_C)^*(h_{\pm}),$$
which is conformal to the hyperbolic metric $h^{\pm}_0$ (using Lemma \ref{lm:conf-bihol} again). Note that
$$(\partial\varphi^-)^{-1}\circ\partial\varphi^+
=(\phi\circ\partial\varphi^-_C)^{-1}\circ(\phi\circ \partial\varphi^+_C)=(\partial\varphi^-_C)^{-1}\circ (\partial\varphi^+_C)=\Phi_C,$$
which is normalized. So $(\varphi^+,\varphi^-)$ is a representative pair for the $I$-gluing map $\Phi^I_{\Omega}$ of $\Omega$. Then there is an element $f_3\in\PSL_2(\C)$ such that $\varphi^{I_+}_{\Omega}=f_3\circ\varphi^+$ and $\varphi^{I_-}_{\Omega}=f_3\circ\varphi^-$. Hence,
$$\Phi^I_{\Omega}
=(\partial\varphi^{I_-}_{\Omega})^{-1}\circ\partial\varphi^{I_+}_{\Omega}
=(f_3\circ\partial\varphi^-)^{-1}\circ(f_3\circ\partial\varphi^+)
=\Phi_{C}.$$
Therefore, $\Omega$ is the desired convex domain in Statement (III).

We now show that Statement (III) implies Statement (iii).  Let $\Omega$ be the convex subset given in Statement (III). Recall that $\Phi^I_{\Omega}=(\partial\varphi^{I_-}_{\Omega})^{-1}\circ\partial\varphi^{I_+}_{\Omega}$ and $\Phi_C=(\partial\varphi^-_C)^{-1}\circ (\partial\varphi^+_C)$.
By Statement (III), $\Phi^I_{\Omega}=\Phi_C$,
then
\begin{equation}\label{eq:condition-(III)}
(\partial\varphi^{I_-}_\Omega)\circ(\partial\varphi^-_C)^{-1}
=\partial\varphi^{I_+}_{\Omega}\circ(\partial\varphi^+_C)^{-1}.
\end{equation}
We claim that $\Omega$ is a desired convex domain for Statement (iii). Set $\phi:=\varphi^{I_+}_{\Omega}\circ(\varphi_C^+)^{-1}$ on $D_+$ and $\phi:=\varphi^{I_-}_{\Omega}\circ(\varphi^-_C)^{-1}$ on $D_-$. By \eqref{eq:condition-(III)}, $\phi$ is continuous on $\CP^1=D_+\cup C\cup D_-$. It remains to show that $\phi^*I_{\pm}=h_{\pm}$. Note that
\begin{equation}\label{eq:pull-back-III}
(\varphi^{I_{\pm}}_{\Omega}\circ(\varphi^{\pm}_C)^{-1})^*I_{\pm}
=((\varphi^{\pm}_C)^{-1})^*((\varphi^{I_{\pm}}_{\Omega})^*I_{\pm}).
\end{equation}
By the definition of $\varphi^{I_{\pm}}_{\Omega}$, it follows that $(\varphi^{I_{\pm}}_{\Omega})^*I_{\pm}$ is conformal to $h^{\pm}_0$, namely, $$(\varphi^{I_{\pm}}_{\Omega})^*I_{\pm}=e^{2u_{I_{\pm}}}h^{\pm}_0,$$
where $u_{I_{\pm}}:\HH^{2\pm}\rightarrow \R$ is a smooth function. By assumption, $h_{\pm}$ is a conformal metric on $D_{\pm}$, and $\varphi^{\pm}_C:\HH^{2\pm}\rightarrow D_{\pm}$ is bi-holomorphic. Then by Lemma \ref{lm:conf-bihol},
$$(\varphi^{\pm}_C)^*h_{\pm}=e^{2u_{h_{\pm}}}h^{\pm}_0,$$
where $u_{h_{\pm}}:\HH^{2\pm}\rightarrow \R$ is a smooth function. Since $I_{\pm}$ and $h_{\pm}$ are isometric by Statement (III), applying Lemma \ref{lm:conf-bihol} again as above, it follows that $u_{I_{\pm}}=u_{h_{\pm}}$. Combining \eqref{eq:pull-back-III}, we have
$$\phi^*I_{\pm}
=((\varphi^{\pm}_C)^{-1})^*(e^{2u_{I_{\pm}}}h^{\pm}_0)
=((\varphi^{\pm}_C)^{-1})^*(e^{2u_{h_{\pm}}}h^{\pm}_0)
=((\varphi^{\pm}_C)^{-1})^*((\varphi^{\pm}_C)^*h_{\pm})=h_{_{\pm}}.$$

\end{proof}

\subsubsection{Convex domains in quasifuchsian case}

\begin{proposition}\label{prop:quasifuchsian realization}
Let $\rho_-, \rho_+:\pi_1(S)\rightarrow \PSL_2(\R)$ be two Fuchsian groups and let $f\in\cT$ be a quasifuchsian  quasisymmetric homeomorphism which is $(\rho_+,\rho_-)$-equivariant. Let $h_{\pm}$ and $h^*_{\pm}$ be the metrics on $D_{\pm}$ given above. Assume that $h_-, h^*_-$ (resp. $h_+, h^*_+$ ) are $\rho_-$-invariant (resp. $\rho_+$-invariant). Then
\begin{enumerate}
  \item There exists a convex domain $\Omega\subset\HH^3$ of first type, invariant under a quasifuchsian representation from $\pi_1(S)$ to $\PSL_2(\C)$, with the induced metric on $\partial\Omega$ isometric to $h_-$ and the $I$-gluing map of $\Omega$ being $f$.
  \item There exists a convex domain $\Omega\subset\HH^3$ of first type, invariant under a quasifuchsian representation from $\pi_1(S)$ to $\PSL_2(\C)$, with the third fundamental form on $\partial\Omega$ isometric to $h^*_-$ and the $\III$-gluing map of $\Omega$ being $f$.
  \item There exists a convex domain $\Omega\subset\HH^3$ of second type, invariant under a quasifuchsian representation from $\pi_1(S)$ to $\PSL_2(\C)$, with the induced metric on $\partial_{\pm}\Omega$ isometric to $h_{\pm}$ and the $I$-gluing map of $\Omega$ being $f$.
  \item There exists a convex domain $\Omega\subset\HH^3$ of second type, invariant under a quasifuchsian representation from $\pi_1(S)$ to $\PSL_2(\C)$, with the third fundamental form on $\partial_{\pm}\Omega$ isometric to $h^*_{\pm}$ and the $\III$-gluing map of $\Omega$ being $f$.
   \item There exists a convex domain $\Omega\subset\HH^3$ of second type, invariant under a quasifuchsian representation from $\pi_1(S)$ to $\PSL_2(\C)$, with the induced metric (resp. third fundamental form) on $\partial_-\Omega$ (resp. $\partial_+\Omega$) isometric to $h_{-}$ (resp. $h^*_+$) and the $(\III,I)$-mixed gluing map of $\Omega$ being $f$.
\end{enumerate}

\end{proposition}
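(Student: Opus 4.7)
The plan is to reduce each of the five assertions to the existence results for convex co-compact manifolds established in Section \ref{sec:convex-co compact}, and then to identify the gluing map of the resulting $\Omega$ with the prescribed $f$ by an equivariance plus simultaneous uniformization argument.

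Concretely, consider case (1). First I would descend $h_-$ via its $\rho_-$-invariance to a smooth Riemannian metric $\bar h_-$ on $S = \HH^{2-}/\rho_-$, of curvature in $[-1+\epsilon,1/\epsilon]$, and set $c_+ \in \cT_S$ to be the marked conformal structure on $S \cong \HH^{2+}/\rho_+$. Applying Corollary \ref{cr:qf-I} to the data $(\bar h_-, c_+)$ yields a unique (up to isotopy) hyperbolic manifold $N \cong S \times [0, +\infty)$ whose convex boundary has induced metric isotopic to $\bar h_-$ and whose conformal structure at infinity is isotopic to $c_+$; moreover $N$ embeds isometrically in a quasifuchsian manifold with holonomy $\rho : \pi_1(S) \to \PSL_2(\C)$, and lifting to $\HH^3$ produces a $\rho$-invariant convex domain $\Omega$ of the first type whose convex boundary is a lift of that of $N$. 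For case (2) I would replace Corollary \ref{cr:qf-I} by Corollary \ref{cr:qf-III} and use $(\bar h^*_-, c_+)$ instead. For cases (3), (4), (5), the prescribed metrics descend to metrics on $S$ via the $\rho_\pm$-invariances, and I would apply Theorem \ref{tm:main-cc} specialized to the quasifuchsian case ($M = S \times [0,1]$, $n=2$) with partitions $\cJ = \{1,2\}$ and $\cI = \cK = \emptyset$ for (3); $\cK = \{1,2\}$ and $\cI = \cJ = \emptyset$ for (4); $\cJ = \{1\}$, $\cK = \{2\}$ and $\cI = \emptyset$ for (5). Each application produces a quasifuchsian manifold whose convex subset realizes the prescribed boundary data, which lifts to a convex domain $\Omega \subset \HH^3$ of the second type.

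It then remains to check that the relevant gluing map of $\Omega$ equals $f$. By Examples \ref{ex:gluing map I}--\ref{ex:gluing map II}, this gluing map is a normalized quasifuchsian quasisymmetric homeomorphism equivariant with respect to the pair of Fuchsian uniformizations of the two relevant conformal structures (whether originating from a conformal structure at infinity or from the conformal class of $I$ or $\III$ on a convex boundary). Our choice $c_+ = [\rho_+]$ together with the prescribed $\rho_\pm$-invariances ensures that these Fuchsian uniformizations are precisely $\rho_+$ and $\rho_-$ (or the appropriate pair thereof in cases (3)--(5)), so both $f$ and the gluing map of $\Omega$ are normalized $(\rho_+, \rho_-)$-equivariant quasifuchsian quasisymmetric homeomorphisms.

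The main---and, arguably, essentially the only delicate---step is therefore to prove that such a map is unique. For this I would appeal to the Bers simultaneous uniformization theorem: any $(\rho_+, \rho_-)$-equivariant quasifuchsian normalized quasisymmetric homeomorphism of $\RP^1$ arises, through the Ahlfors--Bers Theorem \ref{thm:Ahlfors-Bers}, as the welding of the limit quasi-circle of a quasifuchsian representation $\tilde\rho : \pi_1(S) \to \PSL_2(\C)$ with marked end invariants $[\rho_+]$ and $[\rho_-]$ in Teichm\"uller space, and such a $\tilde\rho$ is unique up to conjugation in $\PSL_2(\C)$. Hence its limit quasi-circle is unique up to the $\PSL_2(\C)$-action, and the normalization fixing $0, 1, \infty$ pins down both the curve and its welding. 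This forces $f$ and the gluing map of $\Omega$ to coincide, completing the proof.
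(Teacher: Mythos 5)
Your proposal matches the paper's proof essentially step for step: descend the prescribed $\rho_\pm$-invariant metrics to $S$, apply Corollary~\ref{cr:qf-I} (resp.\ Corollary~\ref{cr:qf-III}, resp.\ Theorem~\ref{tm:main-cc} with the partitions you list) to get a quasifuchsian manifold with a convex subset realizing those data, lift to $\HH^3$, and then use Examples~\ref{ex:gluing map I}--\ref{ex:gluing map II} to see the gluing map is a normalized $(\rho_+,\rho_-)$-equivariant quasisymmetric homeomorphism, hence equal to $f$. The only difference is in how you justify the final uniqueness step: you route through the Ahlfors--Bers correspondence and Bers simultaneous uniformization, whereas a more direct (and what the paper implicitly invokes) argument is that the centralizer of a cocompact Fuchsian group in $\mathrm{Homeo}^+(\RP^1)$ is trivial --- a homeomorphism commuting with $\rho_+$ must fix the dense set of fixed points of its hyperbolic elements, so any two $(\rho_+,\rho_-)$-equivariant homeomorphisms coincide; either route is fine.
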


\begin{proof}
We show the proposition in two cases, according to the two types of convex domains:

\textbf{Case 1}: For the case of convex domains of first type (namely Statements (1)-(2)), it suffices to show Statement (1), since Statement (2) will follow analogously.

Let $c_+$ denote the conformal structure on $S$ invariant under the action of $\rho_+$. Since $h_-$ is invariant under the action of $\rho_-$, then $h_-$ descends to a smooth, complete metric on $S$, say $\bar{h}_-$. By Corollary \ref{cr:qf-I}, there is a unique (up to isometries) quasifuchsian hyperbolic manifold $M$ which contains a convex subset $N$ with induced metric on the boundary $\partial N$ isometric to $\bar{h}_-$ and the conformal structure on the boundary at infinity $\partial_{\infty}N$ conformal to $c_+$. We consider the lift of $N$ in the universal cover (which is identified with $\HH^3$) of $M$ and denote it by $\Omega$. Then the induced metric on $\partial\Omega$ is isometric to $h_-$.

By Example \ref{ex:gluing map I}, the $I$-gluing map $\Phi^I_{\Omega}:\RP^1\rightarrow\RP^1$ is a $(\rho_+,\rho_-)$-equivariant quasisymmetric homeomorphism. Note that $f:\RP^1\rightarrow\RP^1$ is also a $(\rho_+,\rho_-)$-equivariant quasisymmetric homeomorphism. With the normalized condition of gluing maps $\Phi^I_{\Omega}$ and the element $f$ in the universal Teichm\"uller space $\cT$, we have that $f=\Phi^I_{\Omega}$.

\textbf{Case 2}: For the case of convex domains of second type (namely Statements (3)-(5)), it suffices to show one of them, since the other statements will follow in a similar way.
We show Statement (4) for instance.

Note that $h^*_{\pm}$ is invariant under the action of $\rho_{\pm}$, $h^*_{\pm}$ descends to a smooth, complete metric on $S$, say $\bar{h}^*_{\pm}$. Applying Theorem \ref{tm:main-cc} to the case that $n=2$ with $M$ homeomorphic to the product of $S$ (homeomorphic to $\partial_1M$ and $\partial_2M$) by $[0,1]$, $\cI=\cJ=\emptyset$, $h^*_1=\bar{h}^*_{+}$ and $h^*_2=\bar{h}^*_{-}$,  there is a unique (up to isometries) quasifuchsian hyperbolic manifold $M$ which contains a convex subset $N$ with the third fundamental form on the boundary $\partial_{\pm}N$ isometric to $\bar{h}^*_{\pm}$. We consider the lift of $N$ in the universal cover (which is identified with $\HH^3$) of $M$ and denote it by $\Omega$. Then the third fundamental form on $\partial_{\pm}\Omega$ is isometric to $h^*_{\pm}$.

By Example \ref{ex:gluing map II}, the $\III$-gluing map $\Phi^{\III}_{\Omega}:\RP^1\rightarrow\RP^1$ is a $(\rho_+,\rho_-)$-equivariant (normalized) quasisymmetric homeomorphism. Since $f:\RP^1\rightarrow\RP^1$ is also a $(\rho_+,\rho_-)$-equivariant (normalized) quasisymmetric homeomorphism, we have that $f=\Phi^{\III}_{\Omega}$.
\end{proof}

\subsubsection{Approximation of convex domains}

Let $(f_n)_{n\in\N^+}$ be a sequence of $(\rho^+_n,\rho^-_n)$-equivariant quasisymmetric homeomorphisms (where $\rho^{\pm}_n$ are Fuchsian representations of a surface group $\Gamma_n$).
Let $h_{\pm}$ be a smooth, complete, bounded conformal metrics of curvature varying in $[-1+\epsilon,1/\epsilon]$ on $\DD^2$ and let $(h^{\pm}_n)_{n\in\N^+}$ be a sequence of $\rho^+_n$ (resp. $\rho^-_n$)-invariant 
uniformly bounded conformal metrics (i.e. $h^{\pm}_n\leq e^{2c_1}h_0$ for a constant $c_1>0$) with curvature varying in $[-1+\epsilon/C,C/\epsilon]$ on $\DD^2$ for a constant $C>1$. We first give the following statements concerning the convergence of convex domains of first type.

\begin{proposition}\label{prop:convergence-I}
  Let $(\iota_n: (\DD^2,h^-_n)\rightarrow \HH^3)_{n\in\N^+}$ be a sequence of isometric immersions whose image together with a part of $\partial_{\infty}\HH^3$ bounds a convex domain of first type, which is invariant under a quasifuchsian representation of $\Gamma_n$ with $I$-gluing map $f_n$.
  Assume that
 \begin{enumerate}[(i)]
 \item $(\iota_n(0))_{n\in\N^+}$ is contained in a compact subset of $\HH^3$,
 \item the maps $f_n$ are uniformly quasi-symmetric,
 \item $(f_n)_{n\in\N^+}$ converges to $f\in\cT$ in the $C^0$-sense, and
 \item $(h^-_n)_{n\in\N^+}$ converges $C^\infty$ to $h_-$ on compact subsets of $\DD^2$.
 \end{enumerate}
Then  $(\iota_n)_{n\in\N^+}$ has a subsequence which converges smoothly on compact subsets to an isometric immersion $\iota:(\DD^2,h_-)\rightarrow\HH^3$ whose image together with a part of $\partial_{\infty}\HH^3$ bounds a convex domain of first type, with $I$-gluing map $f$.
\end{proposition}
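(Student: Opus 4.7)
The plan is to apply Labourie's compactness theorem (Theorem \ref{thm:Labourie}) to extract a smoothly convergent subsequence of $(\iota_n)$, and then verify that the limit isometric immersion $\iota:(\DD^2, h_-)\to \HH^3$ bounds a convex domain of first type whose $I$-gluing map is $f$.

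First, I would establish uniform bounds on the principal curvatures of the convex surfaces $\iota_n(\DD^2)$. By hypothesis each $h^-_n$ has Gauss curvature in $[-1+\epsilon', 1/\epsilon']$ for some $\epsilon'>0$ depending only on $h_-$. Moreover, since the $h^-_n$ arise from the Fuchsian approximation (with $\rho^-_n$ having fundamental domain containing hyperbolic balls of radii $\tau_n \to \infty$), the injectivity radius of $(\DD^2, h^-_n)$ at any point of a fixed compact set is bounded below, uniformly in $n$. Lemma \ref{lm:principal-curv} then yields a uniform bound $[k_0, 1/k_0]$ on the principal curvatures of $\iota_n(\DD^2)$, and hence on its mean curvature. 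Combining this with assumption (i) that $\iota_n(0)$ remains in a compact set and (iv) that $h^-_n \to h_-$ smoothly on compact subsets, Theorem \ref{thm:Labourie} yields, after extraction, smooth convergence of $\iota_n$ on compact subsets to an isometric immersion $\iota:(\DD^2, h_-)\to \HH^3$, whose image is locally convex (convexity being closed under $C^2$-convergence).

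Second, I would identify the ideal boundary of the limit and establish that $\iota$ is an embedding. Let $\Omega_n \subset \HH^3$ denote the convex domain of first type whose convex boundary is $\iota_n(\DD^2)$ and whose ideal boundary is the closed disk $\Delta_n \subset \partial_\infty \HH^3$ bounded by the quasicircle $C_n$ associated to $f_n$ via Theorem \ref{thm:Ahlfors-Bers}. By hypotheses (ii)--(iii), the $C_n$ are uniformly quasi-circles converging in Hausdorff distance (after normalization using (i)) to the unique normalized quasi-circle $C$ whose conformal welding is $f$. Passing to the limit, $\iota(\DD^2)\cup \Delta$, where $\Delta$ is the closed disk bounded by $C$, forms the boundary of a convex domain $\Omega \subset \HH^3$ of first type; embeddedness of $\iota$ follows from local convexity together with the fact that $\partial_\infty \iota(\DD^2) = C$ is a Jordan curve. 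To verify $\Phi^I_\Omega = f$, the uniform principal curvature bounds imply that the nearest-point retraction from $\Delta_n$ onto $\iota_n(\DD^2)$ is uniformly bi-Lipschitz (as in the argument of Lemma \ref{lm:compact}), so the normalized uniformization maps $\varphi^c_{\Omega_n}:\HH^{2+} \to \mathrm{int}(\Delta_n)$ and $\varphi^I_{\Omega_n}:\HH^{2-} \to \iota_n(\DD^2)$ form normal families on compact subsets, converging (after extracting) to the corresponding uniformization maps for $\Omega$. Their Carathéodory boundary extensions then converge uniformly on $\RP^1$, yielding $\Phi^I_\Omega = \lim_n \Phi^I_{\Omega_n} = \lim_n f_n = f$.

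The main obstacle will be establishing the uniform convergence of the Carathéodory boundary extensions of the uniformization maps, which is precisely what transfers interior smooth convergence of $\iota_n \to \iota$ into $C^0$-convergence of gluing maps on $\RP^1$. The uniform quasi-symmetry of the $f_n$ combined with the uniformly bi-Lipschitz character of the nearest-point retractions together imply that the boundary correspondences under these uniformizations are themselves uniformly quasi-symmetric, and classical compactness results for quasi-symmetric homeomorphisms on $\RP^1$ then deliver the required uniform boundary convergence. A secondary subtlety is ensuring that the three-point normalization of the Riemann maps stays coherent throughout the extraction; this is handled by using assumption (i) to anchor the interior of $\Omega_n$ in a fixed compact set of $\HH^3$.
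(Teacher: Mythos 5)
Your overall strategy (Labourie compactness for the interior, then identification of the ideal boundary and the gluing map via Carath\'eodory) matches the paper, but there is a genuine conceptual error at the heart of your second step that would make the argument collapse. You write that $\Omega_n$ has ideal boundary the closed disk $\Delta_n$ bounded by ``the quasicircle $C_n$ associated to $f_n$ via Theorem \ref{thm:Ahlfors-Bers}.'' This conflates two different objects. The $I$-gluing map $f_n=\Phi^I_{\Omega_n}=(\partial\varphi^I_{\Omega_n})^{-1}\circ\partial\varphi^c_{\Omega_n}$ compares the conformal structure of $\partial_\infty\Omega_n$ with the conformal structure on $\partial\Omega_n$ induced by the \emph{first fundamental form}. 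It is \emph{not} the classical conformal welding of the limit curve $\partial_\infty(\partial\Omega_n)$, which would instead compare the conformal structures of the two complementary disks in $\CP^1$. So there is no a priori reason that $\partial_\infty\Omega_n$ is the Ahlfors--Bers quasicircle of $f_n$, and everything you deduce from this identification is unsupported.

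The step you are missing is exactly the role of the hyperbolic Gauss map. The paper factors
$$f_n=\partial\bigl((\varphi^I_{\Omega_n})^{-1}\circ G_n^{-1}\circ\varphi^{c-}_{\Omega_n}\bigr)\circ\bigl((\partial\varphi^{c-}_{\Omega_n})^{-1}\circ\partial\varphi^c_{\Omega_n}\bigr)~,$$
where the second factor is the classical conformal welding of $\partial_\infty(\partial\Omega_n)$ and the first factor is the boundary correspondence of a map which is uniformly quasiconformal because the Gauss map $G_n$ is (this in turn uses the uniform principal-curvature bounds). Only then does hypothesis (ii) transfer to a uniform quasisymmetry bound on the conformal welding, hence to the curves $\partial_\infty(\partial\Omega_n)$ being uniform quasicircles. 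A second, related gap: even once uniform quasicircularity is in hand, you still need to rule out that the limit is a single point; the paper does this via Lemma \ref{lm:distance}, which bounds the distance from the convex surfaces to the convex cores. Your assumption (i) anchors $\iota_n(0)$ in a compact set of $\HH^3$, but that alone does not prevent the ideal boundary from degenerating. Also note a minor inaccuracy at the start: the injectivity-radius lower bound should come from Lemma \ref{lm:injectivity}, using the uniform boundedness of $h^-_n$ together with the curvature bounds, not from the fundamental-domain structure of the Fuchsian approximation (that structure is not a hypothesis of this proposition, and in any case would only give a bound on a fixed compact set rather than a uniform one).
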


The proof of this proposition will use the following two lemmas.

\begin{lemma} \label{lm:injectivity}
  Let $h$ be a complete conformal metric on $\DD^2$ such that
  \begin{itemize}
  \item $h$ is bounded, in the sense that there exists a constant $c>1$ such that $h\leq c h_{0}$, where $h_0$ is the hyperbolic metric on $\DD^2$,
  \item $h$ has curvature $K_h\in [-1, 1/\epsilon]$ for some $\epsilon>0$.
  \end{itemize}
  Then there exists $l_0>0$, depending only on $c$ and $\epsilon$, such that the injectivity radius of $h$ is at least $l_0$.
\end{lemma}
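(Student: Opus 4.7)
The plan is to combine Klingenberg's injectivity radius estimate with a Gauss--Bonnet argument and the hyperbolic isoperimetric inequality. First, the cited result from \cite{mateljevic} together with the hypotheses $K_h \geq -1$ and completeness yields $h \geq h_0$, so $h_0 \leq h \leq c\, h_0$. In particular $(\DD^2, h)$ is a complete Riemannian surface, and its area and length elements are uniformly comparable to those of $h_0$: one has $dA_h \leq c\, dA_{h_0}$, and $L_{h_0}(\sigma) \leq L_h(\sigma)$ for any curve $\sigma$.

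Fix $p \in \DD^2$ and set $r := \mathrm{inj}_h(p)$. By Rauch's comparison theorem, $K_h \leq 1/\epsilon$ gives the conjugate radius at $p$ a lower bound of $\pi\sqrt{\epsilon}$, so if $r \geq \pi\sqrt{\epsilon}$ we are done. Otherwise $r < \mathrm{conj}_h(p)$, and Klingenberg's lemma (valid in any complete Riemannian manifold with $\mathrm{inj}<\mathrm{conj}$) provides a smooth geodesic loop $\gamma: [0, 2r] \to \DD^2$ based at $p$ of $h$-length exactly $2r$. Since a smooth geodesic has transverse (hence isolated) self-intersections on any bounded parameter interval, the self-intersection set of $\gamma$ is finite; taking a pair $(s_0, t_0)$ with $t_0 - s_0$ minimal, the restriction $\gamma' := \gamma|_{[s_0, t_0]}$ is a simple geodesic loop (possibly based at a point $q \neq p$) of $h$-length $L' \leq 2r$.

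Because $\DD^2$ is simply connected, $\gamma'$ bounds a topological disk $D$. Applying Gauss--Bonnet to $D$, whose boundary is geodesic except for one corner with interior angle $\theta \in (0, 2\pi)$, gives
\[
\int_D K_h \, dA_h = \pi + \theta > \pi.
\]
Using $K_h \leq 1/\epsilon$ then yields $\mathrm{Area}_h(D) > \pi\epsilon$, and the comparison of area forms gives $\mathrm{Area}_{h_0}(D) > \pi\epsilon/c$. The lower bound $h \geq h_0$ yields $L_{h_0}(\partial D) \leq L' \leq 2r$. Finally, the isoperimetric inequality in $\HH^2$ (in the sharp form $L^2 \geq 4\pi A + A^2$, of which only the first term is needed) gives
\[
4r^2 \geq L_{h_0}(\partial D)^2 \geq 4\pi\, \mathrm{Area}_{h_0}(D) > 4\pi^2 \epsilon/c,
\]
so $r > \pi\sqrt{\epsilon/c}$. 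Combining both cases, $l_0 := \pi\sqrt{\epsilon/c}$ works.

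The main obstacles I anticipate are technical rather than conceptual: ensuring that Klingenberg's lemma is invoked in a form valid for a non-compact but complete simply connected surface, and dealing with the possibility that the geodesic loop produced self-intersects. The latter is dispatched cleanly by the minimal-sub-loop argument, once one observes that the Gauss--Bonnet/isoperimetric estimate only requires the existence of \emph{some} simple geodesic loop bounding a disk of controlled area and having length at most $2r$, not one based at $p$ itself. Both the bi-Lipschitz control $h_0 \leq h \leq c\, h_0$ and the curvature bound $K_h \leq 1/\epsilon$ enter crucially, the former to pass between $h$-area and $h_0$-area (and lengths), and the latter both via Rauch and in converting the Gauss--Bonnet lower bound on $\int_D K_h\,dA_h$ into a lower bound on $\mathrm{Area}_h(D)$.
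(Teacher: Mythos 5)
Your proof is correct and uses the same core estimate as the paper's: combine the two-sided comparison $h_0\le h\le ch_0$ (the lower bound from Cheng--Yau via \cite{mateljevic}), the hyperbolic isoperimetric inequality for $h_0$, and Gauss--Bonnet together with $K_h\le 1/\epsilon$ to bound the length of a short geodesic loop from below. Where the two arguments differ is in the reduction to that loop. The paper's proof splits into the two cases ``$h$ admits a simple closed geodesic'' and ``$h$ admits no simple closed geodesic,'' asserting in the latter that the injectivity radius is infinite; this dichotomy is informal, since a priori the injectivity radius of a complete, simply connected surface with curvature allowed to reach $1/\epsilon>0$ could instead be limited by a conjugate point, or by a geodesic loop with a corner that is not a simple closed geodesic. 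You close both of these gaps explicitly: Rauch's comparison handles the conjugate point case (giving the bound $\pi\sqrt\epsilon$), Klingenberg's lemma (which does hold in the complete noncompact setting, by Hopf--Rinow compactness of closed metric balls) produces a geodesic loop of length $2\,\mathrm{inj}_h(p)$ when $\mathrm{inj}_h(p)<\mathrm{conj}_h(p)$, and the minimal-sub-loop extraction yields a simple geodesic loop to which Gauss--Bonnet applies (with $\int_D K_h\,dA_h>\pi$ rather than $=2\pi$, but that only changes the constant). One small remark on the extraction step: self-intersections of a geodesic need not literally be transverse --- a tangential self-intersection forces a smaller smooth closed geodesic, but that case is even easier (Gauss--Bonnet then gives $2\pi$) and folds into the same argument, so the claim survives. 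Your constant $\pi\sqrt{\epsilon/c}$ also differs from the paper's $\pi\epsilon/c$ because you use the quadratic isoperimetric inequality $L^2\ge 4\pi A$ rather than the linear consequence $A\le L$; both are fine since the lemma only asserts existence of some $l_0>0$.
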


\begin{proof}
   We first consider the case where $h$ admits a simple closed geodesic on $\DD^2$. Let $\gamma$ be a simple closed geodesic in $(\DD^2, h)$ of length $l$. Since $\DD^2$ is simply connected, $\gamma$ bounds a disk $\Omega\subset \DD^2$.

 Note that $h$ is complete and has curvature at least $-1$. Therefore $h\geq h_0$ (see \cite[Theorem 2.8]{mateljevic}). So the length of $\gamma$ for $h_{0}$ is bounded from above: $L_{h_{0}}(\gamma)\leq L_h(\gamma)=l$.

  The hyperbolic isoperimetric inequality then implies that the area of $\Omega$ for $h_{0}$ is also bounded from above by
  $$ A_{h_{0}}(\Omega) \leq L_{h_{0}}(\gamma) \leq l~. $$
  Since $h$ is bounded, it then follows that
  $$ A_h(\Omega) \leq c A_{h_{0}}(\Omega) \leq cl~. $$
  It then follows that the total curvature of $\Omega$ for $h$ is at most $cl/\epsilon$.

  However the Gauss-Bonnet theorem implies that the total curvature of $\Omega$ for $h$ must be equal to $2\pi$, since $\gamma$ is geodesic for $h$. Thus we finally obtain that
  $$ 2\pi \leq \frac{cl}\epsilon~, $$
  so that $l\geq 2\pi \epsilon/c$, and the injectivity radius of $(\DD^2,h)$ is at least $\pi \epsilon/c$. If $h$ admits no simple closed geodesics on $\DD^2$, since $h$ is complete on $\DD^2$,  then its injectivity radius is infinite and is naturally at least $\pi \epsilon/c$ (which is a constant provided in the above case). So we can take $l_0=\pi \epsilon/c$ as the lower bound of injectivity radius of $h$. This concludes the lemma.
\end{proof}

\begin{lemma} \label{lm:distance}
  Let $M$ be a quasifuchsian hyperbolic manifold, and let $\Sigma\subset M$ be a closed, locally convex surface embedded in $M$ such that the induced metric $I$ on $\Sigma$
  \begin{itemize}
  \item is bounded, in the sense that $I\leq ch_{-1}$, where $c>0$ and $h_{-1}$ is the hyperbolic metric conformal to $I$,
  \item it has curvature $K_{I}\in [-1+\epsilon,1/\epsilon]$, where $\epsilon>0$.
  \end{itemize}
  Then there exists $r>0$ (depending only on $c$ and $\epsilon$) such that all points of $\Sigma$ are at distance at most $r$ from the convex core of $M$.
\end{lemma}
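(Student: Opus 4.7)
The plan is to combine Lemmas \ref{lm:injectivity} and \ref{lm:principal-curv} with a compactness-and-contradiction argument. First, $I$ is complete since $\Sigma$ is closed, and the hypothesis $I\leq c\,h_{-1}$ forces the conformal structure of $(\Sigma,I)$ to be hyperbolic, so the universal cover of $\Sigma$ is conformally $\DD^2$; the lift $\tilde I$ then satisfies $\tilde I\leq c\,h_0$, where $h_0$ is the Poincar\'e metric (the lift of $h_{-1}$), and its curvature lies in $[-1+\epsilon,1/\epsilon]\subset[-1,1/\epsilon]$. Lemma \ref{lm:injectivity} then yields $l_0=l_0(c,\epsilon)>0$ with $\inj(\Sigma,I)\geq l_0$. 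Lifting $\Sigma$ further to a complete embedded convex surface $\tilde\Sigma\subset\HH^3$, Lemma \ref{lm:principal-curv} yields $k_0=k_0(c,\epsilon)\in(0,1)$ such that the principal curvatures of $\Sigma$ all lie in $[k_0,1/k_0]$.

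I would then argue by contradiction: suppose no uniform $r=r(c,\epsilon)$ exists. Then there are quasifuchsian manifolds $M_n$, closed locally convex embedded surfaces $\Sigma_n\subset M_n$ satisfying the hypotheses with the same constants $c,\epsilon$, and points $p_n\in\Sigma_n$ with $d(p_n,C_{M_n})\to\infty$. Take lifts $\tilde p_n\in\tilde\Sigma_n\subset\HH^3$ and normalize by isometries of $\HH^3$ so that $\tilde p_n=O$ is a fixed origin and the outward unit normal to $\tilde\Sigma_n$ at $O$ is a fixed direction. Using the uniform principal curvature bound, the uniform curvature bound on $\tilde I_n$, and the pointwise bound on $\tilde I_n$ at $O$ coming from $\tilde I_n\leq c\,\tilde h_{-1,n}$, an application of Theorem \ref{thm:Labourie} produces, after extracting a subsequence, a complete locally convex surface $\tilde\Sigma_\infty\subset\HH^3$ passing through $O$ which is the smooth limit on compact sets of $\tilde\Sigma_n$, with principal curvatures in $[k_0,1/k_0]$ and non-degenerate limit metric $\tilde I_\infty$.

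The hard part is to extract a contradiction from the escape of the convex hulls $\tilde C_{M_n}$ of the limit sets $\Lambda_n$. Since $O=\tilde p_n$ lies at distance $r_n\to\infty$ from $\tilde C_{M_n}$ while $\tilde\Sigma_n$ bounds a convex region of $\HH^3$ containing $\tilde C_{M_n}$, in the projective ball model the sets $\Lambda_n\subset\partial\overline{\HH^3}$ must concentrate on an increasingly small subset of $\partial\overline{\HH^3}$, and the component of $\overline{\HH^3}\setminus\tilde C_{M_n}$ containing $O$ opens up conformally from a hyperbolic disk toward the plane $\C$. Equivalently, when the surfaces $\tilde\Sigma_n$ are parameterized over a fixed domain via the convergent subsequence, the conformal hyperbolic metrics $\tilde h_{-1,n}$ on $\tilde\Sigma_n$ tend to zero on compact subsets near $O$. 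The bound $\tilde I_n\leq c\,\tilde h_{-1,n}$ then forces $\tilde I_n\to 0$ near $O$, contradicting the non-degeneracy of $\tilde I_\infty$. The central technical step, and the main difficulty, is the quantitative version of the assertion that as $r_n\to\infty$, the conformal hyperbolic metric of the component of $\overline{\HH^3}\setminus\tilde C_{M_n}$ containing $O$ must degenerate to zero in a neighborhood of $O$.
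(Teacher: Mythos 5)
Your approach is genuinely different from the paper's. You set up a compactness-and-contradiction argument using Lemmas~\ref{lm:injectivity}, \ref{lm:principal-curv} and Theorem~\ref{thm:Labourie}, while the paper gives a direct, effective proof via the Epstein surface correspondence: it introduces the horospherical metric $I^*=\tfrac12(I+2\II+\III)$, shows $I^*\leq c_0 h^*_{-1}$ where $h^*_{-1}$ is the hyperbolic metric conformal to $I^*$, uses $G_*h^*_{-1}\leq h_{Th}$ (hyperbolic $\leq$ Thurston at infinity), identifies $\Sigma$ as the Epstein surface of $G_*I^*$, and then invokes monotonicity of Epstein surfaces to pin $\Sigma$ between $\partial^+C_M$ and the $(\log c_0)/2$-equidistant surface. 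This yields an explicit $r=(\log c_0)/2$.

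The preliminary steps of your argument (injectivity radius, principal curvature bounds, Labourie compactness after normalization) are sound and are the same inputs the paper uses. But the step you flag as the main difficulty is not a quantitative refinement — it is essentially the whole content of the lemma, and your proposed bridge from it to a contradiction has an unjustified link. You pass from ``the hyperbolic metric on the component of the domain of discontinuity containing $G(O)$ degenerates'' to ``the conformal hyperbolic metrics $\tilde h_{-1,n}$ on $\tilde\Sigma_n$ tend to zero near $O$'' via an ``Equivalently''. These two hyperbolic metrics live in conformal classes related only by the Gauss map, which is uniformly quasi-conformal but not conformal; a $K$-quasi-conformal map between hyperbolic disks does not give a pointwise bi-Lipschitz comparison of their hyperbolic metrics (e.g.\ $z\mapsto z|z|^{a-1}$ is $K$-quasi-conformal with $|f'(0)|\in\{0,\infty\}$). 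To bridge the two conformal classes you would need to compare $I$, $I^*$, $h_{-1}$ and $h^*_{-1}$ pointwise — which is exactly the chain of inequalities the paper establishes in the first half of its proof — and then the degeneration claim would best be handled by the Epstein correspondence anyway. So although the contradiction framework is legitimate, completing it would effectively reconstruct the paper's direct argument, and without that work your proposal has a genuine gap at its central step.
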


\begin{proof}
  Let $\partial^+_{\infty}M$ and $\partial^-_{\infty}M$ denote the upper and lower boundary components at infinity of $M$ and let $\partial^+C_M$ and $\partial^-C_M$ denote the upper and lower boundary components of the convex core $C_M$ of $M$. Note that the surface $\Sigma$ is a locally convex surface embedded in $M$ with curvature $K_{I}\in [-1+\epsilon,1/\epsilon]$. Without loss of generality, we assume that $\Sigma$ lies in the hyperbolic end with the boundary at infinity $\partial^+_{\infty}M$. Then $\Sigma$ lies strictly above $\partial^+C_M$.

  Let $I^*$ be the metric defined in $\Sigma$ as
  $$ I^* = \frac 12(I+2\II+\III)~, $$
  where $\II$ and $\III$ are the second and third fundamental forms on $\Sigma$, respectively. The metric $I^*$ is conformal to the pull-back on $\Sigma$ of the conformal metric at infinity by the hyperbolic Gauss map $G: \Sigma\rightarrow\partial^+_{\infty}M$, see \cite[Theorem 5.8]{volume}. We also denote by $h^*_{-1}$ the hyperbolic metric on $\Sigma$ conformal to $I^*$. The push-forward metric $G_*h^*_{-1}$ is therefore the hyperbolic metric in the conformal class on $\partial^{+}_{\infty}M$ .

  We first prove that there is a constant $c_0>1$ such that
  \begin{equation}
    \label{eq:comp1}
    I^*\leq c_0h^*_{-1}.
  \end{equation}
  This follows from the following points.
  \begin{itemize}
  \item It follows from Lemma \ref{lm:injectivity} that the injectivity radius of $I$ is bounded from below by a fixed positive constant. It therefore follows from  Lemma \ref{lm:principal-curv} that there exists a constant that bounds from above the principal curvatures of $\Sigma$.
  \item As a consequence, there exists a constant $c_1>1$ such that $(1/c_1)I\leq I^*\leq c_1I$ on $\Sigma$.
  \item This implies that $I^*$ is uniformly quasi-conformal to $I$, and it follows that there exists a constant $c_2>1$ such that $(1/c_2) h_{-1}\leq h^*_{-1}\leq c_2h_{-1}$.
  \item 
  By the assumption that $I$ is complete with curvature $K_I\geq -1$ and is bounded, we have $h_{-1}\leq I\leq ch_{-1}$.
  \end{itemize}
  Putting those inequalities together yields the desired inequality \eqref{eq:comp1}.

  Now $\Sigma$ is equal to the Epstein surface (see \cite{epstein:envelopes}) associated to $G_*I^*$ -- this is the point of view developed in \cite{horo}. Let $h_{Th}$ denote the Thurston metric on $\partial^+_{\infty}M$. The Epstein surface of $h_{Th}$ is exactly the upper boundary component $\partial^+C_M$ of the convex core $C_M$, and the canonical hyperbolic metric in the conformal class of $h_{Th}$ is equal to $G_*h^*_{-1}$. 
By the fact that $G_*h^*_{-1}\leq h_{Th}$ (see \cite[Chapter 1, p. 1]{anderson1998}) and Equation \eqref{eq:comp1}, we obtain
$$ G_*I^*\leq c_0G_*h^*_{-1}\leq c_0h_{Th}~. $$
Using the fact that the equidistant surface at distance $t$ (where the positive direction is the normal direction towards $\partial^+_{\infty}M$) from the Epstein surface of a conformal metric say $h$ on $\partial^+_{\infty}M$ is the Epstein surface of the metric $e^{2t}h$ (see \cite[Theorem 2.1]{epstein:envelopes}), the Epstein surface of $c_0h_{Th}$ is the $(\log c_0)/2$-equidistant surface from $\partial^+C_M$ (as the Epstein surface of $h_{Th}$). We recall another fact here that for any two conformal metrics say $h_1$, $h_2$ on $\partial^+_{\infty}M$ with $h_1\leq h_2$ and their Epstein surfaces immersed in $M$, the Epstein surface of $h_1$ lies below the Epstein surface of $h_2$, see \cite[Lemma 3.12]{compare}.

Combining the above facts, the locally convex surface $\Sigma$ (as the Epstein surface of $G_*I^*$) lies between $\partial^+C_M$ and the $(\log c_0)/2$-equidistant surface from $\partial^+C_M$. The result follows.
\end{proof}

\begin{proof}[Proof of Proposition \ref{prop:convergence-I}]
  By assumption, the sequence of metrics $h^-_n$ are uniformly bounded conformal metrics (i.e. $h^-_n\leq e^{2c_1}h_0$ for a constant $c_1>0$) and have curvature varying in $[-1+\epsilon/C, C/\epsilon]$ for a constant $C>1$. Combined with Lemma \ref{lm:injectivity}, the injectivity radius of $h^-_n$ are uniformly bounded from below by a constant depending only on $c_1$, $\epsilon$ and $C$.

  Since $h^-_n$ is isometric to the induced metric on $\iota_n(\DD^2)$, it follows from Lemma \ref{lm:principal-curv} that the principal curvatures of $\iota_n(\DD^2)$ are uniformly bounded between two positive constants (depending only on $c_1$, $\epsilon$ and $C$). Combined with Theorem \ref{thm:Labourie} and assumption (i), $\iota_n$ converges smoothly on compact subsets to an isometric immersion $\iota: (\DD^2, h_-)\rightarrow \HH^3$. We denote by $\Omega_n$ (resp. $\Omega$) the convex connected component of $\HH^3\setminus\iota_n(\DD^2)$ (resp. $\HH^3\setminus\iota(\DD^2)$), and denote by $I_n$ (resp. $I$) the induced metric on $\partial\Omega_n$ (resp. $\partial\Omega$), which is isometric to $h^-_n$ (resp. $h_-$).

  We claim that $\partial_{\infty}\Omega$ is a closed quasi-disk in $\CP^1$ (which implies that $\Omega$ is a convex domain of first type). To see this, note that the principal curvatures of $\partial\Omega_n=\iota_n(\DD^2)$ are uniformly bounded between two positive constants (as shown above). This implies that the hyperbolic Gauss map say $G_n$ from $\partial\Omega_n$ to $\CP^1\setminus \partial_{\infty}\Omega_n$ is uniformly quasi-conformal, by comparing the induced metric $I_n$ on $\partial\Omega_n$ (which is isometric to $h^-_n$) with the metric $I_n+2\II_n+\III_n$ (whose push-forward metric by $G_n$ is a conformal metric on $\CP^1\setminus \partial_{\infty}\Omega_n$), where $\II_n$, $\III_n$ are the second and third fundamental forms of $\partial\Omega_n$.
  Note that $G_n$ is a homeomorphism and its boundary map $\partial G_n$ is the identity from $\partial_{\infty}(\partial\Omega_n)$ to itself.
  Recall that the $I$-gluing map of $\Omega_n$ is $\Phi^I_{\Omega_n}=f_n=(\partial\varphi^I_{\Omega_n})^{-1}\circ\partial\varphi^c_{\Omega_n}$, where $\varphi^c_{\Omega_n}: \HH^{2+}\rightarrow \partial_{\infty}\Omega_n\subset\CP^1$ is a bi-holomorphic map, while $\varphi^I_{\Omega_n}:\HH^{2-}\rightarrow \partial\Omega_n$ is a map such that $(\varphi^I_{\Omega_n})^*(I_n)$ is conformal to $h^-_0$.  Let $\varphi^{c-}_{\Omega_n}: \HH^{2-}\rightarrow \CP^1\setminus \partial_{\infty}\Omega_n$ be a bi-holomorphic map such that $(\partial\varphi^{c-}_{\Omega_n})^{-1}\circ\partial\varphi^c_{\Omega_n}$ is the conformal welding of the Jordan curve  $\partial_{\infty}(\partial\Omega_n)$. In particular, we have
  \begin{equation*}
  \begin{split}
  f_n=\Phi^I_{\Omega_n}&=(\partial\varphi^I_{\Omega_n})^{-1}\circ \partial\varphi^{c-}_{\Omega_n} \circ\big((\partial\varphi^{c-}_{\Omega_n})^{-1}\circ\partial\varphi^c_{\Omega_n}\big)\\
 &=(\partial\varphi^I_{\Omega_n})^{-1}\circ \partial G^{-1}_n\circ\partial\varphi^{c-}_{\Omega_n} \circ\big((\partial\varphi^{c-}_{\Omega_n})^{-1}\circ\partial\varphi^c_{\Omega_n}\big)\\
 &=\partial\big((\varphi^I_{\Omega_n})^{-1}\circ G^{-1}_n\circ\varphi^{c-}_{\Omega_n}\big) \circ\big((\partial\varphi^{c-}_{\Omega_n})^{-1}\circ\partial\varphi^c_{\Omega_n}\big)
  \end{split}
  \end{equation*}
 By the above discussion, $(\varphi^I_{\Omega_n})^{-1}\circ G^{-1}_n\circ\varphi^{c-}_{\Omega_n}$ is a uniformly quasiconformal map (see e.g. \cite[Proposition 3.14]{convexhull}) and its boundary map is thus a uniformly quasisymmetric homeomorphism. It then follows from hypothesis (ii) that the conformal welding of $\partial_{\infty}(\partial\Omega_n)$ is uniformly quasisymmetric, so by Theorem \ref{thm:Ahlfors-Bers}, the $\partial_\infty \iota_n(\DD^2)=\partial_{\infty}(\partial\Omega_n)$ are quasi-circles with a quasi-conformal constant which is bounded from above by a fixed constant.

  The sequence of quasi-circles $\partial_\infty\iota_n(\DD^2)$, $n\in\N^+$, which are uniform quasi-circles, has a subsequence converging either to a quasi-circle or to a point. However, it cannot converge to a point, since otherwise the distance between $\iota_n(\DD^2)$ and the boundary of the convex hull of the corresponding quasifuchsian manifolds would go to $\infty$, contradicting Lemma \ref{lm:distance}.

  Since  $\partial_{\infty}\Omega\subset\CP^1$ has more than one point, by Carath\'{e}odory's theorem, $(\varphi^c_{\Omega_n})_{n\in\N^+}$ has a subsequence $(\varphi^c_{\Omega_{n_k}})_{k\in\N^+}$ converging to a bi-holomorphic map, say $\varphi^c_{\Omega}:\HH^{2+}\rightarrow \partial_{\infty}\Omega$. Combined with Lemma \ref{lm:conf-bihol} and assumption (iv), the corresponding subsequence $(\varphi^I_{\Omega_{n_k}})_{k\in\N^+}$ converges to a map, say $\varphi^I_{\Omega}:\HH^{2-}\rightarrow \partial\Omega$, with $(\varphi^I_{\Omega})^*I$ conformal to $h^-_0$. Since the gluing map $\Phi^I_{\Omega_n}=f_n=(\partial\varphi^I_{\Omega_n})^{-1}\circ\partial\varphi^c_{\Omega_n}$ is normalized, the map $(\partial\varphi^I_{\Omega})^{-1}\circ\partial\varphi^c_{\Omega}$, as the limit of $f_{n_k}$, is also normalized and is thus the $I$-gluing map $\Phi^I_{\Omega}$ of $\Omega$ (combined with the above properties about $\varphi^I_{\Omega}$ and  $\varphi^c_{\Omega}$). By assumption (ii), $f_n=\Phi^I_{\Omega_n}$ converges to $f$ up to subsequence.  As a consequence, $\Phi^I_{\Omega}=f$. The proposition follows.
\end{proof}

We now consider the corresponding convergence problem for convex domains of second type. Notice that we have the compactness result about the convergence of isometric immersions of a surface into $\bdS^3$ (see Theorem \ref{thm:schlenker}) and the estimate on principal curvatures of a complete, embedded spacelike convex surface in $\bdS^3$ of variable curvature (see Lemma \ref{lm:principal-curv-dual}).
We list one case (for the $I$-gluing map) in the following for instance, since the cases for $\III$-gluing map and $(\III,I)$-gluing map can be stated and shown in the same manner, by considering the isometric immersions of the disk $(\DD^2, h^{*\pm}_n)$ or $(\DD^2, h^*_\pm)$ in $\bdS^3$ instead.

\begin{proposition}\label{prop:convergence-II}
Let $(\iota^{\pm}_n: (\DD^2,h^{\pm}_n)\rightarrow \HH^3)_{n\in\N^+}$ be a sequence of a pairs of isometric immersions whose images bound a convex domain of second type, which is invariant under a quasifuchsian representation of $\Gamma_n$ with its $I$-gluing map $f_n$. Assume that
 \begin{enumerate}[(i)]
   \item $(\iota^{\pm}_n(0))_{n\in\N^+}$ is contained in a compact subset $K^{\pm}$ of $\HH^3$,
   \item the maps $f_n$ are uniformly quasi-symmetric,
   \item $f_n$ converges to $f\in\cT$ in the $C^0$-sense, and
   \item $(h^{\pm}_n)_{n\in\N^+}$ 
  converges $C^{\infty}$  to $h_{\pm}$ on compact subsets of $\DD^2$.
 \end{enumerate}
Then $(\iota^{\pm}_n)_{n\in\N^+}$ has a subsequence which converges smoothly on compact subsets to a couple of isometric immersions $\iota^{\pm}:(\DD^2,h_{\pm})\rightarrow\HH^3$ whose images bound a convex domain of second type with its $I$-gluing map $f$.
\end{proposition}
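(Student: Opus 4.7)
The plan is to mimic the proof of Proposition \ref{prop:convergence-I}, but apply the arguments to both boundary components $\iota^{\pm}_n$ simultaneously, and then use the uniform quasi-symmetry hypothesis on the $f_n$ to prevent degeneration of the common asymptotic curve. First, since the $h^{\pm}_n$ are uniformly bounded conformal metrics with curvature in $[-1+\epsilon/C, C/\epsilon]$ (by the remark following the proof of Proposition \ref{prop:approx I}), Lemma \ref{lm:injectivity} gives a uniform positive lower bound on the injectivity radii of the $h^{\pm}_n$. Combined with Lemma \ref{lm:principal-curv}, the principal curvatures of the immersed surfaces $\iota^{\pm}_n(\DD^2)$ are uniformly pinched between two positive constants depending only on $c_1$, $\epsilon$ and $C$.

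Given hypothesis (i) and the above pinching of principal curvatures, Theorem \ref{thm:Labourie} applies on each side: up to passing to a subsequence, $\iota^{+}_n$ converges smoothly on compact subsets to an isometric immersion $\iota^{+}: (\DD^2, h_+) \rightarrow \HH^3$, and similarly $\iota^{-}_n \rightarrow \iota^{-}$. Denote by $\Omega_n$ the convex domain bounded by $\iota^{+}_n(\DD^2) \cup \iota^{-}_n(\DD^2)$, and let $\Omega$ be the connected component of $\HH^3 \setminus (\iota^{+}(\DD^2) \cup \iota^{-}(\DD^2))$ containing a compact set in the limit of the $\Omega_n$ (for instance, a point lying between the two compact sets $K^{\pm}$). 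By the uniform pinching of principal curvatures, $\Omega$ has $\iota^{\pm}(\DD^2)$ as locally strictly convex boundary surfaces.

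The crucial step is to check that the asymptotic boundary $\partial_\infty \Omega$ is still a Jordan curve, so that $\Omega$ is indeed of the second type, and that the $I$-gluing map of $\Omega$ is exactly $f$. As in the proof of Proposition \ref{prop:convergence-I}, the uniform pinching of principal curvatures implies that the hyperbolic Gauss maps $G^{\pm}_n$ from $\iota^{\pm}_n(\DD^2)$ to the corresponding components of $\CP^1 \setminus \partial_\infty \Omega_n$ are uniformly quasiconformal. Composing with the uniformizing maps $\varphi^{I_\pm}_{\Omega_n}$, the relation
\[ f_n = \Phi^{I}_{\Omega_n} = \partial((\varphi^{I_-}_{\Omega_n})^{-1} \circ (G^{-}_n)^{-1} \circ \varphi^{c-}_{\Omega_n}) \circ ((\partial \varphi^{c-}_{\Omega_n})^{-1} \circ \partial \varphi^{c+}_{\Omega_n})\]
expresses each $f_n$ as the composition of a uniformly quasi-symmetric map with the classical conformal welding of the Jordan curve $\partial_\infty \Omega_n$. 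Hypothesis (ii) then forces the conformal weldings themselves to be uniformly quasi-symmetric, so by Theorem \ref{thm:Ahlfors-Bers} the $\partial_\infty \Omega_n$ form a family of uniform quasi-circles. After passing to a subsequence, these quasi-circles converge either to a quasi-circle or to a single point.

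The main obstacle is to rule out the degeneration of $\partial_\infty \Omega_n$ to a point, which is the two-sided analogue of the distance bound provided by Lemma \ref{lm:distance}. If the common asymptotic curve collapsed, then the convex hulls of these curves in the corresponding quasifuchsian manifolds would recede to infinity, and since both boundary components $\iota^{\pm}_n(\DD^2)$ stay within a uniform Hausdorff distance of their respective sides of the convex core (by a direct adaptation of Lemma \ref{lm:distance}, applied to both boundary components), this would force $\iota^{\pm}_n(0)$ to exit every compact set, contradicting (i). Hence $\partial_\infty \Omega$ is a genuine quasi-circle, $\Omega$ is a convex domain of the second type, and by Carath\'eodory's theorem together with Lemma \ref{lm:conf-bihol} and hypothesis (iv), the uniformizing maps $\varphi^{I_\pm}_{\Omega_n}$ converge to maps $\varphi^{I_\pm}_{\Omega}$ realizing the conformal structures of $(\partial_\pm \Omega, I_\pm)$. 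Normalization of all gluing maps involved, combined with hypothesis (iii), gives $\Phi^{I}_{\Omega} = f$, completing the proof.
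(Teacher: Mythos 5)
Your proof follows the same strategy as the paper's: uniform injectivity radius (Lemma \ref{lm:injectivity}), uniform pinching of principal curvatures (Lemma \ref{lm:principal-curv}), Labourie compactness on each side, Gauss map quasi-conformality to control the quasi-circles, and then passage to the limit of the uniformizing maps via Carath\'eodory.

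However, your displayed factorization of $f_n$ is incorrect for the second type of convex domain. You wrote
$$ f_n = \Phi^{I}_{\Omega_n} = \partial\big((\varphi^{I_-}_{\Omega_n})^{-1} \circ (G^{-}_n)^{-1} \circ \varphi^{c-}_{\Omega_n}\big) \circ \big((\partial \varphi^{c-}_{\Omega_n})^{-1} \circ \partial \varphi^{c+}_{\Omega_n}\big),$$
which is exactly the formula from the proof of Proposition \ref{prop:convergence-I}, where the ``$+$'' boundary component lies at infinity and therefore requires no Gauss map. Since $\partial G^-_n$ is the identity, your expression simplifies to $(\partial\varphi^{I_-}_{\Omega_n})^{-1} \circ \partial \varphi^{c+}_{\Omega_n}$, whereas $\Phi^I_{\Omega_n} = (\partial\varphi^{I_-}_{\Omega_n})^{-1} \circ \partial\varphi^{I_+}_{\Omega_n}$. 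These differ because $\partial\varphi^{c+}_{\Omega_n}$ and $\partial\varphi^{I_+}_{\Omega_n}$ are distinct boundary maps (they differ by the boundary value of the quasiconformal self-map $(\varphi^{c+}_{\Omega_n})^{-1}\circ G^+_n\circ \varphi^{I_+}_{\Omega_n}$ of $\HH^{2+}$). The correct factorization for the second type must insert Gauss maps on \emph{both} sides, giving a product of three pieces: a uniformly quasi-symmetric map from the $-$ side comparison, the classical conformal welding of $\partial_\infty\Omega_n$, and a uniformly quasi-symmetric map from the $+$ side comparison. Your conclusion still holds with this corrected three-factor decomposition (the weldings are uniformly quasi-symmetric, so Ahlfors--Bers applies), but as written the identity is false.

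A minor remark: you place the ruling-out of quasi-circle degeneration to a point (via an adaptation of Lemma \ref{lm:distance}) inside the proof of this proposition, whereas the paper establishes it outside, in Case 2 of the proof of Theorem \ref{tm:main-iv}, as a precondition on the normalization before invoking Proposition \ref{prop:convergence-II}. Either placement works, and making the proposition's proof more self-contained in this way is reasonable, though it duplicates an argument the paper prefers to leave to the caller.
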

\begin{proof}  The proof is similar to that of Proposition \ref{prop:convergence-I}. We include it here for completeness.

   By assumption, the sequence of metrics $h^{\pm}_n$ are uniformly bounded conformal metrics (i.e. $h^{\pm}_n\leq e^{2c_1}h_0$ for some constant $c_1>0$) and have curvature varying in $[-1+\epsilon/C, C/\epsilon]$ for a constant $C>1$. Combined with Lemma \ref{lm:injectivity}, the injectivity radius of $h^{\pm}_n$ are uniformly bounded from below by a constant depending only on $c_1$, $\epsilon$ and $C$.

   Note also that $h^{\pm}_n$ is isometric to the induced metric on $\iota^{\pm}_n(\DD^2)$, it follows from Lemma \ref{lm:principal-curv} that the principal curvatures of $\iota^{\pm}_n(\DD^2)$ are uniformly bounded between two positive constants (depending only on $c_1$, $\epsilon$ and $C$). Combined with Theorem \ref{thm:Labourie} and assumption (i), $\iota^{\pm}_n$ converges smoothly on compact subsets to an isometric immersion $\iota^{\pm}: (\DD^2, h_{\pm})\rightarrow \HH^3$. We denote by $\Omega_n$ (resp. $\Omega$) the convex domain bounded by $\iota^{\pm}_n(\DD^2)$ (resp. $\iota^{\pm}(\DD^2)$), and denote by $I^{\pm}_n$ (resp. $I^{\pm}$) the induced metric on $\partial_{\pm}\Omega_n$ (resp. $\partial_{\pm}\Omega$), which is isometric to $h^{\pm}_n$ (resp. $h_{\pm}$).

 We claim that $\partial_{\infty}\Omega$ is a quasi-circle in $\CP^1$ (which implies that $\Omega$ is a convex domain of second type). To see this, since the principal curvatures of $\partial_{\pm}\Omega_n=\iota^{\pm}_n(\DD^2)$ are uniformly bounded between two positive constants (as shown above). This implies that the hyperbolic Gauss map say $G^{\pm}_n$ from 
  $\partial_{\pm}\Omega_n$ to $D^{\pm}_n$ (where $D^+_n\cup D^+_n=\CP^1\setminus\partial_{\infty}\Omega_n$) is uniformly quasi-conformal, by comparing the induced metric $I^{\pm}_n$ on $\partial_{\pm}\Omega_n$ (which is isometric to $h^{\pm}_n$) with the metric $I^{\pm}_n+2\II^{\pm}_n+\III^{\pm}_n$ (whose push-forward metric by $G^{\pm}_n$ is a conformal metric on $D^{\pm}_n$), where $\II^{\pm}_n$, $\III^{\pm}_n$ are the second and third fundamental forms of $\partial_{\pm}\Omega_n$. Note that $G^{\pm}_n$ is a homeomorphism and its boundary map $\partial G^{\pm}_n$ is the identity from $\partial_{\infty}(\partial_{\pm}\Omega_n)=\partial_{\infty}\Omega_n$ to itself. Recall that the $I$-gluing map of $\Omega_n$ is $\Phi^I_{\Omega_n}=(\partial\varphi^{I_-}_{\Omega_n})^{-1}\circ\partial\varphi^{I_+}_{\Omega_n}$, where $\varphi^{I_{\pm}}_{\Omega_n}: \HH^{2\pm}\rightarrow \partial_{\pm}\Omega_n$ is a map such that $(\varphi^{I_{\pm}}_{\Omega_n})^*(I^{\pm}_n)$ is conformal to $h^{\pm}_0$.  Let $\varphi^{c\pm}_{\Omega_n}: \HH^{2\pm}\rightarrow D^{\pm}_n$ be a bi-holomorphic map such that $(\partial\varphi^{c-}_{\Omega_n})^{-1}\circ\partial\varphi^{c+}_{\Omega_n}$ is the conformal welding of the 
  Jordan curve $\partial_{\infty}\Omega_n$. In particular, we have
  \begin{equation*}
  \begin{split}
  f_n=\Phi^I_{\Omega_n}&=(\partial\varphi^{I-}_{\Omega_n})^{-1}\circ \partial\varphi^{c-}_{\Omega_n}\circ \big( (\partial\varphi^{c-}_{\Omega_n})^{-1} \circ \partial\varphi^{c+}_{\Omega_n}\big) \circ\big((\partial\varphi^{c+}_{\Omega_n})^{-1}\circ\partial\varphi^{I+}_{\Omega_n}\big)\\
 &=\big((\partial\varphi^{I-}_{\Omega_n})^{-1}\circ
 \partial (G_n^{-})^{-1}\circ \partial\varphi^{c-}_{\Omega_n}\big)\circ \big( (\partial\varphi^{c-}_{\Omega_n})^{-1}\circ \partial\varphi^{c+}_{\Omega_n}\big) \circ\big((\partial\varphi^{c+}_{\Omega_n})^{-1}\circ\partial G^{+}_n\circ\partial\varphi^{I+}_{\Omega_n}\big)\\
 &=\partial\big(\big((\varphi^{c-}_{\Omega_n})^{-1}\circ G^{-}_n\circ\varphi^{I-}_{\Omega_n}\big)^{-1}\big) \circ\big((\partial\varphi^{c-}_{\Omega_n})^{-1}\circ\partial\varphi^{c+}_{\Omega_n}\big)
 \circ\partial\big((\varphi^{c+}_{\Omega_n})^{-1}\circ G^{+}_n\circ\varphi^{I+}_{\Omega_n}\big)
  \end{split}
  \end{equation*}
 By the above discussion, $(\varphi^{c\pm}_{\Omega_n})^{-1}\circ G^{\pm}_n\circ(\varphi^{I\pm}_{\Omega_n})$ is a uniformly quasiconformal map (see e.g. \cite[Proposition 3.14]{convexhull}) and its boundary map is thus a uniformly quasisymmetric homeomorphism. It then follows from hypothesis (ii) that the conformal welding of $\partial_{\infty}\Omega_n$ is uniformly quasisymmetric, so by Theorem \ref{thm:Ahlfors-Bers}, the $\partial_\infty \iota_n^{\pm}(\DD^2)=\partial_{\infty}\Omega_n$ are quasi-circles with a quasi-conformal constant which is bounded from above by a fixed constant.

  Since  $\partial_{\infty}(\partial_{\pm}\Omega)=\partial_{\infty}\Omega$ has more than one point, by assumption (iv), Lemma \ref{lm:conf-bihol} and Carath\'{e}odory's theorem, $(\varphi^{I_{\pm}}_{\Omega_n})_{n\in\N^+}$ has a subsequence $(\varphi^{I_{\pm}}_{\Omega_{n_k}})_{k\in\N^+}$ converging to a map, say $\varphi^{\pm}_{\Omega}:\HH^{2\pm}\rightarrow \partial_{\pm}\Omega$, with $(\varphi^{\pm}_{\Omega})^*I_{\pm}$ conformal to $h^{\pm}_0$. Since the gluing map $\Phi^{I}_{\Omega_n}=f_n=(\partial\varphi^{I_-}_{\Omega_n})^{-1}\circ\partial\varphi^{I_+}_{\Omega_n}$ is normalized, the map $(\partial\varphi^{I_-}_{\Omega})^{-1}\circ\partial\varphi^{I_+}_{\Omega}$, as the limit of $f_{n_k}$, is also normalized and is thus the $I$-gluing map $\Phi^I_{\Omega}$ of $\Omega$ (combined with the above properties about $\varphi^{I_{\pm}}_{\Omega}$). By assumption (iii), $f_n=\Phi^I_{\Omega_n}$ converges to $f$ up to subsequence. As a consequence, $\Phi^I_{\Omega}=f$. The lemma follows.
\end{proof}

\begin{proof}[\textbf{Proof of Theorem \ref{tm:main-iv}}]
Let $D_-, D_+\subset\CP^1$ be disjoint quasi-disks with $\partial D_-=\partial D_+$, denoted by $C$ for simplicity, which is oriented such that $D_+$ and $D_-$ lie on the left-hand and right-hand sides of $C$. Using Theorem \ref{thm:Ahlfors-Bers}, the conformal welding $\Phi_C$ is quasisymmetric. By Proposition \ref{prop:qcirc approx}, $\Phi_C$ can be approximated by a sequence of uniformly quasisymmetric quasifuchsian homeomorphisms, say $f_n:\RP^1\to \RP^1$. We assume that $f_n$ is $(\rho^+_n,\rho^-_n)$-equivariant, with $\rho^+_n$ and $\rho^-_n$ two Fuchsian representations of the same surface group $\Gamma_n$.

Note that since $\Gamma_n$ is a surface group, $\Gamma_n$ is residually finite -- this follows from Selberg's Lemma, or from \cite{hempel}.
Therefore, there exists a finite-index subgroup of $\Gamma_n$ (which does not contain 
finitely many non-trivial elements of $\Gamma_n$) such that its representation under $\rho^{\pm}_n$ has a fundamental domain $P^{\pm}_n$ in $\DD^2$ containing a closed hyperbolic disk $B_H(0,\tau_n)$, where $\tau_n$ strictly increases to $\infty$.

For the given metrics $h_{\pm}$ on $D_{\pm}$, we consider the conformal metrics $(\varphi^{\pm}_0)^*h_{\pm}$ on $\DD^2$, where $\varphi_0^{\pm}: (\DD^2, h_0)\rightarrow (D_{\pm},h_{\pm})$ are a pair of (fixed) conformal maps. 
Applying Proposition \ref{prop:approx I} for the conformal metrics $(\varphi^{\pm}_0)^*h_{\pm}$ and the sequence $(\rho^{\pm}_n)_{n\in\N^+}$ of pairs of Fuchsian representations, there is a sequence of pairs of $\rho^{\pm}_n$-invariant conformal metrics, say $h^{\pm}_n$, on $\DD^2$, which uniformly converges to $(\varphi^{\pm}_0)^*h_{\pm}$ smoothly on compact subsets of $\DD^2$.

For the given metrics $h^*_{\pm}$ on $D_{\pm}$, we consider the conformal metrics $(\psi^{\pm}_0)^*h^*_{\pm}$ on $\DD^2$, where $\psi^{\pm}_0: (\DD^2, h_0)\rightarrow (D_{\pm},h^*_{\pm})$ is a conformal map. Applying Proposition \ref{prop:approx III} for the conformal metric $(\psi^{\pm}_0)^*(h^*_{\pm})$ and the sequence $(\rho^{\pm}_n)_{n\in\N^+}$ of pairs of Fuchsian representations, there is a sequence of pairs of $\rho^{\pm}_n$-invariant conformal metrics, say $h^{*\pm}_n$, on $\DD^2$, which uniformly converges to $(\psi^{\pm}_0)^*(h^*_{\pm})$ smoothly on compact subsets of $\DD^2$.

We divide the discussion into two cases below, according to the types of convex domains we want to realize.

\textbf{Case 1}: For Statements (i)-(ii), it suffices to show Statements (i), since Statements (ii) will follow in a similar way, by replacing $h_-$ with $h^*_-$.

Applying Statement (1) of Proposition \ref{prop:quasifuchsian realization} for the $(\rho^+_n,\rho^-_n)$-equivariant quasisymmetric homeomorphism $f_n$ and $\rho^-_n$-invariant metric $h^-_n$, there exists a convex domain $\Omega_n\subset\HH^3$ of first type, invariant under a quasifuchsian group, with the induced metric on $\partial\Omega_n$ isometric to $h^-_n$ and the $I$-gluing map of $\Omega_n$ being $f_n$.

Up to isometries, we can assume that the $\Omega_n$ are located in good positions (namely, their boundary are realized by isometric immersions of $(\DD^2, h^{-}_n)$ with the images of $0$ contained in a compact subset of $\HH^3$).

By Proposition \ref{prop:convergence-I}, after passing to a subsequence, $(\Omega_n)_{n\in\N^+}$ converges to a convex domain $\Omega\subset\HH^3$ of first type with the induced metric on $\partial\Omega$ isometric to $h_-$, and the $I$-gluing map of ${\Omega}$ being $\Phi_C$. Combined with Proposition \ref{prop:equiv}, Statement (i) follows.

\textbf{Case 2}: For Statements (iii)-(v), we just need to show one of them, since the others will follow in the same manner, by replacing $h_{\pm}$ with $h^*_{\pm}$. We show Statement (iii) for instance.

Applying Statement (3) of Proposition \ref{prop:quasifuchsian realization} for the $(\rho^+_n,\rho^-_n)$-equivariant quasisymmetric homeomorphism $f_n$ and $\rho^{\pm}_n$-invariant metric $h^{\pm}_n$, there exists a convex domain $\Omega_n\subset\HH^3$ of second type, which is invariant under a quasifuchsian group, with the induced metric on $\partial_{\pm}\Omega_n$ isometric to $h^{\pm}_n$ and the $I$-gluing map of $\Omega_n$ being $f_n$.

We set $\Lambda_n=\partial_\infty \Omega_n$, which is the asymptotic boundary of $\partial_{\pm}\Omega_n$. Since the hyperbolic Gauss map is uniformly quasiconformal, similarly as shown in Proposition \ref{prop:convergence-II}, each $\Lambda_n$ is a $K$-quasicircle, with $K$ independent of $n$. After normalizing by a sequence of hyperbolic isometries, we can assume that $(\Lambda_n)_{n\in \N}$ converges in the Hausdorff topology to a $K$-quasicircle $\Lambda\subset \partial_\infty \HH^3$.

Let $\iota^{\pm}_n: (\DD^2, h^{\pm}_n)\rightarrow \HH^3$ be a pair of isometric embeddings with $\iota^{\pm}_n(\DD^2)=\partial_{\pm}\Omega_n$ and let $x_n^{\pm}$ be the images of 0 on $\partial_{\pm}\Omega_n$ under $\iota^{\pm}_n$.
Denote by $G_n:\partial\Omega_n\to \partial_\infty\HH^3$ the hyperbolic Gauss map on $\partial\Omega_n$. With the normalization chosen above, where 
$\Lambda_n\to \Lambda$, the images $G_n(x_n^\pm)=G_n\circ\iota^{\pm}_n(0)$ remain in a compact subset of 
$\partial_\infty\HH^3\setminus \Lambda$
again by the fact that the Gauss maps $G_n$ are uniformly quasi-conformal and $\iota^{\pm}_n$ are conformal.

We claim that the points $x^\pm_n$ also remain in a compact region of $\HH^3$. To see this, notice that, after extracting a subsequence, we can assume that $x_n^\pm \to x^\pm\in \bar \HH^3 =\HH^3\cup \partial_\infty \HH^3$. However we cannot have 
$x^+\in \Lambda$, since otherwise it would follow from the convexity of $\Omega_n$ that 
$G_n(x_n^+)\to x^+\in\Lambda$ (which contradicts that $G_n(x_n^+)$ remain in a compact subset of $\partial_\infty\HH^3\setminus \Lambda$), and similarly $x^-\not\in \Lambda$. It remains to show that $x^\pm\not\in \partial_\infty\HH^3\setminus \Lambda$.

Note that the domains $\Omega_n$ are invariant under quasifuchsian groups. Moreover, the induced metrics on $\partial_{\pm}\Omega_n$ are isometric to $h^{\pm}_n$, which are uniformly bounded conformal metrics (in the sense that $h^{\pm}_n\leq e^{2c_{\pm}}h_0$ for all $n$, where $c_{\pm}>0$ depend only on $h_{\pm}$) and have curvature in $[-1+\epsilon/C,C/\epsilon]$ for a constant $C>1$. Apply Lemma \ref{lm:distance} to $\partial_{\pm}\Omega_n$, we have that $\partial_{\pm}\Omega_n$ (and thus $x_n^\pm$) lie at distance $r$ (depending only on $\epsilon$, $C$ and $h_{\pm}$) from the convex hull of $\Lambda_n$. Note also that $\Lambda_n\rightarrow\Lambda$ in the Hausdorff topology. It follows that $x^\pm\not\in \partial_\infty\HH^3\setminus \Lambda$. We can therefore conclude that, after taking a subsequence $x_n^\pm\to x^\pm\in \HH^3$. The claim follows.

By Proposition \ref{prop:convergence-II}, after passing to a subsequence, $(\Omega_n)_{n\in\N^+}$ converges to a convex domain $\Omega\subset\HH^3$ of second type with the induced metric on $\partial_{\pm}\Omega$ is isometric to $h_{\pm}$, and the $I$-gluing map of ${\Omega}$ being $\Phi_C$. Combined with Proposition \ref{prop:equiv}, Statement (iii) follows.
\end{proof}


\section{Applications}

\subsection{New parametrizations of Bers slice}
Recall that a Bers slice is the space of quasifuchsian hyperbolic manifolds with a fixed conformal structure at one prescribed boundary at infinity (up to isotopy). We denote by ${\cB}_c$ the Bers slice with a fixed conformal structure $c$ at the prescribed boundary at infinity. By the Bers' simultaneous uniformization theorem, ${\cB}_c$ is parameterized by $\cT_S$ in terms of the conformal structure on the other boundary at infinity. In addition, ${\cB}_c$ is uniquely determined by the bending lamination (resp. the induced metric) on the boundary component of the convex core facing the prescribed boundary at infinity with conformal structure $c$, by the Grafting parametrization theorem given by Scannell-Wolf \cite{scannell-wolf} (resp. Dumas-Wolf \cite{dumas-wolf}).

As an application of the special cases of Corollary \ref{cr:qf-I} and Corollary \ref{cr:qf-III} when the prescribed metrics $h$ and $h^*$ have constant curvature, we obtain a new parametrization of $\cB_c$ in terms of constant curvature surfaces. More precisely, let $K\in(-1,0)$, we define $\Phi_K: \cB_c\rightarrow \cT(S)$ as the map that sends $M\in\cB_c$ to the hyperbolic metric homothetic to the induced metric on the $K$-surface in $M$ on the opposite side of the prescribed boundary at infinity.
Let $K^*\in(-\infty,0)$, we define $\Psi_{K^*}: \cB_c\rightarrow \cT(S)$ as the map that sends $M\in\cB_c$ to the hyperbolic metric homothetic to the third fundamental form on the $K^*/(1-K^*)$-surface in $M$ on the opposite side of the prescribed boundary at infinity. Then we have the following statement.

\begin{corollary}
For each $K\in(-1,0)$ and $K^*\in(-\infty,0)$, the maps $\Phi_K$ and $\Psi_{K^*}$ are both homeomorphisms.
\end{corollary}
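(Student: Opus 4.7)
The plan is to deduce both claims directly from Lemma~\ref{lm:special}. Take $M\cong S\times[0,1]$, so that $\cCC(M)$ coincides with the space of quasifuchsian structures on the interior, and label its two boundary components $\partial_1M,\partial_2M$ so that the conformal structure $c$ is prescribed on $\partial_1M$ and the $K$-surface (resp.\ the $K^*/(1-K^*)$-surface) lies in the hyperbolic end of $M$ facing $\partial_2M$. Both maps $\Phi_K$ and $\Psi_{K^*}$ are then well-defined: each quasifuchsian end admits a unique foliation by surfaces of constant Gauss curvature $K\in(-1,0)$ by Labourie's theorem cited in the proof of Lemma~\ref{lm:special}, and the dual of such a surface has constant third fundamental form curvature $K^*=K/(K+1)\in(-\infty,0)$ by Proposition~\ref{prop:dual}(4).

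For $\Phi_K$, I would apply Lemma~\ref{lm:special} with $\cI=\{1\}$, $\cJ=\{2\}$, $\cK=\emptyset$, and $K_2=K$, to conclude that the map
\[
\phi_{\{2\},\emptyset}:\cCC(M)\longrightarrow \cT_{\partial_1M}\times\cT_{\partial_2M}
\]
is a homeomorphism; its first coordinate records the conformal structure at infinity on $\partial_1M$, and its second coordinate records the hyperbolic metric homothetic to the induced metric on the $K$-surface isotopic to $\partial_2M$. By definition, the Bers slice $\cB_c$ is the preimage $\phi_{\{2\},\emptyset}^{-1}(\{c\}\times\cT_S)$, and the projection of this restriction onto the second factor is precisely $\Phi_K$. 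Being the restriction of a global homeomorphism to a fibre of the first-factor projection, $\Phi_K$ is therefore itself a homeomorphism onto $\cT_S$.

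The argument for $\Psi_{K^*}$ is identical, applying Lemma~\ref{lm:special} instead with $\cJ=\emptyset$, $\cK=\{2\}$, and $K^*_2=K^*$; the second coordinate of $\phi_{\emptyset,\{2\}}$ then records the hyperbolic metric homothetic to the third fundamental form on the $K^*/(1-K^*)$-surface isotopic to $\partial_2M$, and the same fibre-restriction argument applies.

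I do not expect any serious obstacle here. All the substantive work is already packaged in Lemma~\ref{lm:special}, which rests on the infinitesimal rigidity statement of Proposition~\ref{lm:local rigidity-mfld}, the dimension count from the Ahlfors--Bers Theorem~\ref{tm:ab}, and the compactness argument of Lemma~\ref{lm:compact}. The only thing to verify at this stage is the purely tautological identification of $\cB_c$ with the slice of $\cCC(M)$ cut out by fixing the first factor of $\phi_{\{2\},\emptyset}$ (or of $\phi_{\emptyset,\{2\}}$), which is immediate from the definitions.
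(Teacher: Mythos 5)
Your argument is correct and is essentially the route the paper intends: reduce to the constant-curvature case of the main theorem, identify $\cB_c$ with a fibre of the parametrizing map $\phi_{\cJ,\cK}$, and read off $\Phi_K$ and $\Psi_{K^*}$ as the second-factor projections of its restriction. The only minor point worth flagging is that the bare statement of Lemma~\ref{lm:special} only asserts existence and uniqueness (i.e.\ bijectivity of $\phi_{\{2\},\emptyset}$), whereas the fact that $\phi_{\{2\},\emptyset}$ is actually a homeomorphism --- which you need for continuity of $\Phi_K$ and its inverse --- is established inside the \emph{proof} of that lemma via the smoothness, local-diffeomorphism, and properness arguments, so you are appealing to the proof rather than just the lemma's stated conclusion.
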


\subsection{Proof of Theorem \ref{tm:cc-I}}
Another application of Corollary \ref{cr:qf-I} is Theorem \ref{tm:cc-I}. We recall it here for convenience. Let $h$ be a hyperbolic metric on $S$, and let $c\in \cT_S$ be a conformal structure on $S$. 
We want to show that there is a quasifuchsian hyperbolic manifold with the conformal structure on its upper boundary component at infinity isotopic to $c$, and with the induced metric on the lower boundary component of its convex core isotopic to $h$. To see this, we first show the following compactness statement:

\begin{proposition}\label{prop:appl-convergence}
Let $K_n\in (-1, 0)$ with $K_n$ decreasingly converging to $-1$ and let $h_n=(1/|K_n|)h$. Let $M_n$ be a sequence of quasifuchsian hyperbolic manifolds with the conformal structure on its upper boundary component at infinity isotopic to $c$, and with the induced metric on the $K_n$-surface in the lower side of its convex core isotopic to $h_n$. Then up to extracting a subsequence, $M_n$ converges to a quasifuchsian hyperbolic manifold with the conformal structure on its upper boundary component at infinity isotopic to $c$, and with the induced metric on the lower boundary component of its convex core isotopic to $h$.
\end{proposition}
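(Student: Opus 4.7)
My plan is in three steps: construction, compactness, and identification of the limit.

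For the construction, for each $n$ the metric $h_n=(1/|K_n|)h$ is a smooth Riemannian metric on $S$ of constant curvature $K_n\in(-1,0)$; Corollary~\ref{cr:qf-I} applied to the boundary data $(c,h_n)$ then produces a unique (up to isotopy) quasifuchsian hyperbolic manifold $M_n$ whose upper conformal structure at infinity is isotopic to $c$ and which contains on the lower side a closed convex surface with induced metric isotopic to $h_n$. By uniqueness of Labourie's foliation of the lower end by constant-curvature surfaces~\cite{Lab91}, this convex surface is (isotopic to) the $K_n$-surface $\Sigma_n$ of the lower end of $M_n$.

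To extract a convergent subsequence $M_{n_k}\to M_\infty$ in $\cQF(S)$, I would invoke Lemma~\ref{lm:compact} applied to the boundary data $X_n=(c,h_n)$. The data $X_n$ is bounded in $\cX$ because $\ell_{h_n}(\gamma)=(1/\sqrt{|K_n|})\,\ell_h(\gamma)$ is pinched between two positive constants for every simple closed curve $\gamma$ on $S$. The main obstacle here is that the proof of Lemma~\ref{lm:compact} relies on Lemma~\ref{lm:principal-curv} to obtain uniform bounds on the principal curvatures of $\Sigma_n$, and this conclusion requires $K_n$ to stay in a fixed interval $[-1+\epsilon,1/\epsilon]$; in our situation $K_n\to -1$, so such bounds degenerate a priori (indeed, as $\Sigma_n$ approaches the pleated surface $\partial_-C_{M_\infty}$, its principal curvatures can blow up near the bending locus). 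To circumvent this, I would replace $\Sigma_n$ by the equidistant surface $\Sigma^1_n\subset M_n$ at distance $1$ on the side of $\partial^-_\infty M_n$: by the formula recalled in Remark~\ref{rk:comparsion}, the principal curvatures of $\Sigma^1_n$ automatically lie in $[\tanh 1,\coth 1]$ regardless of those of $\Sigma_n$, so the hyperbolic Gauss map $\Sigma^1_n\to\partial^-_\infty M_n$ becomes uniformly bi-Lipschitz. Combined with the fact that lengths of simple closed curves on $\Sigma^1_n$ remain bounded above (via $\ell_{\Sigma^1_n}(\gamma)$ controlled by $\ell_{h_n}(\gamma)$ and the bounded second/third fundamental form contributions away from the bending locus) and below (by Sullivan's Lipschitz comparison with the bounded pleated metric on $\partial_-C_{M_n}$), this yields that the lower conformal structure $c^-_n$ stays in a compact subset of $\cT_S$, hence that $(M_n)$ subconverges via Ahlfors--Bers (Theorem~\ref{tm:ab}).

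Finally, to identify the limit $M_\infty$, continuity of the Ahlfors--Bers map gives that the upper conformal structure of $M_\infty$ at infinity is isotopic to $c$. For the lower side, I would combine the convergence $M_{n_k}\to M_\infty$ in $\cQF(S)$ with the convergence $K_{n_k}\to -1$ together with Labourie's theorem: as $K$ decreases to $-1$, the $K$-surfaces in any fixed convex co-compact manifold converge in Hausdorff distance to the lower boundary of the convex core, and their rescaled induced metrics $|K|\,I_{S_K}$ converge as hyperbolic metrics to the induced metric on $\partial_-C$; moreover this convergence is joint in $(M,K)$. Since along our sequence $|K_{n_k}|h_{n_k}=h$ is constantly equal to $h$, passing to the limit yields that the induced hyperbolic metric on $\partial_-C_{M_\infty}$ is isotopic to $h$, which is the desired conclusion.
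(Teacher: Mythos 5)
The key shortcut you are missing is the contracting nearest-point projection from $\Sigma_n$ directly to the lower boundary $\partial_-C_{M_n}$ of the convex core. Since that projection is $1$-Lipschitz, the pleated metric $m_n^-$ on $\partial_-C_{M_n}$ satisfies $m_n^-\leq h_n=(1/|K_n|)h$ as length spectra. Combined with $h_n\to h$ and the positive definiteness of Thurston's asymmetric metric, this yields $m_n^-\to h$ in $\cT_S$ directly; that single fact settles both the compactness step (via the uniform quasi-conformal comparison between $m_n^-$ and the lower conformal structure $c_n^-$) and the identification of the limit (via Bonahon's $C^1$-continuity of the map sending a convex co-compact metric to its convex-core boundary metrics). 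Your equidistant-surface detour is therefore unnecessary, and it also miswrites the direction of the inequalities it invokes: the nearest-point projections from $\Sigma^1_n$ to $\Sigma_n$ and to $\partial_-C_{M_n}$ are contracting, so they furnish \emph{lower} bounds $\ell_{\Sigma^1_n}\geq\ell_{h_n}$ and $\ell_{\Sigma^1_n}\geq\ell_{m_n^-}$, not the upper bound you claim; the upper bound one actually needs follows from $m_n^-\leq h_n$ together with the fact that $\Sigma^1_n$ lies at distance $1$ from $\partial_-C_{M_n}$, which you do not draw on.

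The more serious gap is in your identification step. You assert that the map $(M,K)\mapsto |K|\,I_{S_K(M)}$ is jointly continuous up to and including the boundary $K=-1$, where it should equal the pleated metric on $\partial_-C_M$. This equicontinuity claim is not given by Labourie's foliation theorem (which is a statement about a single fixed manifold) nor elsewhere in the paper, and establishing it would itself require an auxiliary compactness argument of roughly the kind you are trying to avoid. The paper's proof never needs it: once $m_n^-\to h$ is in hand via Thurston's positive definiteness, Bonahon's continuity applied to $M_{n_k}\to M_\infty$ gives $m^-(M_\infty)=h$ in one step.
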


\begin{proof}
  Let $\Sigma_n$ be the $K_n$-surface in the lower connected component of the complement of the convex core in $M_n$. We denote by 
  $\partial_-C_{M_n}$ the lower boundary of the convex core, and by $m_n^-$ its induced metric.

  Note that the nearest-point projection from $\Sigma_n$ to 
  $\partial_-C_{M_n}$ is contracting. As a consequence, $m_n^-\leq h_n=(1/|K_n|)h$ (in the sense of comparison of the length spectrum) and since $K_n\to -1$, it follows that $m^-_n\to h$ (by the positive definiteness of Thurston's asymmetric metric on $\cT_S$, see \cite[Prop. 2.1]{thurston:minimal}).

  Since $c_n^-$ (the conformal structure at infinity on the lower boundary component) is uniformly quasi-conformal to $m_n^-$ (see e.g. \cite{epstein-marden-markovic}), it follows that $(c_n^-)_{n\in \N}$ remains in a compact subset of $\cT_S$. After extracting a sub-sequence, it converges to a limit $c^-$, and $(M_n)_{n\in \N}$ converges to the quasifuchsian manifold $M$ having conformal structures $c$ and $c^-$ on the upper and lower ideal boundary components. 
  Recall that the map that takes a quasifuchsian hyperbolic manifold to the induced metrics on the boundary of its convex core is continuously differentiable (see \cite[Theorem 1]{bonahon-variations}).  
  $M$ has the induced metric $h$ on the lower boundary component of the convex core. The proposition follows.
\end{proof}

\begin{proof}[\textbf{Proof of Theorem \ref{tm:cc-I}}]
Let $K_n\in (-1, 0)$ with $K_n$ decreasingly converging to $-1$ and let $h_n=(1/|K_n|)h$. By Corollary \ref{cr:qf-I}, there is a unique (up to isometries) quasifuchsian hyperbolic manifold, denoted by $M_n$, with the conformal structure on its upper boundary component at infinity isotopic to $c$, and with the induced metric on the $K_n$-surface in the lower side of its convex core isotopic to $h_n$. It then follows from Proposition \ref{prop:appl-convergence} that the sequence $(M_n)_{n\in\N}$ converges to a quasifuchsian hyperbolic manifold with the conformal structure on its upper boundary component at infinity isotopic to $c$, and with the induced metric on the lower boundary component of its convex core isotopic to $h$, which is a desired manifold. This concludes the proof of Theorem \ref{tm:cc-I}.
\end{proof}

\bibliographystyle{plain}
\bibliography{bibweyl}
\end{document}